\title{Variational approach to coarse-graining of generalized gradient flows}
\theoremstyle{plain}
\newtheorem{theorem}{Theorem}[section]
\newtheorem{lemma}[theorem]{Lemma}
\newtheorem{corollary}[theorem]{Corollary}
\newtheorem{proposition}[theorem]{Proposition}
\theoremstyle{definition}
\newtheorem{remark}[theorem]{Remark}
\newcommand{\R}{\mathbb{R}}
\newcommand{\ve}{\varepsilon}
\DeclareMathAlphabet\gothic{U}{euf}{m}{n}
\DeclareMathOperator{\supp}{supp}
\def\div{\mathop{\mathrm{div}}\nolimits}
\def\div{\mathop{\mathrm{div}}\nolimits}
\def\opN{\mathcal N}
\def\opL{\mathscr L}
\def\RelEnt{\mathcal H}
\def\RelFI{\mathcal I}
\def\Hausdorff{\mathscr H}
\def\Lebesgue{\mathcal L}
\def\LDJ{\mathcal J}
\DeclareMathOperator\Int{Int}
\newcommand\TA{T\!A}
\let\oldnabla\nabla
\def\nabla{\oldnabla\sbcorr{-2mu}{0mu}} 
\def\sbcorr#1#2{\def\tmpa{#1}\def\tmpb{#2}\futurelet\next\sbcorrA}
\def\sbcorrA{\ifx\next_\expandafter\sbcorrB\fi}
\def\sbcorrB_#1{_{\mkern\tmpa#1}\futurelet\next\sbcorrC}
\def\sbcorrC{\ifx\next^\expandafter\sbcorrD\fi}
\def\sbcorrD^#1{^{\mkern\tmpb#1}}
\def\vep{\varepsilon}
\let\e\vep
\def\hrho{\hat{\rho}}
\def\trho{\tilde{\rho}}
\def\d{\delta}
\def\g{\gamma}
\def\G{\Gamma}
\def\k{\kappa}
\def\M{\mathcal}
\def\Rd{{\mathbb{R}^{2d}}}
\def\rd{{\mathbb{R}^{d}}}
\definecolor{darkviolet}{rgb}{0.58, 0.0, 0.83}
\DeclareRobustCommand{\rchi}{{\mathpalette\irchi\relax}}
\newcommand{\irchi}[2]{\raisebox{\depth}{$#1\chi$}}
\author{Manh Hong Duong, Agnes Lamacz, Mark A. Peletier and Upanshu Sharma}
\begin{document}
\maketitle

\begin{abstract}
In this paper we present a variational technique that handles coarse-graining and passing to a limit in a unified manner. The technique is based on a duality structure, which is present in many gradient flows and other variational evolutions, and which often arises from a large-deviations principle. It has three main features: (A) a natural interaction between the duality structure and the coarse-graining, (B) application to systems with non-dissipative effects, and (C) application to coarse-graining of approximate solutions which solve the equation only to some error. As examples, we use this technique to solve three limit problems, the overdamped limit of the Vlasov-Fokker-Planck equation and the small-noise limit of randomly perturbed Hamiltonian systems with one and with many degrees of freedom.  
\end{abstract}
\tableofcontents

\section{Introduction}

Coarse-graining  is the procedure of 
approximating a system by a simpler or lower-dimensional one, often in some limiting regime.
It arises naturally in various fields such as thermodynamics, quantum mechanics, and molecular dynamics, just to name a few. Typically coarse-graining requires a separation of temporal and/or spatial scales, i.e.\ the presence of fast and slow variables. As the ratio of  `fast' to `slow' increases, some form of averaging or homogenization should allow one to remove the fast scales, and obtain a limiting system that focuses on the slow ones.

Coarse-graining limits are by nature \emph{singular limits}, since information is lost in the coarse-graining procedure; therefore rigorous proofs of such limits are always non-trivial. Although the literature abounds with cases that have been treated successfully, and some fields can even be called well-developed---singular limits in ODEs and homogenization theory, to name just two---many more cases seem out of reach, such as coarse-graining in materials~\cite{PabloCurtin07}, climate prediction~\cite{StainforthAllenTredgerSmith07}, and complex systems~\cite{GrandChallenges07,NicolisNicolis12}. 

All proofs of singular limits hinge on using certain \emph{special structure} of the equations; well-known examples are compensated compactness~\cite{Tartar79,Murat87}, the theories of viscosity solutions~\cite{CrandallIshiiLions92} and entropy solutions~\cite{Kruzkov70,Smoller94}, and the methods of periodic unfolding~\cite{CioranescuDamlamianGriso02,CioranescuDamlamianGriso08} and two-scale convergence~\cite{Allaire92}. \emph{Variational-evolution structure}, such as in the case of gradient flows and variational rate-independent systems,  also facilitates limits~\cite{SandierSerfaty04,Stefanelli08,MielkeRoubicekStefanelli08,DaneriSavare10TR,Serfaty11,MielkeRossiSavare12,Mielke14TR}. 

In this paper we introduce and study such a structure, which arises from the theory of \emph{large deviations} for stochastic processes. In recent years we have discovered that many gradient flows, and also many `generalized' gradient systems, can be matched one-to-one to the large-deviation characterization of some stochastic process~\cite{AdamsDirrPeletierZimmer11,AdamsDirrPeletierZimmer13,DuongPeletierZimmer14,DuongPeletierZimmer13,DirrLaschosZimmer12,MielkePeletierRenger14}. The large-deviation rate functional, in this connection, can be seen to \emph{define} the generalized gradient system. This connection has many philosophical and practical implications, which are discussed in the references above.

We show how in such systems, described by a rate functional, `passing to a limit' is facilitated by the duality structure that a rate function inherits from the large-deviation context, in a way that meshes particularly well with coarse-graining.

\subsection{Variational approach---an outline}\label{DOG-sec:Outline-Var-App}
\label{sec:coarse-graining-intro}
The systems that we consider in this paper are evolution equations in a space of measures
. Typical examples are the forward Kolmogorov equations associated with stochastic processes, but also various nonlinear equations, as in one of the examples below.

Consider the family of evolution equations
\begin{equation}
\label{DOG-eq:Formal-Evolution}
\begin{aligned}
&\partial_t\rho^\ve =\opN^\ve \rho^\ve,\\
&\rho^\ve|_{t=0}=\rho_0^\ve, 
\end{aligned}
\end{equation}
where $\opN^\ve$ is a linear or nonlinear operator. 
The unknown $\rho^\ve$ is a time-dependent Borel measure on a state space~$\mathcal{X}$, i.e. $\rho^\ve:[0,T]\rightarrow \mathcal{M}(\mathcal{X})$. 
In the systems of this paper, \eqref{DOG-eq:Formal-Evolution} has a variational formulation characterized by a functional $I^\ve$ such that 
\begin{equation}
\label{eq: variational formulation}
I^\ve\geq 0 \qquad\text{and}\qquad \rho^\ve \text{ solves } \eqref{DOG-eq:Formal-Evolution}\  \Longleftrightarrow\  I^\ve(\rho^\ve)=0.
\end{equation}
This variational formulation is closely related to the Brezis-Ekeland-Nayroles variational principle~\cite{BrezisEkeland76,Nayroles76,Stefanelli08,Ghoussoub09} and the integrated energy-dissipation identity for gradient flows~\cite{AmbrosioGigliSavare08}; see Section~\ref{sec: discussion}.

Our interest in this paper is the limit $\e\to0$, and we wish to study the behaviour of the system in this limit. If we postpone the aspect of coarse-graining for the moment, this corresponds to studying the limit of $\rho^\ve$ as $\ve\to0$. Since $\rho^\ve$ is characterized by $I^\ve$, establishing the limiting behaviour consists of answering two questions:
\begin{enumerate}
\item{\textit{Compactness}: Do solutions of $I^\ve(\rho^\ve)=0$ have useful compactness properties, allowing one to extract a subsequence that converges in a suitable topology, say $\varsigma$?}
\item{\textit{Liminf inequality}: Is there a limit functional $I\geq 0$ such that 
\begin{equation}
\label{prop:liminf-intro-abstract}
\rho^\ve\stackrel \varsigma \longrightarrow\rho\ \Longrightarrow\ \liminf\limits_{\ve\rightarrow 0} I^\ve(\rho^\ve)\geq I(\rho)?
\end{equation}
And if so, does one have
\begin{align*}
I(\rho)=0\ \Longleftrightarrow\  \rho \text{ solves }  \partial_t\rho =\opN\rho,
\end{align*}
for some operator $\opN$?
}
\end{enumerate}
A special aspect of the method of the present paper is that it also applies to \emph{approximate} solutions. By this we mean that we are interested in sequences of time-dependent Borel measures $\rho^\varepsilon$ such that $\sup_{\varepsilon>0} I^\varepsilon(\rho^\varepsilon)\leq C$ for some $C\geq 0$. The exact solutions are special cases when $C=0$. The main message of our approach is that all the results then follow from this uniform bound and assumptions on well-prepared initial data.

The compactness question will be answered by the first crucial property of the functionals $I^\ve$, which is that they provide an \emph{a priori} bound of the type
\begin{equation}
\label{eq: bound of energy and fisher information}
S^\ve(\rho^\ve_t) + \int_0^t R^\ve(\rho^\ve_s)\, ds \leq S^\ve(\rho^\ve_0) +  I^\ve(\rho^\ve),
\end{equation}
where $\rho^\vep_t$ denotes the time slice at time $t$ and $S^\ve$ and $R^\ve$ are functionals. In the examples of this paper $S^\ve$ is a free energy and $R^\ve$ a relative Fisher Information, but the structure is more general. This inequality is reminiscent of the energy-dissipation inequality in the gradient-flow setting. The uniform bound, by assumption, of the right-hand side of \eqref{eq: bound of energy and fisher information} implies that each term in the left-hand side of \eqref{eq: bound of energy and fisher information}, i.e., the free energy at any time $t>0$ and the integral of the Fisher information,  is also bounded. This will be used to apply the Arzel\`a-Ascoli theorem to obtain certain compactness  and `local-equilibrium' properties. All this discussion will be made clear in each example in this paper.

\medskip

The second crucial property of the functionals $I^\ve$ is that they satisfy a duality relation of the type 
\begin{align}
I^\ve(\rho)=\sup\limits_f \LDJ^\ve(\rho,f),\label{DOG-eq:Abstract-Duality-Form-Rate-Fn}
\end{align}
where the supremum is taken over a class of smooth functions $f$. It is well known how such duality structures give rise to good convergence properties such as~\eqref{prop:liminf-intro-abstract}, but the focus in this paper is on how this duality structure combines well with coarse-graining.

In this paper we define \emph{coarse-graining} to be a shift to a reduced, lower dimensional description via a coarse-graining map $\xi:\mathcal{X}\rightarrow \mathcal{Y}$ which identifies relevant information
 and is typically highly non-injective. Note that $\xi$ may depend on $\varepsilon$. A typical example of such a coarse-graining map is a `reaction coordinate' in molecular dynamics. The coarse-grained equivalent of $\rho^\e:[0,T]\rightarrow \mathcal{M}(\mathcal{X})$ is the push-forward $\hrho^\e:=\xi_\#\rho^\e:[0,T]\rightarrow\mathcal{M}(\mathcal{Y})$.  If $\rho^\e$ is the law of a stochastic process $X^\e$, then $\xi_\#\rho^\e$ is the law of the process $\xi(X^\e)$. 
 
There might be several reasons to be interested in $\xi_\#\rho^\e$ rather than $\rho^\e$ itself. The push-forward $\xi_\#\rho^\e$ obeys a dynamics with fewer degrees of freedom, since $\xi$ is non-injective; this might allow for more efficient computation. Our first example (see Section~\ref{DOG-sec:Concrete-Problems}), the overdamped limit in the Vlasov-Fokker-Planck equation, is an example of this. As a second reason, by removing certain degrees of freedom, some specific behaviour of $\rho^\e$ might become clearer; this is the case with our second and third examples (Section~\ref{DOG-sec:Concrete-Problems}), where the effect of $\xi$ is to remove a rapid oscillation, leaving behind a slower diffusive movement. Whatever the reason, in this paper we assume that some $\xi$ is given, and that we wish to study the limit of $\xi_\#\rho^\e$ as $\e\rightarrow 0$.
 
\medskip
The core of the arguments of this paper, that leads to the characterization of the equation satisfied by the limit of $\xi_\#\rho^\e$, is captured by the following formal calculation: 
\begin{eqnarray*}
I^\ve (\rho^\ve) &=& \sup_f \; \LDJ^\ve(\rho^\ve,f)\\
&\stackrel{f=g\circ \xi} \geq & \sup_g \;\LDJ^\ve(\rho^\ve,g\circ \xi)\\
&& \phantom{\sup\; \widehat \LDJ^\ve(}\Big\downarrow\;\ve\rightarrow 0\\[\jot]
&&\sup_g\;  {\LDJ}({\rho},g\circ\xi)\\
&\stackrel{(*)}\eqqcolon & \sup_g\; {\hat \LDJ}({\hrho},g)\quad
\stackrel{(**)}=:\quad{\hat I}({\hat \rho})
\end{eqnarray*}

Let us go through the lines one by one.
The first line is the duality characterization~\eqref{DOG-eq:Abstract-Duality-Form-Rate-Fn} of $I^\e$. The inequality in the second line is due to the reduction to a subset of special functions $f$, namely those of the form $f=g\circ \xi$. This is in fact an implementation of coarse-graining: in the supremum we decide to limit ourselves to observables of the form $g\circ \xi$ which only have access to the information provided by $\xi$. After this reduction we pass to the limit and show that $\LDJ^\ve(\rho^\ve,g\circ\xi)$ converges to some $\LDJ(\rho,g\circ\xi)$---at least for appropriately chosen coarse-graining maps.

In the final step $(*)$ one requires that the  loss-of-information in passing from $\rho$ to $\hrho$ is consistent with the loss-of-resolution in considering only functions $f=g\circ \xi$. This step requires a proof of \emph{local equilibrium}, which describes how the behaviour of $\rho$ that is \emph{not} represented explicitly by the push-forward~$\hrho$, can nonetheless be deduced from $\hrho$. This local-equilibrium property is at the core of various coarse-graining methods and is typically determined case by case. 

We finally define $\hat{I}$ by duality in terms of $\hat{J}$ as in $(**)$.
In a \emph{successful} application of this method, the resulting functional $\hat I$ at the end has `good' properties \emph{despite} the loss-of-accuracy introduced by the restriction to functions of the form $g\circ \xi$, and this fact acts as a test of success. Such good properties should include, for instance, the property that $\hat I = 0$ has a unique solution in an appropriate sense.

Now let us explain the origin of the functionals $I^\varepsilon$.

\subsection{Origin of the functional $I^\varepsilon$: large deviations of a stochastic particle system}
\label{sec:stochastic-intro}
The abstract methodology that we described above arises naturally in the context of \emph{large deviations}, and we now describe this in the context of the three examples that we discuss in the next section. All three originate from (slight modifications of) one stochastic process, that models a collection of interacting particles with inertia in the physical space $\R^d$:
\begin{subequations}\label{DOG-eq:Intro-VFP-SDE}
\begin{align}
&dQ^n_{i}(t)=\frac{P^n_{i}(t)}{m}dt,\label{DOG-eq:Intro-VFP-SDE-Q}\\
&dP^n_{i}(t)=-\nabla V(Q^n_{i}(t))dt-\frac1n \sum\limits_{j=1}^n\nabla\psi(Q^n_{j}(t)-Q^n_{i}(t)) dt-\frac{\gamma}{m}P^n_{i}(t)dt+\sqrt{2\gamma\theta}\,dW_i(t).\label{DOG-eq:Intro-VFP-SDE-P}
\end{align}
\end{subequations}
Here $Q^n_i\in\R^d$ and $P^n_i\in\R^d$ are the position and momentum of particles $i=1,\ldots,n$ with mass $m$. Equation~\eqref{DOG-eq:Intro-VFP-SDE-Q} is the usual relation between $\dot Q^n_i$ and $P^n_i$, and \eqref{DOG-eq:Intro-VFP-SDE-P} is a force balance which describes the forces acting on the particle. For this system, corresponding to the first example below, these forces are (a) a force arising from a fixed potential $V$, (b) an interaction force deriving from a potential $\psi$, (c) a friction force, and (d) a stochastic force characterized by independent $d$-dimensional Wiener measures $W_i$. Throughout this paper we collect $Q_i^n$ and $P_i^n$ into a single variable $X^n_i= (Q^n_i,P^n_i)$.

The parameter $\gamma$ characterizes the intensity of collisions of the particle with the solvent; it is present  in both the friction term and the noise term, since they both arise from these collisions (and in accordance with the Einstein relation). The parameter $\theta=kT_a$, where $k$ is the Boltzmann constant and $T_a$ is the absolute temperature, measures the mean kinetic energy of the solvent molecules, and therefore characterizes the magnitude of collision noise. Typical applications of this system are for instance as a simplified model for chemical reactions, or as a model for particles interacting through Coulomb, gravitational, or volume-exclusion forces. However, our focus in this paper is on methodology, not on technicality, so we will assume that $\psi$ is sufficiently smooth later on.

\medskip

We now consider the many-particle limit $n\to\infty$ in~\eqref{DOG-eq:Intro-VFP-SDE}. It is a well-known fact that the empirical measure
\begin{align}\label{DOG-eq:Emperical-Measure}
\rho_{n}(t)=\frac{1}{n}\sum\limits_{i=1}^n\delta_{X^n_{i}(t)}
\end{align}
converges almost surely to the unique solution of the \emph{Vlasov-Fokker-Planck  (VFP) equation}~\cite{Oelschlager84}
\begin{alignat}2
\label{DOG-eq:Intro-VFP}
\partial_t\rho&= (\opL_{\rho})^* \rho, &\qquad
 (\opL_\mu)^*\rho&:=  -\div_q\big{(}\rho\frac{p}{m}\big{)}+\div_p\rho\Bigl(\nabla_q V+\nabla_q\psi\ast\mu+\gamma\frac{p}{m}\Bigr)+\gamma\theta\,\Delta_p\rho ,\\
&&&\phantom{:}= -\div \rho J\nabla (H+\psi*\mu) + \gamma\div_p \rho\frac pm + \gamma \theta \Delta_p \rho,
\label{DOG-eq:Intro-VFPa}
\end{alignat}
with an initial datum that derives from the initial distribution of $X_{i}^n$.
The spatial domain here is $\mathbb{R}^{2d}$ with coordinates $(q,p)\in\mathbb{R}^{d}\times \mathbb{R}^{d}$, and subscripts such as in $\nabla _q $ and $\Delta_p$ indicate that differential operators act only on corresponding variables. The convolution is defined by $(\psi\ast\rho)(q)=\int_{\mathbb{R}^{2d}}\psi(q-q')\rho(q',p')dq'dp'$. 
In the second line above we use a slightly shorter way of writing $\opL_\mu^*$, by introducing the Hamiltonian $H(q,p) = p^2/2m + V(q)$ and the canonical symplectic matrix $J = \bigl(\begin{smallmatrix}0&I\\-I&0\end{smallmatrix}\bigr)$. This way of writing also highlights that the system is a combination of conservative effects, described by $J$, $H$, and $\psi$, and dissipative effects, which are parametrized by $\gamma$. The primal form $\opL_\mu$ of the operator~$(\opL_\mu)^*$ is
\[
\opL_\mu f = J\nabla (H+\psi*\mu)\cdot \nabla f - \gamma \frac pm\cdot \nabla_p f + \gamma\theta \Delta_p f.
\] 


The almost-sure convergence of $\rho_n$ to the solution $\rho$ of the (deterministic) VFP equation is the starting point for a \emph{large-deviation} result. In particular it has been shown that the sequence $(\rho_n)$ has a \emph{large-deviation property}~\cite{DawsonGartner87,BudhirajaDupuisFischer12,DuongPeletierZimmer13} which characterizes the probability of finding the empirical measure far from the limit $\rho$, written informally as 
\begin{align*}
\text{Prob}(\rho_n\approx\rho)\sim \text{exp}\Big(-\frac{n}{2}I(\rho)\Big),
\end{align*}
in terms of a \emph{rate functional} $I:C([0,T];\mathcal{P}(\R^{2d}))\rightarrow\mathbb{R}$. If we assume  that the initial data $X_i^n$ are chosen to be deterministic, and such that the initial empirical measure $\rho_n(0)$ converges narrowly to some $\rho_0$,  then $I$ has the form~\cite{DuongPeletierZimmer13}
\begin{align}\label{DOG-eq:Large-Dev-Rate-Fn-Generator-Form}
I(\rho):=\sup\limits_{f\in C^{1,2}_b(\R\times\R^{2d})}\ 
\int\limits_{\R^{2d}}f_T\,d\rho_T
-\int\limits_{\R^{2d}}f_0\,d\rho_0
-\int\limits_0^T\int\limits_{\R^{2d}}
\big{(}\partial_t f+ \opL_{\rho_t} f\big{)}\,d\rho_tdt
-\frac{1}{2}\int\limits_0^T\int
\limits_{\R^{2d}}\Lambda(f,f)\, d\rho_tdt,
\end{align} 
provided $\rho_t |_{t=0} = \rho_0$, where $\Lambda$ is the carr\'e-du-champ operator (e.g.~\cite[Section 1.4.2]{Bakry2014})
\begin{equation*}
\Lambda(f,g):=\frac{1}{2}\bigl(\opL_\mu(fg)-f\opL_\mu g-g\opL_\mu f\bigr) 
= \gamma\theta\,  \nabla_p f\nabla_p g  .
\end{equation*}
If the initial measure $\rho_t|_{t=0}$ is not equal to the limit $\rho_0$ of the stochastic initial empirical measures, then $I(\rho)=\infty$.

\medskip

Note that the functional $I$ in~\eqref{DOG-eq:Large-Dev-Rate-Fn-Generator-Form} is non-negative, since $f\equiv 0$ is admissible. If $I(\rho)=0$, then by replacing $f $ by $\lambda f$ and letting $\lambda$ tend to zero we find that $\rho$ is the weak solution of~\eqref{DOG-eq:Intro-VFP} (which is unique, given initial data~$\rho_0$~\cite{Funaki84}). Therefore $I$ is of the form that we discussed in Section~\ref{DOG-sec:Outline-Var-App}: $I\geq0$, and $I(\rho)=0$ iff $\rho$ solves~\eqref{DOG-eq:Intro-VFP}, which is a realization of \eqref{DOG-eq:Formal-Evolution}. 

\medskip

\subsection{Concrete Problems }\label{DOG-sec:Concrete-Problems}

We now apply the coarse-graining method of Section~\ref{sec:coarse-graining-intro} to three limits: the \emph{overdamped} limit $\gamma\to\infty$, and two \emph{small-noise} limits $\theta\to0$. In each of these three limits, the VFP equation~\eqref{DOG-eq:Intro-VFP} is the starting point, and we prove convergence to a limiting system using appropriate coarse-graining maps. Note that the convergence is therefore from one deterministic equation to another one; but the method makes use of the large-deviation structure that the VFP equation has inherited from its stochastic origin.

\subsubsection{Overdamped limit of the Vlasov-Fokker-Planck  equation}
The first limit that we consider is the limit of large friction, $\g\rightarrow\infty$, in the Vlasov-Fokker-Planck  equation \eqref{DOG-eq:Intro-VFP}, setting $\theta=1$ for convenience. To motivate what follows, we divide \eqref{DOG-eq:Intro-VFP} throughout by $\g$ and formally let $\g\to\infty$ to find
\begin{align*}
\div_p\rho\Bigl(\frac{p}{m}\Bigr)+\Delta_p\rho=0,
\end{align*}
which suggests that in the limit $\g\rightarrow\infty$, $\rho$ should be Maxwellian in $p$, i.e.
\begin{align}\label{DOG-eq:VFP-Intro-Local-Eq}
\rho_t(dq,dp)=Z^{-1}\exp\Bigl(-\frac{p^2}{2m}\Bigr)\,dp \;\sigma_t(dq),
\end{align}
where $Z=(2m\pi)^{d/2}$ is the normalization constant for the Maxwellian distribution. 
The main result in Section~\ref{sec:overdamped-limit} shows that after an appropriate time rescaling, in the limit $\gamma\to\infty$, the remaining unknown $\sigma\in C([0,T];\mathcal{P}(\mathbb{R}^d))$ solves the Vlasov-Fokker-Planck equation 
\begin{align}\label{DOG-eq:VFP-Intro-Limiting-Eq}
\partial_t\sigma=\div(\sigma\nabla V(q))+\div(\sigma(\nabla\psi\ast\sigma))+\Delta\sigma.
\end{align}

In his seminal work~\cite{Kramers1940}, Kramers formally discussed these results for the `Kramers equation', which corresponds to~\eqref{DOG-eq:Intro-VFP} with $\psi= 0$, and this limit has become known as the \emph{Smoluchowski-Kramers approximation}. Nelson made these ideas rigorous~\cite{Nelson1967} by studying the corresponding stochastic differential equations (SDEs); he showed that under suitable rescaling  the solution to the Langevin equation converges almost surely to the solution of~\eqref{DOG-eq:VFP-Intro-Limiting-Eq} with $\psi=0$. Since then  various generalizations and related results have been proved \cite{Freidlin2004a,Cerrai2006,Narita1994,Hottovy2012}, mostly using stochastic and asymptotic techniques. 

In this article we recover some of the results mentioned above for the VFP equation using the variational technique described in Section~\ref{DOG-sec:Outline-Var-App}. Our proof is made up of the following three steps.
Theorem~\ref{DOG-thm:VFP-Compactness} provides the necessary compactness properties to pass to the limit, 
 Lemma~\ref{DOG-lem:VFP-Local-Eq} gives the characterization \eqref{DOG-eq:VFP-Intro-Local-Eq} of the limit, and in Theorem~\ref{DOG-thm:VFP-Liminf-Inequality} we prove the convergence of the solution of the VFP equation to the solution of~\eqref{DOG-eq:VFP-Intro-Limiting-Eq}.

\subsubsection{Small-noise limit of a randomly perturbed Hamiltonian system with  one degree of freedom}
In our second example we consider the following equation
\begin{align}\label{DOG-eq:DOG-Intro-1}
\partial_t\rho=-\div_q\Bigl(\rho\frac{p}{m}\Bigr)+\div_p(\rho\nabla_q V)+\vep\Delta_p\rho \qquad\text{on } [0,T]\times \R^2, 
\end{align}
where $(q,p)\in\mathbb{R}^2$, $t\in [0,T]$ and $\div_q,\,\div_p,\,\Delta_p$ are one-dimensional derivatives. This equation can also be written as 
\begin{align}\label{DOG-eq:Intro-DOG}
\partial_t\rho=-\div(\rho J\nabla H)+\vep\Delta_p\rho,
\qquad\text{on } [0,T]\times \R^2.
\end{align}
This corresponds to the VFP equation~\eqref{DOG-eq:Intro-VFP} with $\psi=0$, without friction and with small noise $\e=\gamma\theta$. 

In addition to the interpretation as the many-particle limit of~\eqref{DOG-eq:Intro-VFP-SDE}, Equation \eqref{DOG-eq:Intro-DOG} also is the forward Kolmogorov equation  of a  randomly perturbed Hamiltonian system in $\R^2$ with Hamiltonian $H$:
\begin{align}
\label{DOG-SDE:Intro-DOG}
X = \begin{pmatrix} Q\\P\end{pmatrix},\qquad
dX_t=J\nabla H (X_t)+\sqrt{2\vep}\begin{pmatrix}
0 \\ 1
\end{pmatrix}dW_t, 
\end{align}
where $W_t$ is now a $1$-dimensional Wiener process. When the amplitude $\e$ of the noise is small, the dynamics \eqref{DOG-eq:Intro-DOG} splits into fast and slow components. The fast component approximately follows an unperturbed trajectory of the Hamiltonian system, which is a level set of $H$. The slow component is visible as a slow modification of the value of $H$, corresponding to a motion transverse to the level sets of $H$.  Figure~\ref{DOG-fig:StochasticTrajectories} illustrates this. 

Following~\cite{Freidlin1994} and others, in order to focus on the slow, Hamiltonian-changing motion, we rescale time such that the Hamiltonian, level-set-following motion is fast, of rate $O(1/\e)$, and the level-set-changing motion is of rate $O(1)$. In other words, the process~\eqref{DOG-SDE:Intro-DOG} `whizzes round' level sets of $H$ at rate $O(1/\e)$, while shifting from one level set to another at rate $O(1)$. 

This behaviour suggests choosing a coarse-graining map $\xi:\R^2\to\Gamma$, which maps a whole \emph{level set} to a single point in a new space $\Gamma$; because of the structure of level sets of $H$, the set $\Gamma$ has a structure that is called a \emph{graph}, a union of one-dimensional intervals locally parametrized by the value of the Hamiltonian. Figure~\ref{DOG-fig:Hamiltonian-Graph} illustrates this, and in Section~\ref{DOG-sec:DOG} we discuss it in full detail.

\begin{figure}[t]
\centering
\begin{subfigure}{.45\textwidth}
  \centering
  \includegraphics[width=.90\linewidth]{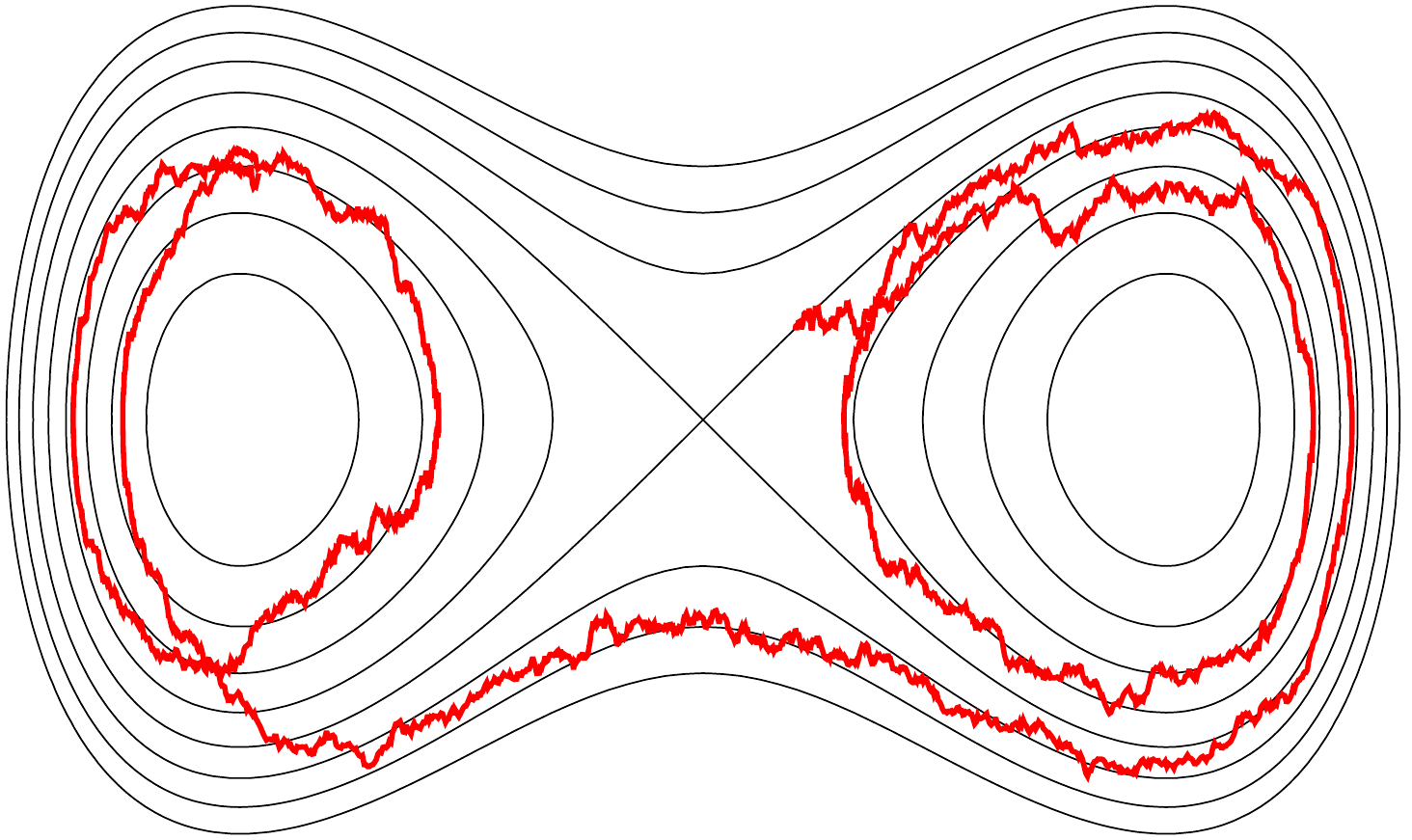}
  \caption{$\vep=0.005$}
\end{subfigure}%
\begin{subfigure}{.45\textwidth}
  \centering
  \includegraphics[width=.90\linewidth]{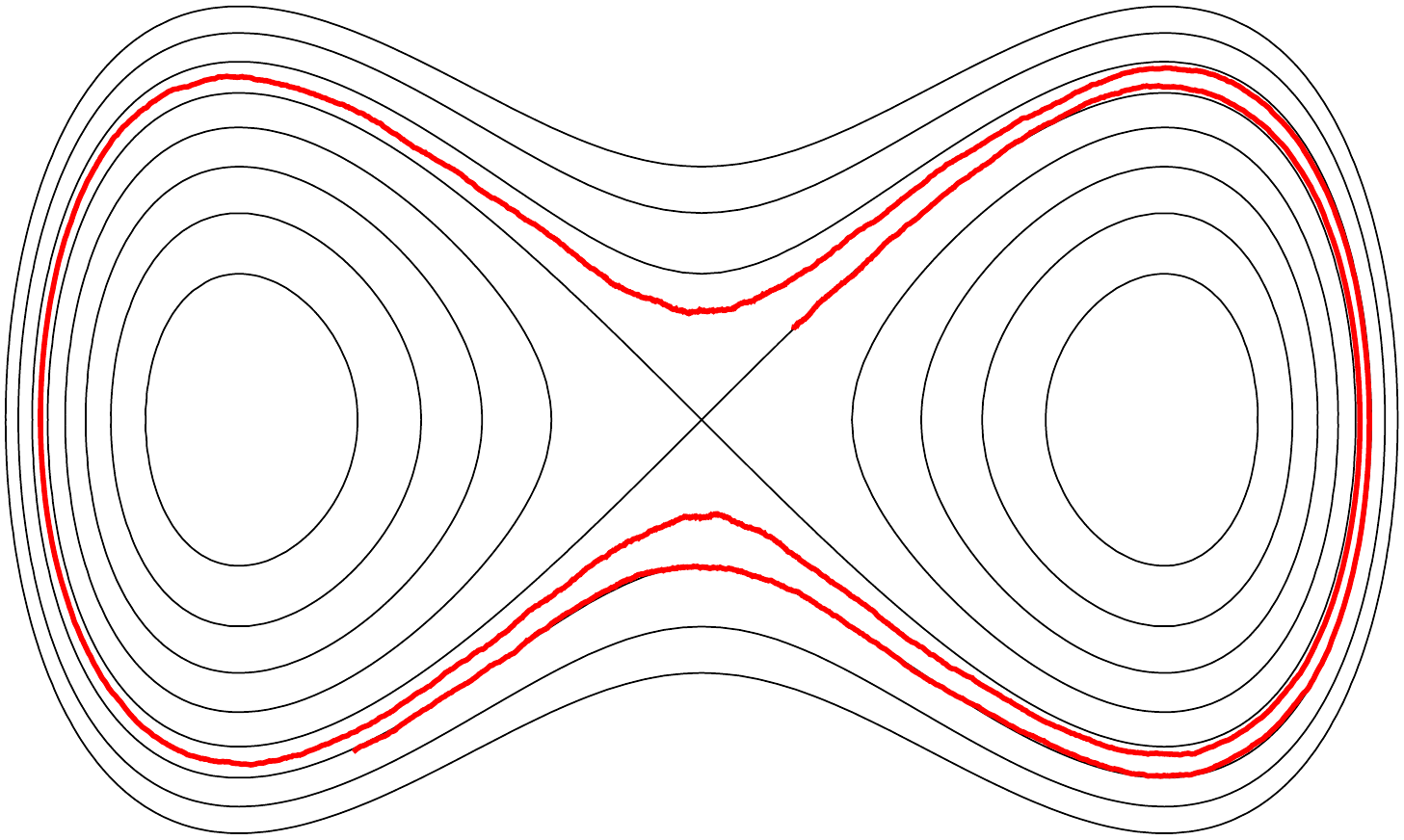}
  \caption{$\vep=0.00005$}
\end{subfigure}
\caption{Simulation of \eqref{DOG-SDE:Intro-DOG} for varying $\vep$. Shown are the level curves of the Hamiltonian $H$ and for each case a single trajectory.}
\label{DOG-fig:StochasticTrajectories}
\end{figure}

\begin{figure}[b]
\centering
\includegraphics[scale=0.55]{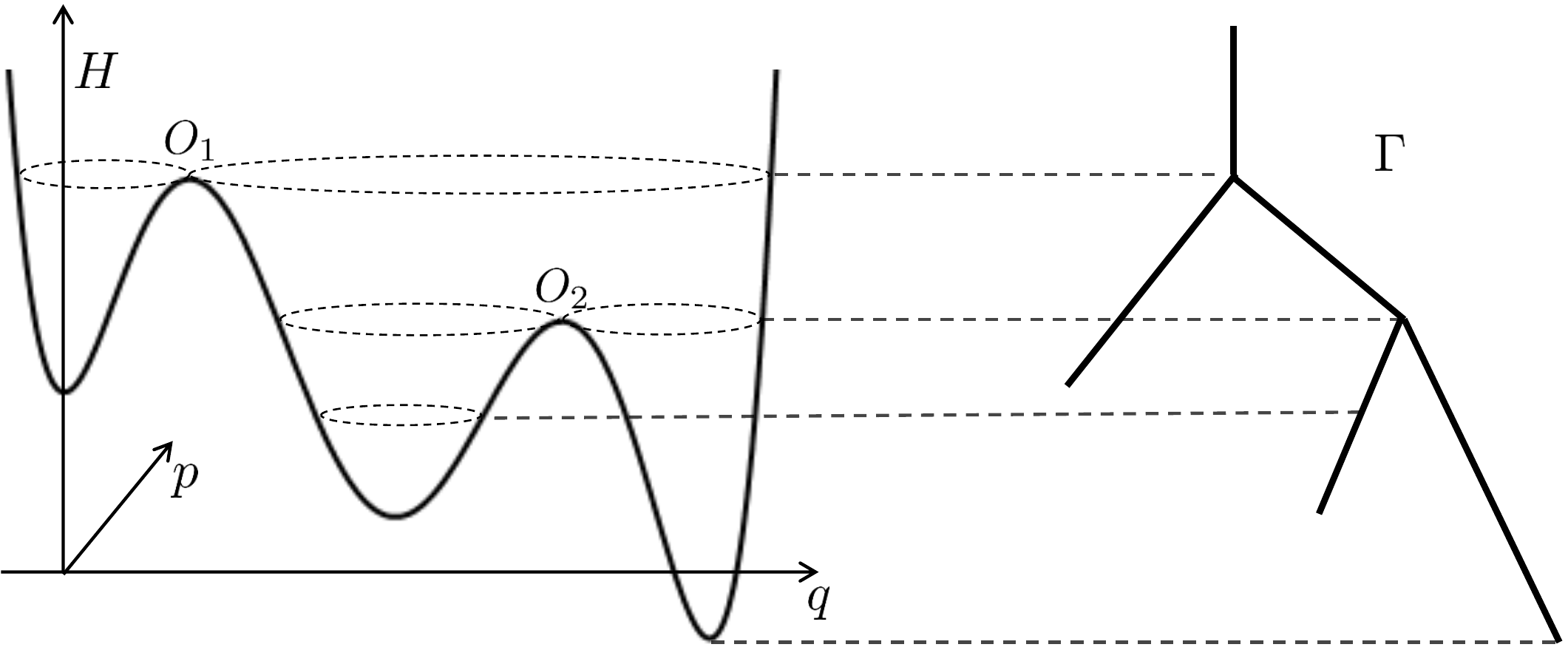}
\caption[Hamiltonian]{Left: Hamiltonian $\mathbb{R}^2\ni(q,p)\mapsto H(q,p)$, Right: Graph $\Gamma$}
\label{DOG-fig:Hamiltonian-Graph}
\end{figure}

After projecting onto the graph $\Gamma$, the process turns out to behave like a diffusion process on $\Gamma$. 
This property was first made rigorous in \cite{Freidlin1994} for a system with one degree of freedom, as here, and non-degenerate noise, using probabilistic techniques. In \cite{Freidlin1998} the authors consider the case of degenerate noise by using  probabilistic and analytic techniques based on hypoelliptic operators. More recently this problem has been handled using PDE techniques \cite{Ishii2012} (the elliptic case) and Dirichlet forms \cite{Barret2014}. In Section~\ref{DOG-sec:DOG} we give a new proof, using the structure outlined in Section~\ref{DOG-sec:Outline-Var-App}.

\subsubsection{Small-noise limit of a randomly perturbed Hamiltonian system with $d$ degrees of freedom}

The convergence of solutions of~\eqref{DOG-eq:Intro-DOG} as $\vep\rightarrow 0$ to a diffusion process on a graph requires that the non-perturbed system has a unique invariant measure on each connected component of a level set. 
While this is true for a Hamiltonian system with one degree of freedom, in the higher-dimensional case one might have additional first integrals of motion.
In such a system the slow component will not be a one-dimensional process but a more complicated object---see \cite{Freidlin2004}.
However, by introducing an additional stochastic perturbation that destroys all first integrals except the Hamiltonian, one can regain the necessary ergodicity, such that 
the slow  dynamics again lives on a graph. 

In Section~\ref{DOG-sec:DOG-d>1} we discuss this case. Equation~\eqref{DOG-eq:Intro-DOG}
gains an additional noise term, and reads 
\begin{align}\label{DOG-eq:Intro-DOG-d>1}
\partial_t\rho=-\div(\rho J\nabla H)+\kappa\div(a\nabla \rho)+\vep\Delta_p\rho,
\end{align}
where $a:\Rd\rightarrow \R^{2d\times2d}$ with $a\nabla H=0$, $\text{dim}(\text{Kernel}(a))=1$, and $\kappa,\vep>0$ with $\kappa\gg\vep$. The spatial domain is $\mathbb{R}^{2d}, \ d>1$  with coordinates $(q,p)\in\mathbb{R}^d\times \mathbb{R}^d$ and the unknown is a trajectory in the space of probability measures $\rho:[0,T]\rightarrow\mathcal{P}(\mathbb{R}^{2d})$. As before the aim is to derive the dynamics as $\vep\rightarrow 0$. This problem was studied in \cite{Freidlin2001} and 
the results closely mirror the previous case. The main difference lies in the proof of the local equilibrium statement, which we discuss in Section~\ref{DOG-sec:DOG-d>1}.

%
%

\subsection{Comparison with other work}

The novelty of the present paper lies in the following.

\begin{enumerate}
\item \textit{In comparison with existing literature on the three concrete examples treated in this paper:} The results of the three  examples are known in the literature (see for instance \cite{Nelson1967, Freidlin1994, Freidlin1998, Freidlin2001}), but they are proved by different techniques and in a different setting. The variational approach of this paper, which has a clear microscopic interpretation from the large-deviation principle, to these problems is new. We provide alternative proofs, recovering known results, in a unified framework. In addition, we obtain all the results on compactness, local-equilibrium properties and liminf inequalities solely from the variational structures. The approach also is applicable to approximate solutions, which obey the original fine-grained dynamics only to some error. This allows us to work with larger class of measures and to relax many regularity conditions required by the exact solutions. Furthermore, our abstract setting has  potential applications to many other systems.

\item \textit{In comparison with recently developed variational-evolutionary methods:} Many recently developed variational techniques for `passing to a limit' such as the Sandier-Saferty method based on the $\Psi$-$\Psi^*$ structure~\cite{SandierSerfaty04, ArnrichMielkePeletierSavareVeneroni12, Mielke14TR} only apply to gradient flows, i.e.\ dissipative systems. The approach of this paper also applies to certain variational-evolutionary systems that include non-dissipative effects, such as GENERIC systems \cite{Ottinger2005,DuongPeletierZimmer13}; our examples illustrate this. Since our approach only uses the duality structure of the rate functionals, which holds true for more general systems, this method also works for other limits in non-gradient-flow systems such as the Langevin limit of the Nos\'{e}-Hoover-Langevin thermostat \cite{FG11,OP11,Sharma17TH}.

\item \textit{Quantification of the coarse-graining error.} The use of the rate functional as a central ingredient in `passing to a limit' and coarse-graining also allows us to obtain quantitative estimates of the coarse-graining error. One intermediate result of our analysis is a functional inequality similar to the energy-dissipation inequality in the gradient-flow setting (see~\eqref{eq: bound of energy and fisher information}). This inequality provides an upper bound on the free energy and the integral of the Fisher information by the rate functional and initial free energy. 
To keep the paper to a reasonable length, we address this issue in details separately in a companion article \cite{DLPSS-TMP}.
\end{enumerate}
We provide further comments in Section \ref{sec: discussion}.

\subsection{Outline of the article}
The rest of the paper is devoted to the study of three concrete problems: the overdamped limit of the VFP equation in Section \ref{sec:overdamped-limit}, diffusion on a graph with one degree of freedom in Section \ref{DOG-sec:DOG}, and diffusion on a graph with many degrees of freedom in Section \ref{DOG-sec:DOG-d>1}. In each section, the main steps in the abstract framework are performed in detail. Section \ref{sec: discussion} provides further discussion. Finally, detailed proofs of some theorems are given in Appendices A and B.


\subsection{Summary of notation}
\begin{tabular}{c l c}
$\pm^{}_{kj}$ &$\pm1$, depending on which end vertex $O_j$ lies of edge $I_k$ & Sec.~\ref{DOG-sec:Graph}\\

$\mathcal F$ & Free energy & \eqref{def:FreeEnergy}, \eqref{def:FreeEnergyDOG}\\

$\gamma$ (Sec.~\ref{sec:overdamped-limit}) & large-friction parameter\\

$\Gamma,\g$ (Sec.~\ref{DOG-sec:DOG}) & The graph $\Gamma$ and its elements $\g$ & Sec.~\ref{DOG-sec:Graph}\\

$\mathcal{H}(\cdot|\cdot)$ & relative entropy  & \eqref{DOG-eq:Relative-Entropy} \\ 

$H(q,p)$ & $H(q,p) = p^2/2m + V(q)$, the  Hamiltonian\\

$\Hausdorff^n$ & $n$-dimensional Hausdorff measure  &   \\ 

$\mathcal{I}(\cdot|\cdot)$ & relative Fisher Information & \eqref{DOG-eq:Relative-Fisher-Information} \\ 

$\Int$ & The interior of a set\\

$I^\e$ & Large-deviation rate functional for the diffusion-on-graph problem & \eqref{DOG-eq:DOG-Large-Dev-Rate-Fn}\\

$I^\gamma$ & Large-deviation rate functional for the VFP equation & \eqref{def:I-gamma}\\

$J$ & $J = \bigl(\begin{smallmatrix}0&I\\-I&0\end{smallmatrix}\bigr)$, the canonical symplectic matrix \\
 


$\Lebesgue$ & Lebesgue measure \\

$\opL_\mu$, $(\opL_\mu)^*$ & primal and dual generators & Sec.~\ref{sec:stochastic-intro}\\

$\mathcal{M}(\mathcal X)$ & space of finite, non-negative Borel measures on $\mathcal X$  \\ 
 
$\mathcal{P}(\mathcal X)$ & space of probability measures on $ \mathcal X$ &  \\ 

$\hat \rho$ & push-forward under $\xi$ of $\rho$ & \eqref{def:hrho}\\

$T(\g)$ & period of the periodic orbit at $\g\in \Gamma$ & \eqref{DOG-eq:DOG-Time-Period}\\

$V(q)$ & potential on position (`on-site')\\

$x$ & $x = (q,p)$ joint variable\\

$\xi^\g,\xi$ & coarse-graining maps & \eqref{DOG-def:xi1}, \eqref{def:xi-DOG}\\

\end{tabular} 

\medskip

Throughout we use measure notation and terminology. For a given topological space $\mathcal X$, the space $\mathcal M(\mathcal X)$ is the space of non-negative, finite Borel measures on $\mathcal X$; $\mathcal P(\mathcal X)$ is the space of probability measures on $\mathcal X$. For a measure $\rho\in \M M([0,T]\times \R^{2d})$, for instance, we often write $\rho_t\in \M M(\R^{2d})$ for the time slice at time $t$; we also often use both the notation $\rho(x)dx$ and $\rho(dx)$ when $\rho$ is Lebesgue-absolutely-continuous. We equip $\mathcal M(\mathcal X)$ and $\mathcal P(\mathcal X)$ with the \emph{narrow} topology, in which convergence is characterized by duality with continuous and bounded functions on $\mathcal X$.

\section{Overdamped Limit of the VFP equation}
\label{sec:overdamped-limit}

\subsection{Setup of the system}
In this section we prove the large-friction limit $\g\rightarrow\infty$ of the VFP equation
\eqref{DOG-eq:Intro-VFP}.  Setting $\theta=1$ for convenience, and speeding time up by a factor $\gamma$, the VFP equation reads 
\begin{align}
\partial_t\rho = \opL_\rho^* \rho, \qquad
\opL_\nu^*\rho :=  -\gamma\div\rho J\nabla (H + \psi\ast\nu)+\gamma^2\bigg{[}\div_p\bigg{(}\rho\frac pm \bigg{)}+\Delta_p \rho\bigg{]}, 
\label{DOG-eq:Rescaled-VFP}
\end{align}
where, as before, $J = \begin{psmallmatrix}0 &I\\-I& 0\end{psmallmatrix}$ and $H(q,p) = p^2/2m + V(q)$. The spatial domain is $\mathbb{R}^{2d}$ with coordinates $(q,p)\in \mathbb{R}^{d}\times \mathbb{R}^{d}$  with $d\geq 1$, and  $\rho\in C([0,T];\mathcal{P}(\mathbb{R}^{2d}))$. For later reference we also mention the primal form of the operator $\opL_\nu^*$:
\begin{equation}
\label{def:L-VFP-rescaled}
\opL_\nu f = \gamma J\nabla (H+\psi*\nu)\cdot \nabla f - \gamma^2 \frac pm \cdot\nabla_p f + \gamma^2 \Delta_p f.
\end{equation}

We assume 
\begin{enumerate}[label=({V}\arabic*)]
\item\label{cond:VFP:V1}The potential $V\in C^2(\mathbb{R}^{d})$ has globally bounded second derivative. Furthermore $V\geq0$,  $|\nabla V|^2 \leq C(1+V)$ for some $C>0$, and $e^{-V}\in L^1(\R^{d})$. 
\item 
\label{cond:VFP:V2}
The interaction potential $\psi\in C^2(\mathbb{R}^{d})\cap W^{1,1}(\R^d)$ is symmetric, has globally bounded first and second derivatives, and the mapping $\nu\mapsto \int \nu*\psi\, d\nu$ is convex (or equivalently non-negative). 
\end{enumerate}

\medskip

As we described in Section~\ref{sec:coarse-graining-intro}, the study of the limit $\g\to\infty$ contains the following steps:
\begin{enumerate}
\item Prove compactness;
\item Prove a local-equilibrium property;
\item Prove a liminf inequality.
\end{enumerate}
According to the framework detailed by \eqref{DOG-eq:Formal-Evolution}, \eqref{eq: variational formulation}, 
each of these results is based on the large-deviation structure, which for Equation \eqref{DOG-eq:Rescaled-VFP} is associated to the functional 
$I^\gamma: C([0,T];\mathcal{P}(\R^{2d}))\rightarrow\mathbb{R}$ with
\begin{multline}
I^\gamma(\rho)= \sup\limits_{f\in C_b^{1,2}(\mathbb{R}\times\mathbb{R}^{2d})}\bigg{[} 
\int\limits_{\mathbb{R}^{2d}}f_T\,d\rho_T
-\int\limits_{\mathbb{R}^{2d}}f_0\,d\rho_0
-\int\limits_0^T\int\limits_{\mathbb{R}^{2d}}
\Bigl{(}\partial_t f_t+\opL_{\rho_t} f_t\Bigr{)}\,d\rho_tdt
-\frac{\gamma^2}{2}\int\limits_0^T\int
\limits_{\mathbb{R}^{2d}}\left|\nabla_p f_t\right|^2d\rho_tdt\bigg{]},
\label{def:I-gamma}
\end{multline}
where $\opL_\nu$ is given in~\eqref{def:L-VFP-rescaled}. 
Alternatively the rate functional can be written as~\cite[Theorem 2.5]{DuongPeletierZimmer13}
\begin{align}\label{DOG-eq:VFP-Alternate-LDRF}
I^\gamma(\rho)=
\begin{cases}
\displaystyle
\frac{1}{2}\int\limits_0^T\int\limits_{\mathbb{R}^{2d}}|h_t|^2\,d\rho_t dt
\ &\text{ if } \partial_t\rho_t=\opL_{\rho_t}^*\rho_t-\g\div_p(\rho_t h_t), \text{ for } h\in L^2(0,T;L^2_\nabla(\rho)), \text{ and } \rho|_{t=0} = \rho_0
\\
+\infty & \text{otherwise,}
\end{cases}
\end{align}
where $\opL_\nu^*$ is given in~\eqref{DOG-eq:Rescaled-VFP}. For fixed $t$, the space $L^2_\nabla(\rho_t)$ is the closure of the set $\{\nabla_p \varphi\,:\,\varphi\in C^\infty_c(\Rd)\}$ in $L^2(\rho_t)$, the $\rho_t$-weighted $L^2$-space. Similarly, $L^2(0,T; L^2_\nabla(\rho))$ is defined as the closure of 
$\{\nabla_p \varphi\,:\,\varphi\in C^\infty_c((0,T)\times\Rd)\}$ in the $L^2$-space associated to the space-time density $\rho$. 
This second form of the rate functional shows clearly how $I^\g(\rho) = 0$ is equivalent to the property that $\rho$ solves the VFP equation~\eqref{DOG-eq:Rescaled-VFP}. It also shows that if $I^\g(\rho)>0$, then $\rho$ is an approximate solution in the sense that it satisfies the VFP equation up to some error $-\g\div_p(\rho_t h_t)$ whose norm is controlled by the rate functional.

\subsection{A priori bounds}
We give ourselves a sequence, indexed by $\g$, of solutions $\rho^\g$ to the VFP equation~\eqref{DOG-eq:Rescaled-VFP} with initial datum $\rho^\g_t|_{t=0}=\rho_0$.  We will deduce the compactness of the sequence $\rho^\g$ from \emph{a priori} estimates, that are themselves derived from the rate function $I^\g$.

For probability measures $\nu,\zeta$ on $\Rd$ we first introduce:
\begin{itemize}
\item{Relative entropy: 
\begin{align}
\RelEnt(\nu|\zeta)=\begin{cases}
\displaystyle
\int_{\R^{2d}}[f\log f]\,d\zeta &\text{if}\quad \nu = f\zeta,\\
\infty&\text{otherwise}.
\end{cases}\label{DOG-eq:Relative-Entropy}
\end{align}
}
\item The free energy for this system:
\begin{equation}
\label{def:FreeEnergy}
\mathcal F(\nu) :=  
 \RelEnt(\nu|Z_H^{-1}e^{-H}dx) + \frac12\int_{\R^{2d}}  \psi*\nu \, d\nu
 = \int_{\R^{2d}} \Bigl[ \log g + H + \frac12 \psi*g \Bigr]\, gdx + \log Z_H,
 \end{equation}
where $Z_H = \int e^{-H}$ and the second expression makes sense whenever $\nu = gdx$. 
\end{itemize}
The convexity of the term involving $\psi$ (condition~\ref{cond:VFP:V2}) implies that the free energy $\mathcal F$ is strictly convex and has a unique minimizer $\mu\in\mathcal{P}(\mathbb{R}^{2d})$.
This minimizer is a stationary point of the evolution~\eqref{DOG-eq:Rescaled-VFP}, and has the implicit characterization
\begin{align}
\mu\in\mathcal{P}(\mathbb{R}^{2d}): \ \mu(dqdp)=Z^{-1}\exp\Bigl(-\bigl[H(q,p)+(\psi\ast\mu)(q)
\bigr]
\Bigr)\, dqdp\label{DOG-eq:VFP-Stationary-Measure},
\end{align}
where $Z$ is the normalization constant for $\mu$. Note that $\nabla_p \mu = -\mu\nabla_p  H = -p\mu/m $.

\medskip
We also define the \emph{relative Fisher Information} with respect to $\mu$ (in the $p$-variable only): 
\begin{align}
\RelFI(\nu|\mu)=
\sup_{\varphi\in C_c^\infty(\R^{2d})}
2\int_{\R^{2d}}\Bigl[ \Delta_p \varphi - \frac pm \nabla_p \varphi - \frac12 |\nabla_p\varphi|^2\Bigr]\, d\nu.
\label{DOG-eq:Relative-Fisher-Information}
\end{align}
Note that the right hand side of \eqref{DOG-eq:Relative-Fisher-Information} depends on $\mu$ via 
$\nabla_p\left(\log\mu\right)=-\nabla_p H(q,p) = -p/m$. 
In the more common case in which the derivatives $\Delta_p$ and $\nabla_p$ are replaced by the full derivatives $\Delta$ and $\nabla$, the relative Fisher Information has an equivalent formulation in terms of the Lebesgue density of $\nu$. In our case such equivalence only holds when $\nu$ is absolutely continuous with respect to the Lebesgue measure in both $q$ and~$p$: 

\begin{lemma}[Equivalence of relative-Fisher-Information expressions for a.c.\ measures]
\label{DOG-lem:two-defs-RFI}
If $\nu\in \M P(\R^{2d})$, $\nu(dx) = f(x)dx$ with $f\in L^1(\R^{2d})$, then 
\begin{equation}
\label{def:RelFI-ac}
\RelFI(\nu|\mu)=
\begin{cases}
\displaystyle
\int_{\R^{2d}}\Bigl|\frac{\nabla_p f}{f}\mathds{1}_{\{f>0\}} + \frac pm \Bigr|^2 f\,dqdp,\qquad&\text{if}\quad \nabla_p f\in L^1_{\mathrm{loc}}(dqdp),\\
\infty&\text{otherwise},
\end{cases}
\end{equation}
where $\mathds{1}_{\{f>0\}}$ denotes the indicator function of the set $\{x\in\R^{2d}\,|\,f(x)>0\}$ and $\nabla_p f$ is the distributional gradient of $f$ in the $p$-variable only. 
\end{lemma}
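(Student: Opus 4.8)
The plan is to establish the displayed formula as two matching inequalities, after recording one algebraic fact: since $\mu$ has $p$-logarithmic gradient $\nabla_p\log\mu=-\nabla_p H=-p/m$, for every $\varphi\in C_c^\infty(\R^{2d})$ we may write $\Delta_p\varphi-\tfrac pm\cdot\nabla_p\varphi=\mu^{-1}\div_p(\mu\,\nabla_p\varphi)$, so that $\RelFI(\cdot\,|\mu)$ is genuinely the relative Fisher information in the $p$-variable with respect to $\mu$. Throughout set $w:=\tfrac{\nabla_p f}{f}\mathds 1_{\{f>0\}}+\tfrac pm$.

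\textbf{Upper bound.} Suppose $\nabla_p f\in L^1_{\mathrm{loc}}(dqdp)$. Then $f$ lies in $W^{1,1}_{\mathrm{loc}}$ in the $p$-directions, so $\nabla_p f=0$ a.e.\ on $\{f=0\}$ and hence $wf=\nabla_p f+\tfrac pm f$ a.e. For $\varphi\in C_c^\infty(\R^{2d})$, an integration by parts in $p$ (legitimate because $\nabla_p f\in L^1_{\mathrm{loc}}$) gives $\int\Delta_p\varphi\,f=-\int\nabla_p\varphi\cdot\nabla_p f$, so that
\[
2\int_{\R^{2d}}\Bigl[\Delta_p\varphi-\tfrac pm\cdot\nabla_p\varphi-\tfrac12|\nabla_p\varphi|^2\Bigr]f\,dqdp=\int_{\R^{2d}}\bigl(|w|^2-|w+\nabla_p\varphi|^2\bigr)f\,dqdp\le\int_{\R^{2d}}|w|^2f\,dqdp.
\]
Taking the supremum over $\varphi$ yields $\RelFI(\nu|\mu)\le\int|w|^2f$; that is, the inequality ``$\le$'' of the claimed identity holds whenever $\nabla_p f\in L^1_{\mathrm{loc}}$ (and is vacuous if the right-hand side is $+\infty$).

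\textbf{Lower bound and the ``otherwise $=\infty$'' case.} Replacing $\varphi$ by $t\varphi$ in \eqref{DOG-eq:Relative-Fisher-Information} and optimising over $t\in\R$ recasts the definition as
\[
\RelFI(\nu|\mu)=\sup_{\varphi\in C_c^\infty(\R^{2d})}\frac{\bigl(\int[\Delta_p\varphi-\tfrac pm\cdot\nabla_p\varphi]\,d\nu\bigr)^2}{\int|\nabla_p\varphi|^2\,d\nu},
\]
with the convention that the right-hand side is $+\infty$ as soon as some $\varphi$ makes the numerator nonzero while the denominator vanishes. Assume now $\RelFI(\nu|\mu)<\infty$. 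Then the linear map $\varphi\mapsto-\int[\Delta_p\varphi-\tfrac pm\cdot\nabla_p\varphi]\,d\nu$ is well defined on the subspace $\{\nabla_p\varphi:\varphi\in C_c^\infty\}$ of $L^2(\nu;\R^d)$ (if $\nabla_p\varphi=0$ $\nu$-a.e.\ the numerator above must vanish) and is bounded there by $\RelFI(\nu|\mu)^{1/2}\,\|\nabla_p\varphi\|_{L^2(\nu)}$; by Riesz representation on the Hilbert space $W$ defined as the $L^2(\nu;\R^d)$-closure of $\{\nabla_p\varphi\}$ there is $w_0\in W$ with $\|w_0\|_{L^2(\nu)}^2=\RelFI(\nu|\mu)$ and $\int w_0\cdot\nabla_p\varphi\,f=-\int[\Delta_p\varphi-\tfrac pm\cdot\nabla_p\varphi]f$ for all $\varphi$. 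Unravelling this relation, the distributions $\nabla_p f+\tfrac pm f$ and $w_0 f$ coincide once one excludes a possible $p$-divergence-free remainder; since $w_0\in L^2(\nu)$ forces $w_0 f\in L^1_{\mathrm{loc}}$, this shows $\nabla_p f\in L^1_{\mathrm{loc}}$ and $w_0=w$ a.e., whence $\RelFI(\nu|\mu)=\|w_0\|_{L^2(\nu)}^2=\int|w|^2f$ and the two sides of the claimed identity agree. Contrapositively: if $\nabla_p f\notin L^1_{\mathrm{loc}}$, or if $\nabla_p f\in L^1_{\mathrm{loc}}$ but $\int|w|^2f=+\infty$, then $\RelFI(\nu|\mu)=+\infty$, which is exactly the value assigned by the formula.

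\textbf{Main obstacle.} The delicate step is the one labelled ``unravelling'': the Riesz identity only gives a priori that $\nabla_p f+(\tfrac pm-w_0)f$ is $p$-divergence-free as a distribution, and one must rule out a genuine divergence-free part. This is carried out by a regularisation argument --- mollify $f$ and truncate in $x$, transfer the identity to the smooth approximants, and pass to the limit --- crucially exploiting that $w_0$ lies in the \emph{closure of $p$-gradients} (not merely in $L^2(\nu)$) together with the joint convexity of $(a,b)\mapsto|b|^2/a$, which keeps the regularised gradients bounded in $L^2(\nu)$. The two points that require care are that $\nu$ carries no a priori moment bound in $p$ --- so the Maxwellian weight $e^{-p^2/2m}$ implicit in the definition of $w$ is not bounded below, and $p$-mollification does not interact harmlessly with it, forcing the mollification scale to be taken finer than the $p$-truncation --- and that $f$ may vanish on sets of positive measure. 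Equivalently, and perhaps more transparently, one may bypass Riesz altogether: when $\nabla_p f\in L^1_{\mathrm{loc}}$ and $\int|w|^2f<\infty$, build an explicit recovery sequence $\varphi_n\in C_c^\infty(\R^{2d})$ with $\nabla_p\varphi_n\to-w$ in $L^2(\nu)$ by regularising $\log\!\bigl(f e^{p^2/2m}\bigr)$, so that the square-completion identity of the first step gives $\RelFI(\nu|\mu)\ge\int|w|^2f$ directly, and run the same construction along a vanishing regularisation parameter to produce test functions realising $\RelFI(\nu|\mu)=+\infty$ in the remaining cases.
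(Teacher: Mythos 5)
Your proposal takes a genuinely different route from the paper's. The paper defines $\tilde\RelFI(f)$ to be the integral expression, assumes $\tilde\RelFI(f)<\infty$, proves in a separate lemma (Lemma~\ref{lem:L2nabla} in the appendix) that $w:=\tfrac{\nabla_p f}{f}\mathds{1}_{\{f>0\}}+\tfrac{p}{m}$ then lies in the closure $L^2_\nabla(fdqdp)$ of $p$-gradients, and identifies $\RelFI$ with $\tilde\RelFI$ by invoking the dual-norm machinery of \cite{DuongPeletierZimmer13}; for $\tilde\RelFI=\infty$ it cites the same reference. You instead prove the upper inequality $\RelFI\le\int|w|^2 f$ by square completion when $\nabla_p f\in L^1_{\mathrm{loc}}$, and try to get the lower inequality from Riesz representation: rewrite $\RelFI$ as a supremum of ratios, obtain $w_0$ in the gradient closure $W$ with $\|w_0\|^2_{L^2(\nu)}=\RelFI(\nu|\mu)$, and then argue that $w_0=w$. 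Your square-completion argument for the upper bound is clean and correct, and in fact serves as a transparent replacement for the citation of~\cite[(11)]{DuongPeletierZimmer13}. Moreover, the recovery-sequence alternative you sketch at the end --- regularise $\log(fe^{p^2/2m})$ and truncate in $x$ so that $\nabla_p\varphi_n\to w$ in $L^2(\nu)$ --- is, up to combining the two terms of $w$, precisely the construction of the paper's Lemma~\ref{lem:L2nabla} (with $\varphi_\ve=[\log(\tfrac1\ve\wedge(f\vee\ve))-\log\ve]\eta_\ve$). So the two routes share the same technical core; the Riesz wrapper is a different packaging of the same computation.

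That said, the step you flag as ``unravelling'' is a genuine gap, and your Riesz route does not close it as written. The Riesz identity only gives you, for all $\varphi\in C_c^\infty$, the relation
\[
\int_{\R^{2d}}\bigl(w_0 f-\tfrac{p}{m}f\bigr)\cdot\nabla_p\varphi\,dx \;=\; -\int_{\R^{2d}}\Delta_p\varphi\,f\,dx,
\]
i.e.\ $\div_p\bigl(w_0 f-\tfrac{p}{m}f-\nabla_p f\bigr)=0$ as a distribution. To conclude that $\nabla_p f\in L^1_{\mathrm{loc}}$ and that the divergence-free remainder vanishes, it is not enough to know that $w_0$ lies in the $L^2(\nu)$-closure of gradients; the closure $W$ is taken in the $\nu$-weighted $L^2$ norm, which carries no a priori information at places where $f$ vanishes or becomes small, and convergence $\nabla_p\varphi_n\to w_0$ in $L^2(\nu)$ does not by itself transfer to a distributional identity for $f$. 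What is actually needed is exactly the explicit approximation argument of Lemma~\ref{lem:L2nabla} --- the mollification/truncation of $\log f$ with the scale of the cut-off tied to the logarithmic growth --- and that is not a detail but the substance of the lemma. So your proposal is a correct blueprint whose hard step coincides with the paper's; the paper avoids the Riesz detour by proving the gradient-closure lemma directly (in the forward direction, from finiteness of the integral expression) and delegating the $\infty$-case to a citation, whereas your route would need to supply the regularisation you sketch, or fall back on the recovery-sequence alternative, which is the paper's proof.
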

\noindent

For a measure of the form $\zeta(dq)f(p)dp$, with $\zeta \not\ll dq$, the functional $\RelFI$ in~\eqref{DOG-eq:Relative-Fisher-Information} may be finite while the integral in~\eqref{def:RelFI-ac} is not defined.
Because of the central role of duality in this paper, definition~\eqref{DOG-eq:Relative-Fisher-Information} is a natural one, as we shall see below. The proof of Lemma~\ref{DOG-lem:two-defs-RFI} is given in Appendix~\ref{app:RFI-lemma-proof}.

\medskip

In the introduction we mentioned that we expect $\rho^\g$ to become Maxwellian in the limit $\g\to\infty$. This will be driven by a vanishing relative Fisher Information, as we shall see below. For absolutely continuous measures, the characterization~\eqref{def:RelFI-ac} already provides the property
\[
\RelFI(fdx|\mu)=0 \qquad\Longrightarrow \qquad
f(q,p) = \tilde f(q) \exp\Bigl(-\frac{p^2}{2m}\Bigr).
\]
This property holds more generally:
\begin{lemma}[Zero relative Fisher Information implies Maxwellian]
\label{DOG-lem:zero-RFI}
If $\nu\in\M P(\Rd)$ with $\RelFI(\nu|\mu)=0$, then there exists $\sigma\in\M P(\R^d)$ such that 
\[
\nu(dqdp) = Z^{-1}\exp\left(-\frac{p^2}{2m}\right)\sigma(dq)dp,
\]
where $Z = \int_{\R^d} e^{-p^2/2m}dp$ is the normalization constant for the Maxwellian distribution. 
\end{lemma}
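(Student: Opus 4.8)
The plan is to exploit the duality definition \eqref{DOG-eq:Relative-Fisher-Information} directly, without assuming that $\nu$ has a Lebesgue density. The hypothesis $\RelFI(\nu|\mu)=0$ says that for every $\varphi\in C_c^\infty(\R^{2d})$,
\[
\int_{\R^{2d}}\Bigl[\Delta_p\varphi - \frac pm\cdot\nabla_p\varphi - \frac12|\nabla_p\varphi|^2\Bigr]\,d\nu \le 0.
\]
First I would replace $\varphi$ by $\lambda\varphi$ for $\lambda\in\R$ and divide by $\lambda$: letting $\lambda\to 0$ from above and below forces the linear term to vanish, so in fact
\[
\int_{\R^{2d}}\Bigl[\Delta_p\varphi - \frac pm\cdot\nabla_p\varphi\Bigr]\,d\nu = 0 \qquad\text{for all }\varphi\in C_c^\infty(\R^{2d}).
\]
This is precisely the statement that $\nu$ is a distributional stationary solution, in the $p$-variable only (with $q$ as a passive parameter), of the Ornstein--Uhlenbeck operator $\Delta_p + \frac pm\cdot\nabla_p(\cdot)$ — equivalently, $\div_p\bigl(\nabla_p\nu + \frac pm\,\nu\bigr)=0$ in $\mathcal D'(\R^{2d})$.

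Next I would extract the $p$-structure from this identity. Disintegrate $\nu$ with respect to its $q$-marginal $\sigma\in\mathcal P(\R^d)$, writing $\nu(dqdp)=\sigma(dq)\,\nu_q(dp)$ with $\nu_q\in\mathcal P(\R^d)$ for $\sigma$-a.e.\ $q$. Testing with product functions $\varphi(q,p)=\alpha(q)\beta(p)$, the identity becomes $\int_{\R^d}\alpha(q)\bigl[\int_{\R^d}(\Delta_p\beta - \frac pm\cdot\nabla_p\beta)\,d\nu_q\bigr]\sigma(dq)=0$ for all $\alpha,\beta$, hence for $\sigma$-a.e.\ $q$ the measure $\nu_q$ satisfies $\int(\Delta\beta - \frac pm\cdot\nabla\beta)\,d\nu_q = 0$ for all $\beta\in C_c^\infty(\R^d)$. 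By elliptic regularity for the (hypoelliptic, in fact uniformly elliptic here) operator $\Delta + \frac pm\cdot\nabla$, any such $\nu_q$ is absolutely continuous with a smooth positive density $m_q(p)$ solving $\Delta m_q - \div(\frac pm\, m_q)=0$, i.e.\ $\nabla m_q = -\frac pm\, m_q$; the unique probability solution is the Gaussian $m_q(p)=Z^{-1}e^{-p^2/2m}$, independent of $q$. Therefore $\nu(dqdp)=Z^{-1}e^{-p^2/2m}\,\sigma(dq)\,dp$, which is the claim.

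The main obstacle is the regularity/uniqueness step: one must show that a distributional solution $\nu_q$ of the stationary Fokker--Planck equation in $\R^d$, which a priori is only a probability measure, is in fact the Gaussian. This is where one invokes that $\Delta + \frac pm\cdot\nabla$ is uniformly elliptic with smooth coefficients, so Weyl-type hypoellipticity gives a smooth density, and then a standard argument (e.g.\ an integration-by-parts/Liouville argument, or maximum-principle considerations, using that the invariant measure is unique) pins it down to $Z^{-1}e^{-p^2/2m}$; some care is needed because $\nu_q$ is only known to be a finite measure, so one should either argue locally and then use the probability normalization, or cite the known uniqueness of invariant measures for the Ornstein--Uhlenbeck semigroup. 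The measurable dependence on $q$ of the disintegration and the passage from product test functions to the full statement are routine and I would not belabour them.
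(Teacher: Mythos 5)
Your proof is correct and follows essentially the same route as the paper: linearize the duality characterization to obtain a distributional stationary Fokker--Planck (Ornstein--Uhlenbeck) identity in $p$, disintegrate with respect to the $q$-marginal, and invoke uniqueness of the invariant measure to pin $\nu_q$ down as the Gaussian. The one procedural difference is the order: you linearize first (replacing $\varphi$ by $\lambda\varphi$ and sending $\lambda\to0$) and then disintegrate, whereas the paper disintegrates the variational expression first and then linearizes fibre-wise. Your order is arguably the cleaner one, since after linearization the expression is linear in the test function, so product test functions $\alpha(q)\phi(p)$ localize it to $\sigma$-a.e.\ $q$ directly via a countable dense family of $\phi$; the paper's order leaves the quadratic term $|\nabla_p\varphi|^2$ in play while disintegrating, which makes the fibre-wise vanishing of the supremum a slightly less immediate step. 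You are also right to flag that passing from the divergence-form identity to $\nabla_p m_q = -\tfrac{p}{m} m_q$ requires a genuine regularity-plus-uniqueness argument (the divergence of $\nabla_p\nu_q + \tfrac pm\nu_q$ vanishing does not by itself force that vector field to vanish); the paper delegates exactly this to \cite[Theorem 4.1.11]{BKRS15}. One small slip: the stationary adjoint equation should read $\Delta m_q + \div\bigl(\tfrac{p}{m}m_q\bigr)=0$, not with a minus sign; your subsequent conclusion $\nabla m_q = -\tfrac{p}{m}m_q$ and the resulting Gaussian are nonetheless correct.
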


\begin{proof}
From 
\begin{align}\label{DOG-eq:VFP-Local-Eq-Limit-Desc}
\RelFI(\nu|\mu)= \sup_{\varphi\in C_c^\infty(\mathbb{R}^{2d})}\ 2 \int_{\Rd}\bigg{(}\Delta_p\varphi - 
\frac{p}{m}\cdot\nabla_p \varphi- \frac{1}{2}|\nabla_p\varphi|^2\bigg)d\nu = 0
\end{align}
we conclude upon disintegrating $\nu$ as $\nu(dqdp) = \sigma(dq) \nu_q(dp)$,  
\[
\text{for $\sigma$-a.e. $q$:} \qquad
\sup_{\phi\in C_c^\infty(\mathbb{R}^d)}\ \int_{\mathbb{R}^{d}}\bigg{(}\Delta_p\phi - 
\frac{p}{m}\cdot\nabla_p \phi- \frac{1}{2}|\nabla_p\phi|^2\bigg) \, \nu_q(dp) = 0.
\]
By replacing $\phi$ by $\lambda\phi$, $\lambda>0$, and taking $\lambda\to0$ we find
\[
\forall \phi\in C_c^\infty(\mathbb{R}^d):\  \int_{\mathbb{R}^{d}}\bigg{(}\Delta_p\phi - 
\frac{p}{m}\cdot\nabla_p \phi\bigg) \, \nu_q(dp) = 0,
\]
which is the weak form of an elliptic equation on $\R^d$ with unique solution (see e.g. \cite[Theorem 4.1.11]{BKRS15})
\[
\nu_q(dp) = \frac1Z \exp\left(-\frac{p^2}{2m}\right)dp.
\]
This proves the lemma.
\end{proof}

In the following theorem we give the central \emph{a priori} estimate, in which free energy and relative Fisher Information are bounded from above by the rate functional and the relative entropy at initial time. 
\begin{theorem}[\emph{A priori} bounds]
\label{DOG-thm:VFP-Ent-Fisher-Inf-Bounds}
Fix $\gamma>0$ and let $\rho\in C([0,T];\mathcal{P}(\mathbb{R}^{2d}))$ with $\rho_t|_{t=0}=:\rho_0$  
satisfy 
\begin{align}
I^\gamma(\rho)<\infty, \ \mathcal F(\rho_0)<\infty.
\end{align} 
Then for any  $t\in [0,T]$ we have
\begin{align}\label{DOG-eq:VFP-Ent+FI-Bounds}
\M{F}(\rho_t)+\frac{\gamma^2}{2}\int_0^t \RelFI(\rho_s|\mu)\,ds\leq I^\gamma(\rho)+\M{F}(\rho_0).
\end{align}
From~\eqref{DOG-eq:VFP-Ent+FI-Bounds} we obtain the separate inequality 
\begin{equation}
\label{ineq:bound-Hrho}
\frac{1}{2}\int_{\R^{2d}}  H \, d\rho_t \leq \mathcal F(\rho_0) + I^\g(\rho) + \log \frac{\int_{\R^{2d}} e^{-H/2}}{\int_{\R^{2d}} e^{-H}}.
\end{equation}

\end{theorem}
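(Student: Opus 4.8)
The plan is to establish the main inequality \eqref{DOG-eq:VFP-Ent+FI-Bounds} by exploiting the duality form \eqref{def:I-gamma} of the rate functional, testing against a well-chosen family of functions $f$, and then to derive \eqref{ineq:bound-Hrho} as an algebraic consequence. First I would observe that the natural test function to recover the free energy is (a regularized, truncated version of) $f_t = \log(d\rho_t/dx) + H + \psi*\rho_t$, i.e. the variational derivative $\delta\mathcal F/\delta\rho$ evaluated along the curve. Formally, inserting such an $f$ into \eqref{def:I-gamma}, the boundary terms $\int f_T\,d\rho_T - \int f_0\,d\rho_0$ reproduce $\mathcal F(\rho_T) - \mathcal F(\rho_0)$ up to the symmetric handling of the convex interaction term (using that $\nu\mapsto\frac12\int\psi*\nu\,d\nu$ is quadratic, so its derivative contributes $\psi*\rho_t$ and a chain-rule factor of $\tfrac12$ disappears correctly); the term $-\int_0^T\!\!\int(\partial_t f + \opL_{\rho_t}f)\,d\rho_t\,dt$ produces the dissipation along the flow; and the quadratic term $-\frac{\gamma^2}{2}\int_0^T\!\!\int|\nabla_p f|^2\,d\rho_t\,dt$ is precisely $-\frac{\gamma^2}{2}\int_0^t\RelFI(\rho_s|\mu)\,ds$, since $\nabla_p f = \nabla_p\log\rho_t + p/m = (\nabla_p\rho_t/\rho_t) + p/m$ matches the integrand of \eqref{def:RelFI-ac}. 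Because $I^\gamma(\rho)$ is a supremum over all admissible $f$, the resulting identity becomes the desired inequality $\mathcal F(\rho_t) + \frac{\gamma^2}{2}\int_0^t\RelFI(\rho_s|\mu)\,ds \le \mathcal F(\rho_0) + I^\gamma(\rho)$; one runs the argument on $[0,t]$ rather than $[0,T]$ by choosing $f$ supported in time up to $t$, or equivalently by a standard monotonicity argument in the upper limit.

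Alternatively, and probably more cleanly, I would work from the second representation \eqref{DOG-eq:VFP-Alternate-LDRF}: when $I^\gamma(\rho)<\infty$ we have $\partial_t\rho_t = \opL_{\rho_t}^*\rho_t - \gamma\div_p(\rho_t h_t)$ with $\int_0^T\!\!\int|h_t|^2\,d\rho_t\,dt = 2I^\gamma(\rho)$. Then one computes $\frac{d}{dt}\mathcal F(\rho_t)$ along this perturbed flow. The unperturbed VFP part contributes $-\gamma^2\RelFI(\rho_t|\mu)$ (the standard entropy-dissipation identity for the kinetic Fokker–Planck operator, using that $\mu$ from \eqref{DOG-eq:VFP-Stationary-Measure} is its stationary state and $\nabla_p\log\mu = -p/m$), while the perturbation $-\gamma\div_p(\rho_t h_t)$ contributes $\gamma\int h_t\cdot\nabla_p(\delta\mathcal F/\delta\rho_t)\,d\rho_t = \gamma\int h_t\cdot(\nabla_p\log\rho_t + p/m)\,d\rho_t$; applying Young's inequality $\gamma\, a\cdot b \le \frac{\gamma^2}{2}|b|^2 + \frac12|a|^2$ with $b = \nabla_p\log\rho_t + p/m$ and $a = h_t$ yields $\frac{d}{dt}\mathcal F(\rho_t) \le -\frac{\gamma^2}{2}\RelFI(\rho_t|\mu) + \frac12\int|h_t|^2\,d\rho_t$, and integrating in time gives exactly \eqref{DOG-eq:VFP-Ent+FI-Bounds}. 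This route makes the origin of the factor $\gamma^2/2$ and the sharpness transparent.

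For the second inequality \eqref{ineq:bound-Hrho}, I would start from $\mathcal F(\rho_t) = \RelEnt(\rho_t\,|\,Z_H^{-1}e^{-H}dx) + \frac12\int\psi*\rho_t\,d\rho_t \ge \RelEnt(\rho_t\,|\,Z_H^{-1}e^{-H}dx)$, using non-negativity of the interaction term (condition \ref{cond:VFP:V2}). Writing the relative entropy explicitly, $\RelEnt(\rho_t\,|\,Z_H^{-1}e^{-H}dx) = \int\rho_t\log\rho_t + \int H\,d\rho_t + \log Z_H$; the point is to absorb half of $\int H\,d\rho_t$ by comparing against the Gibbs measure at "double temperature." Concretely, by the Gibbs variational principle (non-negativity of relative entropy with respect to $Z_{H/2}^{-1}e^{-H/2}dx$, where $Z_{H/2} = \int e^{-H/2}$), one has $\int\rho_t\log\rho_t + \frac12\int H\,d\rho_t + \log Z_{H/2} \ge 0$, hence $\int\rho_t\log\rho_t \ge -\frac12\int H\,d\rho_t - \log Z_{H/2}$. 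Substituting into the bound $\mathcal F(\rho_0) + I^\gamma(\rho) \ge \mathcal F(\rho_t) \ge \int\rho_t\log\rho_t + \int H\,d\rho_t + \log Z_H$ gives $\mathcal F(\rho_0) + I^\gamma(\rho) \ge \frac12\int H\,d\rho_t + \log Z_H - \log Z_{H/2}$, which is \eqref{ineq:bound-Hrho} after rearranging and noting $\log Z_H - \log Z_{H/2} = \log\frac{\int e^{-H}}{\int e^{-H/2}} = -\log\frac{\int e^{-H/2}}{\int e^{-H}}$. The finiteness of $\int e^{-H/2}$ follows from \ref{cond:VFP:V1} since $H/2 = p^2/4m + V(q)/2$ and $e^{-V}\in L^1$ forces integrability of the (slower-decaying but still integrable) $e^{-V/2}$ after noting $V\ge0$; here one may need the mild additional observation that $e^{-V}\in L^1$ together with $V\ge 0$ does not immediately give $e^{-V/2}\in L^1$, so strictly one should invoke the coercivity implicit in $|\nabla V|^2\le C(1+V)$ or simply assume it as part of the standing hypotheses.

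The main obstacle is the rigorous justification of the formal computations in the first approach — the test function $\log\rho_t$ is not in $C_b^{1,2}$, not bounded, and $\rho_t$ need not be bounded away from zero or smooth — so one must regularize (mollify, add a constant, truncate $H$) and pass to the limit, controlling error terms using the a priori integrability coming from $I^\gamma(\rho)<\infty$ and $\mathcal F(\rho_0)<\infty$. The second approach sidesteps some of this but instead requires justifying the chain rule $\frac{d}{dt}\mathcal F(\rho_t)$ and the entropy-dissipation identity for the kinetic Fokker–Planck flow at the regularity available; this is where I would expect to invoke, or adapt, the corresponding result from \cite{DuongPeletierZimmer13}. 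The derivation of \eqref{ineq:bound-Hrho} from \eqref{DOG-eq:VFP-Ent+FI-Bounds} is routine by comparison.
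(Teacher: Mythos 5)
Your formal computations are exactly the heuristic that the paper itself presents: the representation $\partial_t\rho_t = \opL_{\rho_t}^*\rho_t - \gamma\div_p(\rho_t h_t)$ from \eqref{DOG-eq:VFP-Alternate-LDRF}, the differentiation of $\mathcal F(\rho_t)$ along this perturbed flow, the cancellation of the $O(\gamma)$ Hamiltonian term by antisymmetry of $J$, and Young's inequality to split the cross term into $\tfrac{\gamma^2}{2}\RelFI + \tfrac12\int|h|^2\,d\rho$ are all correct and are precisely what the paper writes down as motivation. Your derivation of \eqref{ineq:bound-Hrho} from \eqref{DOG-eq:VFP-Ent+FI-Bounds} via the ``double-temperature'' relative entropy $\RelEnt(\rho_t\,|\,Z_{H/2}^{-1}e^{-H/2}dx)\ge 0$ is also exactly the paper's argument, just written in a slightly different bookkeeping.

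The genuine gap is in your plan for turning the formal step into a proof. You propose either (i) testing the rate functional with a regularized, truncated version of $\log\rho_t + H + \psi*\rho_t$, or (ii) rigorously justifying the chain rule $\tfrac{d}{dt}\mathcal F(\rho_t)$ and the kinetic entropy-dissipation identity along the perturbed flow. The paper explicitly argues that both of these routes fail for approximate solutions (that is, for $0 < I^\gamma(\rho) < \infty$, which is the entire point of Theorem~\ref{DOG-thm:VFP-Ent-Fisher-Inf-Bounds}): the right-hand side $-\gamma\div_p(\rho_t h_t)$ has too little regularity, and even mild (Duhamel) solutions via the hypoelliptic Green function do not make sense because $\iint \nabla_p G\cdot h\,d\rho$ need not be integrable. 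Pointing to \cite{DuongPeletierZimmer13} will not rescue this; that reference does not address the approximate-solution case either. What the paper actually does, and what is missing from your plan, is the \emph{dual-equation method} of Appendix~\ref{DOG-sec:App-PDE}: instead of regularizing the ill-behaved test function $\log\rho$, one \emph{solves for} an admissible test function by posing the backward Hamilton--Jacobi-type equation \eqref{FIR-eq:VFP-Aux-f-PDE} (which after Hopf--Cole becomes the linear backward parabolic equation \eqref{FIR-eq:Aux-PDE-Hopf-Cole}), proving well-posedness in weighted $L^2(e^{-H})$ spaces, establishing a comparison principle and the growth bound $|f|\le C(1+H)^{1/2}$, the monotonicity of $t\mapsto\int e^{f_t - H}$, and hypoelliptic $L^2_{\mathrm{loc}}$ regularity, and then substituting this solution $f$ into the duality form of $I^\gamma$ (Proposition~\ref{FIR-lem:VFPRate-Fn-Subst-Sol}). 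The free-energy term is then recovered from the Donsker--Varadhan variational characterization \eqref{def:VFP-FreeEnergy-Var} by taking the supremum over the final datum $\phi$. This auxiliary-PDE machinery is the key idea you would need to supply; without it your approach proves the estimate only for exact solutions.
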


This estimate will lead to a priori bounds in two ways. First, the bound~\eqref{ineq:bound-Hrho} gives tightness estimates, and therefore compactness in space and time (Theorem~\ref{DOG-thm:VFP-Compactness}); secondly, by \eqref{DOG-eq:VFP-Ent+FI-Bounds}, the relative Fisher Information is bounded by $C/\g^2$ and therefore vanishes in the limit $\gamma\rightarrow \infty$. This fact is used to prove that the limiting measure is Maxwellian (Lemma~\ref{DOG-lem:VFP-Local-Eq}).

\begin{proof}
We give a heuristic motivation here; Appendix~\ref{DOG-sec:App-PDE} contains a full proof. 
Given a trajectory $\rho$ as in the theorem, note that by~\eqref{DOG-eq:VFP-Alternate-LDRF} $\rho$ satisfies 
\[
\partial_t \rho_t  = -\gamma \div\rho_t J\nabla (H+\psi*\rho_t) + \gamma^2 \Bigl(\div_p \rho_t\frac pm + \Delta_p \rho_t\Bigr) 
-\gamma \div_p\rho_t h_t,
\qquad\text{with }h\in L^2(0,T;L^2_\nabla(\rho)).
\]
We then formally calculate 
\begin{alignat*}2
\frac d{dt} \mathcal F(\rho_t) &= \int_{\Rd}\bigl[ \log\rho_t + 1 + H + \psi*\rho_t\bigr]
\Bigl( -\gamma \div \rho_t J\nabla (H + \psi*\rho_t) + \gamma^2 \bigl(\div_p \rho_t\frac pm &&{}+ \Delta_p \rho_t\bigr) 
  {}-\gamma \div_p\rho_t h_t\Bigr)\\
&= -\gamma^2 \int_{\Rd} \frac1{\rho_t} \left|\nabla_p \rho_t+ \rho _t\frac pm \right|^2 + \gamma \int_{\Rd} h_t \Bigl(\nabla_p \rho_t + \rho_t\frac pm\Bigr)\\
&\leq -\frac{\gamma^2}2 \int_{\Rd} \frac1{\rho_t} \left|\nabla_p \rho_t+ \rho _t\frac pm \right|^2 + \frac12 \int_{\Rd} \rho_t h_t^2,
 \end{alignat*}
where the first $O(\gamma)$ term cancels because of the anti-symmetry of $J$. After integration in time this latter expression yields~\eqref{DOG-eq:VFP-Ent+FI-Bounds}. 

For exact solutions of the VFP equation, i.e.\ when $I^\gamma(\rho) = 0$, this argument can be made rigorous following e.g.~\cite{BonillaCarrilloSoler97}. However, the fairly low regularity of the right-hand side in~\eqref{DOG-eq:VFP-Alternate-LDRF} prevents these techniques from working. `Mild' solutions, defined using the variation-of-constants formula and the Green function for the hypoelliptic operator, are not well-defined either, for the same reason: the term $\iint \nabla_p G\cdot h \, d\rho$ that appears in such an expression is generally not integrable. In the appendix we give a different proof, using the method of dual equations. 

Equation~\eqref{ineq:bound-Hrho} follows by substituting
\begin{equation*}
\mathcal{F}(\rho_t)=\RelEnt\left(\rho_t\Big|Z_{H/2}^{-1} e^{-H/2}dx\right)+\frac{1}{2}\int_{\mathbb{R}^{2d}}H\,d\rho_t +\frac{1}{2}\int_{\mathbb{R}^{2d}}\psi*\rho_t \, d\rho_t + \log \frac{\int_{\R^{2d}} e^{-H}}{\int_{\R^{2d}} e^{-H/2}},
\end{equation*}
in~\eqref{DOG-eq:VFP-Ent+FI-Bounds}, where $Z_{H/2}:=\int_{\R^{2d}} e^{-H/2}$. 
\end{proof}

\subsection{Coarse-graining and compactness}
As we described in the introduction, in the overdamped limit $\gamma\to\infty$ we expect that $\rho$ will resemble a Maxwellian distribution $Z^{-1}\exp\bigl(-{p^2}/{2m}\bigr)\sigma_t(dq)$, and that the $q$-dependent part $\sigma$ will solve equation~\eqref{DOG-eq:VFP-Intro-Limiting-Eq}. We will prove this statement using the method described in Section~\ref{sec:coarse-graining-intro}. 

It would be natural to define `coarse-graining' in this context as the projection $\xi(q,p) := q$, since that should eliminate the fast dynamics of $p$ and focus on the slower dynamics of $q$. However, this choice fails: it completely decouples the dynamics of $q$ from that of $p$, thereby preventing the noise in $p$ from transferring to $q$. Following the lead of Kramers~\cite{Kramers1940}, therefore, we define a slightly different coarse-graining map 
\begin{equation}
\label{DOG-def:xi1}
\xi^\gamma:\mathbb{R}^{2d}\rightarrow\mathbb{R}^d, \qquad \xi^\gamma(q,p):=q+\frac p\gamma.
\end{equation}
In the limit $\gamma\to\infty$,  $\xi^\g\rightarrow\xi$ locally uniformly, recovering the projection onto the $q$-coordinate.

The theorem below gives the compactness properties of the solutions $\rho^\g$ of the rescaled VFP equation that allow us to pass to the limit. There are two levels of compactness, a weaker one in the original space $\R^{2d}$, and a stronger one in the coarse-grained space $\R^d = \xi^\g(\R^{2d})$. This is similar to other multilevel compactness results as in e.g.~\cite{GrunewaldOttoVillaniWestdickenberg09}.

\begin{theorem}[Compactness]\label{DOG-thm:VFP-Compactness}
Let a sequence $\rho^\gamma\in C([0,T];\mathcal{P}(\mathbb{R}^{2d}))$
satisfy for a suitable constant $C>0$ and every $\gamma$ the estimate
\begin{align}\label{DOG-eq:Basic-Ass}
I^\gamma(\rho^\gamma)+ \mathcal{F}(\rho_t^\gamma |_{t=0})\leq C.
\end{align} 
Then there exist a subsequence (not relabeled) such that 
\begin{enumerate}
\item\label{thm:compactness-gamma:item:R2d}
$\rho^\gamma\rightarrow\rho$ in $\mathcal{M}([0,T]\times\mathbb{R}^{2d})$  with respect to the narrow topology. 
\item\label{thm:compactness-gamma:item:Rd}$\xi^\gamma_{\#}\rho^\gamma\rightarrow \xi_\#\rho$ in $C([0,T];\mathcal{P}(\mathbb{R}^d))$ with respect to the uniform topology in time and narrow topology on $\mathcal{P}(\mathbb{R}^d)$. 
\end{enumerate}
For a.e. $t\in [0,T]$ the limit $\rho_t$ satisfies
\begin{align}\label{DOG-eq:VFP-Ent+FI-Bounds2}
\RelFI(\rho_t|\mu)=0
\end{align}
\end{theorem}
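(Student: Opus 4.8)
The plan is to combine the a priori estimate of Theorem~\ref{DOG-thm:VFP-Ent-Fisher-Inf-Bounds} with classical compactness arguments, treating the two levels of compactness separately and extracting the Fisher Information bound at the end.

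\textbf{Step 1: Uniform bounds from the a priori estimate.} Under the hypothesis \eqref{DOG-eq:Basic-Ass}, Theorem~\ref{DOG-thm:VFP-Ent-Fisher-Inf-Bounds} gives, for every $\gamma$ and every $t\in[0,T]$,
\[
\mathcal F(\rho^\gamma_t) + \frac{\gamma^2}{2}\int_0^t \RelFI(\rho^\gamma_s|\mu)\,ds \le C,
\qquad
\frac12\int_{\R^{2d}} H\, d\rho^\gamma_t \le C',
\]
with $C'$ depending only on $C$ and $H$. Since $H(q,p)=p^2/2m + V(q)$ has compact sublevel sets (using $V\ge 0$ with $e^{-V}\in L^1$, hence $V$ coercive in a suitable weak sense — more precisely, $\{H\le R\}$ is bounded because of the $p^2$ term and the growth of $V$; if $V$ is not coercive one uses instead that $\int e^{-H}<\infty$ together with the entropy bound to get tightness), the bound on $\int H\,d\rho^\gamma_t$ uniform in $t$ and $\gamma$ yields tightness of $\{\rho^\gamma_t : t\in[0,T],\ \gamma>0\}$ in $\mathcal P(\R^{2d})$. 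Integrating in $t$ gives tightness of $\{\rho^\gamma\}$ in $\mathcal M([0,T]\times\R^{2d})$, hence a narrowly convergent subsequence $\rho^\gamma\to\rho$; this is item~\ref{thm:compactness-gamma:item:R2d}. One should check the limit is still a probability measure at a.e.\ time (no mass escapes), which follows from the uniform tightness.

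\textbf{Step 2: Stronger compactness on the coarse-grained space via Arzelà–Ascoli.} For item~\ref{thm:compactness-gamma:item:Rd} I would work with $\hat\rho^\gamma := \xi^\gamma_\# \rho^\gamma \in C([0,T];\mathcal P(\R^d))$ and show equicontinuity in time in a metric compatible with narrow convergence (e.g.\ a bounded-Lipschitz or Wasserstein-$1$ type metric). The key computation: for $g\in C^\infty_c(\R^d)$ with $\|\nabla g\|_\infty\le 1$, using the equation \eqref{DOG-eq:VFP-Alternate-LDRF} for $\rho^\gamma$ and the test function $f = g\circ\xi^\gamma$ (so that $\nabla_p f = \gamma^{-1}\nabla g\circ\xi^\gamma$, and the $O(\gamma^2)$ Laplacian term becomes $O(1)$ while the conservative $O(\gamma)$ term combines with $\xi^\gamma(q,p)=q+p/\gamma$ to stay $O(1)$), one obtains
\[
\Big|\frac{d}{dt}\int_{\R^d} g\, d\hat\rho^\gamma_t\Big| \le C\Big(1 + \|h^\gamma_t\|_{L^2(\rho^\gamma_t)}\Big),
\]
and since $\int_0^T \|h^\gamma_t\|^2_{L^2(\rho^\gamma_t)}\,dt = 2 I^\gamma(\rho^\gamma)\le 2C$, the map $t\mapsto \int g\,d\hat\rho^\gamma_t$ is uniformly (in $\gamma$ and in such $g$) $\tfrac12$-Hölder in time. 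Combined with the tightness of $\{\hat\rho^\gamma_t\}$ inherited from Step~1 (note $\xi^\gamma$ maps $H$-sublevel sets into bounded sets for $\gamma\ge 1$), a refined Arzelà–Ascoli argument (as in Ambrosio–Gigli–Savaré, Prop.\ 3.3.1) gives a subsequence with $\hat\rho^\gamma\to\hat\rho$ in $C([0,T];\mathcal P(\R^d))$ uniformly in time. Since $\xi^\gamma\to\xi$ locally uniformly and $\rho^\gamma\to\rho$ narrowly with uniform tightness, one identifies $\hat\rho=\xi_\#\rho$.

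\textbf{Step 3: Vanishing relative Fisher Information.} From the a priori estimate, $\int_0^T \RelFI(\rho^\gamma_s|\mu)\,ds \le 2C/\gamma^2 \to 0$. The functional $\nu\mapsto \RelFI(\nu|\mu)$ is, by its definition \eqref{DOG-eq:Relative-Fisher-Information} as a supremum of narrowly continuous linear (affine) functionals, narrowly lower semicontinuous; hence $\int_0^T \RelFI(\cdot|\mu)\,ds$ is lower semicontinuous along the narrow convergence $\rho^\gamma\to\rho$ in $\mathcal M([0,T]\times\R^{2d})$ (after passing to time slices, using Fatou). Therefore
\[
\int_0^T \RelFI(\rho_s|\mu)\,ds \le \liminf_{\gamma\to\infty}\int_0^T \RelFI(\rho^\gamma_s|\mu)\,ds = 0,
\]
and since $\RelFI\ge 0$ we conclude $\RelFI(\rho_t|\mu)=0$ for a.e.\ $t\in[0,T]$, which is \eqref{DOG-eq:VFP-Ent+FI-Bounds2}.

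\textbf{Main obstacle.} The delicate point is Step~2: establishing time-equicontinuity of $\hat\rho^\gamma$ \emph{uniformly in $\gamma$}, because the generator $\opL^*_{\rho^\gamma}$ carries factors of $\gamma$ and $\gamma^2$ that blow up. The whole construction of $\xi^\gamma(q,p)=q+p/\gamma$ rather than the naive projection $q$ is precisely what makes these large factors cancel or neutralize when tested against $g\circ\xi^\gamma$; verifying this cancellation carefully (including controlling the error term involving $h^\gamma$ by $\|h^\gamma\|_{L^2(\rho^\gamma)}$ via Cauchy–Schwarz, and controlling the conservative term using the $\int H\,d\rho^\gamma_t$ bound and the growth assumptions \ref{cond:VFP:V1}--\ref{cond:VFP:V2} on $V,\psi$) is the technical heart of the proof. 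A secondary subtlety is ensuring the limit measures remain probability measures (tightness uniform in time), which is needed for the statement $\hat\rho=\xi_\#\rho$ and for working in $\mathcal P$ rather than $\mathcal M$.
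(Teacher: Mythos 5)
Your proposal matches the paper's proof in its overall architecture: tightness from the a priori bound (Step~1), Arzel\`a--Ascoli with a uniform $\sqrt{h}$-H\"older estimate exploiting the cancellation of the $O(\gamma)$ and $O(\gamma^2)$ terms under $\xi^\gamma(q,p)=q+p/\gamma$ (Step~2), and the $O(1/\gamma^2)$ decay of the Fisher Information combined with its duality definition (Step~3). You also correctly identify the technical heart of the matter.

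One point in Step~3 deserves a correction. You argue that $\RelFI(\cdot|\mu)$ is narrowly lower semicontinuous and then pass this to $\int_0^T\RelFI(\cdot|\mu)\,ds$ ``after passing to time slices, using Fatou.'' Narrow convergence of $\rho^\gamma\to\rho$ in $\mathcal M([0,T]\times\R^{2d})$ does \emph{not} give narrow convergence of the time slices $\rho^\gamma_t\to\rho_t$ for a.e.\ $t$, so Fatou applied to $t\mapsto\RelFI(\rho^\gamma_t|\mu)$ is not available. The paper sidesteps this by working with the weaker supremum over \emph{time-dependent} test functions $\varphi\in C_c^\infty(\R\times\R^{2d})$: this is dominated by $\int_0^T\RelFI(\rho^\gamma_t|\mu)\,dt\le C/\gamma^2$, and each fixed test function produces a functional that is continuous for the narrow convergence of the space-time measures, so one can pass to the limit inside the supremum. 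One then disintegrates the limit to conclude $\RelFI(\rho_t|\mu)=0$ a.e. The conclusion and ingredients are the same as yours; only the intermediate logical step needs to be restated in this space-time form. Your hedging in Step~1 about coercivity of $H$ is also well placed: assumption~\ref{cond:VFP:V1} does not give pointwise coercivity of $V$, and the paper indeed obtains tightness from the uniform entropy bound (via the argument of~\cite[Prop.~4.2]{AmbrosioSavareZambotti09}) rather than from bounded sublevel sets of $H$.
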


\begin{proof}
To prove part~\ref{thm:compactness-gamma:item:R2d}, note that the positivity of the convolution integral involving $\psi$ and the free-energy-dissipation inequality~\eqref{DOG-eq:VFP-Ent+FI-Bounds} imply that 
$\RelEnt(\rho_t^\gamma|Z^{-1}_He^{-H}dx)$ is bounded uniformly in $t$ and $\g$. By an argument as in~\cite[Prop.~4.2]{AmbrosioSavareZambotti09} this implies that the set of space-time measures $\{\rho^\g: \g>1\}$ is tight, from which compactness in $\mathcal{M}([0,T]\times\mathbb{R}^{2d})$ follows. 

To prove~\eqref{DOG-eq:VFP-Ent+FI-Bounds2} we remark that 
\[
0\leq\sup_{\varphi\in C_c^\infty(\R\times\Rd)} 
2\int_0^T\!\!\int_\Rd \Bigl[ \Delta_p \varphi - \frac pm \nabla_p \varphi - \frac12 |\nabla_p\varphi|^2\Bigr]\, d\rho_t^\g dt 
\leq \int_0^T \RelFI(\rho^\g_t|\mu)\, dt \leq \frac{C}{\g^2} \stackrel{\g\to\infty}\longrightarrow 0,
\]
and by passing to the limit on the left-hand side we find
\[
\sup_{\varphi\in C_c^\infty(\R\times\Rd)} 
2\int_0^T\!\!\int_\Rd \Bigl[ \Delta_p \varphi - \frac pm \nabla_p \varphi - \frac12 |\nabla_p\varphi|^2\Bigr]\, d\rho_t dt = 0.
\]
By disintegrating $\rho$ in time as $\rho(dtdqdp) = \rho_t(dqdp)dt$, we find that $\RelFI(\rho_t|\mu) = 0$ for (Lebesgue-) almost all $t$.

We prove part~\ref{thm:compactness-gamma:item:Rd} with the Arzel\`a-Ascoli theorem. For any $t\in [0,T]$ the sequence $\xi^\g_\#\rho^\g_t$ is tight, which follows from the tightness of $\rho_t^\g$ proved above and the local uniform convergence $\xi^\g\rightarrow\xi$ (see e.g.~\cite[Lemma 5.2.1]{AmbrosioGigliSavare08}).

To prove equicontinuity we will show that  
\begin{align}\label{DOG-eq:Intermediate-Compactness-Step}
\sup _{\g> 1} \;\sup\limits_{t\in[0,T-h]}\sup_{\substack{\varphi\in C_c^2(\rd) \\ \|\varphi\|_{C^2(\rd)}\leq 1}} \int_{\rd} \varphi(\xi^\g_\#\rho^\g_{t+h}-\xi^\g_\#\rho^\g_{t})\xrightarrow{h\rightarrow 0}0.
\end{align}
In fact, \eqref{DOG-eq:Intermediate-Compactness-Step} is a direct consequence of the following stronger statement
\begin{align}
\label{DOG-eq:HoelderWasserstein}
\int_{\rd} \varphi(\xi^\g_\#\rho^\g_{t+h}-\xi^\g_\#\rho^\g_{t})\leq C\|\nabla\varphi\|_{\infty}\sqrt{h}
\end{align}
with $C$ independent of $t,\g$ and $\varphi$. Note that \eqref{DOG-eq:HoelderWasserstein} in particular implies a uniform $1/2$-H\"older estimate with respect to the $L^1$-Wasserstein distance. 

Let us now give the proof of \eqref{DOG-eq:HoelderWasserstein}. Indeed, the boundedness of the rate functional, definition \eqref{DOG-eq:VFP-Alternate-LDRF}, and tightness of $\rho^\g$ imply that there exists some $h^\g\in L^2(0,T;L^2_\nabla(\rho^\g_t))$ with
\begin{align}\label{DOG-eq:Bounded-Rate-Fn-PDE-Desc}
 \partial_t\rho^\g_t=(\mathcal{L}_{\rho^\g_t})^*\rho^\g_t-\g\div_p(\rho^\g_t h^\g_t).
\end{align}
in duality with $C_b^2(\R^{2d})$, pointwise almost everywhere in $t\in[0,T]$. 
Therefore for any $f\in C^2_b(\Rd)$ we have in the sense of distributions on $[0,T]$,
\begin{align*}
\frac{d}{dt}\int_{\Rd}f\rho^\g_t=\int_{\Rd}\bigg{(}\gamma\frac pm \cdot \nabla _q f - \gamma\nabla_q V\cdot\nabla_pf 
-\gamma\nabla_pf\cdot(\nabla_q\psi\ast \rho^\g) 
- \gamma^2\frac pm\cdot \nabla_p f 
+\gamma^2 \Delta_p f +\g\nabla_pf\cdot h_t^\g)
\bigg{)}d\rho^\g_t.
\end{align*} 
To prove \eqref{DOG-eq:HoelderWasserstein}, make the choice $f=\varphi\circ\xi^\g$  for $\varphi\in C^2_c(\rd)$ and integrate over $[t,t+h]$. 
Note that due to the specific form of $\xi^\g=q+p/\g$ the terms $\gamma\frac pm \cdot \nabla _q f$ and $\gamma^2\frac pm\cdot \nabla_p f$ cancel and therefore
\begin{multline*}
\int_{\rd}\varphi(\xi^\g_\#\rho^\g_{t+h}-\xi^\g_\#\rho^\g_{t})=\int_t^{t+h}\int_{\Rd}\bigg{(}-\nabla V(q)\cdot\nabla\varphi\left(q+\frac{p}{\g}\right)-(\nabla_q\psi\ast\rho_s^\g)(q)\cdot\nabla\varphi\left(q+\frac{p}{\g}
\right)\\
+\Delta\varphi\left(q+\frac{p}{\g}\right)+\nabla\varphi\left(q+\frac{p}{\g}\right)\cdot h_s^\g(q,p)
\bigg{)}d\rho^\g_s\,ds.
\end{multline*}
We estimate the first term on the right hand side by using H\"older's inequality and growth condition (V1),
\begin{align*}
\left|\int_t^{t+h}\int_{\Rd}\nabla V(q)\cdot\nabla\varphi\left(q+\frac{p}{\g}\right)d\rho^\g_s\,ds\right|
&\leq\|\nabla\varphi\|_{\infty}\sqrt{h}\left(\int_t^{t+h}\int_{\Rd}|\nabla V(q)|^2d\rho^\g_s\,ds\right)^{1/2}\\
&\leq\|\nabla\varphi\|_{\infty}\sqrt{h}\left(\int_t^{t+h}\int_{\Rd}C(1+V(q))\rho^\g_s\,ds\right)^{1/2}
\leq \tilde C\|\nabla\varphi\|_{\infty}\sqrt{h},
\end{align*}
where the last inequality follows from the free-energy-dissipation inequality~\eqref{DOG-eq:VFP-Ent+FI-Bounds}. 
For the second term we use $|\nabla_q\psi\ast\rho^\g_s|\leq \|\nabla_q\psi\|_\infty$ and the last term is estimated by H\"older's 
inequality,
\begin{align*}
\left|\int_t^{t+h}\int_{\Rd}\nabla\varphi\left(q+\frac{p}{\g}\right)h_s^\g(q,p)d\rho^\g_sds\right|
&\leq\|\nabla\varphi\|_\infty \sqrt{h}\bigg{(}\int_t^{t+h}\int_{\Rd}|h_s^\g|^2d\rho^\g_sds\bigg{)}^\frac{1}{2}
\\
&\leq \|\nabla\varphi\|_\infty \sqrt{h}\,\left(2I^\g(\rho^\g)\right)^{\frac12}\leq C \|\nabla\varphi\|_\infty\sqrt{h}.
\end{align*}
To sum up we have \begin{align*}
\bigg{|}\int_{\rd}\varphi(\xi^\g_\#\rho^\g_{t+h}-\xi^\g_\#\rho^\g_{t})\bigg{|}\leq C\|\nabla\varphi\|_\infty\sqrt{h}\xrightarrow{h\rightarrow 0}0,
\end{align*}
where $C$ is independent of $t,\g$ and $\varphi$.


Thus by the Arzel\`a-Ascoli theorem there exists a $\nu \in C([0,T];\mathcal{P}(\mathbb{R}^d))$ such that $\xi^\gamma_{\#}\rho^\gamma\rightarrow \nu$ with respect to uniform topology in time and narrow topology on $\mathcal{P}(\mathbb{R}^d)$. Since $\rho^\g\rightarrow\rho$ in $\M{M}([0,T]\times\Rd)$ and 
$\xi^\g\rightarrow\xi$ locally uniformly, we have $\xi^\g_\#\rho^\g\rightarrow\xi_\#\rho$ in $\M{M}([0,T]\times\rd)$ (again using~\cite[Lemma 5.2.1]{AmbrosioGigliSavare08}), implying that $\nu=\xi_\#\rho$.
This concludes the proof of Theorem~\ref{DOG-thm:VFP-Compactness}.
\end{proof}

\subsection{Local equilibrium}

A central step in any coarse-graining method is the treatment of the information that is `lost' upon coarse-graining. The lemma below uses the a priori estimate~\eqref{DOG-eq:VFP-Ent+FI-Bounds} to reconstruct this information, which for this system means showing that $\rho^\g$ becomes Maxwellian in $p$ as $\g\to\infty$.

\begin{lemma}[Local equilibrium]\label{DOG-lem:VFP-Local-Eq}
Under the assumptions of Theorem~\ref{DOG-thm:VFP-Compactness}, let $\rho^\gamma\rightarrow\rho$ in 
$\mathcal{M}([0,T]\times\mathbb{R}^{2d})$ with respect to the narrow topology and $\xi^\g_\#\rho^\g\rightarrow \xi_\#\rho$ in $C([0,T];\mathcal{P}(\mathbb{R}^d))$ with respect to 
the uniform topology in time and narrow topology on $\mathcal{P}(\mathbb{R}^d)$. Then there exists 
$\sigma\in C([0,T];\mathcal{P}(\mathbb{R}^d))$, $\sigma(dtdq) = \sigma_t(dq)dt$, such that
for almost all $t\in[0,T]$, 
\begin{align}
\label{char:Maxwellian}
\rho_t(dqdp)=Z^{-1}\exp\left(-\frac{p^2}{2m}\right)\sigma_t(dq)dp,
\end{align} 
where $Z = \int_{\R^d} e^{-p^2/2m}dp$ is the normalization constant for the Maxwellian distribution. 
Furthermore $\xi^\gamma_\#\rho^\gamma\rightarrow \sigma$ uniformly in time and narrowly on $\mathcal{P}(\mathbb{R}^d)$.
\end{lemma}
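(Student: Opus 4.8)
\textbf{Proof plan for Lemma~\ref{DOG-lem:VFP-Local-Eq}.}

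The plan is to combine the two compactness statements from Theorem~\ref{DOG-thm:VFP-Compactness} with Lemma~\ref{DOG-lem:zero-RFI} and then to identify the limiting $q$-marginal with $\xi_\#\rho$. First I would invoke Theorem~\ref{DOG-thm:VFP-Compactness}: along the given subsequence we already have $\rho^\g\to\rho$ narrowly in $\mathcal M([0,T]\times\R^{2d})$, $\xi^\g_\#\rho^\g\to\xi_\#\rho$ in $C([0,T];\mathcal P(\R^d))$, and $\RelFI(\rho_t|\mu)=0$ for a.e.\ $t\in[0,T]$. Disintegrating $\rho$ in time as $\rho(dtdqdp)=\rho_t(dqdp)\,dt$ (valid because the $t$-marginal of $\rho$ is Lebesgue measure, which follows from $\rho_t^\g\in\mathcal P(\R^{2d})$ for every $t$ passing to the limit), I apply Lemma~\ref{DOG-lem:zero-RFI} slicewise: for a.e.\ $t$ there is $\sigma_t\in\mathcal P(\R^d)$ with $\rho_t(dqdp)=Z^{-1}e^{-p^2/2m}\sigma_t(dq)\,dp$. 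Concretely $\sigma_t$ is just the $q$-marginal of $\rho_t$, i.e.\ $\sigma_t=(\pi_q)_\#\rho_t$ where $\pi_q(q,p)=q$, so it is canonically defined and $\sigma(dtdq):=\sigma_t(dq)\,dt$ is a well-defined element of $\mathcal M([0,T]\times\R^d)$. This establishes~\eqref{char:Maxwellian}.

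Next I would establish the convergence $\xi^\g_\#\rho^\g\to\sigma$ and the continuity-in-time of $\sigma$ simultaneously, by showing $\sigma=\xi_\#\rho$ as elements of $\mathcal M([0,T]\times\R^d)$; since $\xi^\g_\#\rho^\g\to\xi_\#\rho$ in $C([0,T];\mathcal P(\R^d))$ is already known, this identification upgrades the convergence and transfers the $C([0,T];\mathcal P(\R^d))$ regularity to $\sigma$. To see $\xi_\#\rho=\sigma$: for $\varphi\in C_b(\R^d)$ and $\eta\in C_b([0,T])$,
\begin{equation*}
\int_0^T\!\!\int_{\R^d}\eta(t)\varphi(q)\,(\xi_\#\rho)(dtdq)
=\int_0^T\!\!\int_{\R^{2d}}\eta(t)\varphi(q)\,\rho_t(dqdp)\,dt
=\int_0^T\!\!\int_{\R^d}\eta(t)\varphi(q)\,\sigma_t(dq)\,dt,
\end{equation*}
where the first equality uses $\xi(q,p)=q$ (the limit of $\xi^\g$) and the second uses~\eqref{char:Maxwellian} together with $\int_{\R^d}Z^{-1}e^{-p^2/2m}\,dp=1$. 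Hence $\xi_\#\rho$ and $\sigma$ agree on a measure-determining class of functions on $[0,T]\times\R^d$, so they coincide. Therefore $\xi^\g_\#\rho^\g\to\sigma$ uniformly in time and narrowly on $\mathcal P(\R^d)$, and $\sigma\in C([0,T];\mathcal P(\R^d))$ because $\xi_\#\rho$ is (being the uniform limit of the continuous curves $\xi^\g_\#\rho^\g$).

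The only delicate point is the measurability and consistency of the slicing $t\mapsto\sigma_t$: Lemma~\ref{DOG-lem:zero-RFI} is applied for a.e.\ $t$ separately, so I should check that the resulting map is Borel and that the product $\sigma_t(dq)\,dt$ reassembles to a genuine measure equal to $(\pi_q)_\#\rho$. This is routine once one observes that $\sigma_t$ is forced to equal the $q$-marginal of $\rho_t$, which is measurable in $t$ by the general disintegration theorem; there is no real freedom in the choice of $\sigma_t$. I do not expect a serious obstacle here — the substance of the lemma is entirely carried by Theorem~\ref{DOG-thm:VFP-Compactness} (for compactness and $\RelFI(\rho_t|\mu)=0$) and Lemma~\ref{DOG-lem:zero-RFI} (for the Maxwellian form), and what remains is bookkeeping to package $\sigma$ correctly and to identify it with $\xi_\#\rho$.
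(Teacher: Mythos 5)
Your proposal is correct and follows essentially the same route as the paper: disintegrate the limit $\rho$ in time, use the a priori estimate from Theorem~\ref{DOG-thm:VFP-Compactness} to get $\RelFI(\rho_t|\mu)=0$ for a.e.\ $t$, apply Lemma~\ref{DOG-lem:zero-RFI} slicewise to obtain the Maxwellian form, and identify $\sigma=\xi_\#\rho$ so that the already-established uniform convergence of $\xi^\g_\#\rho^\g$ carries over. The only difference is that you spell out the duality-pairing identification of $\sigma$ with $\xi_\#\rho$ and the measurability of the slicing, which the paper leaves implicit.
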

\begin{proof}
Since $\rho^\g\rightarrow \rho$ narrowly in $\mathcal{M}([0,T]\times\mathbb{R}^{2d})$, the limit $\rho$ also has the disintegration structure $\rho(dtdpdq) = \rho_t(dpdq)dt$, with $\rho_t\in \M P(\R^{2d})$.  From the \emph{a priori} estimate~\eqref{DOG-eq:VFP-Ent+FI-Bounds} and the duality definition of $\RelFI$ we have $\RelFI(\rho_t|\mu)=0$ for almost all $t$, and the characterization~\eqref{char:Maxwellian} then follows from Lemma~\ref{DOG-lem:zero-RFI}.
The uniform in time convergence of $\xi^\g_\#\rho^\g$ implies $\xi^\gamma_\#\rho^\gamma\rightarrow\xi_\#\rho=\sigma$ 
uniformly in time and narrowly on $\mathcal{P}(\mathbb{R}^d)$ and the regularity $\sigma\in C([0,T];\mathcal{P}(\mathbb{R}^d))$.
\end{proof}

\subsection{Liminf inequality}

The final step in the variational technique is proving an appropriate liminf inequality which also provides the structure of the limiting coarse-grained evolution. The following theorem makes this step rigorous.

Define the (limiting) functional $I:C([0,T];\mathcal{P}(\mathbb{R}^d))\rightarrow\mathbb{R}$
by 
\begin{multline}
\label{def:gamma:I}
 I(\sigma):=\sup_{g\in C_b^{1,2}(\R\times\R^d)}\int_{\mathbb{R}^{d}}g_Td\sigma_T
-\int_{\mathbb{R}^{d}}g_0d\sigma_0
-\int_0^T\int_{\mathbb{R}^{d}}
\Bigl(\partial_t g-\nabla V\cdot\nabla g
-(\nabla\psi\ast\sigma)\cdot\nabla g+\Delta g\Bigr)d\sigma_tdt\\
-\frac{1}{2}\int_0^T\int
_{\mathbb{R}^{d}}\left|\nabla g\right|^2d\sigma_tdt.
\end{multline}

Note that $I\geq0$ (since $g=0$ is admissible); we have the equivalence
\[
I(\sigma)=0 \quad \Longleftrightarrow \quad \partial_t\sigma=\div \sigma\nabla V(q) +\div \sigma(\nabla\psi\ast\sigma)+\Delta\sigma \qquad \text{in }[0,T]\times \R^d.
\]

\begin{theorem}[Liminf inequality]
\label{DOG-thm:VFP-Liminf-Inequality}
Under the same conditions as in Theorem~\ref{DOG-thm:VFP-Compactness} we assume that $\rho^\gamma\rightarrow\rho$  narrowly in $\mathcal{M}([0,T]\times\mathbb{R}^{2d})$ and $\xi^\gamma_\#\rho^\gamma\rightarrow\xi_\#\rho\equiv\sigma$ in $C([0,T];\mathcal{P}(\rd))$.
Then 
\begin{align*}
  \liminf\limits_{\g\rightarrow \infty}I^\gamma(\rho^\gamma)\geq I(\sigma).
\end{align*}
\end{theorem}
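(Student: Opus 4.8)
The plan is to exploit the duality formula~\eqref{def:I-gamma} together with the restriction $f = g\circ\xi^\gamma$, following the formal chain in Section~\ref{DOG-sec:Outline-Var-App}. Fix $g\in C_b^{1,2}(\R\times\R^d)$ and set $f^\gamma := g\circ\xi^\gamma$, i.e.\ $f^\gamma_t(q,p) = g_t(q+p/\gamma)$. Since $f^\gamma\in C_b^{1,2}(\R\times\R^{2d})$, it is an admissible test function in~\eqref{def:I-gamma}, so $I^\gamma(\rho^\gamma)$ is bounded below by the corresponding bracket $\LDJ^\gamma(\rho^\gamma, f^\gamma)$. First I would write out $\opL_{\rho^\gamma_t}f^\gamma_t$ explicitly using~\eqref{def:L-VFP-rescaled}: as in the compactness proof, the specific form $\xi^\gamma = q + p/\gamma$ makes the two $O(\gamma)$ and $O(\gamma^2)$ transport terms $\gamma\frac pm\cdot\nabla_q f^\gamma$ and $-\gamma^2\frac pm\cdot\nabla_p f^\gamma$ cancel exactly, and $\nabla_p f^\gamma_t(q,p) = \gamma^{-1}\nabla g_t(q+p/\gamma)$, so $\gamma^2\Delta_p f^\gamma_t = \Delta g_t(q+p/\gamma)$ and the carré-du-champ term $\frac{\gamma^2}2|\nabla_p f^\gamma_t|^2 = \frac12|\nabla g_t(q+p/\gamma)|^2$ is $O(1)$. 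Thus all the dangerous powers of $\gamma$ disappear and
\[
\LDJ^\gamma(\rho^\gamma,f^\gamma) = \int g_T\circ\xi^\gamma\,d\rho^\gamma_T - \int g_0\circ\xi^\gamma\,d\rho^\gamma_0 - \int_0^T\!\!\int \Bigl(\partial_t g - \nabla_q V\cdot\nabla g - (\nabla_q\psi*\rho^\gamma_t)\cdot\nabla g + \Delta g - \tfrac12|\nabla g|^2\Bigr)\circ\xi^\gamma\,d\rho^\gamma_t\,dt,
\]
where all the $\circ\xi^\gamma$ compositions are evaluated at $(q,p)\mapsto g_\bullet(q+p/\gamma)$, except that $V$, $\psi$ are still evaluated at $q$.

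Next I would pass to the limit $\gamma\to\infty$ in each term. The endpoint terms converge by the uniform-in-time narrow convergence $\xi^\gamma_\#\rho^\gamma_t\to\sigma_t$ at $t=0,T$ (using well-preparedness of the initial data, $\xi^\gamma_\#\rho^\gamma_0\to\sigma_0$, and $g_T\circ\xi^\gamma\to g_T\circ\xi$ locally uniformly). For the bulk terms, the clean strategy is to rewrite each integrand $\phi(q+p/\gamma)$ as $\phi(q) + [\phi(q+p/\gamma)-\phi(q)]$; the first piece integrates against $\xi^\gamma_\#\rho^\gamma$ pushed to $q$... more precisely I would integrate the $q$-only pieces $\nabla_q V(q)\cdot\nabla g(q+p/\gamma)$ etc.\ directly against $\rho^\gamma$ and use: (i) $\rho^\gamma\to\rho$ narrowly on $[0,T]\times\R^{2d}$; (ii) $\nabla g(q+p/\gamma)\to\nabla g(q)$ locally uniformly with uniformly bounded $\nabla g$, $\Delta g$, $\nabla V$ having at most the growth allowed by~\ref{cond:VFP:V1}; and (iii) the uniform bound $\int_0^T\!\!\int H\,d\rho^\gamma_t\le C$ from~\eqref{ineq:bound-Hrho} to control the $\nabla V$ term (which has unbounded coefficient) via a truncation/uniform-integrability argument. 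The convolution term needs the extra observation that $\nabla_q\psi*\rho^\gamma_t \to \nabla_q\psi*\rho_t$ pointwise with a uniform $\|\nabla_q\psi\|_\infty$ bound (here $\rho^\gamma_t$, $\rho_t$ denote the time-marginals, which converge for a.e.\ $t$). In the limit every integrand becomes a function of $q$ alone, so by the local-equilibrium Lemma~\ref{DOG-lem:VFP-Local-Eq}, $\rho_t(dqdp) = Z^{-1}e^{-p^2/2m}\sigma_t(dq)dp$, the $p$-integral is trivial and $\int \phi(q)\,d\rho_t = \int\phi(q)\,d\sigma_t$. Hence $\LDJ^\gamma(\rho^\gamma,f^\gamma)$ converges to exactly the bracket defining $I(\sigma)$ in~\eqref{def:gamma:I} for this particular $g$.

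Finally, combining $\displaystyle\liminf_{\gamma\to\infty} I^\gamma(\rho^\gamma) \ge \liminf_{\gamma\to\infty}\LDJ^\gamma(\rho^\gamma,g\circ\xi^\gamma) = \LDJ(\sigma,g)$ for every fixed $g\in C_b^{1,2}(\R\times\R^d)$, and taking the supremum over $g$ on the right, yields $\liminf_{\gamma\to\infty} I^\gamma(\rho^\gamma)\ge I(\sigma)$. I expect the main obstacle to be the rigorous passage to the limit in the $\nabla V$ term, since $\nabla V$ is unbounded and $\rho^\gamma_t$ is only a measure: one must combine the $H$-moment bound~\eqref{ineq:bound-Hrho} (which via~\ref{cond:VFP:V1} controls $\int V\,d\rho^\gamma_t$, hence $\int|\nabla V|^2 d\rho^\gamma_t$) with narrow convergence and a truncation argument to pass the nonlinearity $\nabla V(q)\cdot\nabla g(q+p/\gamma)$ to the limit. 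A secondary technical point is justifying convergence of the convolution term uniformly in $t$; this is handled by dominated convergence in $t$ together with the a.e.-$t$ narrow convergence of the time-marginals $\rho^\gamma_t$.
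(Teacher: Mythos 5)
Your proposal follows the paper's strategy closely and is correct in all its main steps: restricting to test functions $f=g\circ\xi^\gamma$, observing that the choice $\xi^\gamma(q,p)=q+p/\gamma$ kills the $O(\gamma)$ and $O(\gamma^2)$ coefficients, passing to the limit term by term using narrow convergence, the $H$-moment bound, and finally using the local-equilibrium lemma to reduce from $\rho$ to $\sigma$. There is one difference in organization and one step that needs repair.

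The organizational difference concerns the $\nabla V$ term. You propose to pass $\int\nabla V(q)\cdot\nabla g(q+p/\gamma)\,d\rho^\gamma$ to the limit directly against $\rho^\gamma$ via truncation and local uniform convergence of the integrand, then apply local equilibrium. The paper instead adds and subtracts $\nabla V(q+p/\gamma)\cdot\nabla g(q+p/\gamma)$, so that the main part becomes an integral of the fixed function $\nabla V\cdot\nabla g$ against the push-forward $\hrho^\gamma$, which converges (uniformly in time) to $\sigma$, while the remainder is controlled by Taylor-expanding $\nabla V$ and using $\int_0^T\!\int p^2/\gamma^2\,d\rho^\gamma\le C/\gamma^2$. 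Both routes use the same ingredients (moment bound, bounded $D^2V$, bounded $\nabla g$) and reach the same place; the paper's version avoids the truncation step and makes the reduction to $\sigma$ happen at the level of $\hrho^\gamma$ rather than by invoking local equilibrium at the end. Either is fine.

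The step that needs repair is the treatment of the convolution term. You justify $\nabla_q\psi*\rho^\gamma_t\to\nabla_q\psi*\rho_t$ by appealing to "a.e.-$t$ narrow convergence of the time-marginals $\rho^\gamma_t$." This does not follow from $\rho^\gamma\to\rho$ narrowly in $\mathcal M([0,T]\times\R^{2d})$: convergence of a space-time measure does not imply convergence of its time slices for almost every $t$ (think of a Dirac at a fast-oscillating position $q=a(\gamma t)$, whose space-time law has a well-defined narrow limit while no time slice converges). What \emph{is} available and what should be used is the uniform-in-time convergence $\hrho^\gamma_t\to\sigma_t$ in $\mathcal P(\R^d)$. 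Since
\[
\nabla_q\psi*\rho^\gamma_t(q)=\int\nabla_q\psi(q-\zeta)\,\hrho^\gamma_t(d\zeta)+O(\gamma^{-1}),
\]
with the error controlled by $\|D^2\psi\|_\infty\gamma^{-1}\int|p'|\,d\rho^\gamma_t$ and the $H$-moment bound, one gets $\nabla_q\psi*\rho^\gamma_t\to\nabla_q\psi*\sigma_t$ uniformly in $t$ and locally uniformly in $q$, and then the double integral passes to the limit against $\rho^\gamma\to\rho$ in the usual way. (The paper's own justification via $\rho^\gamma\otimes\rho^\gamma\to\rho\otimes\rho$ has a related subtlety, since the relevant integral lives on the diagonal $\{t=t'\}$; the route through $\hrho^\gamma$ sketched above is the clean one.)
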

\begin{proof}
Write the large deviation rate functional $I^\gamma:C([0,T];\mathcal{P}(\mathbb{R}^{2d}))\rightarrow\mathbb{R}$ in~\eqref{def:I-gamma} as
\begin{align}\label{DOG-eq:VFP-Rate-Fn-Abstract-Form}
I^\g(\rho)=\sup\limits_{f\in C^{1,2}_b(\mathbb{R}\times\Rd)} \LDJ^\g(\rho,f),
\end{align}
where
\begin{multline*}
\LDJ^\gamma(\rho,f)=
\int_{\mathbb{R}^{2d}}f_Td\rho_T
-\int_{\mathbb{R}^{2d}}f_0d\rho_0
-\int_0^T\int_{\mathbb{R}^{2d}}
\bigg{(}\partial_t f+\gamma\frac pm \cdot \nabla _q f - \gamma\nabla_q V\cdot\nabla_pf 
-\gamma\nabla_pf\cdot(\nabla_q\psi\ast \rho_t) \nonumber\\
- \gamma^2\frac pm\cdot \nabla_p f 
+\gamma^2 \Delta_p f\bigg{)}d\rho_tdt
-\frac{\gamma^2}{2}\int_0^T\int_{\mathbb{R}^{2d}}\left|\nabla_p f\right|^2d\rho_tdt.
\end{multline*}
Define $\M{A}:=\{f=g\circ\xi^\g \text{ with }g\in C_b^{1,2}(\mathbb{R}\times\rd)\}$. Then we have

\begin{align*}
I^\g(\rho^\g)\geq \sup\limits_{f\in\M{A}} \LDJ^\g(\rho^\g,f),
\end{align*}
and
\begin{multline}
\label{DOG-eq:VFP-Liminf-Intermediate-Var-Form}
\LDJ^\gamma(\rho^\g,g\circ\xi^\g)=
\int_{\mathbb{R}^{2d}}g_T\circ\xi^\g d\rho_T^\g
-\int_{\mathbb{R}^{2d}}g_0\circ\xi^\g d\rho_0^\g
-\int_0^T\int_{\mathbb{R}^{2d}}
\bigg{[}\partial_t (g\circ\xi^\g) -\nabla_q V(q)\cdot\nabla g\bigg{(}q+\frac{p}{\g}\bigg{)} \\
+ \Delta g\bigg{(}q+\frac{p}{\g}\bigg{)} -\nabla g\bigg{(}q+\frac{p}{\g}\bigg{)}\cdot(\nabla_q\psi\ast \rho_t^\g)(q) 
\bigg{]}d\rho_t^\g dt
-\frac{1}{2}\int_0^T\int
_{\mathbb{R}^{2d}}\left|\nabla (g\circ\xi^\g)\right|^2d\rho_t^\g dt.
\end{multline}
Note how the specific dependence of $\xi^\gamma(q,p) = q+p/\g$ on $\g$ has caused the coefficients $\g$ and $\g^2$ in the expression above to vanish. 
Adding and subtracting $\nabla V(q+p/\g)\cdot\nabla g(q+p/\g)$ in \eqref{DOG-eq:VFP-Liminf-Intermediate-Var-Form} and defining $\hrho^\g:=\xi^\g_\#\rho^\g$, $\LDJ^\g$ can be rewritten as
\begin{equation}\label{DOG-eq:VFP-LDRF-Push-Forward-Calculation}
\begin{aligned}
\LDJ^\gamma&(\rho,g\circ\xi^\g)=
\int_{\rd}g_Td\hrho_T^\g
-\int_{\rd}g_0d\hrho_0^\g
-\int_0^T\int_{\rd}
\left(\partial_t g -\nabla V\cdot\nabla g + \Delta g\right)(\zeta)\hrho^\g_t(d\zeta)dt-\frac{1}{2}\int_0^T\int_{\rd}\left|\nabla g\right|^2d\hrho_t^\g dt\\
&-\int_0^T\int_{\Rd}\bigg{(}
\nabla V\bigg{(}q+\frac{p}{\g}\bigg{)}-\nabla V(q)
\bigg{)}\cdot\nabla g\bigg{(}q+\frac{p}{\g}\bigg{)}d\rho^\g_t dt
+\int_0^T\int_{\Rd}\nabla g\bigg{(}q+\frac{p}{\g}\bigg{)}\cdot(\nabla_q\psi\ast \rho_t^\g)(q) d\rho_t^\g dt.
\end{aligned}
\end{equation}

We now show that~\eqref{DOG-eq:VFP-LDRF-Push-Forward-Calculation} converges to the right-hand side of~\eqref{def:gamma:I}, term by term.
Since $\xi^\g_\#\rho^\g\rightarrow\xi_\#\rho=\sigma$ narrowly in $\M{M}([0,T]\times\Rd)$ and $g\in C^{1,2}_b(\mathbb{R}\times\rd)$ we have
\begin{align*}
\int_0^T\int_{\rd}
\Bigl(\partial_t g -\nabla V\cdot\nabla g + \Delta g+\frac{1}{2}|\nabla g|^2\Bigr)d\hrho^\g_tdt\xrightarrow{\g\rightarrow \infty}\int_0^T\int_{\rd}
\Bigl(\partial_t g -\nabla V\cdot\nabla g + \Delta g+\frac{1}{2}|\nabla g|^2\Bigr)d\sigma_tdt.
\end{align*}
Taylor expansion of $\nabla V$ around $q$ and estimate~\eqref{ineq:bound-Hrho} give
\begin{multline*}
\left|\,\int_0^T\int_{\R^{2d}}\bigg{(}
\nabla V\bigg{(}q+\frac{p}{\g}\bigg{)}-\nabla V(q)
\bigg{)}\cdot\nabla g\bigg{(}q+\frac{p}{\g}\bigg{)}d\rho^\g_t dt\;\right|\leq \\
\leq \|D^2V\|_{\infty}\|\nabla g\|_{\infty}\sqrt{T}\left(\int_0^T\int_{\R^{2d}}\frac{p^2}{\gamma^2}d\rho^\g_t dt\right)^{1/2}
\leq\frac{C}{\gamma}\xrightarrow{\g\rightarrow\infty }0.
\end{multline*}

Adding and subtracting $\nabla g(q)\cdot(\nabla_q\psi\ast\rho_t^\g)(q)  $  in \eqref{DOG-eq:VFP-LDRF-Push-Forward-Calculation} we find
\begin{multline*}
\int_0^T\int_{\Rd}\nabla g\bigg{(}q+\frac{p}{\g}\bigg{)}\cdot(\nabla_q\psi\ast \rho_t^\g)(q) d\rho_t^\g dt=\int_0^T\int_{\Rd}\nabla g(q)\cdot(\nabla_q\psi\ast \rho_t^\g)(q) d\rho_t^\g dt\\
+\int_0^T\int_{\Rd}\bigg{[}\nabla g\bigg{(}q+\frac{p}{\g}\bigg{)}-\nabla g(q)\bigg{]}\cdot(\nabla_q\psi\ast \rho_t^\g)(q) d\rho_t^\g dt.
\end{multline*}
Since $\rho^\g\rightarrow\rho$ we have $\rho^\g\otimes\rho^\g\rightarrow\rho\otimes\rho$ and therefore passing to the limit in the first term  and using the local-equilibrium characterization of Lemma~\ref{DOG-lem:VFP-Local-Eq}, we obtain
\begin{align*}
\int_0^T\int_{\Rd}\nabla g(q)\cdot(\nabla_q\psi\ast \rho^\g)(q) \,d\rho_t^\g dt\xrightarrow{\g\rightarrow 0}
\int_0^T\int_{\rd}\nabla g\cdot(\nabla\psi\ast \sigma)\, d\sigma_t dt.
\end{align*}


For the second term we calculate
\begin{multline*}
 \left|\int_0^T\int_{\Rd}\bigg{[}\nabla g\bigg{(}q+\frac{p}{\g}\bigg{)}-\nabla g(q)\bigg{]}\cdot(\nabla_q\psi\ast \rho^\g)(q) d\rho_t^\g dt\right|\leq \\
\leq \|D^2 g\|_{\infty}\|\nabla_q \psi\|_\infty\sqrt{T}\left(\int_0^T\int_{\R^{2d}}\frac{p^2}{\gamma^2}d\rho^\g_t dt\right)^{1/2}
\leq\frac{C}{\gamma}\xrightarrow{\g\rightarrow\infty }0.
\end{multline*}

Therefore
\begin{align*}
\int_0^T\int_{\Rd}\nabla g\bigg{(}q+\frac{p}{\g}\bigg{)}\cdot(\nabla_q\psi\ast& \rho^\g)(q) d\rho_t^\g dt\xrightarrow{\g\rightarrow\infty}\int_0^T\int_{\rd}\nabla g\cdot(\nabla\psi\ast \sigma)\, d\sigma_t dt.
\end{align*} 
\end{proof}

\subsection{Discussion}

The ingredients of the convergence proof above are, as mentioned before, (a) a compactness result, (b) a local-equilibrium result, and (c) a liminf inequality. All three follow from the large-deviation structure, through the rate functional $I^\g$. We now comment on these. 

\medskip

\emph{Compactness.} Compactness in the sense of measures is, both for $\rho^\g$ and for $\xi^\g_\#\rho^\g$, a simple consequence of the confinement provided by the growth of $H$. In Theorem~\ref{DOG-thm:VFP-Compactness} we provide a stronger statement for $\xi^\g_\#\rho^\g$, by showing continuity in time, in order for the limiting functional $I(\sigma)$ in~\eqref{def:gamma:I} to be well defined. This continuity depends on the boundedness of $I^\g$.

\emph{Local equilibrium.}
The local-equilibrium statement depends crucially on the structure of $I^\g$, and more specifically on the large coefficient $\g^2$ multiplying the derivatives in $p$. This coefficient also ends up as a prefactor of the relative Fisher Information in the \emph{a priori} estimate~\eqref{DOG-eq:VFP-Ent+FI-Bounds}, and through this estimate it drives the local-equilibrium result. 

\emph{Liminf inequality.} As remarked in the introduction, the duality structure of $I^\g$ is the key to the liminf inequality, as it allows for relatively weak convergence of $\rho^\g$ and $\xi^\g_\#\rho^\g$. The role of the local equilibrium is to allow us to replace the $p$-dependence in some of the integrals by the Maxwellian dependence, and therefore to reduce all terms to dependence on the macroscopic information $\xi^\g_\#\rho^\g$ only.

\medskip

As we have shown, the choice of the coarse-graining map has the advantage that it has caused the (large) coefficients $\g$ and $\g^2$ in the expression of the rate functionals to vanish. In other words, it cancels out the inertial effects and transforms a Laplacian in $p$ variable to a Laplacian in the coarse-grained variable while rescaling it to be of order 1. The choice $\xi(q,p)=q$, on the other hand, would lose too much information by completely discarding the diffusion. 

\section{Diffusion on a graph in one dimension}
\label{DOG-sec:DOG}

In this section we derive the small-noise limit of a randomly perturbed Hamiltonian system, which corresponds to passing to the limit $\vep\rightarrow 0$ in \eqref{DOG-eq:Intro-DOG}. In terms of a rescaled time, in order to focus on the time scale of the noise, equation~\eqref{DOG-eq:Intro-DOG} becomes
\begin{align}
\partial_t\rho^\vep=-\frac{1}{\vep}\div(\rho^\vep J\nabla H)+\Delta_p\rho^\vep. 
\label{DOG-eq:Ran-Ham-Evo-R2}
\end{align}
Here $\rho^\e\in C([0,T],\mathcal{P}(\mathbb{R}^{2}))$, $J = \begin{psmallmatrix}0 &1\\-1& 0\end{psmallmatrix}$ is again the canonical symplectic matrix, $\Delta_p$ 
is the Laplacian in the $p$-direction, and the equation holds in the sense of distributions. The Hamiltonian $H\in C^2(\Rd;\mathbb{R})$ is again defined by $H(q,p)=p^2/2m+V(q)$ for some potential $V:\mathbb{R}^d\rightarrow\mathbb{R}$. We make the following assumptions (that we formulate on $H$ for convenience): 
\begin{enumerate}[label=(\text{A}\arabic*)]
\item{$H\geq 0$,  and $H$ is coercive, i.e. $H(x)\xrightarrow{|x|\rightarrow\infty}\infty$;}
\item\label{cond:DOG-H-Growth}{$|\nabla H|,|\Delta H|,|\nabla_p H|^2\leq C(1+H)$};
\item\label{cond:DOG-H-non-deg}
{$H$ has a finite number of non-degenerate (i.e.\ non-singular Hessian) saddle points $O_1,\ldots,O_n$ with $H(O_i)\neq H(O_j)$ for every $i,j\in\{1,\ldots,n\}$, $i\not=j$.}
\end{enumerate} 

As explained in the introduction, and in contrast to the VFP equation of the previous section, equation~\eqref{DOG-eq:Ran-Ham-Evo-R2} has two equally valid interpretations: as a PDE in its own right, or as the Fokker-Planck (forward Kolmogorov) equation of the stochastic process
\begin{equation}
\label{DOG-eq:DOGSDE}
X^\vep = \begin{pmatrix} Q^\vep\\P^\vep\end{pmatrix}, \qquad dX^\vep_t = \frac 1\vep J\nabla H(X^\vep_t)dt + \sqrt 2\, \begin{pmatrix}0\\1\end{pmatrix} dW_t.
\end{equation}
For the sequel we will think of $\rho^\vep$ as the law of the process $X_t^\vep$; although this is not strictly necessary, it helps in illustrating the ideas.

\subsection{Construction of the graph $\Gamma$}
\label{DOG-sec:Graph}
\begin{figure}[t]
\centering
\includegraphics[scale=0.55]{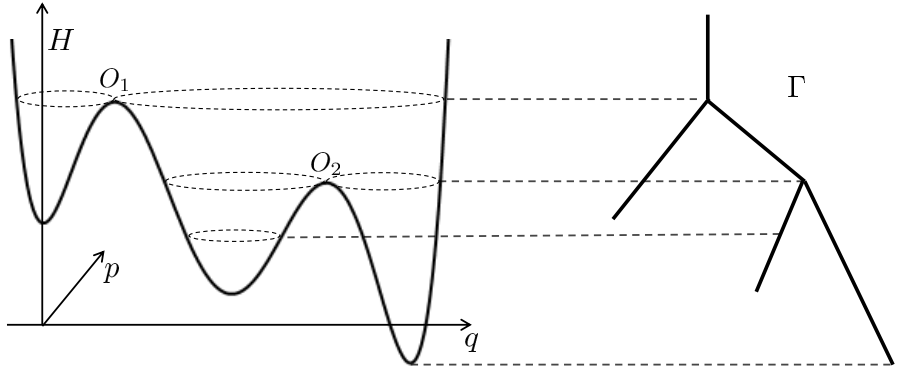}
\caption[Hamiltonian]{Left: Hamiltonian $\mathbb{R}^2\ni(q,p)\mapsto H(q,p)$, Right: Graph $\Gamma$}
\label{DOG-fig:Hamiltonian-LevSet}
\end{figure}

As mentioned in the introduction, the  dynamics of \eqref{DOG-eq:Ran-Ham-Evo-R2} has two time scales when $0<\vep\ll 1$, a fast and  a slow one. The fast time scale, of scale $\e$, is described by the (deterministic) equation 
\begin{equation}
\label{eq:Hamiltonian}
\dot x = \frac1\e J\nabla H(x)\qquad \text{in $\R^2$}, 
\end{equation}
whereas the slow time scale, of order $1$, is generated by the noise term.

The solutions of~\eqref{eq:Hamiltonian} follow level sets of~$H$. There exist three types of such solutions: stationary ones, periodic orbits, and homoclinic orbits. Stationary solutions of~\eqref{eq:Hamiltonian} correspond to stationary points of $H$ (where $\nabla H = 0$); periodic orbits to connected components of level sets along which $\nabla H \not=0$; and homoclinic orbits to components of level sets of $H$ that are terminated on each end by a stationary point. Since we have assumed in (A3) that there is at most one stationary point in each level sets, heteroclinic orbits do not exist, and the orbits necessarily connect a stationary point with itself. 

Looking ahead towards coarse-graining, we define $\Gamma$ to be the set of all connected components of level sets of $H$, and we identify $\Gamma$ with a union of one-dimensional line segments, as shown in Figure~\ref{DOG-fig:Hamiltonian-LevSet}. Each periodic orbit corresponds to an interior point of one of the edges of $\Gamma$; the vertices of $\Gamma$ correspond to connected components of level sets containing a stationary point of $H$. Each saddle point $O$ corresponds to a vertex connected by three edges.  

For practical purposes we also introduce a coordinate system on $\Gamma$. We represent the edges by closed intervals $I_k\subset \R$, and number them with numbers $k=1,2,
\ldots,n$; the pair $(h,k)$ is then a coordinate for a point $\g\in\Gamma$, if $k$ is the index of the edge containing $\g$, and $h$ the value of $H$ on the level set represented by~$\g$. For a vertex $O\in \Gamma$, we 
write $O\sim I_k$ if $O$ is at one end of edge $I_k$; we use the shorthand notation $\pm^{}_{kj}$ to mean $1$ if $O_j$ is at the upper end of $I_k$, and $-1$ in the other case. Note that if $O\sim I_{k_1}$, $O\sim 
I_{k_2}$ and $O\sim I_{k_3}$ and $h_0$ is the value of $H$ at the point corresponding to $O$, then 
the coordinates $(h_0,k_1)$, $(h_0,k_2)$ and $(h_0,k_3)$ correspond to the same point $O$. 
With a slight abuse of notation, we also define the function $k:\mathbb{R}^2\rightarrow \{1,\ldots,n\}$ as the index of the edge $I_k\subset\Gamma$ corresponding to the component containing $(q,p)$.

The rigorous construction of the graph $\Gamma$ and the topology on it has been done several times~\cite{FreidlinWentzell93,Freidlin1994,Barret2014}; for our purposes it suffices to note that (a) inside each edge, the usual topology and geometry of $\R^1$ apply, and (b) across the whole graph there is a natural concept of distance, and therefore of continuity. It will be practical to think of functions $f:\Gamma\to\R$ as defined on the disjoint union $\sqcup_k I_k$. 
A function $f:\Gamma\rightarrow \mathbb{R}$ is then called well-defined if it is a single-valued function on $\Gamma$ (i.e., it takes the same value on those vertices that are multiply represented). A well-defined function $f:\Gamma\rightarrow \mathbb{R}$ is \emph{continuous} if $f|_{I_k}\in C(I_k)$ for every $k$. 

We also define a concept of \emph{differentiability} of a function $f:\Gamma\to\R$. A \emph{subgraph} of $\Gamma$ is defined as any union of edges such that each interior vertex connects exactly two edges, one from above and one from below---i.e., a subtree without bifurcations.  A continuous function on $\Gamma$ is called {differentiable} on $\Gamma$ if it is differentiable on each of its subgraphs. 

Finally, in order to integrate over $\Gamma$, we write $d\gamma$ for the measure on $\Gamma$ which is defined on each $I_k$ as the local Lebesgue measure $dh$. Whenever we write $\int_\Gamma$, this should be interpreted as $\sum_k \int_{I_k}$.

\subsection{Adding noise: diffusion on the graph}

In the noisy evolution~\eqref{DOG-eq:DOGSDE}, for small but finite $\vep>0$, the evolution follows fast trajectories that nearly coincide with the level sets of $H$; the noise breaks the conservation of $H$, and causes a slower drift of $X_t$ across the levels of $H$. In order to remove the fast deterministic dynamics, we now define the coarse-graining map as 
\begin{equation}
\label{def:xi-DOG}
\xi: \R^2 \to \Gamma, \quad \xi(q,p):=(H(q,p),k(q,p)),
\end{equation}
where the mapping $k:\mathbb{R}^2\rightarrow\{1,\ldots,n\}$ indexes the edges of the graph, as above. 

We now consider the process $\xi(X_t^\vep)$, which contains no fast dynamics. For each finite $\vep>0$, $\xi(X^\vep_t)$ is not a Markov process; but as $\e\to0$, the fast movement should result in a form of averaging, such that the influence of the missing information vanishes; then the limit process is a diffusion on the graph $\Gamma$. 

The results of this section are  stated and proved in terms of the corresponding objects $\rho^\e$ and $\hrho^\e$, where  $\hrho^\e$ is the push-forward
\begin{equation}
\label{def:hrho}
\hrho^\vep := \xi_\# \rho^\vep,
\end{equation}
as explained in Section~\ref{sec:coarse-graining-intro}, and similar to Section~\ref{sec:overdamped-limit}. The corresponding statement about $\rho^\vep$ and $\hrho^\vep$ is that $\hrho^\e$ should converge to some $\hrho$, which in the limit satisfies a (convection-) diffusion equation on $\Gamma$. Theorems~\ref{DOG-thm:DOG-Compact} and~\ref{DOG-eq:DOG-Liminf-Theorem} make this statement precise.

\subsection{Compactness}
As in the case of the VFP equation, equation~\eqref{DOG-eq:Ran-Ham-Evo-R2} has a free energy, which in this case is simply the Boltzmann entropy
\begin{equation}
\label{def:FreeEnergyDOG}
\mathcal F(\rho) = \int_{\R^2}\rho\log\rho\,\Lebesgue^2,
\end{equation}
where $\Lebesgue^2$ denotes the two dimensional Lebesgue measure in $\R^2$.

The corresponding `relative' Fisher Information is the same as the Fisher Information in the $p$-variable, 
\begin{align}\label{DOG-def:DOG-RelFI}
\RelFI(\rho|\Lebesgue^2) = \sup_{\varphi\in C_c^\infty(\R^{2})}
2\int_{\R^{2}}\Bigl[ \Delta_p \varphi - \frac12 |\nabla_p\varphi|^2\Bigr]\, d\rho,
\end{align}
and satisfies for $\rho = f\Lebesgue^2$,
\[
\RelFI (f\Lebesgue^2|\Lebesgue^2) = \int_{\R^2} |\nabla_p \log f |^2 \,f \,dqdp,
\]
whenever this is finite.

\medskip
The large deviation functional $I^\vep:C([0,T];\mathcal{P}(\mathbb{R}^2))\rightarrow\mathbb{R}$ is given by
\begin{align}\label{DOG-eq:DOG-Large-Dev-Rate-Fn}
I^\vep(\rho)=\sup\limits_{f\in C_c^{1,2}(\mathbb{R}\times\mathbb{R}^{2})}\bigg{[} 
\int\limits_{\mathbb{R}^{2}}f_Td\rho_T
-\int\limits_{\mathbb{R}^{2}}f_0d\rho_0
-\int\limits_0^T\int\limits_{\mathbb{R}^{2}}
(\partial_t f+\frac{1}{\vep}J\nabla H\cdot\nabla f+\Delta_p f)d\rho_tdt
-\frac{1}{2}\int\limits_0^T\int\limits_{\mathbb{R}^{2}}\left|\nabla_p f\right|^2d\rho_tdt\bigg{]}.
\end{align}

For fixed $\vep>0$, $\rho^\vep$ solves \eqref{DOG-eq:Ran-Ham-Evo-R2} iff $I^\vep(\rho^\vep)=0$.

\medskip

The following theorem states the relevant \emph{a priori} estimates in this setting. 
 
\begin{theorem}[A priori estimates]\label{DOG-thm:DOG-Compactness}
Let $\vep>0$ and let $\rho\in C([0,T];\mathcal{P}(\mathbb{R}^{2}))$ with $\rho_t|_{t=0}=:\rho_0$
satisfy 
\begin{align*}
I^\e(\rho)+ \mathcal F(\rho_0) + \int _{\R^2}H\,d\rho_0\leq C.
\end{align*} 
Then for any  $t\in [0,T]$ we have 
\begin{align}\label{DOG-eq:Ham-Bounds-DOG}
\int_{\R^2}H\rho_t\,dt<C',
\end{align}
where $C'>0$ depends on $C$ but is independent of $\vep$.  Furthermore, for any $t\in [0,T]$ we have  
\begin{align}\label{est:DOG-F-I}
\M{F}(\rho_t)+\frac1{2}\int_0^t \RelFI(\rho_s|\Lebesgue^2)\,ds\leq I^\e(\rho)+\M{F}(\rho_0).
\end{align}
\end{theorem}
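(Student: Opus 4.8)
The plan is to mimic the heuristic computation already given for the VFP case (Theorem~\ref{DOG-thm:VFP-Ent-Fisher-Inf-Bounds}), adapting it to the simpler structure of~\eqref{DOG-eq:Ran-Ham-Evo-R2}, and to postpone the technical justification to the appendix as was done there. First I would record the ``alternate form'' of the rate functional analogous to~\eqref{DOG-eq:VFP-Alternate-LDRF}: if $I^\e(\rho)<\infty$ then there is $h\in L^2(0,T;L^2_\nabla(\rho))$ (closure of $\{\nabla_p\varphi\}$ in the $\rho$-weighted $L^2$) with
\begin{align*}
\partial_t\rho_t=-\tfrac1\e\div(\rho_t J\nabla H)+\Delta_p\rho_t-\div_p(\rho_t h_t),\qquad \tfrac12\int_0^T\!\!\int_{\R^2}|h_t|^2\,d\rho_t\,dt\le I^\e(\rho).
\end{align*}
This is the $\psi=0$, $\gamma=1$ specialization of the computation in~\cite{DuongPeletierZimmer13} and follows by the same Legendre-duality argument used for~\eqref{DOG-eq:VFP-Alternate-LDRF}.

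Next, for~\eqref{est:DOG-F-I}, I would differentiate the Boltzmann entropy $\mathcal F(\rho_t)=\int\rho_t\log\rho_t$ along this equation. Testing with $\log\rho_t+1$, the conservative term $-\tfrac1\e\div(\rho_t J\nabla H)$ contributes $-\tfrac1\e\int J\nabla H\cdot\nabla\rho_t=0$ by antisymmetry of $J$ (here one uses $\div(J\nabla H)=0$), so the $1/\e$ factor disappears exactly as the $O(\gamma)$ term cancelled before; the Laplacian term gives $-\int|\nabla_p\rho_t|^2/\rho_t=-\int|\nabla_p\log\rho_t|^2\rho_t$; and the perturbation term gives $\int h_t\cdot\nabla_p\rho_t=\int h_t\cdot\nabla_p(\log\rho_t)\,\rho_t$. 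Applying Young's inequality $ab\le\tfrac12 a^2+\tfrac12 b^2$ to the last term absorbs half of the Fisher-Information term, leaving
\begin{align*}
\frac{d}{dt}\mathcal F(\rho_t)\le -\frac12\int_{\R^2}|\nabla_p\log\rho_t|^2\,\rho_t + \frac12\int_{\R^2}|h_t|^2\,\rho_t,
\end{align*}
and integrating in $t$ and using the bound on $\int\!\!\int|h|^2 d\rho\,dt$ yields~\eqref{est:DOG-F-I}, since $\RelFI(\rho_s|\Lebesgue^2)=\int|\nabla_p\log\rho_s|^2\rho_s$. As in the VFP case this formal argument must be made rigorous by the dual-equation method of the appendix, because the low regularity of $h$ blocks both the classical entropy-dissipation computation and the mild-solution approach.

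For the moment bound~\eqref{DOG-eq:Ham-Bounds-DOG} I would test~\eqref{DOG-eq:Ran-Ham-Evo-R2} (in perturbed form) with $H$ itself. The conservative term again drops out because $\nabla H\cdot J\nabla H=0$. The term $\int H\,\Delta_p\rho_t = \int \Delta_p H\,\rho_t = \int (1/m)\rho_t = 1/m$ is a harmless constant (using $\Delta_p H = d/m$ in $\R^{2d}$, here $1/m$). The perturbation term gives $\int h_t\cdot\nabla_p H\,\rho_t$, which by Cauchy–Schwarz is bounded by $(\int|h_t|^2\rho_t)^{1/2}(\int|\nabla_p H|^2\rho_t)^{1/2}$, and growth condition~\ref{cond:DOG-H-Growth} gives $|\nabla_p H|^2\le C(1+H)$; a Gronwall argument then closes the estimate, with the constant $C'$ depending on $I^\e(\rho)+\mathcal F(\rho_0)+\int H\,d\rho_0\le C$ but not on $\e$. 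Alternatively, and perhaps more cleanly, I would split $\mathcal F$ as $\mathcal F(\rho_t)=\RelEnt(\rho_t|Z_{H/2}^{-1}e^{-H/2})+\tfrac12\int H\,d\rho_t+\log Z_{H/2}$ and read off~\eqref{DOG-eq:Ham-Bounds-DOG} directly from~\eqref{est:DOG-F-I} together with nonnegativity of relative entropy, exactly mirroring the derivation of~\eqref{ineq:bound-Hrho}; one only needs $e^{-H/2}\in L^1(\R^2)$, which follows from coercivity of $H$ and~\ref{cond:DOG-H-Growth}.

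The main obstacle is not the formal computation, which is routine, but the rigorous justification of the entropy-dissipation inequality for trajectories of the low regularity permitted by $I^\e(\rho)<\infty$: one cannot differentiate $\mathcal F(\rho_t)$ naively, nor use mild solutions, for precisely the reasons spelled out after Theorem~\ref{DOG-thm:VFP-Ent-Fisher-Inf-Bounds}. I expect this to be handled by the same dual-equation technique referenced there (and presumably carried out in Appendix~\ref{DOG-sec:App-PDE}), regularizing $H$ and the test functions, deriving the inequality for the approximations, and passing to the limit using lower semicontinuity of $\mathcal F$ and of $\RelFI$ (the latter being immediate from its definition as a supremum of linear functionals). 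A secondary technical point is checking that the hypoellipticity of the operator $\tfrac1\e J\nabla H\cdot\nabla - \Delta_p$ is enough to run the dual-equation argument in this degenerate setting, but this is exactly the situation already treated for the VFP operator.
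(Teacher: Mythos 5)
Your formal computations are correct, and your first route to the moment bound~\eqref{DOG-eq:Ham-Bounds-DOG} (testing with $H$, using $\Delta_p H = 1/m$, Cauchy--Schwarz on the $h$-term, the growth condition~\ref{cond:DOG-H-Growth}, and Gronwall) is essentially what the paper does, modulo a careful two-step approximation to make $f=H$ admissible. However, there are two genuine gaps.

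First, your ``cleaner'' alternative for~\eqref{DOG-eq:Ham-Bounds-DOG} --- reading the moment bound off from the free-energy estimate via the decomposition $\mathcal F(\rho_t)=\RelEnt(\rho_t|Z_{H/2}^{-1}e^{-H/2})+\tfrac12\int H\,d\rho_t+\log Z_{H/2}$ --- has a sign error. In this section the free energy is the plain Boltzmann entropy $\mathcal F(\rho)=\int\rho\log\rho$, which does \emph{not} contain $\int H\,d\rho$. The correct identity is
\begin{equation*}
\mathcal F(\rho)+\tfrac12\int_{\R^2} H\,d\rho=\RelEnt\bigl(\rho\,\big|\,Z_{H/2}^{-1}e^{-H/2}\bigr)-\log Z_{H/2},
\end{equation*}
so nonnegativity of the relative entropy only yields a \emph{lower} bound on $\mathcal F(\rho_t)+\tfrac12\int H\,d\rho_t$, not an upper bound on $\int H\,d\rho_t$. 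In the VFP case~\eqref{ineq:bound-Hrho} this works because there $\mathcal F$ is already a relative entropy against $e^{-H}$ and so carries the $H$-moment with a favorable sign; here it is not, and the argument breaks.

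Second, and more substantially, you assume that the dual-equation machinery of Appendix~\ref{DOG-sec:App-PDE} applies directly to the operator $\tfrac1\e J\nabla H\cdot\nabla+\Delta_p$. It does not: that machinery is built on the weighted space $L^2(e^{-H})$, which works precisely because $e^{-H}$ is stationary for both the convective part and the Ornstein--Uhlenbeck part $\nabla_p H\cdot\nabla_p-\Delta_p$. The DOG equation has no friction term, so $e^{-H}$ is not stationary for its dissipative part and the weighted functional setting collapses. The paper's actual route is to add a small friction $\alpha\div_p(\rho\nabla_p H)$, thereby making $e^{-\alpha H}$ stationary; then Theorem~\ref{DOG-thm:VFP-Ent-Fisher-Inf-Bounds} (proved in the appendix) applies verbatim for each $\alpha>0$, giving~\eqref{DOG:eq:FIR-alpha-Pass}, and one passes to the limit $\alpha\to0$, using $I_\alpha(\rho)\to I(\rho)$. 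Crucially, this limit produces error terms of the form $\alpha\int H\,d\rho_\tau - \alpha\int H\,d\rho_0$, which are controlled exactly by the moment bound~\eqref{DOG-eq:Ham-Bounds-DOG}. This fixes the logical order: the moment bound must be proved \emph{first} and independently, which is why your attempt to derive it from~\eqref{est:DOG-F-I} cannot be made to work.
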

See Appendix~\ref{DOG-App-Sec:HIR} for a proof of Theorem~\ref{DOG-thm:DOG-Compactness}.

Note that the estimate~\eqref{est:DOG-F-I} implies that $\mathcal{F}(\rho_t)=\RelEnt(\rho_t|\mathcal L^2)$ is finite for all $t$, and therefore $\rho_t$ is Lebesgue absolutely continuous. We will often therefore write $\rho_t(x)$ for the Lebesgue density of $\rho_t$. In addition, the integral of the relative Fisher Information is also bounded: $0\leq \int_0^t\RelFI(\rho_s|\Lebesgue^2)\,ds\leq C$.

The next result summarizes the compactness properties for any sequence $\rho^\vep$ with $\sup_\e I^\vep(\rho^\vep)<\infty$.
\begin{theorem}[Compactness]\label{DOG-thm:DOG-Compact}
Let a sequence $\rho^\vep\in C([0,T];\mathcal P(\R^2))$ with $\rho^\vep|_{t=0}=:\rho^\vep_0$ satisfy for a constant $C>0$ and all $\vep>0$ the estimate
\begin{align*}
I^\vep(\rho^\vep) + \M{F}(\rho^\vep_0)+\int_{\R^2}Hd\rho^\vep_0\leq C.
\end{align*}
Then there exist subsequences (not relabelled) such that 
\begin{enumerate}
\item{$\rho^\vep\rightarrow\rho$ in $\mathcal{M}([0,T]\times\mathbb{R}^{2})$  in the narrow topology;}\label{DOG-eq:DOG-Full-Compact}
\item{$\hrho^\vep\rightarrow \hrho= \xi_\# \rho$ in $C([0,T];\mathcal{P}(\G))$ with respect to the uniform topology in time and narrow topology on $\mathcal{P}(\G)$. }\label{DOG-eq:DOG-CG-Compact}
\end{enumerate}
Finally, we have the estimate
\[
\M{F}(\rho_t)+\frac1{2}\int_0^t \RelFI(\rho_s|\Lebesgue^2)\,ds\leq C \qquad
\text{for all }t\in[0,T].
\]
\end{theorem}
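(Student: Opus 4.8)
The plan follows the same three-step template as Theorem~\ref{DOG-thm:VFP-Compactness}, with Theorem~\ref{DOG-thm:DOG-Compactness} supplying all of the analytic input.

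\emph{Compactness in $\mathcal{M}([0,T]\times\mathbb{R}^2)$.} The hypothesis together with~\eqref{DOG-eq:Ham-Bounds-DOG} gives $\sup_\vep\sup_{t\in[0,T]}\int_{\mathbb{R}^2}H\,d\rho^\vep_t\le C'$. Since $H$ is coercive (A1), its sublevel sets $\{H\le R\}$ are compact, so Chebyshev's inequality gives $\rho^\vep_t(\{H>R\})\le C'/R$ uniformly in $t$ and $\vep$; hence each space-time measure $\rho^\vep$ concentrates, up to mass $TC'/R$, on the compact set $[0,T]\times\{H\le R\}$. Tightness and Prokhorov's theorem then yield a narrowly convergent subsequence $\rho^\vep\to\rho$ in $\mathcal{M}([0,T]\times\mathbb{R}^2)$. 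Each $\rho^\vep$ has time-marginal equal to Lebesgue measure on $[0,T]$, a property inherited by $\rho$, so $\rho$ disintegrates as $\rho(dt\,dx)=\rho_t(dx)\,dt$ with $\rho_t\in\mathcal{P}(\mathbb{R}^2)$ for a.e.\ $t$. This proves~\ref{DOG-eq:DOG-Full-Compact}.

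\emph{Compactness in $C([0,T];\mathcal{P}(\Gamma))$.} I would apply Arzel\`a--Ascoli to $t\mapsto\hrho^\vep_t=\xi_\#\rho^\vep_t$. Tightness, uniform in $t$ and $\vep$, is immediate since $\int_\Gamma h\,d\hrho^\vep_t=\int_{\mathbb{R}^2}H\,d\rho^\vep_t\le C'$ and $\{h\le R\}$ is compact in $\Gamma$. For equicontinuity, replace $f$ by $\lambda f$ in~\eqref{DOG-eq:DOG-Large-Dev-Rate-Fn} and optimize over $\lambda\in\mathbb{R}$, using $I^\vep(\rho^\vep)\le C$, to get for every $f\in C_c^{1,2}(\mathbb{R}\times\mathbb{R}^2)$
\[
\Bigl|\int f_T\,d\rho^\vep_T-\int f_0\,d\rho^\vep_0-\int_0^T\!\int_{\mathbb{R}^2}\bigl(\partial_t f+\tfrac1\vep J\nabla H\cdot\nabla f+\Delta_p f\bigr)\,d\rho^\vep_t\,dt\Bigr|\le\Bigl(2C\int_0^T\!\int_{\mathbb{R}^2}|\nabla_p f|^2\,d\rho^\vep_t\,dt\Bigr)^{1/2}.
\]
Applying this with $f$ approximating $\mathbf 1_{[t,t+h]}(s)\,(g\circ\xi)(x)$, for $g$ smooth and compactly supported on $\Gamma$, the decisive point is that $\nabla(g\circ\xi)=g'(\xi)\,\nabla H$, so $J\nabla H\cdot\nabla(g\circ\xi)=0$ by antisymmetry of $J$ and the $O(1/\vep)$ term disappears. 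Using $\Delta_p(g\circ\xi)=g''(\xi)\,p^2/m^2+g'(\xi)/m$, $|\nabla_p(g\circ\xi)|=|g'(\xi)|\,|p|/m$, and $\int_{\mathbb{R}^2}p^2\,d\rho^\vep_s\le 2m\int_{\mathbb{R}^2}H\,d\rho^\vep_s\le 2mC'$, one obtains
\[
\Bigl|\int_\Gamma g\,d\hrho^\vep_{t+h}-\int_\Gamma g\,d\hrho^\vep_t\Bigr|\le C\bigl(\|g'\|_\infty+\|g''\|_\infty\bigr)\sqrt h,
\]
with $C$ independent of $t$, $\vep$, $g$. Arzel\`a--Ascoli then gives $\hrho^\vep\to\nu$ in $C([0,T];\mathcal{P}(\Gamma))$; since every continuous function on $\Gamma$ pulls back to a continuous bounded function on $\mathbb{R}^2$, one has $\xi_\#\rho^\vep\to\xi_\#\rho$ narrowly as space-time measures, so $\nu=\xi_\#\rho=:\hrho$. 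This proves~\ref{DOG-eq:DOG-CG-Compact}.

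\emph{The limiting estimate.} From~\eqref{est:DOG-F-I} and the hypothesis, $\mathcal F(\rho^\vep_s)+\tfrac12\int_0^s\RelFI(\rho^\vep_r|\Lebesgue^2)\,dr\le C$ for all $s$ and $\vep$. Fix a Lebesgue point $t$ of $s\mapsto\rho_s$, average this inequality over $s\in[t,t+\delta]$, and use convexity of $\mathcal F$ (Jensen) together with monotonicity of $s\mapsto\int_0^s\RelFI$ to get $\mathcal F\bigl(\tfrac1\delta\int_t^{t+\delta}\rho^\vep_s\,ds\bigr)+\tfrac12\int_0^t\RelFI(\rho^\vep_r|\Lebesgue^2)\,dr\le C$. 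Letting $\vep\to0$, the time-averaged measures converge narrowly, $\mathcal F=\RelEnt(\cdot|\Lebesgue^2)$ is narrowly lower semicontinuous, and $\int_0^t\RelFI(\cdot|\Lebesgue^2)\,dr$ is lower semicontinuous since it is a supremum of narrowly continuous linear functionals of the space-time measure (by~\eqref{DOG-def:DOG-RelFI}). Letting $\delta\to0$ and using lower semicontinuity of $\mathcal F$ once more, with $\tfrac1\delta\int_t^{t+\delta}\rho_s\,ds\to\rho_t$, yields $\mathcal F(\rho_t)+\tfrac12\int_0^t\RelFI(\rho_s|\Lebesgue^2)\,ds\le C$ for a.e.\ $t$; in particular $\rho_t\ll\Lebesgue^2$ for a.e.\ $t$.

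\emph{Main obstacle.} The delicate step is the equicontinuity estimate: the cancellation of the $O(1/\vep)$ drift forces one to use test functions of the special form $g\circ\xi$, but such functions are only continuous, not $C^2$, across the separatrices of $H$ (the homoclinic level sets), hence not admissible in~\eqref{DOG-eq:DOG-Large-Dev-Rate-Fn} as stated. One must therefore first prove the bound for the subclass of $g$ whose lifts are genuinely $C^2$---for instance those $g$ that are locally constant near every vertex---or work with mollifications and pass to the limit, and then verify that this subclass still metrizes the narrow topology on $\mathcal{P}(\Gamma)$. A secondary subtlety is that $\rho^\vep_t$ does \emph{not} converge for fixed $t$ (the fast rotation around level sets is not removed by $\xi$, which is only cured later by the local-equilibrium result), which is exactly why the limiting estimate must be obtained through time averaging and convexity rather than by a direct time-slice passage to the limit.
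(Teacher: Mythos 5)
Your proposal is correct and follows the paper's (very terse) three-part template: tightness from the Hamiltonian moment bound~\eqref{DOG-eq:Ham-Bounds-DOG}, Arzel\`a--Ascoli for the coarse-grained curve, and passage to the limit in the a priori estimate~\eqref{est:DOG-F-I}. Where the paper says only that part~\ref{DOG-eq:DOG-CG-Compact} is ``similar to'' Theorem~\ref{DOG-thm:VFP-Compactness} and that the final bound is a ``direct consequence'', you supply two genuinely alternative fills. For equicontinuity the paper's template uses the $h$-representation~\eqref{DOG-def:Rate-Fn-h-Def} of the rate functional and Cauchy--Schwarz on the drift $h$; you instead rescale $f\mapsto\lambda f$ in the duality formula~\eqref{DOG-eq:DOG-Large-Dev-Rate-Fn} and optimize over $\lambda$, getting the same $\sqrt h$ modulus directly from $I^\vep(\rho^\vep)\le C$ without ever introducing the $h$-field. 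For the final estimate your averaging/Jensen/lower-semicontinuity argument is exactly the content hiding behind ``direct consequence'': since $\rho^\vep_t$ is not claimed to converge for fixed $t$, one cannot pass to the limit slicewise, and time-averaging together with convexity of $\mathcal F$ and monotonicity of $s\mapsto\int_0^s\RelFI$ is the right substitute; note that the conclusion then holds for a.e.\ $t$, which is anyway all the disintegration permits. Your observation about the vertices is also a real gap in the paper's ``similar to'' reference: unlike $\xi^\g(q,p)=q+p/\g$, the map $\xi$ here is not globally smooth, and lifts $g\circ\xi$ of generic $g\in C^{1,2}_c(\mathbb R\times\Gamma)$ fail to be $C^2$ across saddle levels; restricting to $g$ locally constant near vertices (or mollifying and passing to the limit) and checking that this subclass still separates $\mathcal P(\Gamma)$ is the correct repair, and in fact the paper uses precisely this device in the local-equilibrium proof.
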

The sequence $\rho^\vep$ is tight in $\mathcal{M}([0,T]\times\mathbb{R}^{2})$ by estimate~\eqref{DOG-eq:Ham-Bounds-DOG}, which implies Part~\ref{DOG-eq:DOG-Full-Compact}. The proof of part~\ref{DOG-eq:DOG-CG-Compact} is similar to Part~\ref{thm:compactness-gamma:item:Rd} in Theorem~\ref{DOG-thm:VFP-Compactness}, and the final estimate is a direct consequence of \eqref{est:DOG-F-I}.

\subsection{Local equilibrium}

Theorem~\ref{DOG-thm:DOG-Compact} states that $\rho^\e$ converges narrowly on $[0,T]\times\R^2$ to some $\rho$. In fact we need a stronger statement, in which the behaviour of $\rho$ on each connected component of $H$ is fully determined by the limit $\hrho$.

Lemma~\ref{DOG-lem:DOG-Local-Eq-Const-Level-Sets} below makes this statement precise.    
Before proceeding we define $T:\Gamma\rightarrow\mathbb{R}$ as
\begin{align}\label{DOG-eq:DOG-Time-Period}
T(\gamma):=\int_{\xi^{-1}(\gamma)}\frac{\Hausdorff^1(dx)}{|\nabla H(x)|},
\end{align}
where $\Hausdorff^1$ is the the one-dimensional Hausdorff measure. $T$ has a natural interpretation as the period of the periodic orbit of the deterministic equation~\eqref{eq:Hamiltonian} corresponding to $\g$. When $\g$ is an interior vertex, such that the orbit is homoclinic, not periodic, $T(\g)=+\infty$.
$T$ also has a second natural interpretation: the measure $T(\gamma)d\gamma = T(h,k) dh$ on $\Gamma$ is the push-forward under $\xi$ of the Lebesgue measure on~$\R^2$, and the measure $T(\gamma)d\g$ therefore appears in various places.

\begin{lemma}[Local Equilibrium]\label{DOG-lem:DOG-Local-Eq-Const-Level-Sets}
Under the assumptions of Theorem~\ref{DOG-thm:DOG-Compact},  
let  $\rho^\vep\rightarrow\rho$ in $\M{M}([0,T]\times\mathbb{R}^2)$ with respect to the narrow topology. Let $\hrho$ be the push-forward $\xi_\#\rho$ of the limit $\rho$, as above. 

Then for a.e. $t$, the limit $\rho_t$ is absolutely continuous with respect to the Lebesgue measure, $\hrho_t$ is absolutely continuous with respect to the measure $T(\g)d\g$, where $T(\gamma)$ is defined in \eqref{DOG-eq:DOG-Time-Period}. Writing 
\[
\rho_t(dx) = \rho_t(x)dx  \qquad\text{and}\qquad 
\hrho_t(d\g) = \alpha_t(\g) T(\g)d\g,
\]
we have
\begin{align}
\label{eq:lem:DOG-locEq}
\rho_t(x)=\alpha_t(\xi(x)) 
\qquad \text{for almost all }x\in \R^{2} \text{ and } t\in[0,T].
\end{align}
\end{lemma}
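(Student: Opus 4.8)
\medskip
\noindent\emph{Proof strategy.} The argument runs parallel to Lemma~\ref{DOG-lem:VFP-Local-Eq}, but the role played there by the relative Fisher Information is now taken by the large coefficient $1/\vep$ in front of $J\nabla H\cdot\nabla f$ in the rate functional~\eqref{DOG-eq:DOG-Large-Dev-Rate-Fn}, and the ``Maxwellian'' conclusion is replaced by constancy along connected components of level sets of $H$. First, exactly as in the proof of Lemma~\ref{DOG-lem:VFP-Local-Eq}, narrow convergence $\rho^\vep\to\rho$ on $[0,T]\times\R^2$ transfers the disintegration structure to the limit, $\rho(dt\,dx)=\rho_t(dx)\,dt$ and $\hrho(dt\,d\g)=\hrho_t(d\g)\,dt$ with $\hrho_t=\xi_\#\rho_t$ for a.e.\ $t$; and the \emph{a priori} bound of Theorem~\ref{DOG-thm:DOG-Compact} gives $\mathcal F(\rho_t)=\RelEnt(\rho_t\,|\,\Lebesgue^2)<\infty$, hence $\rho_t\ll\Lebesgue^2$, so that we may write $\rho_t(dx)=\rho_t(x)\,dx$. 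It remains to prove that, for a.e.\ $t$, $\rho_t$ is a.e.\ constant on each connected component of a level set of $H$.

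\medskip
\noindent\emph{Step 1: the limiting density is transported by the Hamiltonian flow.} I claim that $\div(\rho_t J\nabla H)=0$ in $\mathcal D'(\R^2)$ for a.e.\ $t$. Fix $\chi\in C_c^\infty(0,T)$ and $g\in C_c^\infty(\R^2)$ and test the supremum defining $I^\vep(\rho^\vep)$ with $f(t,x)=\lambda\,\chi(t)\,g(x)$, $\lambda\in\R$; since $f$ vanishes near $t=0$ and $t=T$ this gives
\[
I^\vep(\rho^\vep)\ \ge\ \lambda A^\vep-\tfrac{\lambda^2}{2}B^\vep,\qquad
B^\vep=\int_0^T\!\!\int_{\R^2}\chi^2\,|\nabla_p g|^2\,d\rho^\vep_t\,dt,
\]
with $A^\vep=-\int_0^T\!\!\int_{\R^2}\bigl(\chi' g+\tfrac1\vep\,\chi\,J\nabla H\cdot\nabla g+\chi\,\Delta_p g\bigr)\,d\rho^\vep_t\,dt$. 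Optimising over $\lambda$ and using that each $\rho^\vep_t$ is a probability measure, so $0\le B^\vep\le T\,\|\chi\|_\infty^2\|\nabla_p g\|_\infty^2$, yields a uniform bound $(A^\vep)^2\le 2B^\vep I^\vep(\rho^\vep)\le C_1$. Since the $\chi'g$- and $\Delta_p g$-contributions to $A^\vep$ are also bounded uniformly in $\vep$ (bounded integrands against probability measures), this forces $\bigl|\int_0^T\chi(t)\int_{\R^2}J\nabla H\cdot\nabla g\,d\rho^\vep_t\,dt\bigr|\le C_2\,\vep$. Because $\nabla g$ has compact support, $(t,x)\mapsto\chi(t)\,J\nabla H(x)\cdot\nabla g(x)$ is bounded and continuous, so narrow convergence of $\rho^\vep$ lets us pass to the limit and obtain $\int_0^T\chi(t)\int_{\R^2}J\nabla H\cdot\nabla g\,d\rho_t\,dt=0$ for all such $\chi,g$. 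Ranging $g$ over a countable $C^1$-dense family in $C_c^\infty(\R^2)$ produces a single null set of times outside which $\int_{\R^2}J\nabla H\cdot\nabla g\,d\rho_t=0$ for every $g$; together with $\div(J\nabla H)=\operatorname{tr}(J\,D^2H)=0$ this is precisely $J\nabla H\cdot\nabla\rho_t=0$ in $\mathcal D'(\R^2)$.

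\medskip
\noindent\emph{Step 2: from transport to constancy, and identification of $\hrho_t$.} Outside the union of the saddle points and their separatrices — a closed Lebesgue-null set — the level sets of $H$ foliate $\R^2$ by periodic orbits, and near any such orbit one has a local chart $(h,\ell)$ with $h=H$ and $J\nabla H\cdot\nabla=|\nabla H|\,\partial_\ell$, where $\ell$ is an arc-length parameter along the orbit (such charts are part of the graph construction of~\cite{FreidlinWentzell93,Freidlin1994,Barret2014}). In this chart Step~1 reads $\partial_\ell\rho_t=0$ distributionally, and since $\rho_t\in L^1_{\mathrm{loc}}$ this forces $\rho_t$ to be independent of $\ell$; a connectedness/covering argument along each orbit — each orbit being the whole component of its level set — then shows that $\rho_t(x)=\alpha_t(\xi(x))$ for a.e.\ $x$, with $\alpha_t:\Gamma\to[0,\infty)$ measurable. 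Pushing forward, for $\psi\in C_b(\Gamma)$,
\[
\int_\Gamma\psi\,d\hrho_t=\int_{\R^2}\psi(\xi(x))\,\alpha_t(\xi(x))\,dx=\int_\Gamma\psi(\g)\,\alpha_t(\g)\,(\xi_\#\Lebesgue^2)(d\g)=\int_\Gamma\psi(\g)\,\alpha_t(\g)\,T(\g)\,d\g,
\]
using the identity $\xi_\#\Lebesgue^2=T(\g)\,d\g$ recorded after~\eqref{DOG-eq:DOG-Time-Period}; hence $\hrho_t\ll T(\g)\,d\g$ with density $\alpha_t T$. Choosing instead $\alpha_t(\g):=T(\g)^{-1}\,\tfrac{d\hrho_t}{d\g}(\g)$ from the disintegration of $\hrho$ makes $(t,\g)\mapsto\alpha_t(\g)$ jointly measurable, so that~\eqref{eq:lem:DOG-locEq} holds for a.e.\ $(x,t)$.

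\medskip
\noindent I expect Step~2 to be the main obstacle: extracting the distributional identity in Step~1 is routine once the duality structure is in hand, but converting ``$J\nabla H\cdot\nabla\rho_t=0$'' into the pointwise a.e.\ statement requires careful bookkeeping of the geometry of the level sets near the saddle points and separatrices — where the angle coordinate degenerates and $T(\g)\to\infty$ — and a joint-measurability argument in $t$.
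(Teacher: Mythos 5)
Your argument is correct in outline and reaches the same conclusion, but Step~2 follows a genuinely different route from the paper, and it is there that the remaining work lies. Your Step~1 (bounding $(A^\vep)^2 \le 2B^\vep I^\vep(\rho^\vep)$ by optimizing over $\lambda$, then isolating the $1/\vep$ term) is a more explicit way of packaging exactly what the paper does by ``passing to the limit in the rate functional''; the content is identical, and the observation $\div(J\nabla H)=\operatorname{tr}(J\,D^2H)=0$ correctly converts the integral identity into $J\nabla H\cdot\nabla\rho_t=0$ in $\mathcal D'$.

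In Step~2 the paper does not pass through local action-angle charts at all. Instead it substitutes the product test function $f(t,x)=\zeta(\xi(x))\varphi(t,x)$, with $\zeta$ constant near vertices, into the identity from Step~1, then disintegrates $\rho_t(dx)=\hrho_t(d\g)\,\trho_t(dx\,|\,\g)$ and recognises the tangential derivative $\partial_\tau=J\nabla H/|\nabla H|\cdot\nabla$. Varying $\zeta$ and $\varphi$ then forces $|\nabla H|\,\trho_t(\cdot|\g)$ to be constant on $\xi^{-1}(\g)$, so $\trho_t(dx|\g)=T(\g)^{-1}|\nabla H|^{-1}\Hausdorff^1\lfloor_{\xi^{-1}(\g)}$, and a direct comparison of $\int f\,d\rho_t$ computed once via the co-area formula and once via the disintegration gives~\eqref{eq:lem:DOG-locEq}. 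The advantage of the disintegration route is that the conditional measure $\trho_t(\cdot|\g)$ is by construction supported on the \emph{entire} connected component $\xi^{-1}(\g)$, so the ``constancy along the whole orbit'' conclusion comes for free once one knows $|\nabla H|\trho_t(\cdot|\g)$ has vanishing tangential derivative on a one-dimensional connected manifold. Your route, via local $(h,\ell)$ charts, establishes $\partial_\ell\rho_t=0$ only locally and then needs the ``connectedness/covering argument along each orbit'' that you flag but do not supply; patching local constancy across the overlaps of a chart atlas on a periodic orbit, plus the joint measurability of $(t,\g)\mapsto\alpha_t(\g)$, is precisely the bookkeeping that the disintegration approach sidesteps. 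Your identification $\xi_\#\Lebesgue^2=T(\g)\,d\g$ is fine and matches the remark in the text after~\eqref{DOG-eq:DOG-Time-Period}, and the final push-forward computation is the mirror image of the paper's co-area comparison.

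So: correct strategy, with Step~1 a transparent variant of the paper's, and Step~2 an alternative that trades the disintegration theorem for local charts. To make Step~2 rigorous you would need to actually carry out the gluing: cover each orbit (away from the separatrices) by finitely many overlapping arc-length charts, show the local constants agree on overlaps, and argue that the separatrix set is both Lebesgue-null in $\R^2$ and $\hrho_t$-null (the latter uses $T\to\infty$ there together with absolute continuity of $\hrho_t$ with respect to $T\,d\g$, which you obtain only at the end — a mild circularity worth untangling). The paper's disintegration argument avoids both issues.
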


\begin{proof}
From the boundedness of $I^\vep(\rho^\vep)$ and the narrow convergence $\rho^\vep\rightarrow\rho$ we find, passing to the limit in the rate functional \eqref{DOG-eq:DOG-Large-Dev-Rate-Fn}, for any $f\in C^{1,2}_c(\R\times \mathbb{R}^2)$
\begin{align}\label{DOG-eq:DOG-Limit-LDRF-Local-Eq}
\int_0^T\int_{\R^2}J\nabla H\cdot \nabla f \,d\rho_t dt=0.
\end{align}
Now choose any $\varphi\in C^2_c([0,T]\times\mathbb{R}^2)$ and any $\zeta\in C^2_b(\Gamma)$ such that $\zeta$ is constant in a neighbourhood of each vertex; then the function $f(t,x)=\zeta(\xi(x))\varphi(t,x)$ is well-defined and in $C^2_c([0,T]\times \mathbb{R}^2)$. 
We substitute this special function in \eqref{DOG-eq:DOG-Limit-LDRF-Local-Eq}; since $J\nabla H \nabla(\zeta\circ \xi) = 0$, we have $J\nabla H\nabla f = (\zeta\circ\xi) J\nabla H \nabla \varphi$. Applying the disintegration theorem to $\rho$, writing $\rho_t(dx) = \hrho_t(d\g)\trho_t(dx|\g)$
with $\supp\tilde{\rho}_t(\cdot|\g)\subset \xi^{-1}(\g)$, we obtain
\begin{align*}
0=\int_0^T\!\!\int_{\G}\zeta(\g)\hrho_t(d\g)\int_{\xi^{-1}(\g)}\nabla\varphi\cdot \frac{J\nabla H}{|\nabla H|}|\nabla H|\tilde{\rho}(\cdot|\g) d\Hausdorff^1=\int_0^T\int_{\G}\zeta(\g)\hrho_t(d\g)\int_{\xi^{-1}(\g)}\partial_\tau\varphi|\nabla H|\tilde{\rho}(\cdot|\g) d\Hausdorff^1dt,
\end{align*}
where $\partial_\tau$ is the tangential derivative.
By varying $\zeta$ and $\varphi$ we conclude that for $\hrho$-almost every $(\g,t)$, $|\nabla H|\tilde{\rho}_t(\cdot |\g) =C_{\g,t}$ for some $\g,t$-dependent constant $C_{\g,t}>0$, and since $\trho$ is normalized, we find that 
\begin{equation}
\label{DOG-prop:loc-eq-DOG}
\text{for }\hrho \text{-a.e. }  (\g,t): \ \trho_t(dx|\g)=\frac1{T(\g)|\nabla H(x)|}
{\Hausdorff^1\lfloor_{\xi^{-1}(\g)}(dx)}.
\end{equation} 
This also implies  that $\trho_t(\cdot|\g)$ is in fact $t$-independent.

For measurable $f$ we now compare the two relations
\begin{alignat*}2
&\int_{\mathbb{R}^2}fd\rho_t=\int_{\mathbb R^2} f(y) \rho_t(y) \, dy &&= \int_\G d\g\int_{\xi^{-1}(\g)}\frac{f(y)}{|\nabla H(y)|}\rho_t(y)\Hausdorff^1(dy)\\
&\int_{\mathbb{R}^2}fd\rho_t
=\int_\G\hrho_t(d\g)\int_{\xi^{-1}(\g)}f(y)\tilde{\rho}(dy|\g)
&&=\int_\G \frac{\hrho_t(d\g)}{T(\g)} \int_{\xi^{-1}(\g)}\frac{f(y)}{|\nabla H(y)|}\Hausdorff^1(dy)
\end{alignat*} 
where we have used the co-area formula in the first line and~\eqref{DOG-prop:loc-eq-DOG} in the second one. Since $f$ was arbitrary, \eqref{eq:lem:DOG-locEq} follows for almost all $t$.
\end{proof}

\subsection{Continuity of $\rho$ and $\hrho$}

As a consequence of the local-equilibrium property~\eqref{eq:lem:DOG-locEq} and the boundedness of the Fisher Information, we will show in the following that $\rho$ and its push-forward $\hrho$ satisfy an important continuity property. We first motivate this property heuristically.

The local-equilibrium result Lemma~\ref{DOG-lem:DOG-Local-Eq-Const-Level-Sets} states that the limit measure $\rho$ depends on $x$ only through $\xi(x)$. Take any measure $\rho\in\M P(\R^{2})$ of that form, i.e. $\rho(dx) = f(\xi(x))dx$, with finite free energy and finite relative Fisher Information. Setting $\tilde f = f\circ \xi$, by Lemma~\ref{DOG-lem:two-defs-RFI},  $\nabla _p \tilde f$ is well-defined and locally integrable.
\begin{figure}[h]
\centering
\includegraphics[scale=0.55]{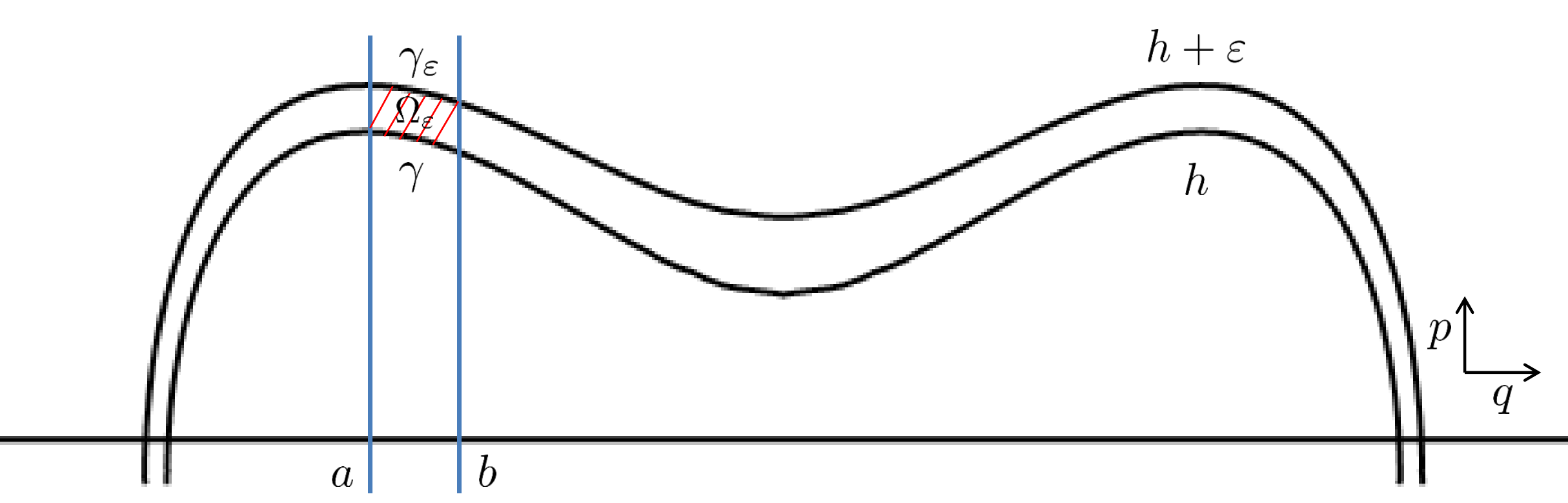}
\caption[Hamiltonian]{Section $\Omega$ in which $H^{-1}(h)$ is transverse to $p$. }
\label{fig:ProofPlot.pdf}
\end{figure}

Consider a section $\Omega_\e$ of the $(q,p)$-plane as shown in Figure~\ref{fig:ProofPlot.pdf}, bounded by $q=a$ and $q=b$ and level sets $H=h$ and $H=h+\e$. The top and bottom boundaries $\g$ and $\g_\e$ correspond to elements of $\Gamma$ that we also call $\g$ and~$\g_\e$; they might be part of the same edge $k$ of the graph, or they might belong to different edges. As $\e\to0$, $\g_\e$ converges to $\g$.

By simple integration we find that
\begin{align*}
\int_{\Omega_\e}\nabla_p\tilde f=\int_{\gamma_\e\cup\gamma}\tilde f n_p\,dr=(f(\g_\e)-f(\g))(b-a),
\end{align*}
where $dr$ is the scalar line element and $n_p$ the $p$-component of the normal $n$.
Applying H\"older's inequality we find
\begin{align*}
|b-a| \, |f(\g_\e)-f(\g)|&= \bigg| \int_{\Omega_\e} \nabla_p \rho\;\bigg|
 \leq \bigg{(}\int_{\Omega_\e}\frac{1}{\rho}\bigl|\nabla_p \rho \bigr|^2\bigg{)}^{\frac{1}{2}}\bigg{(}\int_{\Omega_\e}\rho\bigg{)}^{\frac{1}{2}}
 \xrightarrow{\vep\rightarrow 0}0.
\end{align*}
This argument shows that $f$ is continuous from the right at the point $\g\in \G$.

\medskip
The following lemma generalizes this argument to the case at hand, in which $\rho$ also depends on time. Note that $\Int \Gamma$ is the interior of the graph $\Gamma$, which is $\Gamma$ without the lower exterior vertices. 



\begin{lemma}[Continuity of $\rho$]
\label{lem:Continuity-Rho/T}
Let $\rho\in \M{P}([0,T]\times\R^2)$, $\rho(dtdx)=f(t,\xi(x))dtdx$ for a Borel measurable $f:[0,T]\times\Gamma\rightarrow\mathbb{R}$, and assume that 
\[
\int_0^T \RelFI(\rho_t|\Lebesgue^2)\, dt + \sup_{t\in[0,T]} \M F(\rho_t)<\infty.
\]
Then for almost all $t\in[0,T]$, $\g\mapsto f(t,\g)$ is continuous on $\Int \Gamma$.
\end{lemma}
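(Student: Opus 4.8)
The plan is to run, at a fixed well-chosen time $t$, exactly the slab computation sketched just above the statement, and then to close the two points it leaves open: upgrading the one-sided continuity there to genuine two-sided continuity at interior points of the edges, and establishing continuity at the interior (saddle) vertices of $\Gamma$. First I would isolate a good time: since $\sup_t\mathcal F(\rho_t)=\sup_t\RelEnt(\rho_t|\Lebesgue^2)<\infty$ every $\rho_t$ is Lebesgue-absolutely continuous, and since $\int_0^T\RelFI(\rho_s|\Lebesgue^2)\,ds<\infty$ we also have $\RelFI(\rho_t|\Lebesgue^2)<\infty$ for a.e.\ $t$; I would fix such a $t$, drop it from the notation, and write $\rho=\tilde f\,\Lebesgue^2$ with $\tilde f=f\circ\xi$. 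By Lemma~\ref{DOG-lem:two-defs-RFI} (with $\mu$ the Lebesgue measure), finiteness of $\RelFI(\rho|\Lebesgue^2)$ forces $\nabla_p\tilde f\in L^1_{\mathrm{loc}}(dq\,dp)$ with $\int_{\R^2}|\nabla_p\tilde f|^2\tilde f^{-1}\mathds 1_{\{\tilde f>0\}}\,dq\,dp=\RelFI(\rho|\Lebesgue^2)$; in particular $p\mapsto\tilde f(q,p)$ is absolutely continuous on a.e.\ vertical line (ACL characterization of the weak $p$-derivative). The goal is then that $f(t,\cdot)$ agrees $d\g$-a.e.\ with a function continuous on $\Int\Gamma$.

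For an interior point $\g_0$ of an edge $I_k$, at height $h_0$, I would argue as in the heuristic: the periodic orbit $\xi^{-1}(\g_0)$ is a closed curve not contained in $\{p=0\}$, so it has a point with $p\neq0$; near such a point the level sets $\{H=h\}$ for $h$ close to $h_0$ are transverse graphs $p=p_h(q)$ over a small interval $[a,b]$ which, for $h$ close enough to $h_0$, stay inside the single edge $I_k$. Taking $\Omega$ to be the region between $\{H=h\}$ and $\{H=h_0\}$ over $[a,b]$, integration of $\partial_p\tilde f$ along vertical lines together with Fubini (equivalently, the divergence theorem: the vertical sides of $\partial\Omega$ contribute $0$, while $\tilde f$ equals the constant $f(\g)$ resp.\ $f(\g_0)$ on the two level-set arcs) gives $(b-a)|f(\g)-f(\g_0)|=|\int_\Omega\partial_p\tilde f|$, and Cauchy--Schwarz bounds this by $\RelFI(\rho|\Lebesgue^2)^{1/2}\rho(\Omega)^{1/2}$. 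As $h\to h_0$ from either side, $\Omega$ decreases to a subset of the $\Lebesgue^2$-null curve $\{H=h_0\}\cap\{a\le q\le b\}$, so $\rho(\Omega)\to0$ by absolute continuity and continuity of measures from above; hence $f$ is two-sided continuous at $\g_0$.

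What remains, and is the step I expect to be the main obstacle, is continuity at the interior vertices. Let $O$ be such a vertex, at height $h_0$, incident to three edges; the previous step already gives continuity of $f$ along each of them away from $O$, so it suffices to show the three one-sided limits at $O$ exist and coincide, and then to define $f(O)$ to be their common value. The key observation is that each incident edge corresponds, for heights on one side of $h_0$, to a level-set component lying inside a fixed potential well, so I would choose $[a,b]$ to be a compact subinterval of that well, bounded away from the saddle corresponding to $O$; then for all $h$ in a two-sided neighbourhood of $h_0$ the level sets $\{H=h\}$ over $[a,b]$ are transverse graphs that lie on the lower edge when $h<h_0$ and on the upper edge when $h>h_0$. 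Running the same slab estimate on the region between $\{H=h\}$ and $\{H=h'\}$ over $[a,b]$ then controls $|f(\g_h)-f(\g_{h'})|$, with $\g_h,\g_{h'}$ on two distinct incident edges, by $(b-a)^{-1}\RelFI(\rho|\Lebesgue^2)^{1/2}\rho(\Omega)^{1/2}\to0$ as $h,h'\to h_0$, which forces existence and equality of all the one-sided limits. Since $\g_0$ and $O$ are arbitrary, this gives continuity on $\Int\Gamma$. The genuinely delicate part throughout is this geometric bookkeeping---keeping the sections transverse to the $p$-axis, i.e.\ away from the $q$-turning points where $\partial_pH=p/m$ vanishes, and, near $O$, clear of the degenerate critical point while still joining the correct pair of edges---together with the (routine) verification of the ACL/Fubini manipulations and of the shrinking of $\rho(\Omega)$; the estimate itself is one line.
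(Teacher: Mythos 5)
Your proposal is correct and follows essentially the same route as the paper: after fixing a time $t$ at which $\rho_t$ is absolutely continuous with finite Fisher information, you run the slab estimate $(b-a)|f(\g_h)-f(\g_{h'})|\le\RelFI(\rho_t|\Lebesgue^2)^{1/2}\rho_t(\Omega)^{1/2}$ near a point of $\xi^{-1}(\g_0)$ where $\partial_pH\ne 0$, and let the slab collapse; the paper's proof is exactly this, stated in one sentence (``the argument above can be applied to the neighbourhood of any point $x$ with $\nabla H(x)\ne 0$, and to both right and left limits''), with only the lower exterior vertices excluded. Your explicit treatment of the interior (saddle) vertices --- pairing each lower edge with the common upper edge via a slab straddling $h_0$ over an interval kept inside one well and away from the saddle --- and your sharpening of the transversality condition from $\nabla H\ne 0$ to $\partial_pH\ne 0$ (which is what the vertical-slab integration actually needs, and which every non-stationary orbit satisfies somewhere) make fully precise the details the paper leaves implicit, but the argument is the same.
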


\begin{proof}
The argument is essentially the same as the one above. For almost all $t$, $\rho_t$ is Lebesgue-absolutely-continuous and $\RelFI(\rho_t|\Lebesgue)$ is finite, and the argument above can be applied to the neighbourhood of any point~$x$ with $\nabla H(x)\not=0$, and to both right and left limits. The only elements of $\Gamma$ that have no representative $x\in \R^2$ with $\nabla H(x)\not=0$ are the lower ends of the graph, corresponding to the bottoms of the wells of $H$. At all other points of $\Gamma$ we obtain continuity.
\end{proof}

\begin{corollary}[Continuity of $\hrho$]
Let $\rho$ be the limit given by Theorem~\ref{DOG-thm:DOG-Compact}, and  $\hrho := \xi_\#\rho$ its push-forward.
For almost all $t$, $\hrho_t  \ll T(\g)d\g$, and $d\hrho_t/T(\g)d\g$ is continuous on $\Int \Gamma$.
\end{corollary}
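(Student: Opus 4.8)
The plan is to chain the two preceding results: the corollary is literally what one obtains by feeding the conclusion of the local-equilibrium lemma into Lemma~\ref{lem:Continuity-Rho/T}. First I would apply Lemma~\ref{DOG-lem:DOG-Local-Eq-Const-Level-Sets} to the limit $\rho$ given by Theorem~\ref{DOG-thm:DOG-Compact}. That lemma already yields the first assertion of the corollary, namely that for a.e.\ $t$ the measure $\hrho_t$ is absolutely continuous with respect to $T(\g)\,d\g$; writing $\hrho_t(d\g) = \alpha_t(\g)T(\g)\,d\g$ and $\rho_t(dx) = \rho_t(x)\,dx$, it also gives the identity $\rho_t(x) = \alpha_t(\xi(x))$ for a.e.\ $x$ and $t$. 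Consequently $\rho(dtdx) = \rho_t(x)\,dx\,dt = \alpha_t(\xi(x))\,dt\,dx$, so, setting $f(t,\g) := \alpha_t(\g)$, the limit $\rho$ has exactly the structure $\rho(dtdx) = f(t,\xi(x))\,dt\,dx$ required as the hypothesis of Lemma~\ref{lem:Continuity-Rho/T}. Joint Borel measurability of $f$ on $[0,T]\times\Gamma$ follows from the disintegration $\rho(dtdx)=\rho_t(dx)\,dt$ together with the narrow continuity of $t\mapsto\hrho_t$ supplied by Theorem~\ref{DOG-thm:DOG-Compact}.

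Next I would check the remaining two hypotheses of Lemma~\ref{lem:Continuity-Rho/T}, namely $\int_0^T \RelFI(\rho_t|\Lebesgue^2)\,dt<\infty$ and $\sup_{t\in[0,T]}\M F(\rho_t)<\infty$. Both are contained in the final estimate of Theorem~\ref{DOG-thm:DOG-Compact}, which bounds $\M F(\rho_t)$ uniformly in $t\in[0,T]$ and bounds the time-integral of the relative Fisher Information by a constant. With all hypotheses verified, Lemma~\ref{lem:Continuity-Rho/T} gives that for almost all $t$ the map $\g\mapsto f(t,\g)=\alpha_t(\g)$ is continuous on $\Int\Gamma$. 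Since $\alpha_t = d\hrho_t/(T(\g)\,d\g)$ by construction, this is precisely the second assertion of the corollary, and the proof is complete.

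I do not expect a genuine obstacle here: this is a corollary in the literal sense, a composition of Lemma~\ref{DOG-lem:DOG-Local-Eq-Const-Level-Sets} and Lemma~\ref{lem:Continuity-Rho/T} with no new estimate required. The only points deserving a line of care are the measurability bookkeeping for $f$ in the joint variable $(t,\g)$, and—if one wishes to be scrupulous—the observation that the two ``for almost all $t$'' exceptional sets produced by the two lemmas are both Lebesgue-null and hence their union is still null, so that the conclusion holds off a single null set of times.
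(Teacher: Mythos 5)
Your proposal is correct and follows precisely the paper's own (one-line) proof: combine Lemma~\ref{DOG-lem:DOG-Local-Eq-Const-Level-Sets} with Lemma~\ref{lem:Continuity-Rho/T}. Your write-up simply spells out the verification of hypotheses that the paper leaves implicit, which is fine.
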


\noindent
This corollary follows by combining Lemma~\ref{lem:Continuity-Rho/T} with Lemma~\ref{DOG-lem:DOG-Local-Eq-Const-Level-Sets}.

\subsection{Liminf inequality}

We now derive the final ingredient of the proof, the liminf inequality. 
Define 
\begin{equation}
\label{def:DOG-hatI}
\hat I(\hrho):=
\begin{cases}
\sup\limits_{g\in C^{1,2}_c(\R\times \Gamma)} \hat {\mathcal J}(\hrho,g)\quad
& \text{if }\hrho_t\ll T(\g)d\g, \ \hrho_t(d\g) = f_t(\g)T(\g)d\g\text{ with } f\text{ continuous on }
\Int \Gamma,\\[-3\jot]
&\qquad\qquad \text{for almost all $t\in[0,T]$},\\
+\infty &\text{otherwise},
\end{cases}
\end{equation}
where
\begin{multline}
\hat{\mathcal J}(\hrho,g) := 
\int_{\Gamma}g_Td\hrho_T
-\int_{\Gamma}g_0d\hrho_0
-\int_0^T\int_{\Gamma}
\big{(}\partial_t g_t(\g)+A(\gamma)g_t''(\gamma)+B(\gamma)g_t'(\gamma)
\big{)}\hrho_t(d\gamma)dt\\
-\frac{1}{2}\int_0^T\int
_{\Gamma}A(\gamma)(g_t'(\gamma))^2\hrho_t(d\gamma)dt,
\end{multline}
and we use $g'$ and $g''$ to indicate derivatives with respect to $h$.
For $\gamma\in \Gamma$, the coefficients are defined by
\begin{align}
\label{def:ABT}
A(\gamma):=\frac{1}{T(\gamma)}
\int_{\xi^{-1}(\gamma)}\frac{(\nabla_p H)^2}
{|\nabla H|}d\Hausdorff^{1}, 
\quad 
B(\gamma):=\frac{1}
{T(\gamma)}\int_{\xi^{-1}(\gamma)}
\frac{\Delta_p H}{|\nabla H|}d\Hausdorff^1,
\quad 
T(\gamma):=\int_{\xi^{-1}(\gamma)}\frac{1}{|\nabla H|} d\Hausdorff^1. 
\end{align}
Note that for our particular choice of $H(q,p)=p^2/2m+V(q)$, we have $B(\gamma)=1/m$.

The class of test functions in~\eqref{def:DOG-hatI} is $C^{1,2}_c(\R\times \Gamma)$; recall that differentiability of a function $f:\Gamma\to\R$ is defined by restriction to one-dimensional subgraphs, and $C^{1,2}_c(\R\times \Gamma)$ therefore consists of functions $g:\Gamma\to\R$ that are twice continuously differentiable in $h$ in this sense. The subscript $c$ indicates that we restrict to functions that vanish for sufficiently large $h$ (i.e. somewhere along the top edge of $\Gamma$).

Note that again $\hat I\geq0$; formally, $\hat I(\hrho)=0$ iff $\hrho$ satisfies the diffusion equation 
\begin{align*}
\partial_t\hrho=(A\hrho)''- (B\hrho)',
\end{align*}
and we will investigate this equation in more detail in the next section.

\medskip
\begin{theorem}[Liminf inequality]\label{DOG-eq:DOG-Liminf-Theorem}
Under the same assumptions as in Theorem~\ref{DOG-thm:DOG-Compact}, let  $\rho^\vep\rightarrow\rho$ in $\mathcal M([0,T];\mathbb{R}^2)$ and $\hrho^\vep:= \xi_\#\rho^\vep\rightarrow\xi_\#\rho=:\hrho$ in $C([0,T];\M{P}(\G))$. Then \begin{align*}
 \liminf\limits_{\vep\rightarrow 0}I^\vep(\rho^\vep)\geq \hat I(\hrho).
\end{align*}
\end{theorem}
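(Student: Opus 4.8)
The plan is to follow the formal computation sketched in the introduction: start from the duality formula~\eqref{DOG-eq:DOG-Large-Dev-Rate-Fn} for $I^\vep$, restrict the test functions $f$ to those of the form $f = g\circ\xi$ with $g\in C^{1,2}_c(\R\times\Gamma)$, pass to the limit $\vep\to0$ in the resulting lower bound, and then recognize the limit as $\hat{\mathcal J}(\hrho,g)$. Taking the supremum over $g$ at the end yields $\hat I(\hrho)$. Throughout, $\rho_t$ and $\hrho_t$ are absolutely continuous with the structure provided by Lemma~\ref{DOG-lem:DOG-Local-Eq-Const-Level-Sets} and its corollary, which is what makes the definition of $\hat I$ applicable to the limit.

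Concretely, first I would note that for any fixed $g\in C^{1,2}_c(\R\times\Gamma)$, the function $f := g\circ\xi$ is an admissible (indeed $C^{1,2}_c$) test function in~\eqref{DOG-eq:DOG-Large-Dev-Rate-Fn}, so $I^\vep(\rho^\vep)\geq \LDJ^\vep(\rho^\vep, g\circ\xi)$. The crucial algebraic point is that $\nabla(g\circ\xi)$ and the various second-order terms, when composed with $\xi$, produce expressions whose $\rho^\vep$-integrals can be computed. In particular $J\nabla H\cdot\nabla(g\circ\xi) = g'(\xi)\,J\nabla H\cdot\nabla H = 0$, so the singular $1/\vep$ term drops out identically — this is the whole point of choosing $\xi$ to be constant on level sets. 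What remains is
\[
\LDJ^\vep(\rho^\vep, g\circ\xi) = \int_\Gamma g_T\,d\hrho^\vep_T - \int_\Gamma g_0\,d\hrho^\vep_0 - \int_0^T\!\!\int_{\R^2}\!\Bigl(\partial_t g(\xi) + g''(\xi)|\nabla_p H|^2 + g'(\xi)\Delta_p H + \tfrac12 g'(\xi)^2|\nabla_p H|^2\Bigr)d\rho^\vep_t\,dt,
\]
using $\Delta_p(g\circ\xi) = g''(\xi)|\nabla_p H|^2 + g'(\xi)\Delta_p H$ and $|\nabla_p(g\circ\xi)|^2 = g'(\xi)^2|\nabla_p H|^2$.

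Next I would pass to the limit $\vep\to0$ term by term. The boundary terms converge because $\hrho^\vep\to\hrho$ in $C([0,T];\mathcal P(\Gamma))$ and $g_0,g_T\in C_b(\Gamma)$. For the bulk terms the integrands are continuous and bounded (using the compact support of $g$ in $h$ together with assumption~\ref{cond:DOG-H-Growth}, $|\nabla_p H|^2, |\Delta_p H|\leq C(1+H)$, and the $H$-moment bound~\eqref{DOG-eq:Ham-Bounds-DOG}, which controls any mild unboundedness), so narrow convergence $\rho^\vep\to\rho$ in $\mathcal M([0,T]\times\R^2)$ gives convergence of each integral against $\rho$. At this stage one has $\liminf_\vep I^\vep(\rho^\vep)\geq \int_0^T\!\int_{\R^2}(\dots)\,d\rho_t\,dt$ with $\rho$ on the right. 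The final step is to rewrite these $\rho$-integrals as integrals over $\Gamma$ against $\hrho$: by the local-equilibrium identity~\eqref{DOG-prop:loc-eq-DOG}, for any function $\Phi$ depending on $x$ only through $\xi(x)$ times a level-set-dependent weight,
\[
\int_{\R^2}\Phi(\xi(x))\,w(x)\,d\rho_t(x) = \int_\Gamma \Phi(\g)\Bigl(\int_{\xi^{-1}(\g)}\frac{w}{|\nabla H|}\,d\Hausdorff^1\Bigr)\alpha_t(\g)\,d\g = \int_\Gamma \Phi(\g)\,\overline{w}(\g)\,\hrho_t(d\g),
\]
where $\overline w(\g) = \frac1{T(\g)}\int_{\xi^{-1}(\g)} w/|\nabla H|\,d\Hausdorff^1$ and we used $\hrho_t(d\g)=\alpha_t(\g)T(\g)d\g$. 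Applying this with $w=|\nabla_p H|^2$ gives the coefficient $A(\g)$, with $w=\Delta_p H$ gives $B(\g)$, and with $w=1$ gives $T(\g)$; the $\partial_t g$ term is handled by noting $\int_{\R^2}\partial_t g(\xi(x))\,d\rho_t = \int_\Gamma \partial_t g\,d\hrho_t$ directly. This produces exactly $\hat{\mathcal J}(\hrho, g)$, and taking $\sup_g$ yields $\liminf_\vep I^\vep(\rho^\vep)\geq \hat I(\hrho)$; the definition~\eqref{def:DOG-hatI} is legitimate here because the corollary to Lemma~\ref{lem:Continuity-Rho/T} guarantees $\hrho_t\ll T(\g)d\g$ with continuous density on $\Int\Gamma$.

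The main obstacle I anticipate is the justification of the limit passage near the vertices of $\Gamma$, where $T(\g)\to\infty$ (homoclinic orbits) and the coefficients $A,B$ degenerate: one must check that the test functions $g\circ\xi$ are genuinely $C^{1,2}$ across the separatrices (this is why $g$ is required to be differentiable in the subgraph sense and why, in the local-equilibrium lemma, $\zeta$ was taken locally constant near vertices) and that no mass of $\rho^\vep$ concentrates on the measure-zero set of separatrices in a way that would spoil term-by-term convergence. A careful treatment requires either an approximation argument restricting $g$ to be constant near each vertex (as in the proof of Lemma~\ref{DOG-lem:DOG-Local-Eq-Const-Level-Sets}) and then removing that restriction by density, or a direct estimate using the $H$-moment bound to show the contribution of neighbourhoods of the critical level sets is uniformly small. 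The remaining estimates — boundedness of integrands, uniform integrability from~\eqref{DOG-eq:Ham-Bounds-DOG} — are routine.
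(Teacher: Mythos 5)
Your proposal takes essentially the same route as the paper: restrict the supremum to test functions $f = g\circ\xi$ with $g\in C^{1,2}_c(\R\times\Gamma)$, observe that the $1/\vep$ term drops out because $J\nabla H\cdot\nabla(g\circ\xi)=0$, pass to the limit term by term using narrow convergence (the remaining integrands are bounded, continuous, and compactly supported by coercivity of $H$ and the compact $h$-support of $g$, so the $H$-moment bound is not actually needed for this step), and then rewrite the $\rho$-integrals via the local-equilibrium identity and the co-area formula to recover $A$, $B$, $T$. The vertex obstacle you anticipate at the end is in fact already resolved before it arises: the subgraph definition of $C^{1,2}_c(\R\times\Gamma)$ forces the one-sided values of $g'$ and $g''$ to agree at each interior vertex, which makes $g\circ\xi$ a genuine $C^{1,2}_c(\R\times\R^2)$ function, so the paper simply substitutes these test functions directly with no further approximation or density argument.
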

\begin{proof}
Recall the rate functional from \eqref{DOG-eq:DOG-Large-Dev-Rate-Fn}
\begin{align}
I^\vep(\rho^\vep)=\sup\limits_{f\in C_c^{1,2}(\mathbb{R}\times\mathbb{R}^{2})}\mathcal{J}^\vep(\rho^\vep,f),&  \ \ \text{ where}
\label{eq:thm-lim-inf-rate-fn}
\\
\mathcal{J}^\vep(\rho^\vep,f):= 
\int_{\mathbb{R}^{2}}f_Td\rho_T^\vep
-\int_{\mathbb{R}^{2}}f_0d\rho_0^\vep
-\int_0^T\int_{\mathbb{R}^{2}}
\Bigl(\partial_t f+\frac{1}{\vep}J\nabla H\cdot\nabla f+&\Delta_pf\Bigr)d\rho_t^\vep dt
-\frac{1}{2}\int_0^T\int
_{\mathbb{R}
^{2}}\left|\nabla_p f\right|^2d\rho_t^\vep dt.\nonumber
\end{align}
Define $\hat{\mathcal{A}}:=\left\{ f=g\circ \xi: \ g\in C_c^{1,2}(\mathbb{R}\times \Gamma)\right\}$. Then we have 
\begin{align*}
I^\vep(\rho^\vep)\geq\sup\limits_{f\in \hat{\mathcal{A}}}\mathcal{J}^\vep(\rho^\vep,f).
\end{align*}
Since $J\nabla H\nabla (g\circ\xi)= 0$, upon substituing $f=g\circ \xi$ into $\mathcal \LDJ^\vep$ the $O(1/\vep)$ term  vanishes. 
Using the notation $g'$ for the partial derivative with respect to $h$, $\partial_t g$ for the time derivative, and suppressing the dependence of $g$ on time, we find
\begin{multline}\label{DOG-eq:DOG-Liminf-Rate-Fn-Substitute}
\mathcal{J}^\vep(\rho^\vep,g\circ \xi):=
\int_{\G}g_Td\hrho_T^\vep
-\int_{\G}g_0d\hrho_0^\vep
-\int_0^T\int_{\mathbb{R}^{2}}\bigg{(}\partial_t g(\xi(x))+
g''(\xi(x))(\nabla_p H(x))^2+g'(\xi(x))\Delta_p H(x)\bigg{)}\rho_t^\vep(dx) dt\\
-\frac{1}{2}\int_0^T\int
_{\mathbb{R}
^{2}}|g'(\xi(x))\nabla_p H(x)|^2\rho_t^\vep(dx) dt.
\end{multline}
The limit of \eqref{DOG-eq:DOG-Liminf-Rate-Fn-Substitute} is determined term by term. Taking the fourth term as an example, using the co-area formula and the local-equilibrium result of Lemma~\ref{DOG-lem:DOG-Local-Eq-Const-Level-Sets}, 
the fourth term on the right-hand side of \eqref{DOG-eq:DOG-Liminf-Rate-Fn-Substitute} gives
\begin{multline*}
\int_0^T\!\!\int_{\mathbb{R}^2}g''(\xi(x))(\nabla_p H(x))^2\rho^\vep_t(dx)dt\xrightarrow{\vep\rightarrow 0}
\int_0^T\!\!\int_{\mathbb{R}^2}g''(\xi(x))(\nabla_p H(x))^2\rho_t(dx)dt\\
=\int_0^T dt\int_{\Gamma}\frac{g''(\g)\hrho_t(d\gamma)}{T(\gamma)}\bigg{(}\int_{\xi^{-1}(\gamma)}\frac{(\nabla_p H(y))^2}{|\nabla H(y)|}
\Hausdorff^1(dy)\bigg{)}=\int_0^T \!\!\int_{\Gamma}A(\gamma)g''(\gamma)\hrho_t(d\gamma)dt,
\end{multline*}
where $A:\Gamma\rightarrow \mathbb{R}$ is defined in~\eqref{def:ABT}.
Proceeding similarly with the other terms we find
\begin{align}\label{DOG-eq:DOG-Liminf-First-Admissible-Class}
\liminf\limits_{\vep\rightarrow 0} I^\vep(\rho^\vep)\geq \sup\limits_{g\in C^{1,2}_c(\mathbb{R}\times\Gamma)} \mathcal{\hat J}(\hrho,g).
\end{align}
This concludes the proof of Theorem~\ref{DOG-eq:DOG-Liminf-Theorem}.
\end{proof}

\subsection{Study of the limit problem}

We now investigate the limiting functional $\hat I$ from \eqref{def:DOG-hatI} a little further. The two main results of this section are that $\hat {\mathcal J}$ can be written as 
\begin{equation}
\label{DOG-expr:J-alt}
\hat {\mathcal J}(\hrho,g) = \int_{\Gamma}g_Td\hrho_T
-\int_{\Gamma}g_0d\hrho_0
-\int_0^T\int_{\Gamma}
\Bigl[\partial_t g_t\,d\hrho_t+\Bigl((\TA \,g_t')' + \frac12 \TA\, {g_t'}^2\Bigr)\frac{d\hrho_t}{T} \Bigr]dt,
\end{equation}
and that $\hat I$ satisfies
\begin{equation}
\label{DOG-ineq:IhatItilde}
\hat I(\hrho) \geq \sup_{g\in \mathcal A} \hat{\mathcal J}(\hrho,g)
\qquad\text{for all }\hrho\in C([0,T];\M P(\Gamma)),
\end{equation}
where $\M A$ is the larger class
\begin{align}\label{DOG-eq:DOG-Liminf-Final-Bounded-Test-Class}
\M{A}:=\bigg{\{}g:C^{1,0}(\R\times\Gamma): g\big{|}_{I_k}\in C^{1,2}_b(\R\times I_k), \quad \forall \text{ interior vertex }O_j \ \forall t:
\sum_{k:I_k\sim O_j}\pm_{kj} \,g_t'(O_j,k)
\,{\TA}(O_j,k)=0 
\bigg{\}}.
\end{align} 
The admissible set $\M A$ relaxes the conditions on $g$ at interior vertices: instead of requiring $g$ to have identical derivatives coming from each edge, only a single scalar combination of the derivatives has to vanish. 
(In fact it can be shown that equality holds in~\eqref{DOG-ineq:IhatItilde}, but that requires a further study of the limiting equation that takes us too far here.)

Both results use some special properties of $T$, $A$, and $B$, which are given by the following lemma. In this lemma and below we use $\TA$ and $TB$ for the functions obtained by multiplying $T$ with $A$ and $B$; these combinations play a special role, and we treat them as separate functions. 
\begin{lemma}[Properties of $\TA$ and $TB$] 
The functions $\TA$ and $TB$ have the following properties.
\label{DOG-lem:propsTATB}
\begin{enumerate}
\item \label{DOG-lem:propsTATB-part1}
$\TA \in C^1(I_k)$ for each $k$, and $(\TA)' = TB$;
\item \label{DOG-lem:propsTATB-part3}
$\TA$ is bounded on compact subsets of $\Gamma$;
\item \label{DOG-lem:propsTATB-part4}
At each interior vertex $O_j$, for each $k$ such that $I_k\sim O_j$, $\TA(O_j,k) := \lim\limits_{\substack{h\in I_k\\h\to O_j}} \TA(h,k)$ exists, and 
\begin{equation}
\label{DOG-prop:summation-TA}
\sum_{k:I_k\sim O_j} \pm_{kj} \,\TA(O_j,k) = 0.
\end{equation}
\end{enumerate}
\end{lemma}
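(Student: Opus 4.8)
The plan is to reduce all three parts to a single geometric identity. For $\gamma=(h,k)$ an interior point of an edge $I_k$, the level set $\xi^{-1}(\gamma)$ is a periodic orbit of \eqref{eq:Hamiltonian}, i.e.\ a $C^1$ Jordan curve in $\R^2$ along which $\nabla H\neq0$; let $D(\gamma)\subset\R^2$ be the bounded open region it encloses. I claim
\[
\TA(\gamma)=\int_{D(\gamma)}\Delta_p H\,dx .
\]
To see this, apply the divergence theorem on $D(\gamma)$ to the vector field $F(q,p):=(0,\nabla_p H(q,p))$: one has $\div F=\Delta_p H$, while $\nabla H$ points out of $D(\gamma)$ so the outward unit normal on $\partial D(\gamma)=\xi^{-1}(\gamma)$ is $\nabla H/|\nabla H|$ and $F\cdot n=(\nabla_p H)^2/|\nabla H|$; hence the boundary integral is precisely $\int_{\xi^{-1}(\gamma)}\frac{(\nabla_p H)^2}{|\nabla H|}\,d\Hausdorff^1=T(\gamma)A(\gamma)$. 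For the concrete $H=p^2/2m+V$ this becomes $\TA(\gamma)=|D(\gamma)|/m$, the form I would actually use.

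For part~\ref{DOG-lem:propsTATB-part1}, I would pick two interior points $(h_0,k)$, $(h,k)$ with $h_0<h$ and apply the co-area formula on the annular region $D(h,k)\setminus\overline{D(h_0,k)}$ (where $\nabla H\neq0$), obtaining $\int_{D(h,k)\setminus\overline{D(h_0,k)}}\Delta_p H\,dx=\int_{h_0}^{h}TB(h',k)\,dh'$. Together with the identity above this gives $\TA(h,k)-\TA(h_0,k)=\int_{h_0}^{h}TB(h',k)\,dh'$. Since the orbits $\xi^{-1}(h',k)$ depend smoothly on $h'$ in the interior of $I_k$ (implicit function theorem, $\nabla H\neq0$) and $\Delta_p H/|\nabla H|$ is continuous, $h'\mapsto TB(h',k)$ is continuous there, so $\TA$ is $C^1$ on the interior of each edge with $(\TA)'=TB$ (only $C^0$ up to a saddle endpoint, where $T$, hence $TB$, diverges logarithmically).

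Part~\ref{DOG-lem:propsTATB-part3} then follows quickly: a compact $K\subset\Gamma$ meets finitely many edges and on each of them $h$ stays in a compact interval; by coercivity of $H$ (A1), $|D(h,k)|\leq|\{H\leq h\}|<\infty$, and by (A2) $|\Delta_p H|$ is bounded on $\{H\leq h\}$ (for the concrete $H$, $\Delta_p H\equiv1/m$), so $\TA$ is bounded on $K$.

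The real content is part~\ref{DOG-lem:propsTATB-part4}. Using the structure of $\Gamma$ from Section~\ref{DOG-sec:Graph}, at an interior vertex $O_j$ the level component through $O_j$ is a figure-eight enclosing two disjoint bounded open sets $\Omega_1,\Omega_2$, and the three incident edges are the two ``lower'' ones $I_{k_1},I_{k_2}$ (with $O_j$ at their upper ends, so $\pm_{k_1j}=\pm_{k_2j}=+1$) and the ``upper'' one $I_{k_3}$ (with $O_j$ at its lower end, so $\pm_{k_3j}=-1$). I would then check that $D(h,k_i)\uparrow\Omega_i$ as $h\uparrow H(O_j)$ for $i=1,2$, while $D(h,k_3)\downarrow\overline{\Omega_1}\cup\overline{\Omega_2}$ as $h\downarrow H(O_j)$ (the separatrix being $\Hausdorff^2$-null), so that by monotone convergence of the domains and dominated convergence of the integrals the one-sided limits $\TA(O_j,k)$ exist, with $\TA(O_j,k_i)=\int_{\Omega_i}\Delta_p H\,dx$ and $\TA(O_j,k_3)=\int_{\Omega_1\cup\Omega_2}\Delta_p H\,dx=\int_{\Omega_1}\Delta_p H\,dx+\int_{\Omega_2}\Delta_p H\,dx$ by disjointness; the signed sum then telescopes to $0$. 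The only real obstacle is the level-set geometry near the saddle — the figure-eight picture, the identification of which edges go up and which go down, disjointness of $\Omega_1,\Omega_2$, and that $D(h,k_3)$ converges to exactly $\overline{\Omega_1}\cup\overline{\Omega_2}$ — but this is exactly the information packaged in the construction of $\Gamma$ in Section~\ref{DOG-sec:Graph}, so it is bookkeeping rather than a genuinely new difficulty.
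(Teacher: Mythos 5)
Your argument is correct and rests on the same key identity $\TA(\gamma)=\int_{D(\gamma)}\Delta_p H\,dx$, which is exactly the divergence-theorem formula \eqref{DOG-eq:alt-def-TA} used in the paper's proof, so the route is essentially the same. You are, however, slightly more careful in two places. First, you observe (correctly) that for the concrete $H$ one has $\TA(\gamma)=|D(\gamma)|/m$ and hence $(\TA)'=T/m=TB$, which \emph{diverges} logarithmically as a saddle is approached; the paper's intermediate claim that ``$TB$ is bounded near each interior vertex'' is therefore not literally correct, and the right statement is that $TB$ is merely locally integrable at the saddle value. Your direct domain-limit argument in part~\ref{DOG-lem:propsTATB-part4} (monotone/dominated convergence of $\int_{D(h,k)}\Delta_p H$ as $h$ tends to the saddle value, plus disjointness of $\Omega_1,\Omega_2$ and $\Hausdorff^2$-nullity of the separatrix) avoids this bounded-$TB$ step entirely and still gives existence of the one-sided limits and the telescoping identity \eqref{DOG-prop:summation-TA}. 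Second, and consistently with the first point, your caveat that $\TA$ is $C^1$ only on $\Int I_k$ (and merely continuous up to a saddle endpoint) is accurate, so part~\ref{DOG-lem:propsTATB-part1} of the lemma should be read as $\TA\in C^0(I_k)\cap C^1(\Int I_k)$ rather than $C^1$ up to the closed endpoints.
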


From this lemma the expression~\eqref{DOG-expr:J-alt} follows by simple manipulation.

\medskip

With these two results, we can obtain a {differential-equation characterization} of those $\hrho$ with $\hat I(\hrho)=0$. 
Assume that a $\hrho$ with $\hat I(\hrho)=0$ is given. By rescaling we find that for all $g\in\M A$,
\begin{equation}
\label{DOG-eq:weak-DOG}
\int_{\Gamma}g_Td\hrho_T
-\int_{\Gamma}g_0d\hrho_0
=\int_0^T\int_{\Gamma}
\Bigl[\partial_t g_t\,d\hrho+(\TA \,g_t')' \frac{d\hrho_t}{T} \Bigr]dt.
\end{equation}
As already remarked we find a parabolic equation inside each edge of $\Gamma$,
\begin{equation}
\label{DOG-eq:PDE-on-graph}
\partial_t \hrho_t = \Bigl(\TA\, \bigl(\frac {\hrho_t}T\bigr)'\Bigr)'
= (A\hrho_t)'' - (B\hrho_t)'.
\end{equation}
We next determine the boundary and connection conditions at the vertices.

Consider a  single interior vertex $O_j$, and choose a function $g\in \M A$ such that $\supp g$ contains no other vertices. Writing $\hrho_t(d\g) = f_t(\g)T(\g)d\g$ we find first that $f_t$ is continuous at $O_j$, by the definition~\eqref{def:DOG-hatI}
of~$\hat I$. Then, assuming that $\hrho$ is smooth enough for the following expressions to make sense\footnote{This can actually be proved using the properties of $A$ and $B$ near the vertices and applying standard parabolic regularity theory on each of the edges.}, we perform two partial integrations in $\g$ and one in time on~\eqref{DOG-eq:weak-DOG} and substitute~\eqref{DOG-eq:PDE-on-graph} to find
\[
0 = \int_0^T f_t(O_j) \sum_{k: I_k\sim O_j} \pm_{kj} \,\TA(O_j,k) g'_t(O_j,k)\, dt
- \int_0^T g_t(O_j)\sum_{k: I_k\sim O_j} \pm_{kj} \,\TA(O_j,k) f'_t(O_j,k)\, dt.
\]
The first term vanishes since $g\in\M A$, while the second term leads to the connection condition
\[
\text{at each interior vertex }O_j: \quad \sum_{k: I_k\sim O_j} \pm_{kj} \,\TA(O_j,k) f'_t(O_j,k) =0.
\]

The lower exterior vertices and the top vertex are \emph{inaccessible}, in the language of~\cite{Feller1952,Mandl1968}, and therefore require no boundary condition. Summarizing, we find that if $\hat I(\hrho)=0$, then $\hrho =: fTd\g$ satisfies a weak version of equation~\eqref{DOG-eq:PDE-on-graph} with connection conditions
\[
\text{at each interior vertex }O_j: \quad \text{$f$ \ is continuous and } \quad \sum_{k: I_k\sim O_j} \pm_{kj} \,\TA(O_j,k) f'_t(O_j,k) =0.
\]
This combination of equation and boundary conditions can be proved to characterize a well-defined semigroup  using e.g.\ the Hille-Yosida theorem and the characterization of one-dimensional diffusion processes by Feller (e.g.~\cite{Feller1952}). 
\bigskip

We now prove the inequality~\eqref{DOG-ineq:IhatItilde}.
\begin{lemma}[Comparison of $\hat I$ and $\tilde I$]
We have 
\[
\hat I(\hrho) \geq \tilde I(\hrho) := \sup_{g\in\M A} \hat{\M J}(\hrho,g).
\]
\end{lemma}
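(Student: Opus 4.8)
The plan is to prove the inequality by an approximation argument: I will show that every $g\in\M A$ is a limit, in the sense $\hat{\mathcal J}(\hrho,g_n)\to\hat{\mathcal J}(\hrho,g)$, of functions $g_n\in C^{1,2}_c(\R\times\Gamma)$. Since $\hat{\mathcal J}(\hrho,g_n)\le\hat I(\hrho)$ by the definition of $\hat I$, this gives $\hat{\mathcal J}(\hrho,g)\le\hat I(\hrho)$, and taking the supremum over $g\in\M A$ yields $\tilde I(\hrho)\le\hat I(\hrho)$. If $\hat I(\hrho)=+\infty$ there is nothing to prove, so I assume $\hat I(\hrho)<\infty$. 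Then, by definition of $\hat I$, for a.e.\ $t$ we have $\hrho_t(d\g)=f_t(\g)T(\g)\,d\g$ with $f_t$ continuous on $\Int\Gamma$, hence locally bounded near each of the finitely many interior vertices $O_j$; and, as usual, testing $\hat{\mathcal J}$ with functions of the form $2\log\phi$ and optimising over $\phi$ shows that $\hat I(\hrho)<\infty$ already forces the relative-Fisher-information bound $\int_0^T\int_\Gamma|f_t'|^2f_t^{-1}\TA\,d\g\,dt<\infty$. In addition, $\hrho\in C([0,T];\M P(\Gamma))$ makes the family $\{\hrho_t:t\in[0,T]\}$ tight. These facts, together with the structural identities of Lemma~\ref{DOG-lem:propsTATB}, are all that I use (in the situations where the lemma is applied, Theorem~\ref{DOG-thm:DOG-Compact} additionally supplies $\sup_t\int_\Gamma A\,d\hrho_t<\infty$, which is convenient for the truncation).

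For the construction of $g_n$: fix $g\in\M A$, take neighbourhoods $N_n(O_j)\downarrow\{O_j\}$ of each interior vertex, and inside $N_n(O_j)$ replace $g_t$ by a function that is affine in $h$ on a still smaller neighbourhood of $O_j$, with common slope $w_{n,j}(t)$ on all three incident edges (so that it is twice differentiable across $O_j$ along every subgraph) and that matches $g_t$ to first order on the edges; then multiply by a smooth cut-off $\chi_n$ on $\Gamma$ equal to $1$ on a large compact $K_n\uparrow\Gamma$ and $0$ off $K_{n+1}$, with derivatives bounded uniformly in $n$. This produces $g_n\in C^{1,2}_c(\R\times\Gamma)$ with $\|g_n-g\|_\infty$, $\|\partial_t(g_n-g)\|_\infty$, $\|g_n'\|_\infty$, $\|g_n''\|_\infty$ bounded.

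For the error estimate: using $\TA=AT$ and $(\TA)'=TB$ (Lemma~\ref{DOG-lem:propsTATB}) I write $\hat{\mathcal J}$ in the form \eqref{DOG-expr:J-alt}, so that $\hat{\mathcal J}(\hrho,g)-\hat{\mathcal J}(\hrho,g_n)$ splits into a part supported in $\bigcup_jN_n(O_j)$ and a part supported in $\Gamma\setminus K_n$. In the vertex part I integrate by parts in $h$ on each edge; the boundary term at $O_j$ is $f_t(O_j)\sum_{k:I_k\sim O_j}\pm_{kj}\TA(O_j,k)\bigl(g_t'(O_j,k)-g_{n,t}'(O_j,k)\bigr)$, and both sums $\sum_k\pm_{kj}\TA(O_j,k)g_t'(O_j,k)$ and $\sum_k\pm_{kj}\TA(O_j,k)g_{n,t}'(O_j,k)$ vanish --- the first because $g\in\M A$, the second by \eqref{DOG-prop:summation-TA} since $g_{n,t}'(O_j,k)=w_{n,j}(t)$ does not depend on $k$. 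What remains of the vertex part is bounded by $C\int_{N_n(O_j)}\TA\,(|f_t'|+f_t)\,d\g$, and $\int_{N_n}\TA|f_t'|\,d\g\le\bigl(\int_\Gamma|f_t'|^2f_t^{-1}\TA\,d\g\bigr)^{1/2}\bigl(\int_{N_n}A\,d\hrho_t\bigr)^{1/2}\to0$ for a.e.\ $t$, because $A=\TA/T\to0$ near $O_j$, $f_t$ is locally bounded, and $\hrho_t(N_n)\to0$; a majorant proportional to $\int_\Gamma|f_t'|^2f_t^{-1}\TA\,d\g+\int_\Gamma A\,d\hrho_t\in L^1(0,T)$ lets me pass to the limit under the $t$-integral. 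The tail part is handled identically: the terminal/initial terms are $\le\|g\|_\infty\sup_t\hrho_t(\Gamma\setminus K_n)\to0$ by tightness, and the rest is dominated by $C\int_{\Gamma\setminus K_n}(A+B)\,d\hrho_t$ (recall $B\equiv1/m$), which vanishes for a.e.\ $t$ with an $L^1(0,T)$ majorant. Hence $\hat{\mathcal J}(\hrho,g_n)\to\hat{\mathcal J}(\hrho,g)$.

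The main obstacle is the vertex part: a function in $\M A$ may have a genuine first-derivative jump at an interior vertex, and it must be smoothed into a $C^{1,2}$ function without losing value in $\hat{\mathcal J}$. This succeeds precisely because the single scalar identity \eqref{DOG-prop:summation-TA} for $\TA$ is what both the defining condition of $\M A$ and the $C^1$-matching forced on $C^{1,2}_c$-functions reduce to at each vertex, so the integration-by-parts boundary terms cancel, while the vanishing of the diffusion coefficient $A$ and the integrability of $|f_t'|^2f_t^{-1}\TA$ near the vertices control the leftover interior contribution. The time-dependence and the truncation at large $h$ are routine given the tightness of $\{\hrho_t\}$ and the a priori bounds.
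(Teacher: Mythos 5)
Your proof is correct in its overall strategy but takes a genuinely different route from the paper's. The paper replaces $g$ on a shrinking annulus around each interior vertex $O$ by its constant value $g_t(0)$, using a cut-off $\zeta_\d$, and then computes $\hat{\mathcal J}(\hrho,g_\d)$ directly in the non-integrated form (with $A g_\d''$, $B g_\d'$, etc.). Via the weak convergence of $-\zeta_\d'$ to a signed Dirac at $O$ it isolates a boundary term $\int_0^T f_t(0)\sum_{k}\pm_{kO}\TA(0,k)\,g_t'(0,k)\,dt$, which vanishes precisely because $g\in\M A$, using that $a=f\TA$ is bounded and continuous near $O$. You instead replace $g$ by an affine function with a common slope $w_{n,j}(t)$ on all incident edges (so the boundary term for $g_n$ vanishes by the identity~\eqref{DOG-prop:summation-TA} rather than by the $\M A$-condition), and estimate the error by integrating by parts in the form~\eqref{DOG-expr:J-alt}. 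The cost of the integration by parts is that your vertex-region remainder contains $\TA\,|f_t'|$, which you control by Cauchy--Schwarz against the relative Fisher information $\int_0^T\int_\Gamma |f_t'|^2 f_t^{-1}\TA\,d\g\,dt$. That bound is an extra ingredient the paper never invokes for $\hrho$: you assert that it follows from $\hat I(\hrho)<\infty$ by the usual ``test with $2\log\phi$'' argument (the analogue of Theorem~\ref{DOG-thm:DOG-Compactness}), but you do not carry this out, and you yourself flag that the uniform bound on $\int_\Gamma A\,d\hrho_t$ is only known ``in the situations where the lemma is applied.'' The paper avoids this dependence on $f_t'$ entirely by staying in the non-integrated form, so its argument is self-contained under the stated hypotheses whereas yours is, as written, slightly conditional. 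On the other hand, you handle the large-$h$ truncation explicitly, which is a genuine improvement in rigour: the paper's $g_\d$ inherits the mere boundedness of $g\in\M A$ and so is not strictly admissible in the supremum defining $\hat I$, a step the paper suppresses.
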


\begin{proof}
Take $\hrho$ such that $\hat I(\hrho)<\infty$, implying that $\hrho_t(d\g) = f_t(\g)T(\g)d\g$ with $f_t$ continuous on $\Int \Gamma$ for almost all $t$.
Choose $g\in \M A$; we will show that $\hat I(\hrho)\geq \hat {\M J}(\hrho,g)$, thus proving the lemma. For simplicity we only treat the case of a single interior vertex, called $O$; the case of multiple vertices is a simple generalization. For convenience we also assume that $O$ corresponds to $h=0$.

Define
\begin{align}
g_{\d,t}(h,k)=g_t(h,k)\zeta_\d(h)+(1-\zeta_\d(h))g_t(0),  
\end{align}
where $\zeta_\d$ is a sequence of smooth functions such that 
\begin{itemize}
\item $\zeta_\d$ is identically zero in a $\d$-neighbourhood of $O$, and identically $1$ away from a $2\d$-neighbourhood of $O$; 
\item $\zeta_\d$ satisfies the growth conditions $|\zeta'_\d|\leq 2/\d$ and $|\zeta''_\d|\leq 4/\d^2$.
\end{itemize}

We calculate $\hat {\M J}(\hrho,g_\d)$.
The limit of the first three terms is straightforward: by dominated convergence we obtain
\[
\int_{\Gamma}g_{\d,T}d\hrho_T
-\int_{\Gamma}g_{\d,0}d\hrho_0
-\int_0^T\!\!\int_{\Gamma}
\partial_t g_{\d,t}\,d\hrho_t
\xrightarrow{{\d\to0}} \int_{\Gamma}g_{T}d\hrho_T
-\int_{\Gamma}g_{0}d\hrho_0
-\int_0^T\!\!\int_{\Gamma}
\partial_t g_{t}\,d\hrho_t.
\]
Next consider the term
\begin{align}\label{DOG-eq:DOG-Liming-C0-A-Term}
\int_0^T\!\!\int_\Gamma A(\gamma)g''_\d(\gamma)\hrho_t(d\gamma)dt=
\int_0^T\!\!\int_{\Gamma}\bigg{[}
g''(h,k)\zeta_\d(h)+2\zeta_\d'(h)g'(h,k)
+\zeta''_\d(h)\bigl[hg'(0,k)+O(h^2)\bigr]
\bigg{]}A(\g)\hrho_t(d\gamma) dt.
\end{align}

Since the function $(\g,t)\mapsto A(\g)g_t''(\g)\in L^\infty(\hat\rho_t)$ the first term in \eqref{DOG-eq:DOG-Liming-C0-A-Term} again converges by  dominated convergence :
\begin{align*}
\int_0^T\!\!\int_\Gamma g_t''(h,k)\zeta_\d(h)A(h,k)\hrho_t(d\gamma)dt
\xrightarrow{\d\rightarrow 0} \int_0^T\!\!\int_\Gamma g_t''(h,k)A(h,k)\hrho_t(d\gamma)dt.
\end{align*}
Abbreviate $f_t(\g)\TA(\g)$ as $a(\g)$; note that $a$ is continuous and bounded in a neighbourhood of $O$. 
Write the second term on the right-hand side in~\eqref{DOG-eq:DOG-Liming-C0-A-Term} as (supressing the time integral for the moment)
\begin{align*}
2\int_\Gamma\zeta_\d'(h)g'(h,k)a(h,k)dh
&=2\int_\Gamma \zeta_\d'(h)g'(h,k)\bigl(a(h,k)-a(0,k)\bigr)d\gamma 
+ 2\sum_k a(0,k)\int_{I_k}\zeta_\d'(h)\bigl(g'(h,k)-g'(0,k)\bigr)dh \\
&+2\sum_ka(0,k)g'(0,k)\int_{\Gamma_k}\zeta_\d'(h)dh\\
&\xrightarrow{\d\rightarrow 0}0+0-2\sum_{k:I_k\sim O} \pm_{kO}\,g'(0,k)\,a(0,k)
= 2\sum_{k:I_k\sim O} \pm_{kO}\,g'(0,k)\,f(0,k)\,\TA(0,k).
\end{align*}
The limit above holds since $-\zeta_\d'(\cdot,k)$ converges weakly to a signed Dirac, $\pm_{kO} \delta_0$, as $\d\to0$.
Proceeding similarly with the remaining terms we have 
\begin{align*}
\hat I(\hrho)\geq\hat\LDJ(\hrho,g_\d)\xrightarrow{\d\rightarrow 0} &\int_{\Gamma}g_Td\hrho_T
-\int_{\Gamma}g_0d\hrho_0
-\int_0^T\!\!\int_{\Gamma}
\big{(}\partial_t g_t+A(\gamma)g_t''(\gamma)+B(\gamma)g_t'(\gamma)
\big{)}\hrho_t(d\gamma)dt\\
&-\frac{1}{2}\int_0^T\!\!\int
_{\Gamma}A(\gamma){g_t'(\gamma)}^2\hrho_t(d\gamma)dt-\int_0^T f_t(0,k)\biggl[\sum_{k:I_k\sim O}\pm_{kO}\TA(0,k)g_t'(0,k)\biggr] dt.
\end{align*}
Note that the final term vanishes by the requirement that $g\in \M A$, and therefore the right-hand side above equals $\hat \LDJ(\hrho,g)$.
This concludes the proof of the lemma. 
\end{proof}

We still owe the reader the proof of Lemma~\ref{DOG-lem:propsTATB}.

\begin{proof}[Proof of Lemma~\ref{DOG-lem:propsTATB}]
We first prove part~\ref{DOG-lem:propsTATB-part1}. For simplicity, assume first that $H$ has a single well, and therefore $\Gamma$ has only one edge, $k=1$. Since
\[
\div \begin{pmatrix} 0\\\nabla_p H\end{pmatrix} = \Delta_p H,
\]
and remarking that the exterior normal $n$ to the set $H\leq h$ equals $(0,\nabla_p H/|\nabla H|)^T$, we calculate that
\begin{equation}
\label{DOG-eq:alt-def-TA}
\int\limits_{\{H\leq h\}} \Delta_p H  = \int\limits_{\{H =h\}} \frac {(\nabla_p H)^2}{|\nabla H|}\, d\Hausdorff^1 = \TA(h).
\end{equation}
By the smoothness of $H$, the derivative of the left-hand integral is well-defined for all $h$ such that $\nabla H\not=0$ at that level. At such $h$ we then have
\[
TB(h) = \int\limits_{\{H=h\}} \frac{\Delta_pH}{|\nabla H|} \,d\Hausdorff^1 = 
 \partial_h \int\limits_{\{H\leq h\}} \Delta_p H = \partial_h \TA(h).
\]
For the multi-well case, this argument can simply be applied to each branch of $\Gamma$. 

For part~\ref{DOG-lem:propsTATB-part3}, since $H$ is coercive, $\{H\leq h\}$ is bounded for each $h$; since $H$ is smooth, therefore $\Delta_pH$ is bounded on bounded sets. From~\eqref{DOG-eq:alt-def-TA} it follows that $\TA$ also is bounded on bounded sets of $\Gamma$. 

Finally, for part~\ref{DOG-lem:propsTATB-part4}, note first that $TB$ is bounded near each interior vertex. This follows by an explicit calculation and our assumption that each interior vertex corresponds to exactly one, non-degenerate, saddle point. Since $(\TA)'=TB$, $\TA$ has a well-defined and finite limit at each interior saddle. The summation property~\eqref{DOG-prop:summation-TA} follows from comparing~\eqref{DOG-eq:alt-def-TA} for values of $h$ just above and below the critical value. For instance, in the case of a single saddle at value $h=0$, with two lower edges $k=1,2$ and upper edge $k=0$, we have
\begin{align*}
\lim_{h\uparrow 0} \ \TA(h,1) + \TA(h,2) \;&=\ \lim_{h\uparrow 0} \int\limits_{\xi^{-1}\bigl((-\infty,h]\times \{1\}\bigr)} \Delta_p H
\ + \int\limits_{\xi^{-1}\bigl((-\infty,h]\times \{2\}\bigr)} \Delta_p H\\
&= \ \lim_{h\uparrow 0} \int\limits_{\{H\leq h\}} \Delta_p H  \ 
=\  \lim_{h\downarrow 0} \int\limits_{\{H\leq h\}} \Delta_p H  = \lim_{h\downarrow0}\; \TA(h,0).
\end{align*}
This concludes the proof of Lemma~\ref{DOG-lem:propsTATB}.
\end{proof}

\subsection{Conclusion and discussion}

The combination of Theorems~\ref{DOG-thm:DOG-Compact} and~\ref{DOG-eq:DOG-Liminf-Theorem} give us that along subsequences $\hrho^\e := \xi_\# \rho^\e$ converges in an appropriate manner to some $\hrho$, and that
\[
\hat I(\hrho) \leq \liminf_{\e\to0} I^\e(\rho^\vep).
\]
In addition,  any $\hrho$ satisfying $I(\hrho)=0$ is a weak solution of the PDE 
\[
\partial_t \hrho = (A\hrho)''- (B\hrho)'
\]
on the graph $\Gamma$. This is the central coarse-graining statement of this section. We also obtain the boundary conditions, similarly as in the conventional weak-formulation method, by expanding the admissible set of test functions.

\medskip

In switching from the VFP equation~\eqref{DOG-eq:Intro-VFPa} to equation~\eqref{DOG-eq:Ran-Ham-Evo-R2} we removed two terms, representing the friction with the environment and the interaction between particles. Mathematically, it is straightforward to treat the case with friction, which leads to an additional drift term in the limit equation in the direction of decreasing~$h$. We left this out simply for the convenience of shorter expressions.

As for the interaction, represented by the interaction potential $\psi$, again there is no mathematical necessity for setting $\psi=0$ in this section; the analysis continues rather similarly. However, the limiting equation will now be non-local, since the particles at some $\gamma\in\Gamma$, which can be thought of as `living' on a full connected level set of $H$, will feel a force exerted by particles at a different $\gamma'\in \Gamma$, i.e. at a different level set component. This makes the interpretation of the limiting equation somewhat convoluted. 

\medskip

The results of the current and the next sections were proved by Freidlin and co-authors in a series of papers~\cite{FreidlinWentzell93,Freidlin1994,Freidlin1998,Freidlin2001,
Freidlin2004},  using probabilistic techniques. Recently, Barret and Von Renesse \cite{Barret2014} provided an alternative proof using Dirichlet forms and their convergence. The latter approach is closer to ours in the sense that it is mainly PDE-based method and of variational type. However, in \cite{Barret2014} the authors consider a perturbation of the Hamiltonian by a friction term and a non-degenerate noise, i.e. the noise is present in both space and momentum variables; this non-degeneracy appears to be essential in their method. Moreover, their approach invokes a reference measure which is required to satisfy certain non-trivial conditions. In contrast, the approach of this paper is applicable to degenerate noise and does not require such a reference measure. In addition, certain non-linear evolutions can be treated, such as the example of the VFP equation.

\section{Diffusion on a graph, $d>1$}\label{DOG-sec:DOG-d>1}

We now switch to our final example. As described in the introduction, the higher-dimensional analogue of the diffusion-on-graph system has an additional twist: in order to obtain unique stationary measures on level sets of $\xi$, we need to add an additional noise in the SDE, or equivalently, an additional diffusion term in the PDE. This leads to the equation
\begin{align}
\label{eq:diffusionmultidim}
\partial_t\rho=-\frac{1}{\vep}\div(\rho J\nabla H)+\frac{\kappa}{\vep}\div(a\nabla \rho)+\Delta_p\rho,
\end{align}
where $a:\Rd\rightarrow \R^{2d\times 2d}$ with $a\nabla H=0$, $\text{dim}(\text{Ker}(a))=1$ and $\kappa,\vep>0$ with $\kappa\gg\vep$. The spatial domain is $\mathbb{R}^{2d}$, $d>1$,  with coordinates $(q,p)\in\mathbb{R}^d\times \mathbb{R}^d$. Here the unknown is trajectory in the space of probability measures $\rho:[0,T]\rightarrow\mathcal{P}(\mathbb{R}^{2d})$; the Hamiltonian is the same as in the previous section, $H:\Rd\rightarrow\mathbb{R}$ given by $H(q,p)=p^2/2m+V(q)$.

The results for the limit $\vep\rightarrow 0$ in \eqref{eq:diffusionmultidim} closely mirror the one-degree-of-freedom diffusion-on-graph problem of the previous section; the only real difference lies in the proof of local equilibrium (Lemma~\ref{DOG-lem:DOG-Local-Eq-Const-Level-Sets}). For a rigorous proof of this lemma in this case, based on probabilistic techniques, we refer to~\cite[Lemma 3.2]{Freidlin2001}; here we only outline a possible analytic proof. 

Along the lines of Theorem~\ref{DOG-thm:DOG-Compactness}, and using boundedness of the rate functional $I^\e(\rho^\vep)$, one can show that
\begin{align*}
\frac{1}{2}\int_0^T\int_{\mathbb{R}^2}\frac{|\nabla_p\rho^\vep|^2}{\rho^\vep}+\frac{\k}{\vep}\int_0^T\int_{\mathbb{R}^2}\frac{a\nabla\rho^\vep\cdot\nabla\rho^\vep}{\rho^\vep}\leq C.
\end{align*}
Multiplying this inequality by $\vep/\k$ and using the weak convergence $\rho^\vep\rightharpoonup\rho$  along with the lower-semicontinuity of the Fisher information \cite[Theorem D.45]{FengKurtz06} we find
\begin{align*}
\int_0^T\int_{\mathbb{R}^2}\frac{a\nabla\rho\cdot\nabla\rho}{\rho}=0,
\end{align*}
or in variational form, for almost all $t\in[0,T]$,
\begin{align*}
&0=\sup\limits_{\varphi\in C_c^\infty(\Rd)}\int_{\Rd}\mathrm{div}(a\nabla\varphi)\rho_t-\frac{1}{2}\int_{\Rd}a\nabla\varphi\cdot\nabla\varphi\rho_t \\
&\qquad \Longleftrightarrow \ \ 0= \int_{\Rd}\mathrm{div}(a\nabla\varphi)\rho_t,  \qquad \forall\varphi\in C_c^\infty(\Rd).
\end{align*} 
Applying the co-area formula we find 
\begin{align}\label{DOG-eq:DOG-d>1-Loc-Eq-Co-area}
\int_{\xi^{-1}(\g)}\frac{\rho(x)}{|\nabla H(x)|}\mathrm{div}(a(x)\nabla\varphi(x))\,\Hausdorff^{2d-1}(dx)=0,
\end{align}
where $\Hausdorff^{2d-1}$ is the $(2d-1)$ dimensional Haursdoff measure. Let $\mathcal{M}_\g$ be the $(2d-1)$ dimensional manifold $\xi^{-1}(\g)$ with volume element $|\nabla H|^{-1}\Hausdorff^{2d-1}$. Then \eqref{DOG-eq:DOG-d>1-Loc-Eq-Co-area}  becomes
\begin{align*}
\int_{\mathcal{M}_\g}\rho(x)\,\mathrm{div}_{\mathcal{M}}(a(x)\nabla_{\mathcal{M}}\varphi(x))\, \mathrm{vol}_{\mathcal{M}}(dx)=0,
\end{align*}
where  $\mathrm{div}_\mathcal{M}$ and $\nabla_{\mathcal{M}}$ are the corresponding differential operators on $\mathcal{M}_\g$, and $\mathrm{vol}_{\mathcal{M}}$ is the induced volume measure. Since  $a\nabla H=0$, $\text{dim}(\text{Ker}(a))=1$, $a$ is non-degenerate on the tangent space of $\mathcal{M}_\g$. Therefore, given $\psi\in C^\infty(\mathcal{M}_\g)$ with  $\int_{\mathcal{M}_\g}\psi\,d\,\mathrm{vol_{\mathcal{M}}}=0$,
 we can solve the corresponding Laplace-Beltrami-Poisson equation for $\varphi$,
\begin{align*}
\mathrm{div}_\mathcal{M}(a\nabla_{\mathcal{M}}\varphi)=\psi,
\end{align*}
and therefore 
\begin{align*}
\int_{\mathcal{M}_\g}\rho\,\psi\,\mathrm{dvol_{\mathcal{M}}}=0, \  \forall \psi\in C^\infty(\mathcal{M}_\g) \text{ with } \int_{\mathcal{M}_\g}\psi\,d\,\mathrm{vol_{\mathcal{M}}}=0.
\end{align*} 
Since $\mathcal M_\g$ is connected by definition, it follows that $\rho$ constant on $\mathcal{M}_\g$; this is the statement of Lemma~\ref{DOG-lem:DOG-Local-Eq-Const-Level-Sets}.

\section{Conclusion and discussion}
\label{sec: discussion}

In this paper we have presented a structure in which coarse-graining and `passing to a limit' combine in a natural way, and which extends also naturally to a class of approximate solutions. The central object is the rate function $I$, which is minimal and vanishes at solutions; in the dual formulation of this rate function, coarse-graining has a natural interpretation, and the inequalities of the dual formulation and of the coarse-graining combine in a convenient way.

We now comment on a number of issues related with this method.
\medskip

\emph{Why does this method work?}
One can wonder why the different pieces of the arguments of this paper fit together. Why do the relative entropy and the relative Fisher information appear? To some extent this can be recognized in the similarity between the duality definition of the rate function $I$ and the duality characterization of relative entropy and relative Fisher Information. The details of Appendix~\ref{DOG-sec:App-PDE} show this most clearly, but the similarity between the duality definition of the relative Fisher information and the duality structure of $I$ can readily be recognized: in~\eqref{def:I-gamma} combined with~\eqref{def:L-VFP-rescaled} we collect the $O(\g^2)$  terms
\[
\int_0^T \int_{\Rd} \bigg{[}\Delta_p f_t -  \frac pm \nabla_p f_t - \frac12 \left|\nabla_p f_t\right|^2\bigg{]}d\rho_tdt,
\]
and these match one-to-one to the definition~\eqref{DOG-eq:Relative-Fisher-Information}. This shows how the structure of the relative Fisher Information is to some extent `built-in' in this system.

\medskip

\emph{Relation with  other variational formulations}. Our variational formulation \eqref{eq: variational formulation} to `passing to a limit' is closely related to other variational formulations in the literature, notably the $\Psi$-$\Psi^*$ formulation and the method in \cite{PennacchioSavareColli05,AmbrosioSavareZambotti09}. In the $\Psi$-$\Psi^*$ formulation, a gradient flow of the energy $\mathcal{E_\varepsilon}:\mathcal{Z}\rightarrow \mathbb{R}$ with respect to the dissipation $\Psi_\varepsilon^*$ is defined to be a curve $\rho^\varepsilon\in C([0,T],\mathcal{Z})$ such that 
\begin{equation}
\label{psi-psi^* formulation}
\mathcal{A}^\varepsilon(\rho):=\mathcal{E}_\varepsilon(\rho_T)-\mathcal{E}_\varepsilon(\rho_0)+\int_0^T [\Psi_\varepsilon(\dot{\rho_t},\rho_t)+\Psi_\varepsilon^*(-\mathsf{D}\mathcal{E}_\varepsilon(\rho_t),\rho_t)]\,dt=0.
\end{equation}
`Passing to a limit' in a $\Psi$-$\Psi^*$ structure is then accomplished by studying (Gamma-) limits of the functionals~$\mathcal{A}^\varepsilon$.  The method introduced in \cite{PennacchioSavareColli05,AmbrosioSavareZambotti09} is slightly different. Therein `passing to a limit' in 
the evolution equation is executed by studying (Gamma-)limits of the functionals that appear in the approximating discrete minimizing-movement schemes. 

The similarities between these two  approaches and ours is that all the methods hinge on duality structure of the relevant functionals, allow one to obtain both compactness and limiting results, and can work with approximate solutions, see e.g.  \cite{ArnrichMielkePeletierSavareVeneroni12} and the papers above for details. In addition, all methods assume some sort of well-prepared initial data, such as bounded initial free energy and boundedness of the functionals. Our assumptions on the boundedness of the rate functionals arise naturally in the context of large-deviation principle since this assumption describes events of a certain degree of `improbability'.

The main difference is that the method of this paper makes no use of the gradient-flow structure, and therefore also applies to non-gradient-flow systems as in this paper. The first example, of the overdamped limit of the VFP equation, also is interesting in the sense that it derives a dissipative system from a non-dissipative one. Since the GENERIC framework unifies both dissipative and non-dissipative systems, we expect that the method of this paper could be used to derive evolutionary convergence for GENERIC systems (see the next point). Finally, we emphasize that using the duality of the rate functional is mathematically convenient because we do not need to treat the three terms in the right-hand side of \eqref{psi-psi^* formulation} separately. Note that although the entropy and energy functionals as well as the dissipation mechanism are not explictly present in this formulation, we are still able to derive an energy-dissipation inequality in~\eqref{eq: bound of energy and fisher information}.

\medskip

\emph{Relation with GENERIC.} As mentioned in the introduction, the Vlasov-Fokker-Planck system \eqref{DOG-eq:Intro-VFP} combines both conservative and dissipative effects. In fact it can be cast into the GENERIC form by introducing an excess-energy variable $e$, depending only on time, that captures the fluctuation of energy due to dissipative effects (but does not change the evolution of the system). The building blocks of the GENERIC for the augmented system for $(\rho,e)$ can be easily deduced from the conservative and dissipative effects of the original Vlasov-Fokker-Planck equation. Moreover, this GENERIC structure can be derived from the large-deviation rate functional of the empirical process \eqref{DOG-eq:Emperical-Measure}. We refer to \cite{DuongPeletierZimmer13} for more information.  This suggests that our method could be applied to other GENERIC systems.

\medskip

\emph{Gradient flows and large-deviation principles.} As mentioned in the introduction, this approach using the duality formulation of the rate functionals is motivated by our recent results on the connection between generalised gradient flows and large-deviation principles~ \cite{AdamsDirrPeletierZimmer11,AdamsDirrPeletierZimmer13,DuongPeletierZimmer14,DuongPeletierZimmer13,DirrLaschosZimmer12,
MielkePeletierRenger14}. We want to discuss here how the two overlap but are not the same. In \cite{MielkePeletierRenger14}, the authors show that if~$\opN^\ve$ is the adjoint operator of a generator of a Markov process that satisfies a \textit{detailed balance condition}, then the evolution \eqref{DOG-eq:Formal-Evolution} is the same as the generalised gradient flow induced from a large-deviation rate functional, which is of the form $\int_0^T \mathscr{L}^\varepsilon(\rho_t,\dot{\rho}_t)\,dt$, of the underlying empirical process. The generalised gradient flow is described via the $\Psi$-$\Psi^*$ structure as in \eqref{psi-psi^* formulation} with $\mathscr{L}^\varepsilon(z,\dot{z})=\Psi_\varepsilon(z,\dot{z})+\Psi_\varepsilon^*(z,-\mathsf{D}\mathcal{E}_\varepsilon(z))+\langle \mathsf{D}\mathcal{E}_\varepsilon(z),\dot{z}\rangle$. Moreover, $\mathcal{E}_\varepsilon$ and $\Psi_\varepsilon$ can be determined from $\mathscr{L}^\varepsilon$~\cite[Theorem 3.3]{MielkePeletierRenger14}. However, it is not clear if such characterisation holds true for systems that do not satisfy detailed balance. In addition, there exist (generalised) gradient flows for which we currently do not know of any corresponding microscopic particle systems, such as the Allen-Cahn and Cahn-Hilliard equations. 

\medskip

\emph{Quantification of coarse-graining error}. The use of the rate functional in a central role allows us not only to derive the limiting coarse-grained system but also to obtain quantitative estimates of the coarse-graining error. Existing quantitative methods such as \cite{Legoll2010} and \cite{GrunewaldOttoVillaniWestdickenberg09} only work for gradient flows systems since they use crucially the gradient flow structures. The essential estimate that they need is the energy-dissipation inequality, which is similar to \eqref{eq: bound of energy and fisher information}. Since we are able to obtain this inequality from the duality formulation of the rate functionals, our method would offer an alternative technique for obtaining quantitative estimate of the coarse-graining error for both dissipative and non-dissipative systems. We address this issue in detail in a companion article \cite{DLPSS-TMP}.

\medskip

\emph{Other stochastic processes.} The key ingredient of the method is the duality structure of the rate functional \eqref{DOG-eq:Abstract-Duality-Form-Rate-Fn} and \eqref{DOG-eq:Large-Dev-Rate-Fn-Generator-Form}. This duality formulation holds true for many other stochastic processes; indeed, the `Feng-Kurtz' algorithm~(see chapter 1 of~\cite{FengKurtz06}) suggests  that the large-deviation rate functional for a very wide class of Markov processes can  be written as 
\begin{equation*}
I(\rho)=\sup_{f}\left\{\langle f_T,\rho_T\rangle-\langle f_0,\rho_0\rangle -\int_0^T \langle \dot{f}_t,\rho_t\rangle\,dt-\int_0^T \mathcal{H}(\rho_t,f_t)\,dt\right\},
\end{equation*}
where $\mathcal{H}$ is an appropriate limit of `non-linear' generators. The formula \eqref{DOG-eq:Large-Dev-Rate-Fn-Generator-Form} is a special case. As a result, we expect that the method can be extended to this same wide class of Markov processes.

\begin{appendices}

\section{Proof of Lemma~\ref{DOG-lem:two-defs-RFI}}
\label{app:RFI-lemma-proof}
Define $\tilde \RelFI(f)$ to be the right-hand side in~\eqref{def:RelFI-ac},
\[
\tilde \RelFI(f):=
\begin{cases}
\displaystyle
\int_{\R^{2d}}\Bigl|\frac{\nabla_p f}{f}\mathds{1}_{\{f>0\}} + \frac pm \Bigr|^2 f\,dqdp,\qquad&\text{if}\quad \nabla_p f\in L^1_{\mathrm{loc}}(dqdp),\\
\infty&\text{otherwise}.
\end{cases}
\]
for $f\in L^1(\R^{2d})$.
We need to show that $\tilde \RelFI(f) = \RelFI(f\,dqdp|\mu)$. 

First assume that $\tilde \RelFI$ is finite. 
Then $\frac{\nabla_p f}{f}\mathds{1}_{\{f>0\}} + \frac pm\in L^2(fdqdp)$, which implies the following stronger statement.

\begin{lemma}
\label{lem:L2nabla}
One has
$$\frac{\nabla_p f}{f}\mathds{1}_{\{f>0\}} + \frac pm\in L^2_\nabla(fdqdp),$$
where the space $L^2_\nabla(fdqdp)$ is defined as the closure of $\left\{\nabla_p \varphi\,:\,\varphi\in C_c^\infty(\R^{2d})\right\}$ with 
respect to the norm $\|\cdot\|^2_{fdqdp}:= \int_{\R^{2d}}|\cdot|^2 \,fdqdp$.
\end{lemma}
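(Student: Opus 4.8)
The statement is a density/regularity upgrade: we already know $v:=\frac{\nabla_p f}{f}\mathds 1_{\{f>0\}}+\frac pm$ lies in $L^2(fdqdp)$, and we must show it lies in the \emph{closed subspace} $L^2_\nabla(fdqdp)$ spanned by gradients $\nabla_p\varphi$ of test functions. The plan is to realize $v$ as a limit, in the $fdqdp$-weighted $L^2$-norm, of such gradients, using a standard three-step approximation: truncation of $f$ away from $0$ and $\infty$, cutoff in the spatial variable, and mollification.

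\textbf{Step 1 (reduce to $f$ bounded above and below on a bounded set).} The quantity $v$ is only constrained on $\{f>0\}$; on $\{f=0\}$ it contributes nothing to any integral against $fdqdp$. For $\delta>0$ and $R>0$ consider the set $E_{\delta,R}=\{x:\delta<f(x)<1/\delta,\ |x|<R\}$ and the approximants obtained by working with $f$ restricted to $E_{\delta,R}$. By dominated convergence (dominated by $|v|^2f\in L^1$), $\int_{E_{\delta,R}}|v|^2f\,dqdp\to\int_{\{f>0\}}|v|^2f\,dqdp=\|v\|^2_{fdqdp}$ as $\delta\to0$, $R\to\infty$, and likewise $\mathds 1_{E_{\delta,R}}v\to v$ in $L^2(fdqdp)$. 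So it suffices to approximate $\mathds 1_{E_{\delta,R}}v$ by elements of $L^2_\nabla(fdqdp)$; equivalently, it suffices to prove the claim under the extra assumption that $f$ is bounded above and below by positive constants on a fixed ball and vanishes (or is irrelevant) outside it.

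\textbf{Step 2 (mollification produces the approximating gradients).} With $f$ comparable to a constant on the relevant ball, the weighted norm $\|\cdot\|_{fdqdp}$ is equivalent there to the ordinary (unweighted) $L^2$ norm. On $E_{\delta,R}$, the hypothesis $\nabla_p f\in L^1_{\mathrm{loc}}$ together with $f>\delta$ gives $v=\nabla_p f/f+p/m=\nabla_p(\log f)+\nabla_p(p^2/2m)=\nabla_p\big(\log f+\tfrac{p^2}{2m}\big)$ in the distributional sense, so $v$ is itself a $p$-gradient of a function $w:=\log f+p^2/2m$ with $w\in L^1_{\mathrm{loc}}$ and $\nabla_p w=v\in L^2_{\mathrm{loc}}$. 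Mollify: let $w_\eta=w*\chi_\eta$ with a standard mollifier, multiply by a fixed spatial cutoff $\theta\in C_c^\infty$ equal to $1$ on the ball, and set $\varphi_\eta=\theta\,w_\eta\in C_c^\infty(\R^{2d})$. Then $\nabla_p\varphi_\eta=\theta\,\nabla_p w_\eta+w_\eta\nabla_p\theta\to\theta v+w\nabla_p\theta$ in $L^2_{\mathrm{loc}}$, and on the ball where $\theta\equiv1$ this limit is exactly $v$; choosing the cutoff supported only slightly larger than the ball and using the upper bound on $f$ there controls the error term $w_\eta\nabla_p\theta$ in $\|\cdot\|_{fdqdp}$. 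Hence $\mathds 1_{E_{\delta,R}}v$ is approximated in $\|\cdot\|_{fdqdp}$ by $\nabla_p\varphi_\eta$, placing it in $L^2_\nabla(fdqdp)$.

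\textbf{Step 3 (assemble and conclude).} Combining Steps 1--2, $v$ is a $\|\cdot\|_{fdqdp}$-limit of a sequence in $L^2_\nabla(fdqdp)$, which is closed by definition; therefore $v\in L^2_\nabla(fdqdp)$, as claimed. The main obstacle is the bookkeeping in Step 2: one must make sure the mollification is performed only where $f$ is bounded away from $0$ (so that $\log f$ is well behaved and $\nabla_p w=v$ is genuinely in $L^2$ locally) and only where $f$ is bounded above (so that the $\|\cdot\|_{fdqdp}$-norm of the cutoff-derivative error stays controlled), and then patch the local approximants across the truncation using the dominated-convergence reductions of Step 1 — the interplay of the three limiting parameters $\delta,R,\eta$ (and the mollification scale versus the distance to $\{f\le\delta\}$) is where care is needed, but no single estimate is deep.
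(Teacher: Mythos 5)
Your plan shares the paper's basic strategy---realize the vector field as a $p$-gradient of (roughly) $\log f + p^2/2m$, apply a spatial cutoff, and regularize---but the way you set up the regularization has a genuine gap that the paper's explicit construction is designed to avoid.

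The problem is in Step~2, where you mollify $w=\log f+p^2/2m$. First, the set $E_{\delta,R}=\{\delta<f<1/\delta,\ |x|<R\}$ is only a measurable set, not an open one; ``$\nabla_p w = v$ in the distributional sense on $E_{\delta,R}$'' has no meaning. Second, and more importantly, $w_\eta=w*\chi_\eta$ requires $w$ to be a locally integrable function on a neighbourhood of $\supp\theta$, and there is no reason for $\log f$ to lie in $L^1_{\mathrm{loc}}(\R^{2d})$: the hypotheses only give $f\in L^1$, $\nabla_p f\in L^1_{\mathrm{loc}}$ and finite relative Fisher information, none of which controls $\log f$ on $\{f\approx 0\}$. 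Extending $w$ by a constant on $\{f\le\delta\}$ is the natural fix, but that extension is exactly $\log(f\vee\delta)+p^2/2m$---i.e.\ truncating $f$ before taking the logarithm, which is precisely what the paper does with its approximants $\varphi_\ve=[\log(\tfrac1\ve\wedge(f\vee\ve))-\log\ve]\eta_\ve$. Once the truncation is done at the level of $f$, the function $\log(\tfrac1\ve\wedge(f\vee\ve))$ is bounded, its distributional $p$-gradient is $\tfrac{\nabla_p f}{f}\mathds 1_{\{\ve<f<1/\ve\}}$ by a Stampacchia argument, and all the error terms from the cutoff can be estimated; the mollification you emphasize is then a one-line afterthought (as the paper notes). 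A secondary imprecision: the paper also first proves, via an integration-by-parts bound with cutoffs $\eta_R$, that $\tfrac{\nabla_p f}{f}\mathds 1_{\{f>0\}}$ and $\tfrac pm$ each lie in $L^2(fdqdp)$ separately; you implicitly use that $v\in L^2(fdqdp)$, which is given, but the separate integrability is what lets one treat the two pieces with different approximating sequences. Your claim that $\|\cdot\|_{fdqdp}$ is equivalent to the unweighted $L^2$ norm ``on the relevant ball'' is also only true on $E_{\delta,R}$, not on all of $B_R$, so the control of the cutoff-derivative error term needs more care than the plan suggests.
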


Assuming Lemma \ref{lem:L2nabla} for the moment we rewrite $\tilde \RelFI(f)$ as
\begin{align*}
\tilde \RelFI(f) &= \int_{\R^{2d}}\Bigl|\tfrac{\nabla_p f}{f}\mathds{1}_{\{f>0\}} + \tfrac pm  \Bigr|^2\, f \,dqdp
=\left\|-\nabla_p\cdot\left(f\left(\tfrac{\nabla_p f}{f}\mathds{1}_{\{f>0\}}+\tfrac{p}{m}\right)\right)\right\|^2_{-1,(fdqdp)}\\
&=\|-\nabla_p\cdot\left(\mathds{1}_{\{f>0\}}\,\nabla_p f+f\tfrac{p}{m})\right)\|^2_{-1,(fdqdp)}\\
&=\|-\nabla_p\cdot\left(\nabla_p f+f\tfrac{p}{m})\right)\|^2_{-1,(fdqdp)},
\end{align*}
where $\|\cdot\|_{-1,fdqdp}$ is the dual norm (in duality with $L^2_\nabla(fdqdp)$) from \cite{DuongPeletierZimmer13} and 
$\mathds{1}_{\{f>0\}}\,\nabla_p f=\nabla_p f$ holds due to Stampacchia's Lemma \cite[Theorem A.1]{KinderlehrerStampacchia00}. 
Following the variational 
characterization of $\|\cdot\|_{-1,(fdqdp)}$ from \cite[(11)]{DuongPeletierZimmer13} we finally obtain
\begin{align*}
\tilde \RelFI(f)&=\sup_{\varphi\in C_c^\infty(\R^{2d})}2\int_{\R^{2d}}\left(\nabla_p\varphi\cdot\frac{p}{m}-\mathds{1}_{\{f>0\}}\,\Delta_p\varphi-\tfrac{1}{2}|\nabla_p\varphi|^2\right)\,f\,dqdp\\
&=\sup_{\varphi\in C_c^\infty(\R^{2d})}2\int_{\R^{2d}}\left(\nabla_p\varphi\cdot\frac{p}{m}-\Delta_p\varphi-\tfrac{1}{2}|\nabla_p\varphi|^2\right)\,f\,dqdp,
\end{align*}
which is the claimed result. The same reference also provides that $\tilde \RelFI=\infty$ iff $\RelFI(f\,dqdp|\mu)=\infty$.

\begin{proof}[Proof of Lemma \ref{lem:L2nabla}]
We assume that $\tfrac{\nabla_p f}{f}\mathds{1}_{\{f>0\}}+\tfrac{p}{m}\in L^2(fdqdp)$ and show that the two individual terms 
$\tfrac{\nabla_p f}{f}\mathds{1}_{\{f>0\}}$ and $\tfrac{p}{m}$ are in $L^2_\nabla(fdqdp)$. 
Choose a smooth cut-off function $\eta_R = \eta(x/R)$ with $\eta:\R^{2d}\rightarrow \R$, 
$\eta=1$ on $B_1(0)$ and $\eta=0$ in $\R^{2d}\setminus B_{2}(0)$. Then
\begin{align*}
-\int_{\R^{2d}} \eta_R \,  \frac pm \cdot \frac{\nabla_p f}{f}\mathds{1}_{\{f>0\}}\,f
&=-\int_{\R^{2d}} \eta_R \, \frac pm \cdot\nabla_p f\,\mathds{1}_{\{f>0\}}
=-\int_{\R^{2d}} \eta_R \, \frac pm \cdot\nabla_p (\mathds{1}_{\{f>0\}}\,f)\\
&= +\frac 1m\int_{\R^{2d}} \Bigl[\eta_R\, d + p\cdot\nabla_p \eta_R\Bigr]\mathds{1}_{\{f>0\}}\,f
\leq \frac dm  + \int_{\R^{2d}} p\cdot \nabla _p \eta_R\,f =: b(R).  
\end{align*}
As $R\to\infty$, the bound $b(R)$ converges to $d/m$. 

Therefore we have
\begin{align*}
\int_{\R^{2d}}\eta_R\left[\left|\tfrac{\nabla_p f}{f}\mathds{1}_{\{f>0\}}\right|^2+\left|\tfrac{p}{m}\right|^2\right]f
&=\int_{\R^{2d}}\eta_R\left|\tfrac{\nabla_p f}{f}\mathds{1}_{\{f>0\}}+\tfrac{p}{m}\right|^2f-2\int_{\R^{2d}}\eta_R\,\nabla_p f\cdot \tfrac{p}{m}\mathds{1}_{\{f>0\}}\\
&\leq 2b(R) + \int_{\R^{2d}}\eta_R\left|\tfrac{\nabla_p f}{f}\mathds{1}_{\{f>0\}}+\tfrac{p}{m}\right|^2f.
\end{align*}
By passing to the limit $R\rightarrow\infty$ we obtain
\begin{align*}
\lim_{R\rightarrow\infty}\int_{\R^{2d}}\eta_R\left[\left|\tfrac{\nabla_p f}{f}\mathds{1}_{\{f>0\}}\right|^2+\left|\tfrac{p}{m}\right|^2\right]f
\leq\int_{\R^{2d}}\left|\tfrac{\nabla_p f}{f}\mathds{1}_{\{f>0\}}+\tfrac{p}{m}\right|^2f+\frac{2d}m<\infty
\end{align*}
and thus $\tfrac{\nabla_p f}{f}\mathds{1}_{\{f>0\}},\frac{p}{m}\in L^2(fdqdp)$. 
To conclude the proof of Lemma \ref{lem:L2nabla} it remains to show that $\tfrac{\nabla_p f}{f}\mathds{1}_{\{f>0\}},\frac{p}{m}$ can 
be approximated by gradients of $C_c^\infty$-functions. To this end we consider, for $\ve>0$, the smooth cut-off function $\eta_\ve:=\eta(x\ve)$ 
with $\eta$ as above and define
\begin{align*}
\varphi_\ve:=\left[\log\left(\frac{1}{\ve}\wedge\left(f\vee\ve\right)\right)-\log\ve\right]\eta_\ve.
\end{align*}
Then $\varphi_\e$ has compact support in $\R^{2d}$. Note that $\varphi_\e$ is 
not necessarily smooth, but by convolution with a mollifier we can also achieve smoothness.  For the gradient one obtains
\begin{align*}
\nabla_p\varphi_\e=
\begin{cases}
\mathds{1}_{B_{\frac{1}{\ve}}(0)}\frac{\nabla_p f}{f} + \mathds{1}_{B_{\frac{2}{\ve}}(0)\setminus B_{\frac{1}{\ve}}(0)}
\left(\eta_\ve\frac{\nabla_p f}{f} + \nabla_p\eta_\ve(\log f-\log\ve)\right)& \quad\text{ for }\{\ve\leq f\leq\frac{1}{\ve}\}\\
\mathds{1}_{B_{\frac{2}{\ve}}(0)\setminus B_{\frac{1}{\ve}}(0)}\nabla_p\eta_\ve\left(\log\frac{1}{\ve}-\log\ve\right)&\quad\text{ for }\{f>\frac{1}{\ve}\}\\
0&\quad\text{ for }\{f<\ve\}
\end{cases}
\end{align*}
Our aim is to show that $\left\|\frac{\nabla_{p}f}{f}\mathds{1}_{\{f>0\}}-\nabla_p\varphi_\varepsilon\right\|_{fdqdp}\rightarrow 0$ as $\ve\rightarrow 0$. Indeed,
\begin{align*}
&\int_{\R^{2d}}\left|\tfrac{\nabla_{p}f}{f}\mathds{1}_{\{f>0\}}-\nabla_p\varphi_\varepsilon\right|^2\,f=\\
&\quad\int_{\{f<\ve\}}\left|\tfrac{\nabla_p f}{f}\mathds{1}_{\{f>0\}}\right|^2f + 
\int_{\{f>\frac{1}{\ve}\}}\left|\tfrac{\nabla_{p}f}{f}\mathds{1}_{\{f>0\}}-\nabla_p\eta_\ve\left(\log\frac{1}{\ve}
-\log\ve\right)\mathds{1}_{B_{\frac{2}{\ve}}(0)\setminus B_{\frac{1}{\ve}}(0)}\right|^2\,f\\
&\quad+\int_{\{\ve\leq f\leq\frac{1}{\ve}\}}\left|(1-\eta_\ve)\tfrac{\nabla_p f}{f}\mathds{1}_{\{f>0\}}
-\nabla_p\eta_\ve(\log f-\log\ve)\right|^2
\mathds{1}_{B_{\frac{2}{\ve}}(0)\setminus B_{\frac{1}{\ve}}(0)}\,f\\
&\quad+\int_{\{\ve\leq f\leq\frac{1}{\ve}\}}\left|\tfrac{\nabla_p f}{f}\mathds{1}_{\{f>0\}}\right|^2\mathds{1}_{\mathbb{R}^{2d}
\setminus B_{\frac{2}{\ve}}(0)}\,f\\
&\quad=:\mathrm{I}_\ve+\mathrm{II}_\ve+\mathrm{III}_\ve+\mathrm{IV}_\ve.
\end{align*}
Since $\tfrac{\nabla_p f}{f}\mathds{1}_{\{f>0\}}\in L^2(f\,dqdp)$ we directly conclude that $\mathrm{I}_\ve$ and $\mathrm{IV}_\ve$ vanish in the limit as $\ve\rightarrow 0$. 
Concerning $\mathrm{II}_\ve$ and $\mathrm{III}_\ve$ we note that, for $\{\ve\leq f\leq\frac{1}{\ve}\}$, one has  
\begin{align*}
\left|\nabla_p\eta_\ve(\log f-\log\ve)\right|^2\leq|\nabla_p\eta_\ve|^2\left|\log{1/\ve}-\log\ve\right|^2 
 =|\nabla_p\eta_\ve|^2\left|2\log\frac{1}{\ve}\right|^2\leq C\ve, 
\end{align*}
where we exploited $|\nabla_p\eta_\ve|^2\leq C\ve^2$ and $\left(\log\frac{1}{\ve}\right)^2\leq C\frac{1}{\ve}$ for some $\ve$-independent constant $C$. 
This shows that also $\mathrm{II}_\ve$ and $\mathrm{III}_\ve$ vanish in the limit as $\ve\rightarrow 0$. 
To sum up, we conclude that $\frac{\nabla_{p}f}{f}\mathds{1}_{\{f>0\}}\in L^2_\nabla(fdqdp)$. 
The calculation for $\frac{p}{m}=\nabla_p\left(\frac{|p|^2}{2m}\right)$ is similar.
\end{proof}


\section{Proof of Theorem~\ref{DOG-thm:VFP-Ent-Fisher-Inf-Bounds}}
\label{DOG-sec:App-PDE}
In this appendix, we prove Theorem~\ref{DOG-thm:VFP-Ent-Fisher-Inf-Bounds} using the method of the duality equation; see e.g.~\cite{AronsonCrandallPeletier82,RosenauKamin82,BertschKersnerPeletier85,Eidus90} or~\cite[Ch.~9]{BKRS15} for examples. 
Throughout this appendix $\gamma$ is fixed.

We recall the functional 
$I^\gamma: C([0,T];\mathcal{P}(\R^{2d}))\rightarrow\mathbb{R}$ defined in \eqref{def:I-gamma} 
\begin{multline}
I^\gamma(\rho)= \sup\limits_{f\in C_b^{1,2}(\mathbb{R}\times\mathbb{R}^{2d})}\bigg{[} 
\int\limits_{\mathbb{R}^{2d}}f_T\,d\rho_T
-\int\limits_{\mathbb{R}^{2d}}f_0\,d\rho_0
-\int\limits_0^T\int\limits_{\mathbb{R}^{2d}}
\Bigl{(}\partial_t f_t+\opL_{\rho_t} f_t\Bigr{)}\,d\rho_tdt
-\frac{\gamma^2}{2}\int\limits_0^T\int
\limits_{\mathbb{R}^{2d}}\left|\nabla_p f_t\right|^2d\rho_tdt\bigg{]},
\label{def:I-gamma2}
\end{multline}
where $\opL_\nu$ is given by
\begin{equation}\label{App-def:L-VFP-rescaled}
\opL_\nu f = \gamma J\nabla (H+\psi*\nu)\cdot \nabla f - \gamma^2 \frac pm \cdot\nabla_p f + \gamma^2 \Delta_p f.
\end{equation}

In addition to the duality definition of the Fisher Information~\eqref{DOG-eq:Relative-Fisher-Information} we will use the Donsker-Varadhan duality characterization of the relative entropy~\eqref{DOG-eq:Relative-Entropy} for two probability measures (see e.g.\,~\cite[Lemma 1.4.3]{DupuisEllis97}) 
\begin{align*}
\RelEnt(\nu|\mu)=\sup\limits_{\phi\in C_c^\infty(\R^{2d})} \int_{\R^{2d}} \phi d\nu - \log\int_{\R^{2d}}e^\phi d\mu,
\end{align*}
which implies the corresponding characterization of the free energy~\eqref{def:FreeEnergy}
\begin{align}\label{def:VFP-FreeEnergy-Var}
\M{F}(\nu)=\sup\limits_{\phi\in C_c^\infty(\R^{2d})} \int_{\R^{2d}}\Bigl[ \phi+\frac 12 \psi\ast\nu \Bigr]d\nu -\log\int_{\R^{2d}} e^{\phi-H} dx +\log Z_H.
\end{align}

We first present some intermediate results which we will use to prove Theorem \ref{DOG-thm:VFP-Ent-Fisher-Inf-Bounds}.
\begin{lemma}
\label{lemma:app:bonds-on-rho}
Let $\rho\in C([0,T];\mathcal P(\R^{2d}))$.
\begin{enumerate}
\item The maps $t \mapsto \psi*\rho_t$ and $t \mapsto \nabla\psi*\rho_t$ are continuous from $[0,T]$ to $C_b(\R^{d})$;
\item If $I^\gamma(\rho),\RelEnt(\rho_0|Z_H^{-1}e^{-H})<\infty$, then $\int H\rho_t <\infty$ for all $t\in [0,T]$.
\end{enumerate}
\end{lemma}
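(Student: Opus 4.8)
The plan is to establish the two assertions separately, in each case leaning on a convex-duality characterization.

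\medskip
\noindent\emph{The first assertion.} I would first record the elementary regularity of $\psi$ that follows from~\ref{cond:VFP:V2}: since $\psi\in L^1(\R^d)$ with $\nabla\psi\in L^\infty$, the function $\psi$ is globally Lipschitz, bounded, and vanishes at infinity; the same is true of each component of $\nabla\psi$ (using $\psi\in W^{1,1}$ and the bounded Hessian). Writing $\sigma_t:=(\pi_q)_\#\rho_t\in\mathcal P(\R^d)$ for the $q$-marginal, which is narrowly continuous in $t$ because $\rho\in C([0,T];\mathcal P(\R^{2d}))$, we have $\psi*\rho_t=\psi*\sigma_t$ and likewise for $\nabla\psi$, so it suffices to prove that $t\mapsto\phi*\sigma_t$ is continuous into $(C_b(\R^d),\|\cdot\|_\infty)$ for $\phi$ equal to $\psi$ or any component of $\nabla\psi$. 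I would combine three ingredients for $t_n\to t$: (a) pointwise convergence $\phi*\sigma_{t_n}(q)\to\phi*\sigma_t(q)$, immediate from narrow convergence since $q'\mapsto\phi(q-q')$ is bounded and continuous; (b) the equi-Lipschitz estimate $|\phi*\sigma_s(q)-\phi*\sigma_s(q')|\le\|\nabla\phi\|_\infty|q-q'|$, uniform in $s$; and (c) uniform decay, namely $\sup_s|\phi*\sigma_s(q)|\to0$ as $|q|\to\infty$, which follows from the tightness of $\{\sigma_s:s\in[0,T]\}$ (a continuous image of the compact set $[0,T]$) together with $\phi(z)\to0$ as $|z|\to\infty$. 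Arzel\`a--Ascoli on a large ball (from (a),(b)) together with (c) off that ball then gives $\|\phi*\sigma_{t_n}-\phi*\sigma_t\|_\infty\to0$.

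\medskip
\noindent\emph{The second assertion.} Here the point is to obtain an a priori bound on $\int H\,d\rho_t$ purely from $I^\gamma(\rho)<\infty$; note we cannot cite~\eqref{ineq:bound-Hrho}, since that is exactly what Appendix~\ref{DOG-sec:App-PDE} is in the process of proving. \emph{Step 1: $\int H\,d\rho_0<\infty$.} Applying the Donsker--Varadhan formula to $\RelEnt(\rho_0|Z_H^{-1}e^{-H}dx)$ with the bounded continuous test functions $\tfrac12(H\wedge R)$ and using $\int e^{(H\wedge R)/2}Z_H^{-1}e^{-H}dx\le Z_H^{-1}\int e^{-H/2}dx<\infty$ (finite under the standing assumptions, since $e^{-H/2}=e^{-|p|^2/4m}e^{-V/2}$), one gets $\tfrac12\int(H\wedge R)\,d\rho_0\le\RelEnt(\rho_0|Z_H^{-1}e^{-H})+\log\bigl(Z_H^{-1}\int e^{-H/2}dx\bigr)$, and $R\to\infty$ with monotone convergence bounds $\int H\,d\rho_0$. \emph{Step 2: propagate to time $t_0$.} For $R>0$ set $g:=\chi_R(H)\in C^2_b(\R^{2d})$, with $\chi_R:\R\to\R$ smooth, concave, $\chi_R(s)=s$ for $s\le R$, $\chi_R'\in[0,1]$, $\chi_R$ constant for $s\ge2R$; then $g\le H$, $\nabla_pg=\chi_R'(H)\tfrac pm$, and a direct computation using $J\nabla H\cdot\nabla H=0$, $\Delta_pH=d/m$, $\chi_R''\le0$, $\chi_R'-\tfrac12(\chi_R')^2\ge0$, and Young's inequality on the $\nabla\psi$-cross term yields
\[
-\opL_{\rho_t}g-\tfrac{\gamma^2}{2}|\nabla_pg|^2\ \ge\ -C_\gamma\qquad\text{on }\R^{2d},\text{ uniformly in }R\text{ and }t,
\]
with $C_\gamma$ depending only on $\gamma,m,d,\|\nabla\psi\|_\infty$. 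Inserting $f_s(x)=\theta(s)g(x)$ into the supremum defining $I^\gamma$, with $\theta$ smooth, $\theta\equiv1$ on $[0,t_0]$ and decreasing to $0$ on $[t_0,t_0+\delta]$, and using $\theta^2\le\theta$ together with $|\nabla_pg|^2\ge0$ on $[t_0,t_0+\delta]$, gives
\[
I^\gamma(\rho)\ \ge\ -\int_{t_0}^{t_0+\delta}\theta'(s)\Bigl(\int_{\R^{2d}}g\,d\rho_s\Bigr)\,ds\ -\ \int_{\R^{2d}}g\,d\rho_0\ -\ C_\gamma T.
\]
Since $s\mapsto\int g\,d\rho_s$ is continuous and $\int_{t_0}^{t_0+\delta}\theta'=-1$, letting $\delta\to0$ yields $\int g\,d\rho_{t_0}\le I^\gamma(\rho)+\int H\,d\rho_0+C_\gamma T$; finally $\chi_R(H)\uparrow H$ pointwise and Fatou give $\int H\,d\rho_{t_0}\le I^\gamma(\rho)+\int H\,d\rho_0+C_\gamma T<\infty$.

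\medskip
I expect the main obstacle to be Step~2: recognizing that the right truncation is $g=\chi_R(H)$ and verifying that $-\opL_{\rho_t}g-\tfrac{\gamma^2}{2}|\nabla_pg|^2$ is bounded below \emph{uniformly} in $R$ and $t$ --- this is where the antisymmetry of $J$, the quadratic growth of $H$ in $p$ (letting $-\gamma^2\chi_R'|p|^2/m^2$ absorb the $\psi$-cross term), and the concavity of $\chi_R$ all have to cooperate. The time-cutoff limit $\delta\to0$ and the passages $R\to\infty$ (monotone convergence / Fatou) are routine, as is the first assertion once the vanishing at infinity of $\psi$ and $\nabla\psi$ is in hand; the one point worth flagging is the finiteness of $\int e^{-H/2}\,dx$, which should be stated explicitly as part of the standing hypotheses on $V$.
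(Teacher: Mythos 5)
Your proof is correct and follows essentially the same approach as the paper. For the first assertion, the paper picks a near-maximizing sequence $x_n$ and uses tightness of $\{\rho_{t_n}\}$ together with the decay of $\psi$ at infinity to bound $x_n$; your Arzel\`a--Ascoli-on-a-ball plus uniform-decay-off-the-ball argument is a reorganization of the same tightness-plus-decay idea, and passing through the $q$-marginal $\sigma_t$ is a harmless simplification. For the second assertion, the paper substitutes the time-independent test function $f=\zeta(H)$ (a smooth, bounded, concave truncation with $0\le\zeta'\le1$, $\zeta''\le0$) into \eqref{def:I-gamma}, exploits $J\nabla H\cdot\nabla H=0$, $\Delta_p H=d/m$ and Young's inequality to obtain exactly your uniform bound $-\opL_\rho f-\tfrac{\gamma^2}{2}|\nabla_p f|^2\ge -C_\gamma$, and lets $\zeta$ converge to the identity; so the calculation in your Step~2 coincides with the paper's. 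You are however more careful in two respects, both of which are genuine gaps in the paper's write-up: (i) you use an explicit decreasing time cutoff $\theta$ to localize the boundary term to an arbitrary $t_0\in[0,T]$, whereas the paper writes $\int\zeta(H)\,d\rho_\tau$ without explaining how the interior time $\tau$ enters the variational formula; (ii) you prove $\int H\,d\rho_0<\infty$ from $\RelEnt(\rho_0|Z_H^{-1}e^{-H})<\infty$ via Donsker--Varadhan, a fact the paper's proof tacitly assumes but needs in order to let $\zeta\uparrow\mathrm{id}$ on the right-hand side. Your flag concerning $\int e^{-H/2}\,dx<\infty$ is well taken: condition~\ref{cond:VFP:V1} gives $e^{-V}\in L^1$ but does not by itself imply $e^{-V/2}\in L^1$ (a logarithmically growing $V$ can separate the two), yet that finiteness is implicitly used both in \eqref{ineq:bound-Hrho} and in the Donsker--Varadhan step controlling $\int H\,d\rho_0$.
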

\begin{proof}
The first part follows from the bound $\psi\in W^{1,1}(\R^d)\cap C^2_b(\R^d)$. Fix $\e>0$, $t\in [0,T]$, and take a sequence $t_n\to t$. For each $n$, choose $x_n\in \R^{2d}$ such that $|\psi*(\rho_t-\rho_{t_n})|(x_n)\geq \|\psi*(\rho_t-\rho_{t_n})\|_\infty - \e/2$. Since $\rho_{t_n}\to\rho_t$ narrowly, $\{\rho_{t_n}\}_n$ is tight, implying that $x_n$ can be chosen bounded; therefore there exists a subsequence (not relabelled) such that $x_n\to x$ as $n\to\infty$. Then 
\begin{align*}
|(\psi*\rho_t)(x_n)&-(\psi*\rho_{t_n})(x_n)| \leq |(\psi*\rho_t)(x_n)-(\psi*\rho_{t})(x)|+ |(\psi*\rho_t)(x)-(\psi*\rho_{t_n})(x)| \\
 &\hskip0.5\textwidth + |(\psi*\rho_{t_n})(x)-(\psi*\rho_{t_n})(x_n)|.
\end{align*}
The last term on the right-hand side satisfies
\[
|(\psi*\rho_{t_n})(x)-(\psi*\rho_{t_n})(x_n)|\leq \int_{\mathbb{R}^{2d}} |\psi(x-y)-\psi(x_n-y)|\rho_{t_n}(y,z)\,dy\,dz \to 0
\]
since $\psi(x_n-\cdot)\to \psi(x-\cdot)$ uniformly,
and a similar argument applies to the first term. The middle term converges to zero by the narrow convergence of $\rho_{t_n}$ to $\rho_t$. This proves that the function $t\mapsto \psi*\rho_t$ is continuous; a similar argument applies to $t\mapsto \nabla \psi*\rho_t$.

For the second part, we take in~\eqref{def:I-gamma2} the function $f(q,p,t) = \zeta(H(q,p))$, where $\zeta\in C^\infty([0,\infty))$ is a smooth, bounded, increasing truncation of the function $f(s) = s$, satisfying $0\leq \zeta'\leq 1$ and $\zeta''\leq 0$. Then we find\begin{align*}
\int_{\R^{2d}} \zeta(H)\rho_\tau &- \int_{\R^{2d}} \zeta(H) \rho_0 - I^\gamma(\rho)
\leq \int\limits_0^\tau\int\limits_{\mathbb{R}^{2d}}\bigg{(}-\gamma\zeta'\frac pm \cdot \nabla_q \psi*\rho_t  
+ \gamma^2\Bigl(\zeta'' + \tfrac12 {\zeta'}^2-\zeta'\Bigr)\frac {p^2}{m^2}
+\gamma^2 \zeta'\frac dm\bigg{)}\,d\rho_tdt\\
&\leq \int\limits_0^\tau\int\limits_{\mathbb{R}^{2d}} \biggl( \frac12 \zeta'|\nabla_q \psi*\rho_t|^2 + \gamma^2 \zeta'\frac dm\bigg{)}\,d\rho_tdt
\leq \frac\tau2 \|\nabla_q\psi\|^2_\infty + \gamma^2 \frac dm \, \tau . \\
\end{align*}
The result follows upon letting $\zeta$ converge to the identity.

Note that this inequality gives a bound on $\int H\rho_t$ for fixed $\g$, but this bound breaks down when $\g\to\infty$. The bound~\eqref{ineq:bound-Hrho}, which is directly derived from~\eqref{DOG-eq:VFP-Ent+FI-Bounds}, gives a $\g$-independent estimate.
 \end{proof}

In the next few results we study certain properties of an auxiliary PDE and its connection to the rate functional.
\begin{theorem}\label{FIR-thm:VFP-Aux-PDE-Res}
Given $\phi\in C_c^\infty(\R^{2d})$ and $\varphi\in C_c^\infty([0,T]\times\R^d)$, there exists a function $f\in L^1_{\mathrm {loc}}([0,T]\times\R^{2d})$ which satisfies the following equation a.e. in $L^1_{\mathrm{loc}}([0,T]\times\R^{2d})$ (i.e.\ for each compact set $K\subset [0,T]\times \R^{2d}$, the equation is satisfied with all weak derivatives and all terms in $L^1(K)$):\begin{subequations}
\begin{align}\label{FIR-eq:VFP-Aux-f-PDE}
&\partial_t f + \mathscr L_\rho f + \frac{\g^2}{2}|\nabla_p f|^2 + \g J\nabla H \cdot\nabla\psi\ast\rho_t= -\g^2 \Bigl( \Delta_p\varphi -\nabla_pH\cdot\nabla_p\varphi - \frac 12 |\nabla_p\varphi|^2 \Bigr), \\
& f|_{t=T}=\phi & \label{IR-eq:VFP-Aux-f-PDE-Final-Data}
\end{align}
\end{subequations}
where $\mathscr L_\rho$ is defined in~\eqref{App-def:L-VFP-rescaled}. The final-time condition~\eqref{IR-eq:VFP-Aux-f-PDE-Final-Data} is satisfied in the sense of traces in $L^1_{\mathrm{loc}}(\R^{2d})$ (which are well-defined since $\partial_t f\in L^1_{\mathrm{loc}}([0,T]\times\R^{2d})$). The solution satisfies $|f|\leq C(1+H)^{1/2}$ for each $t\in [0,T]$ and almost everywhere in $\mathbb{R}^{2d}$, for some constant $C>0$. Finally,  
\begin{align}\label{FIR-res:VFP-Dec-Exp-Map}
t\mapsto \int_{\R^{2d}} e^{f_t-H}dx \quad \text{is non-decreasing.}
\end{align}
\end{theorem}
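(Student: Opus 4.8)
The plan is to construct $f$ by Hopf--Cole linearization of~\eqref{FIR-eq:VFP-Aux-f-PDE}, to obtain the growth bound $|f|\le C(1+H)^{1/2}$ by comparison, and to read off the monotonicity~\eqref{FIR-res:VFP-Dec-Exp-Map} from a differentiation-under-the-integral computation that exploits the symplectic structure.

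\emph{Step 1 (linearization and existence).} The only nonlinearity in~\eqref{FIR-eq:VFP-Aux-f-PDE} is $\tfrac{\g^2}{2}|\nabla_p f|^2$, and it is removed by the substitution $u=e^{f/2}$: using $\partial_t f=2\partial_t u/u$ and the identity $\opL_\rho f+\tfrac{\g^2}{2}|\nabla_p f|^2=\tfrac2u\,\opL_\rho u$ --- valid precisely because the momentum drift $-\g^2\tfrac pm\cdot\nabla_p$ in~\eqref{App-def:L-VFP-rescaled} is $-\g^2\nabla_p H\cdot\nabla_p$ --- equation~\eqref{FIR-eq:VFP-Aux-f-PDE} becomes the \emph{linear} backward Kolmogorov equation
\[
\partial_t u+\opL_\rho u=\tfrac12\,F_t\,u,\qquad u|_{t=T}=e^{\phi/2},
\]
with $F_t:=-\g\,J\nabla H\cdot(\nabla\psi*\rho_t)-\g^2\bigl(\Delta_p\varphi-\nabla_p H\cdot\nabla_p\varphi-\tfrac12|\nabla_p\varphi|^2\bigr)$, which by Lemma~\ref{lemma:app:bonds-on-rho} is continuous in $t$ and of at most linear growth in $x$. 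I would then invoke the existence theory for (possibly degenerate) Fokker--Planck--Kolmogorov equations, e.g.\ \cite[Ch.~9]{BKRS15}: after mollifying and truncating the coefficients one gets smooth positive solutions, and the uniform bounds of Step~2 together with the hypoellipticity of $\opL_\rho$ (the diffusion acts only in $p$, but the commutator of $\nabla_p$ and $\tfrac pm\cdot\nabla_q$ restores the $q$-directions) give interior estimates that let one pass to the limit, obtaining $u>0$ with weak derivatives in $L^1_{\mathrm{loc}}$ and the equation holding a.e. Since $e^{\phi/2}$ is bounded above and away from zero, $f:=2\log u$ is well defined, and $\partial_t f\in L^1_{\mathrm{loc}}$ gives meaning to the trace $f|_{t=T}=\phi$.

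\emph{Step 2 (the growth bound).} To prove $|f|\le C(1+H)^{1/2}$ I would build sub- and super-solutions $u^\pm:=\exp\bigl(\pm\tfrac C2(1+H)^{1/2}\bigr)$ of the $u$-equation. Inserting $w=\pm\tfrac C2(1+H)^{1/2}$ into $\partial_t u+\opL_\rho u-\tfrac12 F_t u$ and using $|\nabla V|^2\le C(1+V)$ and $\psi\in W^{1,1}\cap C^2_b$ from~\ref{cond:VFP:V1}--\ref{cond:VFP:V2} --- so that $|\nabla H|$, $|\Delta_p H|$, $|\nabla_p H|^2$ and $|F_t|$ all have controlled growth --- one checks that the first- and second-order contributions of $w$ are dominated once $C$ is large, so $u^+$ is a super-solution and $u^-$ a sub-solution on $[0,T]$; the comparison principle then gives $u^-\le u\le u^+$, uniformly along the approximation, and this bound survives the limit.

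\emph{Step 3 (monotonicity and the main obstacle).} On the smooth approximations I differentiate under the integral sign --- legitimate because $|f_t|\le C(1+H)^{1/2}$ while $e^{-H}$ decays like a Gaussian, so $e^{f_t-H}$ and the moments below are uniformly integrable --- and substitute the PDE:
\[
\frac{d}{dt}\int_{\Rd}e^{f_t-H}\,dx=\int_{\Rd}e^{f_t-H}\Bigl(F_t-\opL_\rho f_t-\tfrac{\g^2}{2}|\nabla_p f_t|^2\Bigr)\,dx .
\]
Integrating by parts, the $O(\g)$ transport term $\g\!\int e^{f_t-H}J\nabla(H+\psi*\rho_t)\cdot\nabla f_t$ produces $-\g\!\int\div\bigl(J\nabla(H+\psi*\rho_t)\bigr)e^{f_t-H}=0$ (antisymmetry of $J$) plus $\g\!\int e^{f_t-H}J\nabla(H+\psi*\rho_t)\cdot\nabla H$, and in the latter $J\nabla H\cdot\nabla H=0$, so only $\g\!\int e^{f_t-H}J\nabla H\cdot(\nabla\psi*\rho_t)$ survives, and it cancels exactly against the first term of $F_t$; the pieces $-\g^2\nabla_p H\cdot\nabla_p f_t+\g^2\Delta_p f_t$ integrate against $e^{f_t-H}$ to $\g^2\!\int e^{f_t-H}|\nabla_p f_t|^2$; and the $\varphi$-part of $F_t$ integrates to $\g^2\!\int e^{f_t-H}\nabla_p f_t\cdot\nabla_p\varphi+\tfrac{\g^2}{2}\!\int e^{f_t-H}|\nabla_p\varphi|^2$. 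Summing, the right-hand side equals $\tfrac{\g^2}{2}\int_{\Rd}e^{f_t-H}\,|\nabla_p f_t+\nabla_p\varphi|^2\,dx\ge0$, up to an $O(\delta)$ error from the ellipticity added to regularize $\opL_\rho$, which vanishes in the limit; this is~\eqref{FIR-res:VFP-Dec-Exp-Map}. Each integration by parts is justified by truncating on $\{H\le R\}$ and letting $R\to\infty$, the boundary contributions vanishing by the growth bound and the decay of $e^{-H}$. The substantive difficulty is concentrated in Steps~1--2: $\opL_\rho$ is a degenerate hypoelliptic Kolmogorov operator with unbounded drift and an unbounded zeroth-order coefficient, so one must invoke and correctly apply the existence and comparison theory for such equations while keeping the sharp $(1+H)^{1/2}$ bound uniform in the regularization; once $f$ is in hand, Step~3 is a structural identity whose only delicate point is the routine justification of the integrations by parts at infinity.
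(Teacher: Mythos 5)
Your proposal uses the same Hopf--Cole substitution $u=e^{f/2}$ and the same three ingredients (linearization, comparison, a pointwise differential identity for the exponential integral) that the paper uses, so at the structural level you have rediscovered the right strategy, and your Step~3 computation is correct: the terms do reorganize into $\tfrac{\gamma^2}{2}\int e^{f-H}|\nabla_p f+\nabla_p\varphi|^2\ge 0$ after cancellation via the antisymmetry of $J$. (The parenthetical remark that the Hopf--Cole identity works ``precisely because'' the drift is $-\gamma^2\nabla_p H\cdot\nabla_p$ is not right --- the identity $\opL_\rho f+\tfrac{\gamma^2}{2}|\nabla_p f|^2=\tfrac{2}{u}\opL_\rho u$ holds for any first-order drift; only the $\Delta_p$ term matters --- but this is harmless.)

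The genuine gap is in Step~2. Your time-independent barriers $u^\pm=\exp\bigl(\pm\tfrac C2(1+H)^{1/2}\bigr)$ do not satisfy the required super/subsolution inequality, no matter how large $C$ is. After time reversal the condition for $u^+$ to be a supersolution is $-\opL_\rho u^+ +\tfrac12 F_t u^+\ge 0$. Since $u^+$ depends on $x$ only through $H$, the Hamiltonian transport $\gamma J\nabla H\cdot\nabla u^+$ vanishes, and the only term in $-\opL_\rho u^+$ with a usable sign is $\gamma^2\tfrac pm\cdot\nabla_p u^+=\gamma^2\tfrac{C}{4m^2\sqrt{1+H}}|p|^2\,u^+$. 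But this is small in the regime where $|p|$ is bounded and $H$ is large through $V(q)$, while $\gamma^2\Delta_p u^+$ and $\tfrac12 F_t u^+$ contribute $-O(C)\,u^+$ and $-O(1)\,u^+$ respectively; the inequality therefore fails there, and increasing $C$ only worsens the $\Delta_p u^+$ contribution (which has a piece growing linearly in $C$ even at $|p|$ bounded). The fix, which the paper uses (Proposition~\ref{FIR-prop:VFP-Aux-PDE-Control}), is to take \emph{time-dependent} barriers of the form $\alpha\exp\bigl(\pm\beta t\sqrt{\omega+H}\bigr)$: the time derivative then produces an extra term $\pm\beta\sqrt{\omega+H}\cdot u^\pm$ that dominates every remaining bad term once $\beta$ and $\omega$ are chosen large, and at $t=0$ the barrier reduces to the constant $\alpha$, which is enough to match the bounded data $e^{\phi/2}$. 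Relatedly, Step~1 leans on ``the existence theory for Fokker--Planck--Kolmogorov equations'' and ``the comparison principle'' as black boxes, but the operator here is hypoelliptic with drift and zeroth-order coefficient that grow like $\sqrt{1+H}$, and the barriers you want to compare against grow like $e^{C\sqrt{H}}$; the paper works in a bespoke weighted Hilbert-space framework $X=L^2(0,T;L^2(e^{-H}))$ with a Lions-type existence theorem (Theorem~\ref{FIR-thm:AuxPDE-Well-Posed}), and proves the comparison principle in exactly that setting (Proposition~\ref{FIR-prop:VFP-Comp-Princ}). A citation to~\cite{BKRS15} does not by itself supply a comparison principle that applies to such unbounded barriers; you would need to verify the barriers belong to the admissible class, which is precisely where the weighted framework earns its keep.
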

\begin{proof}
The Hopf-Cole tranformation $f=2\log g$ and the time reversal $t\mapsto T-t$ transform equation~\eqref{FIR-eq:VFP-Aux-f-PDE} into
\begin{align}\label{FIR-eq:Aux-PDE-Hopf-Cole}
\partial_t g - \mathscr L_{\rho} g = - \frac{g}{2}\Bigl( -\g J\nabla H\cdot\nabla\psi\ast\rho_t - \g^2\Bigl[ \Delta_p\varphi -\nabla_pH\cdot\nabla_p\varphi - \frac 12 |\nabla_p\varphi|^2 \Bigr]\Bigr),
\end{align}
with initial datum (now at time zero) $g_0=e^{\phi/2}$. The analysis of equation~\eqref{FIR-eq:Aux-PDE-Hopf-Cole} is non-standard and therefore we study this equation separately  in Appendix~\ref{App:Aux-PDE}. The existence and uniqueness of a solution, with this initial value, follow from Corollary~\ref{Aux-Cor:WellPose-Original}. The solution $g$ satisfies~\eqref{FIR-eq:Aux-PDE-Hopf-Cole} a.e.\ in $L^1_{\mathrm{loc}}([0,T]\times\R^{2d})$ by Proposition~\ref{FIR-prop:L1-loc-Prop}. Furthermore, by Proposition~\ref{FIR-prop:VFP-Aux-PDE-Control} there exist constants $\alpha_1,\alpha_2,\beta_1,\beta_2,\omega_1,\omega_2$ such that 
\begin{align*}
\alpha_1\exp\left(-\beta_1 t\sqrt{\omega_1+ H}\right)\leq g \leq \alpha_2\exp\left(\beta_2 t\sqrt{\omega_2+ H}\right) .
\end{align*}
Finally, by Proposition~\ref{FIR-prop:Aux-PDE-Decreasing} we have
\begin{align*}
t\mapsto \int_{\R^{2d}} g_t^2 e^{-H}dx \quad \text{is non-increasing.}
\end{align*}
Transforming back to $f$ we find the result. 
\end{proof}
To prove the second main result on the auxiliary equation~\eqref{FIR-eq:VFP-Aux-f-PDE}, which is Proposition \ref{FIR-lem:VFPRate-Fn-Subst-Sol} below, we will need the following lemma. For the rest of this appendix
we write $*_t$ for convolution in time and $*_x$ for convolution in space ($x=(q,p)$). (The convolution $\psi*_x\rho$ is the same as the notation $\psi*\rho$ used in the rest of this paper.)
\begin{lemma}
\label{FIR-lem:Reg-Abstract-Aux-PDE}
Let $f$ satisfy 
\begin{align}
\label{FIR-eq:Reg-Abstract-Aux-PDE}
\partial_t f + \mathscr L_{\rho_t} f+\frac {\gamma^2}2 |\nabla_p f|^2=\Phi,
\end{align}
a.e.\ in $L^1_{\mathrm{loc}}(\R\times\R^{2d})$ with $\Phi\in L^1_{\mathrm{loc}}(\R\times\R^{2d})$.  Define $f_{\d}:=\nu_\d\ast_x f$ and $f_{\vep}:=\eta_\vep \ast_t f$, where $\eta_\vep = \eta_\vep(t)$ is a regularizing sequence in the $t$-variable and $\nu_\d=\nu_\d(q,p)$ is a regularizing sequence in the $(q,p)$-variables. Then we have
\begin{align}
&\partial_t f_{\d} +\mathscr L_{\rho_t}f_{\d}+\frac{\g^2}{2}|\nabla_p f_{\d}|^2 \leq \nu_\d\ast_x \Phi 
+\g\d\|d^2H\|_{L^\infty} (\nu_\d\ast_x|\nabla f|+\g \nu_\d\ast_x|\nabla_p f|)\nonumber \\
&\qquad\qquad\qquad\qquad\qquad\qquad + \g \Bigl(J\nabla \psi\ast_x\rho_t\cdot\nabla f_{\delta} -\nu_\d\ast_x (J\nabla \psi\ast_x\rho_t\cdot\nabla f)\Bigr),\label{FIR-eq:Smooth-Lem-Spatial}\\
&\partial_t f_{\vep} +\mathscr L_{\rho_t}f_{\vep}+\frac{\g^2}{2}|\nabla_p f_{\vep}|^2 \leq \eta_\vep\ast_t \Phi 
 + \g \Bigl(J\nabla \psi\ast_x\rho_t\nabla f_{\vep} -\eta_\vep\ast_t (J\nabla \psi\ast_x\rho_t\cdot\nabla f)\Bigr).\label{FIR-eq:Smooth-Lem-Time}
\end{align}
\end{lemma}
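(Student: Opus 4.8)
The plan is to derive both inequalities by convolving the identity~\eqref{FIR-eq:Reg-Abstract-Aux-PDE} with the respective mollifier and then comparing $\mathscr L_{\rho_t}$ applied to the mollified function with the mollification of $\mathscr L_{\rho_t}f$. The only two non-algebraic ingredients are convexity of $z\mapsto|z|^2$ for the quadratic gradient term, which is what turns the equality into an inequality, and a first-order Taylor bound on the drift coefficients of $\mathscr L_{\rho_t}$, which produces the $O(\d)$ remainder. First note that the hypothesis that~\eqref{FIR-eq:Reg-Abstract-Aux-PDE} holds with all terms in $L^1_{\mathrm{loc}}$ forces $|\nabla_p f|^2 = \frac{2}{\gamma^2}\bigl(\Phi-\partial_t f-\mathscr L_{\rho_t}f\bigr)\in L^1_{\mathrm{loc}}$, so every convolution below is well defined and the mollified functions are smooth in the mollified variable.

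It is convenient to split $\mathscr L_{\rho_t} = \mathscr L_0 + \mathscr M_t$, with $\mathscr L_0 f := \gamma J\nabla H\cdot\nabla f - \gamma^2\frac pm\cdot\nabla_p f + \gamma^2\Delta_p f$ carrying only $t$-independent coefficients, and $\mathscr M_t f := \gamma\,J\nabla\psi\ast_x\rho_t\cdot\nabla f$ carrying the bounded mean-field coefficient. For the spatial estimate I would convolve~\eqref{FIR-eq:Reg-Abstract-Aux-PDE} with $\nu_\d$ in $(q,p)$ and estimate term by term: Jensen's inequality gives $\nu_\d\ast_x|\nabla_p f|^2\ge|\nu_\d\ast_x\nabla_p f|^2=|\nabla_p f_\d|^2$; the Laplacian commutes with $\nu_\d\ast_x$ exactly; and for the first-order part of $\mathscr L_0$ one writes, with $b:=\gamma J\nabla H - \gamma^2\bigl(0,\frac pm\bigr)$,
\[
\nu_\d\ast_x(b\cdot\nabla f)-b\cdot\nabla f_\d = \int\nu_\d(y)\,[b(x-y)-b(x)]\cdot\nabla f(x-y)\,dy,
\]
whose absolute value is at most $\|db\|_{L^\infty}\,\d\,\nu_\d\ast_x|\nabla f|$ on $\supp\nu_\d\subset\{|y|\le\d\}$. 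Splitting $b$ into its two summands and using $\|d(\gamma J\nabla H)\|_{L^\infty}=\gamma\|d^2H\|_{L^\infty}$ for the first and $\gamma^2/m\le\gamma^2\|d^2H\|_{L^\infty}$ for the second --- the Hessian block $I/m$ of $H$ forces $\|d^2H\|_{L^\infty}\ge1/m$ --- the remainder is controlled by $\gamma\d\|d^2H\|_{L^\infty}\bigl(\nu_\d\ast_x|\nabla f|+\gamma\,\nu_\d\ast_x|\nabla_p f|\bigr)$, which is exactly the term appearing in~\eqref{FIR-eq:Smooth-Lem-Spatial}. The $\mathscr M_t$-term is kept in commutator form, $\mathscr M_t f_\d-\nu_\d\ast_x(\mathscr M_t f)$, since its coefficient is uniformly bounded (by $\psi\in C^2_b$) but it does not need to be estimated here. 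Collecting these and rearranging yields~\eqref{FIR-eq:Smooth-Lem-Spatial}.

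For the temporal estimate the argument is the same but lighter: the coefficients of $\mathscr L_0$ and $\Delta_p$ do not depend on $t$, hence commute exactly with $\eta_\vep\ast_t$ and leave no $O(\d)$-type remainder; the only non-commuting term is $\mathscr M_t$, whose coefficient $\gamma J\nabla\psi\ast_x\rho_t$ is $t$-dependent and which I would again retain in commutator form. Combined with $\partial_t f_\vep=\eta_\vep\ast_t\partial_t f$ and Jensen's inequality $\eta_\vep\ast_t|\nabla_p f|^2\ge|\nabla_p f_\vep|^2$, convolving~\eqref{FIR-eq:Reg-Abstract-Aux-PDE} in $t$ and rearranging gives~\eqref{FIR-eq:Smooth-Lem-Time}. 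The main --- essentially the only --- delicate point is the bookkeeping of powers of $\gamma$ in the Hamiltonian-drift commutator; the elementary inequality $\|d^2H\|_{L^\infty}\ge 1/m$ is what allows the $\gamma^2/m$ coefficient to be absorbed into $\gamma\cdot\gamma\|d^2H\|_{L^\infty}$ and the right-hand side of~\eqref{FIR-eq:Smooth-Lem-Spatial} to come out exactly as stated, and everything else reduces to linearity of convolution, exact commutation of constant-coefficient operators, and convexity of $z\mapsto|z|^2$.
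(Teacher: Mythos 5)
Your proof is correct and takes essentially the same approach as the paper: convolve the identity with the mollifier, apply Jensen's inequality to the quadratic gradient term, and handle the first-order drift commutators explicitly. The paper carries out only the temporal case in full and dispatches the spatial one with ``follows similarly''; your explicit spatial computation is exactly what that phrase hides, and in particular your observation that the friction commutator with coefficient $\gamma^2/m$ is absorbed into $\gamma\cdot\gamma\|d^2H\|_{L^\infty}$ via $\|d^2H\|_{L^\infty}\geq 1/m$ (the $p$-block of $D^2H$ is $I/m$) is precisely what makes the right-hand side of \eqref{FIR-eq:Smooth-Lem-Spatial} come out in the stated form.
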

\begin{proof}[Proof of Lemma~\ref{FIR-lem:Reg-Abstract-Aux-PDE}]
Using~\eqref{FIR-eq:Reg-Abstract-Aux-PDE} and the definition of $\mathscr L_\rho$ we have
\begin{align}
0&=\int_{\R} \eta_\vep(t-\tau)\Bigl(\partial_t f +\g^2\Delta_p f -\g^2\nabla_pH\cdot\nabla_p f +\g J\nabla(H+\psi\ast_x\rho_t)\cdot\nabla f +\frac{\g^2}{2}|\nabla_p f|^2-\Phi \Bigr)(\tau,x)d\tau \nonumber\\
&=\Bigl( \partial_t f_{\vep} +\g^2\Delta_p f_{\vep} -\g^2\nabla_p H\cdot \nabla_p f_{\vep} +\g J\nabla(H+\psi\ast_x\rho_t)\cdot\nabla f_{\vep} 
\Bigr)(t,x) + \frac{\g^2}{2} \eta_\vep\ast_t|\nabla_p f|^2-\eta_\vep\ast_t \Phi (t,x)\nonumber\\
&\qquad \  +\g\int_{\R} \eta_\vep(t-\tau) \Bigl( J\nabla\psi\ast_x\rho_\tau -J\nabla\psi\ast_x\rho_t\Bigr)\nabla f(\tau,x)d\tau.
\end{align}
By Jensen's inequality we have $ \eta_\vep\ast_t|\nabla_p f|^2\geq |\nabla_p f_\vep|^2$. Substituting this inequality  into the relation above completes the proof of~\eqref{FIR-eq:Smooth-Lem-Time}. The proof of~\eqref{FIR-eq:Smooth-Lem-Spatial} follows similarly. 
\end{proof}

The next result connects the solution of the auxiliary equation~\eqref{FIR-eq:VFP-Aux-f-PDE} to the rate functional~\eqref{def:I-gamma2}.

\begin{proposition}\label{FIR-lem:VFPRate-Fn-Subst-Sol}
Let $f$ be the solution of~\eqref{FIR-eq:VFP-Aux-f-PDE}-\eqref{IR-eq:VFP-Aux-f-PDE-Final-Data} in the sense of Theorem~\ref{FIR-thm:VFP-Aux-PDE-Res}. Then for  $\tau\in [0,T]$ we have
\begin{multline}\label{FIR-eq:J-leq-I-Gen}
\int_{\R^{2d}} \rho_\tau\Bigl( f_\tau+\frac 12 \psi\ast_x\rho_\tau\Bigr) -\int_0^\tau\int_{\R^{2d}} \Bigl\{  \partial_t f + \mathscr L_\rho f + \frac{\g^2}{2}|\nabla_p f|^2 + \g J\nabla H \cdot\nabla\psi\ast_x\rho_t
\Bigr\}\, d\rho_tdt  \\ 
\leq I(\rho)+\M{F}(\rho_0)+\log\int_{\R^{2d}}e^{f_0-H}dx-\log Z_H.
\end{multline} 
\end{proposition}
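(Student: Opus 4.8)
The plan is to use the auxiliary solution $f$ itself as a test function in the duality formula for $I^\gamma$, and then to reconcile the resulting inequality with~\eqref{FIR-eq:J-leq-I-Gen} by means of two exact identities. The first preliminary step is a test-function inequality valid for every bounded smooth competitor $g\in C^{1,2}_b([0,\tau]\times\R^{2d})$:
\[
\int_{\R^{2d}} g_\tau\,d\rho_\tau - \int_{\R^{2d}} g_0\,d\rho_0 - \int_0^\tau\!\!\int_{\R^{2d}}\Bigl(\partial_t g + \mathscr L_{\rho_t} g + \tfrac{\gamma^2}{2}|\nabla_p g|^2\Bigr)\,d\rho_t\,dt \;\le\; I^\gamma(\rho).
\]
If $I^\gamma(\rho)=\infty$ this is vacuous; otherwise~\eqref{DOG-eq:VFP-Alternate-LDRF} furnishes $h\in L^2(0,T;L^2_\nabla(\rho))$ with $\partial_t\rho_t=\mathscr L_{\rho_t}^*\rho_t-\gamma\div_p(\rho_t h_t)$ and $I^\gamma(\rho)=\tfrac12\int_0^T\!\!\int|h_t|^2\,d\rho_t\,dt$. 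Testing this equation against $g$ gives $\tfrac{d}{dt}\int g_t\,d\rho_t=\int(\partial_tg+\mathscr L_{\rho_t}g)\,d\rho_t+\gamma\int\nabla_pg\cdot h_t\,d\rho_t$; integrating on $[0,\tau]$, applying Young's inequality $\gamma\nabla_pg\cdot h_t\le\tfrac{\gamma^2}2|\nabla_pg|^2+\tfrac12|h_t|^2$, and bounding $\tfrac12\int_0^\tau\!\!\int|h_t|^2\,d\rho_t\,dt\le I^\gamma(\rho)$ yields the claim.

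Next I would insert $g=f$. Since $f$ only belongs to $L^1_{\mathrm{loc}}$ and grows like $(1+H)^{1/2}$ (Theorem~\ref{FIR-thm:VFP-Aux-PDE-Res}), this requires approximation: mollify $f$ in time and in space as in Lemma~\ref{FIR-lem:Reg-Abstract-Aux-PDE} and multiply by a spatial cut-off $\chi_R$, obtaining admissible $g_{R,\varepsilon,\delta}\in C^{1,2}_b$. These satisfy~\eqref{FIR-eq:VFP-Aux-f-PDE} only as an \emph{inequality}, with right-hand side $-\gamma^2(\Delta_p\varphi-\nabla_pH\cdot\nabla_p\varphi-\tfrac12|\nabla_p\varphi|^2)-\gamma J\nabla H\cdot\nabla\psi\ast\rho_t$ plus commutator and cut-off errors. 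Feeding $g_{R,\varepsilon,\delta}$ into the test-function inequality (using the inequality so that the left-hand side is bounded below by an expression in which $\partial_t f$ and $\mathscr L f$ no longer appear explicitly, only the regularized right-hand side and the error terms), and then letting $R\to\infty$ and $\varepsilon,\delta\to0$, produces
\[
\int_{\R^{2d}} f_\tau\,d\rho_\tau - \int_{\R^{2d}} f_0\,d\rho_0 - \int_0^\tau\!\!\int_{\R^{2d}}\Bigl(\partial_t f + \mathscr L_{\rho_t}f + \tfrac{\gamma^2}{2}|\nabla_p f|^2\Bigr)\,d\rho_t\,dt \;\le\; I^\gamma(\rho).
\]
Here the commutator errors vanish by the continuity of $t\mapsto\psi\ast\rho_t,\nabla\psi\ast\rho_t$ (Lemma~\ref{lemma:app:bonds-on-rho}, part~1) together with the growth and gradient bounds of Theorem~\ref{FIR-thm:VFP-Aux-PDE-Res} and Appendix~\ref{App:Aux-PDE}, while the cut-off errors vanish because $|f|\lesssim(1+H)^{1/2}$ and $\int H\,d\rho_t<\infty$ (Lemma~\ref{lemma:app:bonds-on-rho}, part~2).

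Finally, to convert this into~\eqref{FIR-eq:J-leq-I-Gen} I would add $\tfrac12\int\psi\ast\rho_\tau\,d\rho_\tau$ to, and subtract $\gamma\int_0^\tau\!\!\int J\nabla H\cdot\nabla\psi\ast\rho_t\,d\rho_t\,dt$ from, the displayed inequality, so that its left-hand side becomes exactly the left-hand side of~\eqref{FIR-eq:J-leq-I-Gen}. Then I would use two identities. First, because $\psi\ast\rho_t$ is independent of $p$ and $\psi$ is symmetric, testing $\partial_t\rho_t=\mathscr L_{\rho_t}^*\rho_t-\gamma\div_p(\rho_t h_t)$ against $\psi\ast\rho_t$ and using $J\nabla(\psi\ast\rho_t)\cdot\nabla(\psi\ast\rho_t)=0$ gives $\tfrac{d}{dt}\bigl[\tfrac12\int\psi\ast\rho_t\,d\rho_t\bigr]=\gamma\int J\nabla H\cdot\nabla(\psi\ast\rho_t)\,d\rho_t$, hence $\gamma\int_0^\tau\!\!\int J\nabla H\cdot\nabla\psi\ast\rho_t\,d\rho_t\,dt=\tfrac12\int\psi\ast\rho_\tau\,d\rho_\tau-\tfrac12\int\psi\ast\rho_0\,d\rho_0$. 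Second, the Donsker--Varadhan formula~\eqref{def:VFP-FreeEnergy-Var}, applied with $f_0$ as test function (approximated by $C^\infty_c$ functions, legitimate because $|f_0|\lesssim(1+H)^{1/2}$ makes $e^{f_0-H}\in L^1$), gives $\int f_0\,d\rho_0+\tfrac12\int\psi\ast\rho_0\,d\rho_0\le\M F(\rho_0)+\log\int e^{f_0-H}\,dx-\log Z_H$. Inserting these two relations turns the right-hand side into $I^\gamma(\rho)+\M F(\rho_0)+\log\int e^{f_0-H}\,dx-\log Z_H$, which is the assertion.

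The main obstacle is the middle step: making $f$ — which solves a degenerate parabolic equation, is only locally integrable, and is unbounded at infinity — a legitimate competitor in the duality formula, i.e.\ simultaneously controlling the commutators produced by mollification and the tails produced by truncation. Steps one and three are essentially algebraic once the growth, gradient, moment and continuity estimates are in hand.
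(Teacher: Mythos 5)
Your strategy is correct and leads to the same conclusion, but it decomposes the argument differently from the paper, and the comparison is instructive. The paper's first move is to build the interaction directly into the competitor: it substitutes $f(t,x)=[\psi*_x\rho_t+\tilde f(t,x)]\chi_{[0,\tau]}(t)$ (suitably time-mollified) into the duality formula~\eqref{def:I-gamma2}, and the crucial $\tfrac12$ in $\tfrac12\psi*\rho$ appears purely from the symmetry of~$\psi$ through the algebraic identity $\int_0^\tau\!\int\partial_t(\psi*\rho_t)\,d\rho_t\,dt=\tfrac12\bigl[\int\psi*\rho_\tau\,d\rho_\tau-\int\psi*\rho_0\,d\rho_0\bigr]$; the resulting $\psi$-decorated inequality~\eqref{FIR-eq:LDRF-Alt-Def-Proof} is then applied directly to $f_{\delta,\vep}$. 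You instead first establish a plain test-function inequality (with no $\psi$ anywhere), apply it to the mollified-and-truncated $f$, and then import the $\psi$-structure afterwards via the exact conservation law $\gamma\int_0^\tau\!\int J\nabla H\cdot\nabla(\psi*\rho_t)\,d\rho_t\,dt = \tfrac12\int\psi*\rho_\tau\,d\rho_\tau-\tfrac12\int\psi*\rho_0\,d\rho_0$, which you obtain by testing the weak form $\partial_t\rho_t=\opL_{\rho_t}^*\rho_t-\g\div_p(\rho_t h_t)$ against $\psi*\rho_t$ and exploiting that $\psi*\rho_t$ is $p$-independent so all dissipative and $h$-terms drop out. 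These two identities are not the same: the paper's needs only symmetry of $\psi$ and a Leibniz rule, yours in addition invokes the PDE for $\rho$ and the anti-symmetry of $J$. Since the paper already uses the $h$-representation~\eqref{DOG-eq:VFP-Alternate-LDRF} to make the test-function substitution rigorous, this costs nothing extra; in exchange, your separation makes the role of the interaction as a conserved quantity explicit, which is arguably cleaner. Otherwise the two routes share the same heavy-lifting: the space/time mollification of Lemma~\ref{FIR-lem:Reg-Abstract-Aux-PDE} to make $f$ admissible, the growth control $|f|\lesssim(1+H)^{1/2}$ combined with $\int H\,d\rho_t<\infty$ (Lemma~\ref{lemma:app:bonds-on-rho}) to tame the tails (the paper works with the growth class $\mathcal A$ where you use an explicit cut-off $\chi_R$, but this is cosmetic), and the Donsker--Varadhan characterization~\eqref{def:VFP-FreeEnergy-Var} at $t=0$. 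One point you should make explicit when carrying this out is that the $\psi$-identity, like the paper's time-mollification of $\psi*\rho$, requires the same double time-mollification to give rigorous meaning to $\tfrac{d}{dt}\int\psi*\rho_t\,d\rho_t$ when $\rho$ is only a narrowly-continuous curve of measures; it is not literally ``algebraic,'' but it is the same technical effort the paper expends in deriving~\eqref{FIR-eq:LDRF-Alt-Def-Proof}, so your assessment of the relative difficulty is fair.
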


\begin{proof}
We first show that for every $\tau\in[0,T]$,
\begin{align}\label{FIR-eq:LDRF-Alt-Def-Proof}
I(\rho) \geq \sup_{\tilde f\in \mathcal A} \int_{\R^{2d}} \rho\Bigl( \tilde f+\frac 12 \psi\ast_x\rho\Bigr)\Bigr|_0^\tau  -\int_0^\tau\int_{\R^{2d}} \Bigl\{  \partial_t \tilde f + \mathscr L_\rho \tilde f + \frac{\g^2}{2}|\nabla_p \tilde f|^2 + \g J\nabla H \cdot\nabla\psi\ast_x\rho_t
\Bigr\}\, d\rho_tdt,
\end{align}
where 
\begin{align*}
\mathcal A=\Bigl\{\tilde f\in C^{1,2}([0,T]\times\R^{2d}): |\partial_t \tilde f|, |\nabla \tilde f|^2, |\Delta \tilde f|\leq C(1+H) \Bigr\}.
\end{align*}
Formally, this follows from substituting in the rate functional~\eqref{def:I-gamma2} $f(t,x) = \bigl[\psi\ast_x\rho+ \tilde f\,\bigr](t,x)\chi_{[0,\tau]}(t)$  with $\tilde f\in \mathcal A$, and where $\chi_{[0,\tau]}$ is the characteristic function of the interval $[0,\tau]$. The rigorous proof follows by choosing in the rate functional~\eqref{def:I-gamma2} the function
\[
f_n = \delta_n*_t(\xi \delta_n *_t\psi*_x\rho) +  \tilde f\xi,
\]
for some $\tilde f\in\mathcal A$ and $\xi\in C_c^\infty((0,\tau))$. Here $\delta_n(t) := n\delta(nt)$ is an approximation of a Dirac. Upon rearranging the time convolutions, letting $n\to\infty$,  using Lemma~\ref{lemma:app:bonds-on-rho}, and letting $\xi$ converge to the function $\chi_{[0,\tau]}$, we recover~\eqref{FIR-eq:LDRF-Alt-Def-Proof}.

From~\eqref{FIR-eq:LDRF-Alt-Def-Proof} we now derive~\eqref{FIR-eq:J-leq-I-Gen}.
From here onwards we denote the expression in the supremum on the right hand side of~\eqref{FIR-eq:LDRF-Alt-Def-Proof} by $\mathcal J(\rho,\tilde f)$ and use the notation
\begin{align}\label{FIR-eq:J-I-Psi}
\Psi:=-\g^2 \Bigl( \Delta_p\varphi -\nabla_pH\cdot\nabla_p\varphi - \frac 12 |\nabla_p\varphi|^2 \Bigr).
\end{align}

Our aim is to substitute the solution $f$ of~\eqref{FIR-eq:VFP-Aux-f-PDE}-\eqref{IR-eq:VFP-Aux-f-PDE-Final-Data} into~\eqref{FIR-eq:LDRF-Alt-Def-Proof}. To do this, we first extend $f$ outside $[0,T]\times\R^{2d}$ by constants and define 
\begin{align*}
f_\d := \nu_\d \ast_x f, \qquad f_{\d,\vep}:=\eta_\vep\ast_t f_\d,
\end{align*}
where $\eta_\vep(t)$, $\nu_\d(q,p)$ are again regularizing sequences in time and space. The rest of the proof is divided into the following steps:
\begin{enumerate}
\item We first show that $\mathcal J(\rho,f_{\d,\vep})$ is well defined.
\item We then successively take the limits $\vep\rightarrow 0$ and $\delta\rightarrow 0$ in $\mathcal J(\rho,f_{\d,\vep})$. 
\item We finally show that the limit satisfies~\eqref{FIR-eq:J-leq-I-Gen}.
\end{enumerate}

\emph{Step 1.} Let us first show that $\mathcal J(\rho,f_{\d,\vep})$ is well defined. From Theorem~\ref{FIR-thm:VFP-Aux-PDE-Res} we know that $f$ satisfies $|f|\leq C(1+H)^{1/2}$, and therefore we find
\begin{align}
\label{est:f-delta-epsilon-derivatives}
|\partial_t f_{\d,\vep}|,\  |\Delta_p  f_{\d,\vep}|,\  |J\nabla\psi\ast_x\rho_t\cdot\nabla  f_{\d,\vep}|,\  |J\nabla H\cdot \nabla  f_{\d,\vep}|,\ |\nabla H\cdot\nabla  f_{\d,\vep}|\leq C(1+H), 
\end{align}
where the constant $C$  depends on $\delta$ and $\vep$. The last two objects are bounded since $|\nabla H|^2\leq C(1+H)$; similar estimates hold for $f_\delta$. These bounds combined with  
Lemma~\ref{lemma:app:bonds-on-rho} imply that the integrals in  $\mathcal J(\rho,f_{\d,\vep})$ are well defined and using~\eqref{FIR-eq:LDRF-Alt-Def-Proof} it follows that 
\begin{align*}
\mathcal J(\rho,f_{\d,\vep})\leq I(\rho).
\end{align*}

\emph{Step 2.} Now we consider the convergence of $\mathcal J(\rho,f_{\d,\vep})$ as $\vep\rightarrow 0$.
Since all the derivatives of $f$ in~\eqref{FIR-eq:VFP-Aux-f-PDE} are in $L^1_{\mathrm{loc}}((-2,T+2)\times\R^{2d})$ (recall that we have extended  $f$ by constant functions of $(q,p)$ outside $[0,T]$) the same is true for the corresponding derivatives of $f_{\d}:=\nu_\d\ast_x f$, and therefore using standard results, the following convergence results hold in $L^1_{\mathrm{loc}}(\R\times\R^{2d})$ as $\vep\rightarrow 0$,
\begin{align}\label{FIR-eq:f-vep,delta-to-delta}
f_{\d,\vep}\rightarrow f_\d, \ \partial_t f_{\d,\vep}\rightarrow\partial_t f_\d,  \ \nabla f_{\d,\vep}\rightarrow\nabla f_\d,  \ \Delta_p f_{\d,\vep}\rightarrow\Delta_p f_\d.
\end{align}

Let us first consider  the single-integral terms in $\mathcal J(\rho,f_{\d,\vep})$. Since $f_\d\in W^{1,1}(0,T; L^1(B_R))$ for any $R>0$, we have
\begin{align*}
f_{\d,\vep}\xrightarrow{\vep\rightarrow 0} f_\d \quad \text{in }  W^{1,1}(0,T; L^1(B_R)),
\end{align*}
which together with the trace theorem implies that  
\begin{align}
f_{\d,\vep}\Bigl|_{t=0,\tau}\xrightarrow{\vep\rightarrow 0} f_\d\Bigl|_{t=0,\tau}  \quad \text{ in } L^1(B_R) \text{ and a.e. along  a subsequence}.
\end{align}
Since the traces of $f_\d$ and $f_{\d,\vep}$ at $t=0,\tau$ are continuous in $(q,p)$, this convergence holds everywhere in~$B_R$.
Combining this convergence statement with the estimate~\eqref{est:f-delta-epsilon-derivatives}
and Lemma~\ref{lemma:app:bonds-on-rho} and using the dominated convergence theorem we find 
\begin{align*}
\int_{\R^{2d}}\rho_t f_{\d,\vep,t}\Bigl|_{t=0,\tau}\xrightarrow{\vep\rightarrow 0} \int_{\R^{2d}}  \rho_t f_{\d,t}\Bigl|_{t=0,\tau}.
\end{align*}

Now consider the double integral in $\mathcal J(\rho,f_{\d,\vep})$. Using the estimate~\eqref{FIR-eq:Smooth-Lem-Time} with the choice
\begin{align*}
\Phi=\partial_t f_\d+\mathscr L_{\rho_t} f_\d +\frac 12 |\nabla_p f_\d|^2,
\end{align*}
we have
\begin{align*}
&\limsup\limits_{\vep\rightarrow 0} \int_0^\tau\int_{\R^{2d}} \Bigl( \partial_t f_{\d,\vep}+\mathscr L_{\rho_t} f_{\d,\vep}+\frac {\g^2}2 |\nabla_p f_{\d,\vep}|^2+\g J\nabla H\cdot \nabla\psi\ast_x\rho_t \Bigr)\,d\rho_t dt \\
& \leq \limsup\limits_{\vep\rightarrow 0} \int_0^\tau\int_{\R^{2d}} \Bigl( \eta_\vep\ast_t\Bigl[\partial_t f_{\d}+\mathscr L_{\rho_t} f_{\d}+ \frac {\g^2}2 |\nabla_p f_\d|^2\Bigr]+\g J\nabla H\cdot \nabla\psi\ast_x\rho_t \Bigr)\,d\rho_t dt\\
&\qquad \quad \ \ +\int_0^\tau\int_{\R^{2d}} \Bigl( \g J\nabla\psi\ast_x\rho_t \cdot \nabla f_{\d,\vep} -\eta_\vep \ast_t(\g J\nabla \psi\ast_x\rho_t\cdot\nabla f_\d)\Bigr)\,d\rho_tdt 
\end{align*}
Since $t\mapsto \nabla \psi\ast_x\rho_t$ is continuous (see Lemma~\ref{lemma:app:bonds-on-rho}), it follows that for all $x\in \R^{2d}$
\begin{align*}
t\mapsto \int_{\R}\eta_\vep(t-s) \Bigl[\g J\nabla\psi\ast_x\rho_t -\g J\nabla\psi\ast_x\rho_s\Bigr]\nabla f_\d(s,x)\,ds \xrightarrow{\vep\rightarrow 0} 0 \text{ in }L^1(0,\tau).
\end{align*}
Using this convergence along with~\eqref{FIR-eq:f-vep,delta-to-delta} we find 
\begin{multline}\label{FIR-eq:Double-Int-vep-Limit}
\limsup\limits_{\vep\rightarrow 0} \int_0^\tau\int_{\R^{2d}} \Bigl( \partial_t f_{\d,\vep}+\mathscr L_{\rho_t} f_{\d,\vep}+\frac {\g^2}2 |\nabla_p f_{\d,\vep}|^2+\g J\nabla H\cdot \nabla\psi\ast_x\rho_t \Bigr)d\rho_t dt \\
\leq \int_0^\tau\int_{\R^{2d}}\Bigl( \partial_t f_{\d}+\mathscr L_{\rho_t} f_{\d}+\frac {\g^2}2 |\nabla_p f_{\d}|^2+\g J\nabla H\cdot \nabla\psi\ast_x\rho_t \Bigr)d\rho_t dt.
\end{multline}

Combining these terms and using $I(\rho)\geq \liminf_{\vep\rightarrow 0} \mathcal J(\rho,f_{\d,\vep})$ we have
\begin{align}\label{DOG-eq:J<=I-vep-0}
\int_{\R^{2d}} \rho\Bigl( f_{\d}+\frac 12 \psi\ast_x\rho\Bigr)\Bigl|_0^\tau - \int_0^\tau\int_{\R^{2d}}\Bigl( \partial_t f_{\d}+\mathscr L_{\rho_t} f_{\d}+\frac 12 |\nabla_p f_{\d}|^2+\g J\nabla H\cdot \nabla\psi\ast_x\rho_t \Bigr)d\rho_t dt 
\leq I(\rho) 
\end{align}

Now we study the $\d\rightarrow 0$ limit of~\eqref{DOG-eq:J<=I-vep-0}. Using a similar analysis as before, the following convergence results hold in $L^1_{\mathrm{loc}}(\R\times\R^{2d})$ as $\d\rightarrow 0$,
\begin{align*}
f_{\d}\rightarrow f, \ \partial_t f_{\d}\rightarrow\partial_t f,  \ \nabla f_{\d}\rightarrow\nabla f,  \ \Delta_p f_{\d}\rightarrow\Delta_p f.
\end{align*}
Since $f_T=\phi\in C_c^\infty(\R^{2d})$ (see Theorem~\ref{FIR-thm:VFP-Aux-PDE-Res}) and therefore $f_{\d,T}\rightarrow f_T$ everywhere, we have 
\begin{align}\label{FIR-eq:Pass-vep-0-Init-Point}
\int_{\R^{2d}}\rho_\tau f_{\d,\tau}\xrightarrow{\d\rightarrow 0} \int_{\R^{2d}}  \rho_\tau f_{\tau}. 
\end{align}

To pass to the limit in the right hand side of inequality~\eqref{FIR-eq:Double-Int-vep-Limit}, we use the estimate~\eqref{FIR-eq:Smooth-Lem-Spatial} with the choice $\Phi=\Psi-\g J\nabla H\cdot\nabla\psi\ast_x\rho_t$ (see~\eqref{FIR-eq:J-I-Psi} for the definition of $\Psi$), which leads to
\begin{align*}
&\limsup\limits_{\d\rightarrow 0} \int_0^\tau\int_{\R^{2d}}\Bigl( \partial_t f_{\d}+\mathscr L_{\rho_t} f_{\d}+\frac {\g^2}2 |\nabla_p f_{\d}|^2+\g J\nabla H\cdot \nabla\psi\ast_x\rho_t \Bigr)d\rho_t dt\\
&\leq \limsup\limits_{\d\rightarrow 0} \int_0^\tau\int_{\R^{2d}} \Bigl( \nu_\d\ast_x\Psi-\nu_\d\ast_x(\g J\nabla H\cdot\nabla\psi\ast_x\rho_t)+\g J\nabla H\cdot \nabla\psi\ast_x\rho_t  \Bigr)d\rho_t dt\\
& \qquad \  \ \ \ +  \int_0^\tau\int_{\R^{2d}} \Bigl(\g\d\|d^2H\|_{L^\infty} (\nu_\d\ast_x|\nabla f|+\g \nu_\d\ast_x|\nabla_p f|)  
+\g \Bigl[ J\nabla \psi\ast_x\rho_t\cdot \nabla f_{\delta} -\nu_\d\ast_x (J\nabla \psi\ast_x\rho_t\cdot\nabla f)\Bigr] \Bigr)d\rho_t dt\\
&=\int_0^\tau\int_{\R^{2d}} \Psi d\rho_tdt.
\end{align*}

The only term left is the single-integral term at $t=0$. Instead of passing to the limit, here we estimate as follows
\begin{align}\label{FIR-eq:Init-Val-Est}
\int_{\R^{2d}}\rho_0 \Bigl( f_{\d,0}+\frac 12 \psi\ast_x\rho_0\Bigr)\leq \M{F}(\rho_0)+ \log\int_{R^{2d}}e^{f_{\d,0}-H}-\log Z_H.
\end{align}
Let us first prove~\eqref{FIR-eq:Init-Val-Est}. Recall from the proof of Theorem~\ref{FIR-thm:VFP-Aux-PDE-Res} that 
\begin{align*}
f_0=2\log g_0 \leq 2\log \alpha_2 +2\beta_2 T\sqrt{\omega_2+H},
\end{align*}
where $\alpha_2,\beta_2,\omega_2$ are constants, and therefore 
\begin{align}\label{FIR-eq:f_d,0-Bound}
f_{\d,0}=\nu_\d\ast_x f_0\leq 2\log\alpha_2 +\beta_2T\Bigl(\d^2\|D^2\sqrt{\omega_2+H}\|_{L_\infty}+2\sqrt{\omega_2+H}\Bigr).
\end{align}
To arrive at the estimate above we have used 
\begin{align*}
\nu_\d\ast_x f(x)=\int f(x-y)\nu_\d(y)dy \leq \int \Bigl( |f(x)|+|\nabla f(x)| y +\frac 12 |y|^2 \|d^2 f\|_{L^\infty} \Bigr)\nu_\d(y)dy  \leq   |f(x)|+ \frac 12 \d^2 \|d^2 f\|_{L^\infty} ,
\end{align*}
for any $f\in C_b^2(\R^{2d})$ and $\nu_\d$ satisfying $\int\nu_\d=1$ and  $\int x\nu_\d(x)dx=0$. 

Furthermore, using the growth conditions on $H=p^2/2m+V(q)$ (see~\ref{cond:VFP:V1}) we find for the second derivative
\begin{align*}
d^2\sqrt{\omega_2+H}=-\frac{\nabla H\otimes\nabla H}{4(\omega+H)^{3/2}}+ \frac{d^2 H}{2\sqrt{\omega_2+H}} \quad\Longrightarrow\quad \left\|d^2\sqrt{\omega_2+H}\right\|_{L^\infty} <\infty,
\end{align*}
and therefore~\eqref{FIR-eq:f_d,0-Bound} implies that $|f_{\d,0}|\leq C(1+H)^{1/2}$. The estimate~\eqref{FIR-eq:Init-Val-Est} then follows by using a truncated version of $f_{\d,0}$ in the variational definition~\eqref{def:VFP-FreeEnergy-Var} of the free energy.  

Substituting~\eqref{FIR-eq:Init-Val-Est} into~\eqref{DOG-eq:J<=I-vep-0} we have
 \begin{multline}\label{FIR-eq:Gen-Finite-Par-J<I}
\int_{\R^{2d}} \rho\Bigl( f_{\d}+\frac 12 \psi\ast_x\rho_\tau\Bigr)\Bigr|_{t=\tau} -\int_0^\tau\int_{\R^{2d}} \Bigl\{  \partial_t f_{\d} + \mathscr L_\rho f_{\d} + \frac{\g^2}{2}|\nabla_p f_{\d}|^2 + \g J\nabla H \cdot\nabla\psi\ast_x\rho_t
\Bigr\} d\rho_tdt \\
\leq  I(\rho) + \M{F}(\rho_0)+ \log\int_{R^{2d}}e^{f_{\d,0}-H}-\log Z_H. 
\end{multline}
Using the bound $|f_{\d,0}|\leq C(1+H)^{1/2}$ and the dominated convergence theorem we find
\begin{align*}
 \log\int_{R^{2d}}e^{f_{\d,0}-H} \xrightarrow{\d\rightarrow 0}  \log\int_{R^{2d}}e^{f_{0}-H},
\end{align*}
and therefore passing to the limit $\d\rightarrow 0$ in~\eqref{FIR-eq:Gen-Finite-Par-J<I} gives
\begin{multline*}
\int_{\R^{2d}} \rho_\tau\Bigl( f_\tau+\frac 12 \psi\ast_x\rho_\tau\Bigr) -\int_0^\tau\int_{\R^{2d}} \Bigl\{  \partial_t f + \mathscr L_\rho f + \frac{\g^2}{2}|\nabla_p f|^2 + \g J\nabla H \cdot\nabla\psi\ast_x\rho_t
\Bigr\} d\rho_tdt  \\ 
\leq I(\rho)+\M{F}(\rho_0)+\log\int_{\R^{2d}}e^{f_0-H}dx-\log Z_H.
\end{multline*}
\end{proof}


We are now ready to prove Theorem~\ref{DOG-thm:VFP-Ent-Fisher-Inf-Bounds}. 

\begin{proof}[\textbf{Proof of Theorem~\ref{DOG-thm:VFP-Ent-Fisher-Inf-Bounds}}]
Combining~\eqref{FIR-eq:J-leq-I-Gen} with equation~\eqref{FIR-eq:VFP-Aux-f-PDE} we have
\begin{multline*}
\int_{\R^{2d}} \rho_\tau\Bigl( f_\tau+\frac 12 \psi\ast_x\rho_\tau\Bigr) \leq I(\rho)+\M{F}(\rho_0)+\log\int_{\R^{2d}} e^{f_0-H}dx-\log Z_H \\  - \g^2\int_0^\tau\int_{\R^{2d}} 
\Bigl( \Delta_p\varphi -\nabla_pH\cdot\nabla_p\varphi - \frac 12 |\nabla_p\varphi|^2\Bigr) d\rho_tdt.
\end{multline*}
Substituting this relation into the formula~\eqref{def:VFP-FreeEnergy-Var} for the free energy, and using $f|_{t=\tau}=\varphi$, we find 
\begin{align*}
\M{F}(\rho_\tau)&= \sup\limits_{\phi\in C_c^\infty(\R^{2d})} \int_{\R^{2d}} \Bigl( \phi + \frac 12 \psi\ast_x\rho_\tau \Bigr)\rho_\tau -\log\int_{\R^{2d}} e^{\phi-H} dx +\log Z_H \\ 
&\leq \sup\limits_{\phi\in C_c^\infty(\R^{2d})} I(\rho) + \M{F}(\rho_0|\mu)+\log\int_{\R^{2d}} e^{f_0-H}dx -\log\int_{\R^{2d}} e^{\phi-H} dx  
\\& \qquad \qquad \qquad \qquad - \g^2\int_0^\tau\int_{\R^{2d}} 
\Bigl( \Delta_p\varphi -\nabla_pH\cdot\nabla_p\varphi - \frac 12 |\nabla_p\varphi|^2\Bigr) d\rho_tdt.
\end{align*}
Rearranging and using~\eqref{FIR-res:VFP-Dec-Exp-Map} this becomes
\begin{align}
\M{F}(\rho_\tau) + \g^2\int_0^\tau\int_{\R^{2d}} 
\Bigl( \Delta_p\varphi -\nabla_pH\cdot\nabla_p\varphi - \frac 12 |\nabla_p\varphi|^2\Bigr) d\rho_tdt 
\leq   I(\rho) + \M{F}(\rho_0|\mu).
\label{ineq:FIR-pre-end}
\end{align}
Taking the supremum over $\varphi\in C_c^\infty(\R\times\R^{2d})$ and using a standard argument, based on $C^2$-seperability of $C_c^\infty$, we can move the supremum inside of the time integral and the definition of the relative Fisher Information~\eqref{DOG-eq:Relative-Fisher-Information} then gives
\[
\M{F}(\rho_\tau) + \frac{\gamma^2}2  \int_0^\tau \RelFI(\rho_t|\mu)\, dt \leq \M{F}(\rho_0) + I(\rho) .
\]
This completes the proof.
 \end{proof}


\section{Properties of the auxiliary PDE}\label{App:Aux-PDE}
In this appendix we will study the following equation in $[0,T]\times\R^{2d}$: 
\begin{equation}\label{FIR-eq:VFP-Main-Aux-PDE}
\begin{aligned}
&\partial_t g - J\nabla H\cdot \nabla g -J\nabla (\psi\ast\rho_t)\cdot \nabla g + \nabla_pH\cdot\nabla_pg - \Delta_p g -\frac{g}{2}\left(J\nabla H\cdot\nabla\psi\ast\rho_t-\Psi\right)=U,\\
& g|_{t=0}=g^0.
\end{aligned}
\end{equation}
In addition to providing well-posednes results (see Section~\ref{Aux-sec:Well-Posed}), in this section we also prove certain important properties of this equations such as a comparison principle and bounds at infinity (see Section~\ref{Aux-Sec:PDE-Prop}).  

Equation~\eqref{FIR-eq:Aux-PDE-Hopf-Cole} is a special case of~\eqref{FIR-eq:VFP-Main-Aux-PDE} with the choice
\begin{align*}
U=0, \ \ \ \Psi=-\Bigl(  \Delta_p\varphi -\nabla_p\varphi  \cdot\nabla_pH - \frac 12|\nabla_p\varphi|^2 \Bigr).
\end{align*}
Here and in the rest of this appendix we set $\g=1$, since the value of $\g$ plays no role in the discussion.

The results of this appendix are a generalization of~\cite[Appendix A]{Degond86}. In that reference Degond treats the case of equation~\eqref{FIR-eq:VFP-Main-Aux-PDE} without on-site and interaction potentials and without the friction term $\nabla_pH\cdot \nabla_p g$. We generalize the equation, while closely following his line of argument, and proving what are essentially similar results. 

The main difference in our treatment is the introduction of a weighted functional setting for the equation~\eqref{FIR-eq:VFP-Main-Aux-PDE}, in which the $L^2$-spaces, Sobolev spaces, and the weak formulation of the equation are all given a weight function $e^{-H}$. The choice of this weight function is closely connected to the fact that $e^{-H}$ is a stationary measure  both for the convective part of the equation $J\nabla H\cdot \nabla g$ and for the Ornstein-Uhlenbeck dissipative part $\nabla_pH\cdot\nabla_pg - \Delta_p g$. This weighted setting has the advantage of effectively eliminating all the unbounded coefficients in the equation.

\subsection{Well-posedness}\label{Aux-sec:Well-Posed}
Following Degond~\cite{Degond86} we introduce a change of variable
\begin{align}\label{FIR-eq:VFP-Lambda}
g \mapsto e^{\lambda t} g, \  \text{ with } \  \lambda\geq \frac{1}{2}\|\Psi\|_{L^\infty}+1,
\end{align}
which transforms~\eqref{FIR-eq:VFP-Main-Aux-PDE} into
\begin{equation}\label{FIR-eq:VFP-Trans-Aux}
\begin{aligned}
&\partial_t g - J\nabla H\cdot \nabla g -J\nabla (\psi\ast\rho_t)\cdot\nabla g + \nabla_pH\cdot\nabla_pg - \Delta_p g -\frac{g}{2}\left(J\nabla H\cdot\nabla\psi\ast\rho_t\right) + \Bigl(\lambda+\frac{1}{2}\Psi\Bigr)g=e^{-\lambda t}U,\\
& g|_{t=0}=g^0.
\end{aligned}
\end{equation}
In what follows we will study the well-posedness of~\eqref{FIR-eq:VFP-Trans-Aux}, and at the end of the section we will extrapolate the results to~\eqref{FIR-eq:VFP-Main-Aux-PDE}. 

Let us formally derive the weak formulation for~\eqref{FIR-eq:VFP-Trans-Aux}. Multiplying with a test function $\phi\in C_c^\infty([0,T)\times\R^{2d})$ and a weight $e^{-H}$, and using integration by parts, for the left-hand side of~\eqref{FIR-eq:VFP-Trans-Aux} we get
\begin{align*}
&\int_0^T\int_{\R^{2d}} \phi\Bigl\{ \partial_t g - J\nabla H\cdot \nabla g -J\nabla (\psi\ast\rho_t)\cdot\nabla g + \nabla_pH\cdot\nabla_pg - \Delta_p g    -\frac{g}{2}\left(J\nabla H\cdot\nabla\psi\ast\rho_t\right)  + \Bigl(\lambda+\frac{1}{2}\Psi\Bigr)g \Bigl\}e^{-H}\\
&=  \int_0^T\int_{\R^{2d}} \Bigl\{ g \Bigl( -\partial_t\phi + J\nabla H \cdot \nabla\phi +\frac{1}{2} J \nabla\psi\ast\rho_t\cdot \nabla \phi  +  \Bigl( \lambda + \frac 12 \Psi \Bigr)\phi  \Bigr)   
-\frac 12 \phi\, J\nabla\psi\ast\rho_t\cdot\nabla g + \nabla_p g\cdot\nabla_p \phi
\Bigr\} \, e^{-H} \\
& \qquad\qquad\qquad -\int_{\R^{2d}} g\phi\big|_{t=0} \,e^{-H}.
\end{align*}
The weight $e^{-H}$ causes cancellation of certain terms after integration by parts, as for instance   for the two convolution terms,
\begin{align*}
&\int_0^T\int_{\R^{2d}} \phi\Bigl(-J\nabla\psi\ast\rho_t\cdot\nabla g -\frac 12gJ\nabla H\cdot\nabla\psi\ast\rho_t \Bigr) e^{-H}\\
&=\int_0^T\int_{\R^{2d}} \phi\Bigl(-\frac 12 J\nabla\psi\ast\rho_t\cdot\nabla g -\frac 12 J\nabla\psi\ast\rho_t\cdot\nabla g -\frac 12gJ\nabla H\cdot\nabla\psi\ast\rho_t \Bigr) e^{-H}\\
&=\int_0^T\int_{\R^{2d}} \Bigl(-\frac 12 \phi J\nabla\psi\ast\rho_t\cdot\nabla g + \frac 12 gJ\nabla\psi\ast\rho_t\cdot\nabla \phi +\frac 12\phi gJ\nabla H\cdot\nabla\psi\ast\rho_t-\frac 12\phi gJ\nabla H\cdot\nabla\psi\ast\rho_t \Bigr) e^{-H}\\
&=\int_0^T\int_{\R^{2d}} \Bigl(-\frac 12 \phi J\nabla\psi\ast\rho_t\cdot\nabla g + \frac 12 gJ\nabla\psi\ast\rho_t\cdot\nabla \phi \Bigr) e^{-H}.
\end{align*}

These calculations suggest that we seek weak solutions in the space
\begin{align}\label{FIR-def:X-space}
X:=\Bigl\{  g\in L^2(0,T;L^2(\R^{2d};e^{-H})): \nabla_p g\in L^2(0,T;L^2(\R^{2d};e^{-H})) \Bigr\},
\end{align}
endowed with the norm 
\begin{align*}
\|g\|_X^2:= \|g\|^2_{L^2(L^2(e^{-H}))}+\|\nabla_p g\|^2_{L^2(L^2(e^{-H}))}.
\end{align*}
The subscript in the norm is  shorthand notation for $L^2(0,T;L^2(\R^{2d};e^{-H}))$. Note that $C_c^\infty((0,T)\times \R^{2d})$ is dense in $X$.

We will use $\|\cdot \|_{L^2}$ to indicate the $L^2$ norm without any weight, and $\langle \cdot,\cdot\rangle_{X',X}$ for the dual bracket between $X'$ (the dual of $X$) and $X$. 


For all $g\in X$ we can consider the combination $\partial_t g - J\nabla H\cdot\nabla g$ as a linear form on $C_c^\infty((0,T)\times \R^{2d})$ by interpreting the derivatives in the sense of distributions:
\[
\langle \partial_t g - J\nabla H\cdot\nabla g, \phi\rangle
:= -\int_0^T\int_{\R^{2d}} g (\partial_t\phi - J\nabla H\cdot\nabla \phi)e^{-H} \qquad\text{for }\phi\in C_c^\infty((0,T)\times \R^{2d}).
\]
Note that the weight function $e^{-H}$ yields no extra terms upon partial integration
If this linear form is bounded in the $X'$-norm, i.e.\ if the norm
\[
\|\partial_t g - J\nabla H\cdot\nabla g\|_{X'} := \sup \left\{
\int_0^T\int_{\R^{2d}} g (\partial_t\phi - J\nabla H\cdot\nabla \phi)e^{-H}: \quad \phi\in C_c^\infty((0,T)\times \R^{2d}), \ \|\phi\|_X \leq 1 \right\}
\]
is finite, then $\partial_t g - J\nabla H\cdot\nabla g\in X'$. We define $Y$ to be the space of such functions $g$:
\begin{equation}
\label{FIR-def:Y-space}
Y:=\Bigl\{ g\in X: \partial_t g - J\nabla H\cdot\nabla g\in X'\Bigr\}, \quad
\text{with norm }\|g\|^2_Y:=\|g\|^2_X + \|\partial_t g - J\nabla H\cdot\nabla g\|^2_{X'}.
\end{equation}

\medskip
We now define the variational equation (which is a weak form of~\eqref{FIR-eq:VFP-Trans-Aux}) to be
\begin{align}\label{FIR-eq:PDE-Weak-Form}
E_\lambda(g,\phi) = L_\lambda(\phi), \quad \forall \phi\in C_c^\infty([0,T)\times\R^{2d}),
\end{align}
where $E_\lambda:X\times C_c^\infty([0,T)\times\R^{2d})\rightarrow\R$ and $L_\lambda:C_c^\infty([0,T)\times\R^{2d})\rightarrow\R$ are given by
\begin{align}
&E_\lambda(g,\phi):= \int_0^T\int_{\R^{2d}} \Bigl\{ g \Bigl( -\partial_t\phi + J\nabla H \cdot \nabla\phi +\frac{1}{2} J \nabla\psi\ast\rho_t\cdot \nabla \phi  +  \Bigl( \lambda + \frac 12 \Psi \Bigr)\phi  \Bigr)  \nonumber \\ 
& \qquad \qquad\qquad\qquad\qquad\qquad-\frac 12 \phi\, J\nabla\psi\ast\rho_t\cdot\nabla g + \nabla_p g\cdot\nabla_p \phi
\Bigr\} \, e^{-H},\label{FIR-def:VFP-E-Bi-Form}\\
& L_\lambda(\phi):=\langle e^{-\lambda t}U,\phi\rangle_{X',X} + \int_{\R^{2d}}g^0\phi\big|_{t=0} \, e^{-H}. \label{FIR-def:VFP-L-Form}
\end{align}
 We use the subscript $\lambda$ to indicate that that the variational equation~\eqref{FIR-eq:PDE-Weak-Form} corresponds to the transformed equation~\eqref{FIR-eq:VFP-Trans-Aux}. 

We now state our main result. 
\begin{theorem}[Well-posedness] \label{FIR-thm:AuxPDE-Well-Posed}
Assume that 
\begin{align*}
\Psi\in C^2_c(\R^{2d}), \ \ U\in X', \  \text{ and } \  g^0\in L^2(\R^{2d};e^{-H}). 
\end{align*}
Then there exists a unique solution $g$ in $Y$ to the variational equation~\eqref{FIR-eq:PDE-Weak-Form}.
Furthermore the solution $g$ satisfies the initial condition in the sense of traces in $L^2(\R^{2d};e^{-H})$.
\end{theorem}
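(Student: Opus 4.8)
The plan is to prove Theorem~\ref{FIR-thm:AuxPDE-Well-Posed} by the Lions--Lax--Milgram variational method for (degenerate) parabolic equations, exactly as in the Lions projection theorem (see e.g.~\cite{Degond86}). The essential point is that although $\partial_t$ appears only in the combination $\partial_t - J\nabla H\cdot\nabla$, the transport part is skew-symmetric in the weighted space $L^2(\R^{2d};e^{-H})$ because $J\nabla H$ is divergence-free \emph{with respect to the weight} (i.e. $\div(e^{-H}J\nabla H)=0$, since $J\nabla H\cdot\nabla H=0$ and $\div J\nabla H=0$). Hence the first-order operator contributes nothing to the coercivity estimate and behaves like the time derivative.

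Concretely, I would proceed as follows. \textbf{Step 1 (functional setting).} Record that $E_\lambda(\cdot,\phi)$ and $L_\lambda$ are well-defined on $X\times C_c^\infty([0,T)\times\R^{2d})$; here one uses $\Psi\in C_c^2$ (so $\frac12\Psi$ is a bounded multiplier), $\psi\in W^{1,1}\cap C^2_b$ so that $J\nabla\psi\ast\rho_t$ is bounded uniformly in $t$ (Lemma~\ref{lemma:app:bonds-on-rho}), and $U\in X'$, $g^0\in L^2(e^{-H})$. \textbf{Step 2 (the key coercivity inequality).} For $\phi\in C_c^\infty([0,T)\times\R^{2d})$ I will show
\[
E_\lambda(\phi,\phi)\;\geq\; c\,\|\phi\|_X^2 + \tfrac12\int_{\R^{2d}}\phi^2\big|_{t=0}\,e^{-H}
\]
for $\lambda$ large enough. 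The computation: $\int_0^T\!\!\int -\phi\,\partial_t\phi\,e^{-H} = \tfrac12\int\phi^2|_{t=0}e^{-H}$ since $\phi$ vanishes at $t=T$; the term $\int_0^T\!\!\int \phi\,J\nabla H\cdot\nabla\phi\, e^{-H}$ integrates to $0$ by the weighted skew-symmetry just mentioned; the two convolution terms $\frac12 \phi\, J\nabla\psi\ast\rho_t\cdot\nabla\phi$ (with a sign) combine with $-\frac12\phi\,J\nabla\psi\ast\rho_t\cdot\nabla\phi$ — here I will redo the cancellation already performed in the excerpt to see that only a term bounded by $C\|\phi\|_{L^2(e^{-H})}\|\nabla_p\phi\|_{L^2(e^{-H})}$ survives (the $\nabla_q$-part cancels entirely, using that $J\nabla\psi\ast\rho_t$ only has a $p$-component coupling with $\nabla_q$ and a $q$-component, and the surviving derivative is $\nabla_p$); the dissipative term gives exactly $\|\nabla_p\phi\|^2_{L^2(e^{-H})}$; and the zeroth-order term gives $\int(\lambda+\tfrac12\Psi)\phi^2 e^{-H}\geq (\lambda - \tfrac12\|\Psi\|_\infty)\|\phi\|^2_{L^2(e^{-H})}$. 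Absorbing the cross term $C\|\phi\|_{L^2(e^{-H})}\|\nabla_p\phi\|_{L^2(e^{-H})}$ by Young's inequality into $\|\nabla_p\phi\|^2$ at the cost of a larger constant times $\|\phi\|^2_{L^2(e^{-H})}$, and choosing $\lambda$ accordingly (this is precisely why the shift~\eqref{FIR-eq:VFP-Lambda} with $\lambda\geq\tfrac12\|\Psi\|_\infty+1$ was introduced, possibly enlarged), yields the claimed inequality.

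\textbf{Step 3 (Lions' theorem).} Equip the test space with the pre-Hilbert norm $\|\phi\|_X$ and apply the Lions projection lemma (also known as the generalized Lax--Milgram / Ne\v{c}as theorem): since $\phi\mapsto E_\lambda(g,\phi)$ is continuous on $X$ for fixed $g$, since $E_\lambda$ satisfies the coercivity bound of Step 2 on the diagonal, and since $L_\lambda$ is a bounded linear functional on $X$ (boundedness of the $U$-term is immediate; boundedness of $\phi\mapsto\int g^0\phi|_{t=0}e^{-H}$ follows from the trace term controlled in Step 2, or more directly from a weighted trace inequality $\|\phi|_{t=0}\|_{L^2(e^{-H})}^2\le C\|\phi\|_X\|\partial_t\phi - J\nabla H\cdot\nabla\phi\|_{X'}$ — but in the Lions scheme one simply needs $L_\lambda$ bounded on the norm used), there exists $g\in X$ with $E_\lambda(g,\phi)=L_\lambda(\phi)$ for all test $\phi$. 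Taking $\phi\in C_c^\infty((0,T)\times\R^{2d})$ shows that $g$ solves the PDE in the sense of distributions, from which $\partial_t g - J\nabla H\cdot\nabla g = $ (bounded terms) $\in X'$, hence $g\in Y$. \textbf{Step 4 (initial trace and uniqueness).} Since $g\in Y$, a density/integration-by-parts argument (the weighted analogue of the standard result that $X\cap\{ \partial_t(\cdot) - J\nabla H\cdot\nabla(\cdot)\in X'\}$ embeds continuously into $C([0,T];L^2(e^{-H}))$, valid because the transport term is weight-skew-symmetric and so does not obstruct the energy identity) gives that $g$ has a well-defined trace in $L^2(\R^{2d};e^{-H})$ at $t=0$, and testing~\eqref{FIR-eq:PDE-Weak-Form} against $\phi$ non-vanishing at $t=0$ identifies this trace with $g^0$. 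Uniqueness: if $g$ solves the equation with $U=0$, $g^0=0$, then the energy identity plus the Step~2 coercivity on $(0,\tau)$ forces $\|g\|=0$.

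\textbf{Main obstacle.} The delicate point is Step 2: making the weighted skew-symmetry of the transport term rigorous for $g$ (or $\phi$) only in $X$ — a space with no control on $\nabla_q g$ — and correctly bookkeeping the convolution terms so that the \emph{uncontrolled} $\nabla_q$-directions drop out entirely and only $\nabla_p$-type quantities, which \emph{are} controlled, remain. Relatedly, one must be careful that the formal integration by parts in $q$ using the weight $e^{-H}$ is justified, which is why the argument is first carried out for smooth compactly supported test functions and then extended by the density of $C_c^\infty((0,T)\times\R^{2d})$ in $X$. Everything else is a faithful adaptation of Degond's argument to the weighted setting.
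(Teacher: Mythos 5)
Your proposal follows essentially the same route as the paper: both prove diagonal coercivity of $E_\lambda$ for $\phi\in C_c^\infty([0,T)\times\R^{2d})$ after the spectral shift by~$\lambda$, exploit the exact skew-symmetry of $J\nabla H\cdot\nabla$ in $L^2(e^{-H})$ so that the transport term integrates to zero, and then invoke the Lions projection theorem (the paper records it as Theorem~\ref{FIR-thm:Lions-Deg}); the trace statement and uniqueness then follow from the Green formula of Lemma~\ref{FIR-thm:VFP-Green}, built on density of $C_c^\infty([0,T]\times\R^{2d})$ in~$Y$. Two details in your write-up need tightening before the argument actually runs, however.

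First, in Step~2 you predict that the convolution terms leave a surviving residual of size $C\|\phi\|_{L^2(e^{-H})}\|\nabla_p\phi\|_{L^2(e^{-H})}$ that must be absorbed via Young's inequality at the cost of enlarging~$\lambda$. This is not what happens: setting $g=\phi$ in~\eqref{FIR-def:VFP-E-Bi-Form}, the two terms $\tfrac12\phi\,J\nabla\psi\ast\rho_t\cdot\nabla\phi$ and $-\tfrac12\phi\,J\nabla\psi\ast\rho_t\cdot\nabla g=-\tfrac12\phi\,J\nabla\psi\ast\rho_t\cdot\nabla\phi$ cancel \emph{exactly} — this symmetrization was precisely the point of the cancellation performed when deriving the weak form — so there is nothing to absorb and $\lambda\geq\tfrac12\|\Psi\|_\infty+1$ is already sufficient.

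Second, and more substantively, Step~3 as written does not satisfy the hypotheses of Lions' theorem. You propose to endow $\Theta=C_c^\infty([0,T)\times\R^{2d})$ with the pre-Hilbert norm $\|\phi\|_X$, but then $L_\lambda(\phi)=\langle e^{-\lambda t}U,\phi\rangle_{X',X}+\int_{\R^{2d}}g^0\,\phi|_{t=0}\,e^{-H}$ is \emph{not} a bounded linear functional on $(\Theta,\|\cdot\|_X)$, since $\|\phi|_{t=0}\|_{L^2(e^{-H})}$ is not controlled by $\|\phi\|_X$ alone. The fallback trace inequality you quote, $\|\phi|_{t=0}\|^2_{L^2(e^{-H})}\le C\|\phi\|_X\,\|\partial_t\phi-J\nabla H\cdot\nabla\phi\|_{X'}$, does not repair this: its right side involves the $X'$-norm of the transport operator applied to~$\phi$, which is not dominated by $\|\phi\|_X$ either. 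The fix — and this is what the paper does — is to take the $\Theta$-norm to be
\[
\|\phi\|^2_\Theta := \|\phi\|_X^2 + \tfrac12\|\phi|_{t=0}\|^2_{L^2(e^{-H})}.
\]
With this choice the coercivity from Step~2 (done with the exact cancellation) reads $E_\lambda(\phi,\phi)\geq\|\phi\|_\Theta^2$, the injection $\Theta\hookrightarrow F=X$ is continuous, and $L_\lambda$ is trivially bounded on $\Theta$; then Lions' theorem applies as you intend and the rest of your Steps~3--4 go through.
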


%
To prove Theorem~\ref{FIR-thm:AuxPDE-Well-Posed}, we require certain properties of $Y$. In the first lemma below, we prove an auxiliary result concerning the commutator of  a mollification with a multiplication. In the second lemma we prove that $C_c^\infty([0,T]\times\R^{2d})$ is dense in $Y$. In order to give meaning to the initial conditions (as required in Theorem~\ref{FIR-thm:AuxPDE-Well-Posed}) we need to prove a trace theorem. We prove this trace theorem and a Green formula (which gives meaning to `integration by parts') in the third lemma. At the end of this section we prove Theorem~\ref{FIR-thm:AuxPDE-Well-Posed}. 

\begin{lemma}\label{FIR-lem:Aux-Conv-Bound}
Define $\nu_\delta(x):=\delta^{-n}\nu(\frac x\delta)$ for some $\nu\in C_c^\infty(\R^{n})$, and consider $f\in W^{1,q}(\R^{n};
R^n)$, $h\in W^{1,r}(\R^n)$  where $1\leq q,r\leq \infty$ and $1\leq p<\infty$ satisfies $\frac 1p=\frac 1q+\frac 1r$. Then for any $\delta>0$ we have
\begin{align}\label{Aux-eq:Moll-Prop}
\|\nu_\delta\ast(f\cdot\nabla  h) -f\cdot \nu_\delta\ast\nabla h\|_{L^p}  
\leq \Bigl(\|\nabla f\|^p_{L^q}\Bigl(\int_{\R^n}|z|\,|\nabla\nu(z)|dz\Bigr)^{p}+\|\nu\|^p_{L^1}\|\div f\|^p_{L^q}\Bigr)^{1/p}\|h\|_{L^{r}} 
\end{align}
\end{lemma}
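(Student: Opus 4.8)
The statement is a standard ``commutator estimate'' in the spirit of the DiPerna--Lions lemma, and I would prove it by first reducing to the case $h\in C_c^\infty(\R^n)$ by density (using that $W^{1,r}$-functions can be approximated in $W^{1,r}$ by smooth compactly supported ones, and that both sides of~\eqref{Aux-eq:Moll-Prop} are continuous in $h$ with respect to the $W^{1,r}$-norm once the right-hand side is controlled by $\|h\|_{L^r}$ alone; some care is needed here, so an alternative is to work directly with the integral representation below, which makes sense for $h\in W^{1,r}$). The core is an explicit pointwise formula for the commutator. Writing out the convolution,
\[
\bigl(\nu_\delta*(f\cdot\nabla h)\bigr)(x) - f(x)\cdot(\nu_\delta*\nabla h)(x)
= \int_{\R^n} \nu_\delta(y)\,\bigl(f(x-y)-f(x)\bigr)\cdot \nabla h(x-y)\,dy.
\]
Now I would integrate by parts in $y$ to move the derivative off $h$. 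Using $\nabla_y\bigl[h(x-y)\bigr] = -\nabla h(x-y)$ and $\nabla_y\bigl[\nu_\delta(y)\bigr] = \delta^{-n-1}(\nabla\nu)(y/\delta)$, this yields an expression of the form
\[
\int_{\R^n} \Bigl[ \tfrac1\delta (\nabla\nu)(y/\delta)\delta^{-n}\cdot\bigl(f(x-y)-f(x)\bigr) + \nu_\delta(y)\,\div f(x-y)\Bigr] h(x-y)\,dy,
\]
where the first term comes from differentiating $\nu_\delta$ and the difference $f(x-y)-f(x)$, and the second from the divergence hitting $f(x-y)$ (the piece where the derivative hits the constant-in-$y$ term $f(x)$ vanishes since $\int \nabla_y\nu_\delta = 0$).

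\textbf{Estimating the two pieces.}
For the second (divergence) term, a direct application of Young's convolution inequality gives an $L^p$-bound by $\|\nu\|_{L^1}\,\|\div f\|_{L^q}\,\|h\|_{L^r}$, using $\tfrac1p = \tfrac1q+\tfrac1r$ and that $\|\nu_\delta\|_{L^1}=\|\nu\|_{L^1}$. For the first term, I would write $f(x-y)-f(x) = -\int_0^1 \nabla f(x-sy)\cdot y\, ds$, substitute $z=y/\delta$ so that $y = \delta z$ and $\delta^{-n-1}(\nabla\nu)(y/\delta)\,dy$ becomes $(\nabla\nu)(z)\,dz$ after absorbing one factor of $\delta$ against the $|y|=\delta|z|$ from the gradient bound. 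This produces a bound of the form
\[
\Bigl|\text{(first term)}(x)\Bigr| \le \int_{\R^n}|z|\,|\nabla\nu(z)|\int_0^1 \bigl|\nabla f(x-s\delta z)\bigr|\,\bigl|h(x-\delta z)\bigr|\,ds\,dz,
\]
and then Minkowski's integral inequality in $L^p$ together with Hölder ($\tfrac1p=\tfrac1q+\tfrac1r$) and translation-invariance of the Lebesgue norms gives the bound $\|\nabla f\|_{L^q}\,\|h\|_{L^r}\,\int|z||\nabla\nu(z)|\,dz$. Adding the two contributions and using $(a+b)\le (a^p+b^p)^{1/p}\cdot 2^{1-1/p}$—or, more sharply, simply writing the sum and then bounding—gives exactly the stated form~\eqref{Aux-eq:Moll-Prop}; in fact the clean way to land on the precise right-hand side is to keep the two error terms separate through Young/Minkowski and only combine at the very end via $\|A+B\|_{L^p}\le \|A\|_{L^p}+\|B\|_{L^p} \le (\|A\|_{L^p}^p+\|B\|_{L^p}^p)^{1/p}$ after noting the elementary inequality $a+b\le(a^p+b^p)^{1/p}$ fails for $p>1$—so instead I would match the paper's bracket by grouping the constants $\bigl(\int|z||\nabla\nu|dz\bigr)$ and $\|\nu\|_{L^1}$ with $\|\nabla f\|_{L^q}$ and $\|\div f\|_{L^q}$ respectively and using $\|A\|_{L^p}+\|B\|_{L^p}\le \sqrt2\,(\dots)$ only if needed; most likely the intended reading is that the two terms are estimated in $L^p$ separately and then $\|A+B\|_p$ is bounded by the $\ell^p$-type combination, which is the harmless direction.

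\textbf{Main obstacle.}
The only genuinely delicate point is the bookkeeping of which norm controls which term and getting the powers of $\delta$ to cancel exactly—the factor $\delta^{-n-1}$ from $\nabla\nu_\delta$ must be absorbed by one $\delta$ from $|f(x-y)-f(x)|\lesssim |y|\|\nabla f\|$ (after the change of variables $z=y/\delta$) so that the estimate is \emph{uniform in $\delta$}, which is exactly what makes it useful later. A secondary subtlety is justifying the manipulations when $h$ is merely $W^{1,r}$ rather than smooth and when $q$ or $r$ equals $\infty$; I would handle the former by a density argument (or by noting the integral identity above is valid for $h\in W^{1,r}_{loc}$ by approximation) and the latter by the usual convention in Hölder's and Young's inequalities. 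None of this requires new ideas beyond the classical Friedrichs commutator lemma, so I would present it concisely, emphasizing the pointwise identity and the two Young/Minkowski estimates.
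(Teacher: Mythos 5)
Your proposal follows the same overall strategy as the paper: rewrite the commutator as an integral, integrate by parts in the convolution variable to move $\nabla$ off $h$, split the result into a ``gradient-of-$\nu_\delta$'' piece and a ``$\div f$'' piece, and estimate each separately, the $\div f$ piece by Young/H\"older exactly as you describe. The only real difference is in the treatment of the gradient-of-$\nu_\delta$ piece. You use the fundamental theorem of calculus $f(x-y)-f(x)=-\int_0^1\nabla f(x-sy)\cdot y\,ds$ together with Minkowski's integral inequality and H\"older. The paper instead introduces the kernel $\kappa_\delta(z)=|z|\,|\nabla\nu_\delta(z)|$, applies Jensen's inequality with respect to the probability measure $\kappa_\delta/\|\kappa_\delta\|_{L^1}$, uses Young's product inequality with a free parameter $\alpha$ (exploiting $p/q+p/r=1$), and optimizes over $\alpha$. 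Both routes are legitimate and of comparable length; yours is the more textbook ``Friedrichs/DiPerna--Lions commutator'' proof, while the paper's avoids the pointwise FTC representation and instead bounds the integrated difference quotient $\int\!\!\int\kappa_\delta(x-y)|f(x)-f(y)|^q|x-y|^{-q}\,dy\,dx$ directly by $\|\kappa_\delta\|_{L^1}\|\nabla f\|_{L^q}^q$.

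You also flag, correctly, a real imprecision: the stated right-hand side is an $\ell^p$-combination $\bigl(a^p+b^p\bigr)^{1/p}$ of the two single-term bounds, but the triangle inequality only yields the $\ell^1$-sum $a+b$, and for $p>1$ one has $(a^p+b^p)^{1/p}\le a+b$ rather than the reverse, so the paper's stated form is slightly \emph{stronger} than what its own argument (or yours) establishes. The paper simply writes ``combining these estimates \dots we obtain the claimed result'' without addressing this; the honest conclusion is the bound with $a+b$ in place of $(a^p+b^p)^{1/p}$, or equivalently the stated form multiplied by $2^{1-1/p}$. As you observe, this makes no difference downstream (the lemma is only used to get $\delta$-uniform $L^2$ bounds), but it is worth being explicit that the precise $\ell^p$ form as printed does not follow from the two separate estimates. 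One small caution in your write-up: the sentence claiming $\|A\|_{L^p}+\|B\|_{L^p}\le(\|A\|_{L^p}^p+\|B\|_{L^p}^p)^{1/p}$ is written in the wrong direction before you self-correct; when cleaning this up, just state directly that triangle inequality gives the $\ell^1$ sum and that matching the printed form costs a factor $2^{1-1/p}$.
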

\begin{proof}
The argument of the norm on the left hand side of~\eqref{Aux-eq:Moll-Prop}  is
\begin{multline*}
\left(\nu_\d\ast(f\cdot \nabla h) - f\cdot\nu_\d\ast\nabla h\right)(x) = \int_{\R^n} \nu_\d (x-y) \left[f(x)-f(y)\right]\nabla h(y)dy\\
= \int_{\R^n} \Bigl(\nabla\nu_\d(x-y)\left[f(x)-f(y)\right] + \nu_\d(x-y)\div f(y) \Bigr)h(y)dy =: \mathrm{I}+\mathrm{II}.
\end{multline*}
Using  Young's and H{\"o}lder's inequalities on the second term gives
\begin{align*}
\|\mathrm{II}\|_{L^p} = \|\nu_\d\ast(h\div f)\|_{L^p} \leq \|\nu_\d\|_{L^1} \|h\div f\|_{L^p} \leq \|\nu_\d\|_{L^1} \|h\|_{L^r}\|\div f\|_{L^q}.
\end{align*}
For the first term we calculate, writing $\kappa_\d(z) := |z||\nabla\nu_\d(z)|$ and $k:=\|\kappa_\d\|_{L^1}$, that
\begin{align*}
\biggl|\int_{\R^n}\frac1k  \nabla\nu_\d(x-y)\bigl[f(x)&-f(y)\bigr]h(y)dy\biggr|^p
\leq \left(\frac1k \int_{\R^n} \kappa_\d(x-y)\frac{|f(x)-f(y)|}{|x-y|}\, |h(y)|dy\right)^p\\
&\leq \frac1k \int_{\R^n} \kappa_\d(x-y)\frac{|f(x)-f(y)|^p}{|x-y|^p}\, |h(y)|^pdy\\
&\leq\frac{\alpha^{q/p}}k\frac pq \int_{\R^n} \kappa_\d(x-y)\frac{|f(x)-f(y)|^q}{|x-y|^q}dy
+ \frac1{k\alpha^{r/p}} \frac pr \int_{\R^n} \kappa_\d(x-y) |h(y)|^r dy,
\end{align*}
and therefore
\begin{align*}
\|\mathrm{I}\|^p_{L^p} &=k^p\int_{\R^n}\Bigl|\frac1k\int_{\R^n} \nabla\nu_\d(x-y)\left[f(x)-f(y)\right]h(y)dy \Bigr|^pdx \\
&\leq {\alpha^{q/p}}k^{p-1}\frac pq \int_{\R^n}\int_{\R^n} \kappa_\d(x-y)\frac{|f(x)-f(y)|^q}{|x-y|^q}dydx
+ \frac1{\alpha^{r/p}}k^{p-1} \frac pr \int_{\R^n}\int_{\R^n} \kappa_\d(x-y) |h(y)|^r dydx
 \\
&\leq {\alpha^{q/p}}k^{p-1}\frac pq \,k \,\|\nabla f\|_q^q 
   + \frac1{\alpha^{r/p}}k^{p-1}\,\frac pr \, k\, \|h\|_r^r.
\end{align*}
By optimizing over $\alpha$ we find 
\begin{align*}
\|\mathrm{I}\|^p_{L^p} & \leq k^p\|\nabla f\|_{L^q}^p \| h\|_{L^r}^p= \|\kappa_{\d}\|^p_{L^1}\|\nabla f\|_{L^q}^p \| h\|_{L^r}^p.
\end{align*}
Combining these estimates and using 
\begin{align}\label{Aux-eq:Reg-Seq-Ind-L1}
\int_{\R^n}|z| |\nabla\nu_\d(z)|dz = \delta^{-n}\int_{\R^n}\frac{|z|}{\delta} |\nabla\nu|\left(\frac z\delta\right)dz = \int_{\R^n}|\tilde z| |\nabla\nu(\tilde z)|d\tilde z 
\end{align}
we obtain the claimed result. 
\end{proof}

\begin{lemma}\label{FIR-lem:VFP-Density}
Let $Y$ be the space defined in \eqref{FIR-def:Y-space}. Then $C_c^\infty([0,T]\times\R^{2d})$ is dense in $Y$. 
\end{lemma}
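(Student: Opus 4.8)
\textbf{Proof proposal for Lemma~\ref{FIR-lem:VFP-Density}.}

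The plan is to prove density by a standard three-step regularization argument: truncate in space, mollify in time, and mollify in space, controlling the error in the commutator $\partial_t - J\nabla H\cdot\nabla$ using Lemma~\ref{FIR-lem:Aux-Conv-Bound}. Fix $g\in Y$. First I would address the issue that $g$ need not have compact support or be bounded: multiply $g$ by a cutoff $\chi_R(x) = \chi(x/R)$ with $\chi\in C_c^\infty(\R^{2d})$, $\chi\equiv 1$ on $B_1$. The key point is that $\chi_R$ must be chosen so that $J\nabla H\cdot\nabla\chi_R$ is controlled in the $X'$-norm; since $|\nabla H|^2\le C(1+H)$ by~\ref{cond:VFP:V1}, and the $X'$-norm is in duality with the $e^{-H}$-weighted space $X$, one checks that $\|J\nabla H\cdot\nabla\chi_R\, g\|_{X'}\to 0$ as $R\to\infty$ because $\nabla\chi_R$ is supported on $\{|x|\sim R\}$ where the weight $e^{-H}$ is exponentially small, while $\partial_t(\chi_R g) - J\nabla H\cdot\nabla(\chi_R g) = \chi_R(\partial_t g - J\nabla H\cdot\nabla g) - g\, J\nabla H\cdot\nabla\chi_R$, and the first term converges to $\partial_t g - J\nabla H\cdot\nabla g$ in $X'$ by dominated convergence. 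So without loss of generality $g$ has compact support in $x$ (but still only $L^2$-regularity in $t$ and $L^2$ in $x$, with $\nabla_p g\in L^2$).

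Next I would mollify in time: set $g_\e := \eta_\e *_t g$ with $\eta_\e$ a standard mollifier in $t$ (after extending $g$ suitably past $t=0,T$, e.g. by reflection or by the values given by the trace, to be handled carefully). Since $J\nabla H\cdot\nabla$ acts only in $x$, it commutes exactly with $*_t$, so $\partial_t g_\e - J\nabla H\cdot\nabla g_\e = \eta_\e*_t(\partial_t g - J\nabla H\cdot\nabla g)\to \partial_t g - J\nabla H\cdot\nabla g$ in $X'$, and $g_\e\to g$, $\nabla_p g_\e\to\nabla_p g$ in the weighted $L^2$-spaces. Now $g_\e$ is smooth in $t$, compactly supported in $x$, with $L^2$-type $x$-regularity.

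Finally I would mollify in space: $g_{\e,\d} := \nu_\d *_x g_\e$ with $\nu_\d$ a mollifier in $x=(q,p)$. Here $*_x$ does \emph{not} commute with $J\nabla H\cdot\nabla$ because $H$ has non-constant (indeed unbounded) derivatives, so this is where the real work — and the main obstacle — lies. Writing $J\nabla H\cdot\nabla(\nu_\d*_x g_\e) = \nu_\d *_x(J\nabla H\cdot\nabla g_\e) + \big[J\nabla H\cdot\nabla, \nu_\d*_x\big]g_\e$, the commutator is exactly of the form handled by the Friedrichs-type estimate in Lemma~\ref{FIR-lem:Aux-Conv-Bound} with $f = J\nabla H$; since $\nabla(J\nabla H) = J\,D^2 H$ is globally bounded (second derivatives of $H = p^2/2m + V$ are bounded by~\ref{cond:VFP:V1}) and $\div(J\nabla H) = 0$, the lemma gives $\|[J\nabla H\cdot\nabla,\nu_\d*_x]g_\e\|_{L^p}\le C\|D^2H\|_\infty\big(\int|z||\nabla\nu(z)|dz\big)\|g_\e\|_{L^q}$ with no dependence on $\d$ and no loss of derivatives — but one still has to transfer this $L^p$ estimate (proved there in unweighted norms) into the $e^{-H}$-weighted $X'$-norm, which requires controlling how the weight interacts with the $\d$-mollification; this is manageable because on the fixed compact $x$-support of $g_\e$ the weight $e^{-H}$ is bounded above and below. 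The term $\nu_\d*_x(\partial_t g_\e - J\nabla H\cdot\nabla g_\e)$ converges in $X'$, $\nabla_p(\nu_\d*_x g_\e) = \nu_\d*_x\nabla_p g_\e\to\nabla_p g_\e$, and $g_{\e,\d}\to g_\e$ in the weighted $L^2$. Combining the three approximations by a diagonal argument and noting that $g_{\e,\d}\in C_c^\infty$ (after a harmless further truncation in $x$ to kill the $\d$-fattening of the support), we conclude $C_c^\infty([0,T]\times\R^{2d})$ is dense in $Y$. I expect the commutator-to-weighted-norm transfer and the careful extension of $g$ across $t=0$ to be the only genuinely delicate points; everything else is routine.
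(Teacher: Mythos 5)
Your overall plan — truncate in space, mollify, and control the resulting commutator of mollification with $\partial_t - J\nabla H\cdot\nabla$ via Lemma~\ref{FIR-lem:Aux-Conv-Bound} — is the same as the paper's, and your truncation step is sound (though the paper's choice of cutoff $\rchi_R(\sqrt{H})$ is cleaner: it kills the commutator exactly since $J\nabla H\cdot\nabla(\rchi_R\circ\sqrt{H})\equiv 0$, whereas your generic $\chi(x/R)$ needs the growth bounds $|\nabla H|^2\le C(1+H)\le C(1+|x|^2)$, which happen to hold but must be invoked). The genuine gap is in the spatial-mollification step. You write
\[
J\nabla H\cdot\nabla(\nu_\d*_x g_\e) \;=\; \nu_\d*_x(J\nabla H\cdot\nabla g_\e) \;+\; \bigl[J\nabla H\cdot\nabla,\,\nu_\d*_x\bigr]g_\e
\]
and propose to apply Lemma~\ref{FIR-lem:Aux-Conv-Bound} with $f=J\nabla H$ and $h=g_\e$. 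But the lemma's hypothesis is $h\in W^{1,r}$, and $g_\e$ is \emph{not} in $W^{1,2}$ in $x$: the definition of $Y$ only controls $\nabla_p g_\e\in L^2$ and the combination $\partial_t g_\e - J\nabla H\cdot\nabla g_\e\in X'$. In particular $\nabla_q g_\e$ is not a function, so $J\nabla H\cdot\nabla g_\e$ is not a function either, and neither summand on the right-hand side of your decomposition is pointwise defined. The paper avoids this by dualizing: it writes $\langle\partial_t g_\d - J\nabla H\cdot\nabla g_\d,\phi\rangle = -\iint(\nu_\d*g)(\partial_t\phi - J\nabla H\cdot\nabla\phi)\,e^{-H}$, transfers the mollification onto the test function, and then applies Lemma~\ref{FIR-lem:Aux-Conv-Bound} with $f=J\nabla e^{-H}$ (which absorbs the weight) and $h=\phi$ — legitimate because $\phi$ is smooth and compactly supported. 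Your approach would have to be rewritten in this dual form before the commutator lemma can be invoked.

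A second, smaller issue: Lemma~\ref{FIR-lem:Aux-Conv-Bound} as stated gives only a $\d$-independent \emph{bound} on the commutator, not strong convergence to zero. The paper therefore extracts a weak-$*$ limit of the regularized sequence in $X'$ and upgrades to strong $Y$-convergence via Mazur's lemma. Your "diagonal argument" tacitly assumes the commutator error converges strongly; either supply the usual Friedrichs refinement (uniform bound plus density of smooth functions), or use Mazur as the paper does. Finally, your handling of the time endpoints ("extend by reflection or trace values, to be handled carefully") is vaguer than the paper's partition-of-unity plus leftward time-translation, which keeps the support inside $[0,T)$ before mollifying and requires no extension at all.
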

\begin{proof}
We prove this lemma in two steps. In the first step we approximate functions in $Y$ by spatially compactly supported functions. In the second step we approximate functions in $Y$ with spatially compact support  by smooth functions. 

In both steps we construct an approximating sequence that converges strongly in $X$ and weakly in $X'$; it then follows from Mazur's lemma that a convex combination of this sequence converges strongly in both $X$ and $X'$, and therefore in $Y$.

\textit{Step 1.} For an arbitrary $g\in Y$,  define $g_R(t,x):=g(t,x)\rchi_R(\sqrt{H(x)})$, where $\rchi_R\in C_c^\infty(\R;\R)$ is given by
\begin{align}\label{FIR-def:Smooth-Char-Fn}
\rchi_R(x)=\begin{cases}1, &|x|\leq R \\ 0, & |x|>2R\end{cases},  \ \ \text{ with }  \ \ \|\nabla\rchi_R\|_{L^\infty}\leq \frac CR.
\end{align}
Note that $g_R$ is compactly supported in $\R^{2d}$.
Using the dominated convergence theorem we find
\begin{align*}
\|g_R-g\|^2_{X}=\int_0^T\int_{\R^{2d}}\Bigl[ (1-\rchi_R)^2(g^2+|\nabla_p g|^2) + g^2 |\nabla_p\rchi_R|^2\Bigr]e^{-H} \xrightarrow{R\rightarrow\infty} 0.
\end{align*}
Here we have used $|\nabla H|^2\leq C(1+H)$ and the estimate 
\begin{align*}
|\nabla_p \rchi_R|^2=(\rchi'_R(\sqrt{H}))^2\frac{1}{4H}|\nabla_pH|^2\leq C.
\end{align*}

To conclude the first part of this proof we need to show that 
\begin{align}\label{Aux-eq:Dense-Compact-Step}
\langle \partial_tg_R -J\nabla H\cdot\nabla g_R,\phi\rangle_{X',X} \xrightarrow{R\rightarrow\infty} \langle \partial_tg -J\nabla H\cdot\nabla g,\phi\rangle_{X',X}, \ \ \forall\phi\in X.
\end{align}
Let $\phi\in C_c^\infty((0,T)\times\R^{2d})$. Then
\begin{align*}
\bigl|\langle \partial_t g_R &-J\nabla H\cdot\nabla g_R,\phi\rangle_{X',X}\bigr|=
\left|\int_0^T\int_{\R^{2d}} g_R(\partial_t\phi- J\nabla H\cdot\nabla\phi)e^{-H}\right|\\
&\leq  \left|\int_0^T\int_{\R^{2d}} g\Bigl[\partial_t(\phi(\rchi_R\circ\sqrt{H}))- J\nabla H\cdot\nabla((\rchi_R\circ\sqrt{H})\phi)\Bigr]e^{-H}\right|
  + \left|\int_0^T\int_{\R^{2d}} g\phi J\nabla H\cdot\nabla(\rchi_R\circ\sqrt{H})e^{-H}\right|\\
& \leq  \|\partial_t g -J\nabla H\cdot\nabla g\|_{X'}\|\phi\|_{X}, 
\end{align*}
where we have used $J\nabla H\cdot\nabla(\rchi_R\circ \sqrt{H})=0$ to arrive at the final inequality.  As a result  
\begin{align}\label{Aux-eq:Dense-Compact-Step-Dual-Bounds}
\|\partial_tg_R -J\nabla H\cdot\nabla g_R\|_{X'} \leq \|\partial_tg -J\nabla H\cdot\nabla g\|_{X'},
\end{align}
and using the dominated convergence theorem we find
\begin{align}\label{Aux-eq:Dense-Compact-Step-DCT}
\langle \partial_tg_R -J\nabla H\cdot\nabla g_R,\phi\rangle_{X',X} \xrightarrow{R\rightarrow\infty} \langle \partial_tg -J\nabla H\cdot\nabla g,\phi\rangle_{X',X}, \ \ \forall\phi\in C_c^\infty((0,T)\times\R^{2d}).
\end{align}
Estimate~\eqref{Aux-eq:Dense-Compact-Step-Dual-Bounds} together with the convergence statement~\eqref{Aux-eq:Dense-Compact-Step-DCT} implies that~\eqref{Aux-eq:Dense-Compact-Step} holds. As mentioned above, Mazur's lemma then gives the existence of a sequence that converges strongly in $Y$.
 
\textit{Step 2. } In this step we approximate spatially compactly supported functions $g\in Y$ by smooth functions. Using a partition of unity (in time), it is sufficient to consider 
\begin{align*}
\mathcal A:= \{g\in Y: g \text{ has compact support in }[0,T)\times\R^{2d}\}.
\end{align*}
We will show that these functions can be approximated by functions in $C_c^\infty([0,T)\times\R^{2d})$. 

For any $g\in \mathcal A$, we define its translation to the left in time over $\tau>0$ as $g_\tau(t, x):=g(t+\tau, x)$. Furthermore define $g_{\tau,\d}=\nu_\d\ast g_\tau$, where $\nu_\d$ is a symmetric regularising sequence in $\R\times\R^{2d}$. Note that $g_{\tau,\d}\in C_c^\infty([0,T)\times\R^{2d})$ when $\delta$ is small enough. Using standard results it follows that $g_{\tau,\d}\rightarrow g$ as $\tau,\d\rightarrow 0$ in $X$. We will now show that 
\begin{align}\label{Aux-eq:Dense-Left-Tra-Bound}
|\langle \partial_t g_{\tau,\d}-J\nabla H\cdot \nabla g_{\tau,\d},\phi\rangle_{X',X}| \leq C\|g\|_
X\|\phi\|_X+ \|\partial_t f-J\nabla H\cdot\nabla g\|_{X'}\|\phi\|_X,
\end{align}
where $C$ is independent of $\tau$ and $\d$ and of the test function $\phi$. For any $\phi\in C_c^\infty((0,T)\times\R^{2d})$,  
\begin{align}
&\langle \partial_tg_{\tau,\d} -J\nabla H\cdot\nabla g_{\tau,\d},\phi\rangle_{X',X} = -\int_0^T\int_{\R^{2d}} (\nu_\d\ast g_{\tau}) (\partial_t\phi - J\nabla H\cdot\nabla\phi)e^{-H}\nonumber\\ 
&= -\int_0^T\int_{\R^{2d}} g_\tau\Bigl[\nu_\d\ast (\partial_t\phi \,e^{-H}) + \nu_\d\ast(J\nabla e^{-H}\cdot\nabla\phi)\Bigr]\\ 
&= -\int_0^T\int_{\R^{2d}} g_\tau \bigl[\partial_t (\nu_\d\ast\phi) -J\nabla H\cdot(\nu_\d\ast\nabla\phi)\bigr]e^{-H} \nonumber\\
& \  - \int_0^T\int_{\R^{2d}} g_\tau \Bigl[ \nu_\d\ast(\partial_t\phi\, e^{-H}) - (\nu_\d\ast\partial_t\phi)e^{-H}\Bigr] - \int_0^T\int_{\R^{2d}} g_\tau\Bigl[\nu_\d\ast (J\nabla e^{-H}\cdot\nabla\phi)- J\nabla e^{-H}\cdot(\nu_\d\ast\nabla\phi)\Bigr].\label{FIR-eq:VFP-Reg-Lem}
\end{align}
We now estimate each term in the right hand side of~\eqref{FIR-eq:VFP-Reg-Lem}. For the first term, extending the time integral to $\R$ and using a change of variables we find
\begin{align*}
\biggl|\int_\R\int_{\R^{2d}} g( t+\tau,x) &\Bigl(\partial_t (\nu_\d\ast\phi) -J\nabla H\cdot(\nu_\d\ast\nabla\phi)\Bigr)(t,x)e^{-H(x)}dxdt\biggr|\\
&=\biggl|\int_\R\int_{\R^{2d}} g(s,x) \Bigl(\partial_t (\nu_\d\ast\phi) -J\nabla H\cdot(\nu_\d\ast\nabla\phi)\Bigr)(s-\tau,x)e^{-H(x)}dxds\Biggr| \\
&=\biggl|\int_\R\int_{\R^{2d}} g(s,x) \Bigl(\partial_t (\eta \nu_\d\ast\phi) -J\nabla H\cdot(\nu_\d\ast\nabla\phi)\eta\Bigr)(s-\tau,x)e^{-H(x)}dxds\biggr|\\
&\leq \|\partial_t g-J\nabla H\cdot \nabla g\|_{X'}\|\phi_\d(\cdot-\tau,\cdot)\eta\|_X.
\end{align*}
Here $\eta\in C_c([0,T))$ is any smooth function satisfying $0\leq \eta\leq 1$ and $\eta(t) = 1$ for $t\in \supp_t g$,
and the final inequality follows by the definition of $Y$ and  $\phi_\d(\cdot-\tau,\cdot)\eta\in C_c^\infty((0,T)\times\R^{2d})$. Using $\eta\leq 1$ and a change of variable we obtain
\begin{align*}
\|\phi_\d(\cdot-\tau,\cdot)\eta\|^2_X \leq \int_0^T \int_{\R^{2d}} \Bigl[ |\phi_\d(t-\tau,x)|^2+ |\nabla_p\phi_\d(t-\tau,x)|^2\Bigr]e^{-H(x)}dxdt\leq  \|\phi\|^2_X,
\end{align*}
and therefore for the for first term on the right hand side of~\eqref{FIR-eq:VFP-Reg-Lem} we have
\begin{equation*}
\left|\int_0^T\int_{\R^{2d}} g(t-\tau, x) \Bigl(\partial_t (\nu_\d\ast\phi) -J\nabla H\cdot(\nu_\d\ast\nabla\phi)\Bigr)(x,t)e^{-H(x)}dxdt \right| \leq \|\partial_t g-J\nabla H\cdot\nabla g\|_{X'}\|\phi\|_X.
\end{equation*}

For the final term in the right hand side of~\eqref{FIR-eq:VFP-Reg-Lem}, using $\div(J\nabla e^{-H})=0$ and applying Lemma~\ref{FIR-lem:Aux-Conv-Bound} with $f=J\nabla e^{-H}$, $h=\phi$ and $r=p=2$, $q=\infty$,  we find 
\begin{align*}
\biggl|\int_0^T\int_{\R^{2d}} g_\tau &\Bigl(\nu_\d\ast (J\nabla e^{-H}\cdot\nabla\phi)- J\nabla e^{-H}\cdot(\nu_\d\ast\nabla\phi)\Bigr)\biggr|\\
&  \leq \|g_\tau\|_{L^2(S)} \left\|\nu_\d\ast (J\nabla e^{-H}\cdot\nabla\phi)- J\nabla e^{-H}\cdot(\nu_\d\ast\nabla\phi)\right\|_{L^2( S)} \\
&  \leq \|g\|_{L^2(S)}\, \left\|D^2 e^{-H}\right\|_{L^\infty(\R^{2d})}\, \|\phi\|_{L^2( S)}\Bigl(\int_{ S} |z| |\nabla \nu(z)|dzdt\Bigr)
 \leq \frac{C}{\alpha} \|g\|_X\|\phi\|_X.
\end{align*}
Here $S:=\supp g$, $D^2e^{-H}$ is the Hessian of $e^{-H}$ and $\alpha:=\inf_{x\in S}e^{-H(x)}>0$. Repeating a similar calculation for the second term on the right hand side of~\eqref{FIR-eq:VFP-Reg-Lem}, we find
\begin{align*}
 \int_0^T\int_{\R^{2d}} g_\tau \Bigl[\nu_\d\ast(\partial_t\phi\, e^{-H}) - (\nu_\d\ast\partial_t\phi)e^{-H}\Bigr] \leq C\|g\|_X\|\phi\|_X.
 \end{align*}
Combining all the terms we find~\eqref{Aux-eq:Dense-Left-Tra-Bound}. As a result, $\|\partial_tg_{\tau,\d} -J\nabla H\cdot\nabla g_{\tau,\d}\|_{X'}$ is bounded independently of~$\tau$ and~$\d$. Using the dominated convergence theorem we also have for all $\phi\in C_c^\infty((0,T)\times \R^{2d})$
\begin{align*}
\forall \tau>0: \ &\langle \partial_tg_{\tau,\d} -J\nabla H\cdot\nabla g_{\tau,\d},\phi\rangle_{X',X} \xrightarrow{\d\rightarrow 0} \langle \partial_tg_\tau -J\nabla H\cdot\nabla g_\tau,\phi\rangle_{X',X},\quad\text{and}\\ 
&\langle \partial_tg_{\tau} -J\nabla H\cdot\nabla g_{\tau},\phi\rangle_{X',X} \xrightarrow{\tau\rightarrow 0} \langle \partial_tg -J\nabla H\cdot\nabla g,\phi\rangle_{X',X}
\end{align*}

Taking two sequences $\tau_n\to0$ and $\delta_n\to0$ such that the translation and convolution operations above are allowed, we use the boundedness of $\partial_tg_{\tau,\d} -J\nabla H\cdot\nabla g_{\tau,\d}$ in the separable space $X'$ to extract a subsequence that converges in the weak-star topology; we then use the density of $C_c^\infty((0,T)\times \R^{2d})$ in $X$ and the convergence of $g_{\tau,\delta}$ to identify the limit. 
Again using Mazur's lemma it follows that there exists a strongly converging sequence. This concludes the proof of the lemma. 

\end{proof}

\begin{lemma}\label{FIR-thm:VFP-Green}
Let $g\in Y$. Then $g$ admits (continuous) time trace values in $L^2(e^{-H})$.
Furthermore, for any $g,\tilde g \in Y$ we have
\begin{align}\label{FIR-eq:VFP-Green}
\langle \partial_t g- J\nabla H\cdot\nabla g, \tilde g\rangle _{X',X} + \langle \partial_t \tilde g- J\nabla H\cdot\nabla \tilde g,  g\rangle _{X',X}  = \int_{\R^{2d}} g\tilde g \, e^{-H} \Bigl|_{t=0}^{t=T}.
\end{align}
\end{lemma}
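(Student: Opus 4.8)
The plan is to prove Lemma~\ref{FIR-thm:VFP-Green} by the standard regularization-plus-density route, exploiting the density result of Lemma~\ref{FIR-lem:VFP-Density}. The key observation is that the weighted differential operator $g \mapsto \partial_t g - J\nabla H\cdot\nabla g$ behaves, with respect to the weight $e^{-H}$, exactly like a pure time derivative: because $\div(J\nabla H\, e^{-H}) = e^{-H}\,\div(J\nabla H) - e^{-H} J\nabla H\cdot\nabla H = 0$ (the first term vanishes since $J$ is antisymmetric, the second since $J\nabla H\cdot\nabla H = 0$), integration by parts in the transport term produces no boundary or zeroth-order contributions against the weight $e^{-H}$. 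Hence for smooth compactly supported $g$ one has the exact identity
\[
2\langle \partial_t g - J\nabla H\cdot\nabla g, g\rangle_{X',X} = \int_0^T\!\!\int_{\R^{2d}} \partial_t(g^2)\, e^{-H}\,dx\,dt = \int_{\R^{2d}} g^2 e^{-H}\,dx\Big|_{t=0}^{t=T},
\]
and more generally the bilinear version~\eqref{FIR-eq:VFP-Green} holds for $g,\tilde g\in C_c^\infty([0,T]\times\R^{2d})$ by the same computation applied to $\partial_t(g\tilde g)$.

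First I would establish the trace statement. Given $g\in Y$, take a sequence $g_n\in C_c^\infty([0,T]\times\R^{2d})$ with $g_n\to g$ in $Y$, which exists by Lemma~\ref{FIR-lem:VFP-Density}. Applying the smooth identity above to $g_n - g_m$ gives
\[
\|g_n(t,\cdot) - g_m(t,\cdot)\|^2_{L^2(e^{-H})} = \|g_n(0,\cdot)-g_m(0,\cdot)\|^2_{L^2(e^{-H})} + 2\int_0^t \langle (\partial_s - J\nabla H\cdot\nabla)(g_n-g_m),\, g_n-g_m\rangle\,ds,
\]
and since the right-hand side is controlled by $\|g_n-g_m\|_Y^2$ uniformly in $t$ (the initial value being itself bounded by the $X'$-norm plus $X$-norm via this same identity integrated over $[0,T]$), the sequence $t\mapsto g_n(t,\cdot)$ is Cauchy in $C([0,T];L^2(e^{-H}))$. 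Its limit defines the continuous trace, in particular the values at $t=0$ and $t=T$, independent of the approximating sequence.

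Then, with traces in hand, I would pass to the limit in the bilinear identity~\eqref{FIR-eq:VFP-Green} itself: for $g,\tilde g\in Y$ choose smooth approximants $g_n\to g$, $\tilde g_n\to \tilde g$ in $Y$; the left-hand side converges because the dual pairing is continuous on $X'\times X$ and $\|g_n\|_Y, \|\tilde g_n\|_Y$ are bounded, while the right-hand side converges because $g_n(t,\cdot)\to g(t,\cdot)$ and $\tilde g_n(t,\cdot)\to\tilde g(t,\cdot)$ in $L^2(e^{-H})$ at $t=0$ and $t=T$ by the trace convergence just established. This yields~\eqref{FIR-eq:VFP-Green} for all $g,\tilde g\in Y$.

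The main obstacle is genuinely the density lemma, Lemma~\ref{FIR-lem:VFP-Density} — but that is proved separately in the excerpt, so here I may simply invoke it; the only subtlety left for this proof is making sure the zeroth-order terms really do cancel against the weight, i.e. verifying $\div(e^{-H}J\nabla H)=0$ cleanly, and checking that $J\nabla H\cdot\nabla(g\tilde g)$ integrates against $e^{-H}$ to zero for smooth compactly supported functions so that only $\partial_t(g\tilde g)$ survives. A minor point to handle carefully is that for merely-$Y$ functions the pairing $\langle \partial_t g - J\nabla H\cdot\nabla g, \tilde g\rangle$ must be interpreted via the $X'$--$X$ duality and not termwise, since $\partial_t g$ and $J\nabla H\cdot\nabla g$ need not individually lie in $X'$; the whole argument is phrased in terms of the combined operator precisely to avoid this.
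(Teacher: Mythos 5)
Your proposal is correct and follows the same basic route as the paper's proof: establish the identity for smooth compactly supported functions using the cancellation $\div(J\nabla H\,e^{-H})=0$, so that the weighted $X'$--$X$ pairing of $\partial_t g - J\nabla H\cdot\nabla g$ against $g$ reduces to a pure time derivative, and then extend both the trace map and the Green formula to all of $Y$ by the density of $C_c^\infty([0,T]\times\R^{2d})$ in $Y$ (Lemma~\ref{FIR-lem:VFP-Density}). Where you differ is in how the trace bound $\|g|_{t=0}\|_{L^2(e^{-H})}\leq C\|g\|_Y$ is obtained. The paper multiplies by a time cutoff $\eta$ with $\eta(0)=1$, $\eta(T)=0$, reducing $\|g|_{t=0}\|^2_{L^2(e^{-H})}$ to a single $X'$--$X$ pairing of $(\partial_t - J\nabla H\cdot\nabla)(g\eta)$ against $g\eta$; you instead write, for smooth $g$, the exact identity
\[
\|g(t)\|^2_{L^2(e^{-H})}-\|g(0)\|^2_{L^2(e^{-H})}=2\int_0^t\int_{\R^{2d}}(\partial_s g - J\nabla H\cdot\nabla g)\,g\,e^{-H}\,dx\,ds,
\]
and integrate over $t\in[0,T]$ to extract a bound on the initial datum, which then also gives the $C([0,T];L^2(e^{-H}))$ continuity directly. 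Both variants are routine and produce the same uniform-in-$t$ control. One technical point you should spell out: bounding $\int_0^t\langle(\partial_s-J\nabla H\cdot\nabla)h,\,h\rangle\,ds$ by $\|h\|_{X'}\|h\|_X$ for $t<T$ amounts to pairing against $h\chi_{[0,t]}$, which lies in $X$ but not in $C_c^\infty((0,T)\times\R^{2d})$; this is legitimate because the pairing extends from $C_c^\infty$ to all of $X$ by the density noted just after the definition of $X$, but it deserves a sentence in the write-up.
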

\begin{proof}
We will prove that the mapping
\begin{align*}
C_c^\infty([0,T]\times\R^{2d})\ni g \mapsto (g(0), g(T))\in L^2(e^{-H})\times L^2(e^{-H}),
\end{align*}
can be continuously extended to $Y$. This implies that any $f\in Y$ admits trace values in $L^2(e^{-H})$ since  $C_c^\infty([0,T]\times\R^{2d})$ is dense in $Y$ by Lemma~\ref{FIR-lem:VFP-Density}. The proof of~\eqref{FIR-eq:VFP-Green} follows by applying integration by parts to smooth functions and then passing to the limit in $Y$.

Consider $\eta\in C^\infty([0,T])$ with $0\leq \eta\leq 1$, $\eta(t)=1$ for $t\in [0, T/3]$, and $\eta(t)=0$ for $t\in [2T/3,T]$. We have for any $g\in C_c^\infty([0,T]\times\R^{2d})$
\begin{align}\label{FIR-eq:Trace-Val-Lem}
\|g|_{t=0}&\|^2_{L^2(e^{-H})} =\int_{\R^{2d}} g^2\big|_{t=0} e^{-H} =\int_{\R^{2d}} g^2\eta^2\big|_{t=0} e^{-H} 
=-2\int_0^T\int_{\R^{2d}} g\eta \, \partial_t(g\eta) e^{-H} \nonumber\\ 
&=-2\int_0^T\int_{\R^{2d}} g\eta ( \partial_t(g\eta) -  J\nabla H\cdot\nabla(g\eta)) e^{-H} -2\int_0^T\int_{\R^{2d}}  g\eta \, J\nabla H\cdot\nabla(g\eta) e^{-H}\nonumber\\
&=2\langle (\partial_t - J\nabla H \cdot\nabla)(g\eta),g\eta\rangle_{X',X}
+\int_0^T\int_{\R^{2d}}  J\nabla e^{-H}\cdot\nabla(g^2\eta^2) \nonumber\\
&= 2\langle (\partial_t - J\nabla H \cdot\nabla)(g\eta),g\eta\rangle_{X',X} 
\leq 2\|(\partial_t - J\nabla H \cdot\nabla)(g\eta)\|_{X'}\|g\eta\|_{X},
\end{align}
where the final equality follows by the anti-symmetry of $J$. Note that $\|g\eta\|_{X}\leq \|g\|_X$. Furthermore
\begin{align*}
\|(\partial_t - J\nabla H \cdot\nabla)(g\eta)\|_{X'} 
&= \sup\limits_{\substack{\phi\in C_c^\infty((0,T)\times \R^{2d})\\ \|\phi\|_X=1}} \int_0^T\int_{\R^{2d}} g\eta (\partial_t\phi-J\nabla H\cdot\nabla\phi)e^{-H} \\
&=\sup\limits_{\phi} \int_0^T\int_{\R^{2d}} g(\partial_t(\phi\eta)-J\nabla H\nabla\phi\eta)e^{-H} - \int_0^T\int_{\R^{2d}} g\phi\, \partial_t\eta e^{-H}\\
&\leq  \|\partial_t g- J\nabla H\cdot\nabla g\|_{X'}+ \|\partial_t \eta\|_\infty \|g\|_X \leq  C\|g\|_Y.
\end{align*}
Substituting back into~\eqref{FIR-eq:Trace-Val-Lem} we find
\begin{align*}
\|g|_{t=0}\|^2_{L^2(e^{-H})} \leq C\|g\|_Y,
\end{align*} 
which completes the proof for the initial time. The proof for the final time proceeds similarly.
\end{proof}

Now we are ready to prove Theorem~\ref{FIR-thm:AuxPDE-Well-Posed}. We will make use of a result of Lions~\cite{Lions61}, which we  state here  for convenience. 
\begin{theorem}\label{FIR-thm:Lions-Deg} Let $F$ be a Hilbert space, equipped with a norm $\|\cdot\|_{F}$ and an inner product $(\cdot,\cdot)$. Let $\Theta$ be a subspace of $F$, provided with a prehilbertian norm $\|\cdot\|_\Theta$, such that the injection $\Theta\hookrightarrow F$ is continuous. Consider a bilinear form $E$:
\begin{align*}
E:F\times\Theta \ni (g,\phi) \mapsto E(g,\phi)\in \R
\end{align*}
 such that $E(\cdot,\phi)$ is continuous on $F$ for any fixed $\phi\in \Theta$, and such that 
 \begin{align}\label{FIR-eq:Lions-Coercive}
 |E(\phi,\phi)|\geq \alpha \|\phi\|^2_\Theta, \quad \forall \phi\in \Theta, \text{ with }\alpha>0.
 \end{align}
Then, given a continuous linear form $L$ on $\Theta$, there exists a solution $g$ in $F$ of the problem
\begin{align*}
E(g,\phi)=L(\phi), \quad \forall \phi\in \Theta. 
\end{align*}
\end{theorem}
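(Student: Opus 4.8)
The difficulty is that coercivity is only assumed on the diagonal $E(\phi,\phi)$ for $\phi$ in the \emph{smaller} space $\Theta$, so a direct Lax--Milgram argument on $F\times\Theta$ is unavailable. The plan is to \emph{transpose} $E$ into a single linear operator $A\colon\Theta\to F$ using the Riesz representation theorem on $F$, translate the coercivity hypothesis into a lower bound on $\|A\phi\|_{F}$, and then obtain $g$ by representing an associated bounded functional on the range $A(\Theta)\subset F$.

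First I would fix $\phi\in\Theta$ and observe that, by hypothesis, $g\mapsto E(g,\phi)$ is a bounded linear functional on the Hilbert space $F$; hence by the Riesz representation theorem there is a unique $A\phi\in F$ with $E(g,\phi)=(g,A\phi)$ for all $g\in F$, and bilinearity of $E$ makes $A\colon\Theta\to F$ linear. Next I would record the key estimate: letting $C>0$ denote the norm of the continuous injection $\Theta\hookrightarrow F$, the coercivity assumption gives, for every $\phi\in\Theta$,
\begin{equation*}
\alpha\|\phi\|_\Theta^2\;\le\;|E(\phi,\phi)|\;=\;|(\phi,A\phi)|\;\le\;\|\phi\|_F\,\|A\phi\|_{F}\;\le\;C\,\|\phi\|_\Theta\,\|A\phi\|_{F},
\end{equation*}
so that $\|A\phi\|_{F}\ge(\alpha/C)\,\|\phi\|_\Theta$; in particular $A$ is injective.

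Then I would define a linear functional $\ell$ on the subspace $A(\Theta)\subset F$ by $\ell(A\phi):=L(\phi)$. This is well defined because $A$ is injective (equivalently, $A\phi=0$ forces $\|\phi\|_\Theta=0$, hence $\phi=0$ since $\|\cdot\|_\Theta$ is a norm, hence $L(\phi)=0$). Moreover $\ell$ is bounded with respect to the $F$-norm, since $|\ell(A\phi)|=|L(\phi)|\le\|L\|_{\Theta'}\|\phi\|_\Theta\le(C/\alpha)\,\|L\|_{\Theta'}\,\|A\phi\|_{F}$, where I use continuity of $L$ on $(\Theta,\|\cdot\|_\Theta)$ and the lower bound above. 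By the Hahn--Banach theorem (or, equivalently, by extending $\ell$ by continuity to $\overline{A(\Theta)}$ and applying Riesz there) extend $\ell$ to a bounded linear functional $\bar\ell$ on all of $F$, and then represent $\bar\ell$ by some $g\in F$ with $\bar\ell(v)=(g,v)$ for all $v\in F$. Taking $v=A\phi$ yields $E(g,\phi)=(g,A\phi)=\bar\ell(A\phi)=\ell(A\phi)=L(\phi)$ for every $\phi\in\Theta$, which is the asserted equation.

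The only genuinely non-routine point is the conceptual one in the first two steps---realizing that the asymmetry between $F$ and $\Theta$ is handled by transposing $E$ through Riesz representation and that the coercivity hypothesis is exactly what is needed to make the functional $\ell$ on $A(\Theta)$ continuous for the $F$-topology; after that, the construction of $g$ is a standard Hahn--Banach/Riesz argument. I would also remark that the theorem asserts existence only (no uniqueness), consistent with the fact that $A(\Theta)$ need not be dense in $F$, so that $g$ is determined only up to an element of $A(\Theta)^{\perp}$.
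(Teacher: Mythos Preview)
Your proof is correct and is in fact the classical argument for this result. Note, however, that the paper does not give its own proof of this theorem: it is stated ``for convenience'' and attributed to Lions~\cite{Lions61}, and is then used as a black box in the proof of Theorem~\ref{FIR-thm:AuxPDE-Well-Posed}. What you have written is essentially the standard proof one finds in the literature---Riesz-representing $E(\cdot,\phi)$ to obtain $A\colon\Theta\to F$, converting coercivity into the injectivity/lower bound $\|A\phi\|_F\ge(\alpha/C)\|\phi\|_\Theta$, and then Hahn--Banach/Riesz on the range $A(\Theta)$---so there is nothing to compare against in the paper itself. Your closing remark about non-uniqueness (that $g$ is only determined modulo $A(\Theta)^\perp$) is also accurate and worth keeping.
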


\begin{proof}[Proof of Theorem~\ref{FIR-thm:AuxPDE-Well-Posed}] 
We will use Theorem~\ref{FIR-thm:Lions-Deg} to show the existence of a solution to the variational equation~\eqref{FIR-eq:PDE-Weak-Form}. We choose $F=X$ and $\Theta=C_c^\infty([0,T)\times\R^{2d})$ with 
\begin{align*}
\|\phi\|_{\Theta}^2 =\|\phi\|^2_X + \frac 12 \|\phi|_{t=0}\|^2_{L^2(e^{-H})}, \quad \forall\phi\in \Theta.
\end{align*}
By definition $\Theta\hookrightarrow X$. 

The bilinear form $E_\lambda$ defined in \eqref{FIR-eq:PDE-Weak-Form} satisfies property~\eqref{FIR-eq:Lions-Coercive}, since
\begin{align*}
E_\lambda(\phi,\phi)&=\int_0^T\int_{\R^{2d}} \Bigl\{ -\frac 12 \partial_t\phi^2 + \frac 14 J\nabla\psi\ast\rho_t\cdot\nabla \phi^2 + \Bigl( \lambda+\frac 12 \Psi\Bigr) \phi^2-\frac 14 J\nabla\psi\ast\rho_t\cdot\nabla\phi^2 +|\nabla_p\phi|^2 \Bigr\}\,e^{-H} \\
&\geq  \frac 12 \|\phi|_{t=0}\|^2_{L^2(e^{-H})} + \min\Bigl\{ 1, \lambda - \frac 12 \|\Psi\|_{L^\infty}\Bigr\} \|\phi\|_X^2 \geq  \|\phi\|^2_{\Theta}, 
\end{align*}
where we have used~\eqref{FIR-eq:VFP-Lambda}. 

Since all the conditions of Theorem~\ref{FIR-thm:Lions-Deg} are satisfied, the variational equation~\eqref{FIR-eq:PDE-Weak-Form}  
admits a solution $g$ in $X$. We have 
\begin{align*}
&\int_0^T\int_{\R^{2d}} g (\partial_t\phi - J\nabla H\cdot\nabla \phi)e^{-H} \\
&= \int_0^T\int_{\R^{2d}} g \, \Bigl\{\frac 12 J\nabla\psi\ast\rho_t\cdot\nabla \phi +\Bigl(\lambda+\frac 12 \Psi\Bigr)\phi 
 - \frac 12 \phi J\nabla\psi\ast\rho_t\cdot\nabla g + \nabla_pg\cdot\nabla_p\phi\Bigr\}e^{-H} + L_\lambda(\phi) 
\leq C\|g\|_X\|\phi\|_X,
\end{align*}
where we have used $ J\nabla\psi\ast\rho_t\cdot\nabla \phi=-\nabla_q\psi\ast\rho_t\cdot\nabla_p \phi $. Note that $C>0$ is independent of $\phi$, and therefore the solution $g$ belongs to $Y$.

Next we show that $g^0$ appearing in the definition of $L_\lambda$ in \eqref{FIR-def:VFP-L-Form} is the initial value for the solution $g$ of~\eqref{FIR-eq:PDE-Weak-Form}. 
Choose $\phi(t,x)=\hat\phi(x)\bar\phi_\vep(t)$, where $\hat\phi\in C_c^\infty(\R^{2d})$ and the sequence $\bar\phi_\vep$ satisfies $\bar\phi_\vep(0)=1$, $\bar\phi_\vep(t)\rightarrow 0$ for any $t\in(0,T)$ and $\bar\phi_\vep'\rightarrow -\delta_0$ (Dirac delta at $t=0$). Substituting $\phi$ in~\eqref{FIR-eq:PDE-Weak-Form} we find
\begin{align}
\label{FIR-eq:VFP-Trace-Value-Init-Value}
-\int_{0}^T\int_{\R^{2d}} g\hat\phi(x) \bar\phi'_\vep(t) e^{-H} = \int_{\R^{2d}} g^0 \hat\phi(x) e^{-H} +o(1)
\end{align}
as $\vep\to 0$.
By Lemma~\ref{FIR-thm:VFP-Green}, $g$ admits trace values in $L^2(\R^{2d};e^{-H})$, and therefore passing $\vep\rightarrow 0$ in~\eqref{FIR-eq:VFP-Trace-Value-Init-Value} we find
\begin{align*}
\int_{\R^{2d}}\left[g(0,x)-g^0(x)\right]\hat\phi(x) e^{-H}dx  = 0 , \quad \forall \hat\phi\in C_c^\infty(\R^{2d}).
\end{align*}

Finally we prove the uniqueness  in $Y$ of the solution of~\eqref{FIR-eq:PDE-Weak-Form}. Consider two solutions $g_1,g_2\in Y$ and let $g=g_1-g_2$. Since the initial data and the right-hand side $U$ in~\eqref{FIR-eq:PDE-Weak-Form} vanish,  we have $E_\lambda(g,\phi)=0$ for all $\phi\in C_c^\infty([0,T)\times \R^{2d})$. Taking a sequence $\phi_n\in C_c^\infty([0,T)\times \R^{2d})$ that converges in $X$ to $g$, we find
\begin{align*}
0&= \lim_{n\to\infty} E_\lambda(g,\phi_n)\\
&= \lim_{n\to\infty} \langle \partial_t g - J \nabla H\cdot \nabla g,\phi_n\rangle_{X',X}+
\\
&\qquad + \int_0^T\int_{\R^{2d}}  \, \Bigl\{g\left(\frac 12 J\nabla\psi\ast\rho_t\cdot\nabla \phi_n +\Bigl(\lambda+\frac 12 \Psi\Bigr)\phi_n\right) 
 - \frac 12 \phi_n J\nabla\psi\ast\rho_t\cdot\nabla g + \nabla_pg\cdot\nabla_p\phi_n\Bigr\}e^{-H}\\
&= \langle \partial_t g - J \nabla H\cdot \nabla g,g\rangle_{X',X} +\int_0^T\int_{\R^{2d}} \Bigl\{ \Bigl(\lambda+\frac 12 \Psi\Bigr)g^2+ |\nabla_pg|^2  \Bigr\}e^{-H}
\stackrel{\eqref{FIR-eq:VFP-Green}}\geq \frac 12 \int_{\R^{2d}} g^2|^{}_{t=T}\,e^{-H} + \|g\|^2_X \geq 0.
\end{align*}
This proves uniqueness.
\end{proof}

\begin{remark}
\label{FIR-rem:general-phi}
Using the same technique as in the uniqueness proof above we can prove the following result.
If $g\in Y$ satisfies $E_\lambda(g,\phi) = L_\lambda(\phi)$ for all $\phi\in C_c^\infty([0,T)\times \R^{2d})$, then for all  $\phi\in C([0,T]\times \R^{2d})$ we have 
\[
E_\lambda(g,\phi) = L_\lambda(\phi) - \int_{\R^{2d}} g\phi\big|_{t=T} \, e^{-H}
= \langle e^{-\lambda t}U,\phi\rangle_{X',X} - \int_{\R^{2d}}g\phi\big|_{t=0}^{t=T} \, e^{-H}.
\]
\end{remark}

Theorem~\ref{FIR-thm:AuxPDE-Well-Posed} proves the well-posedness of the variational equation~\eqref{FIR-eq:PDE-Weak-Form} which is a weak form for the time-rescaled equation~\eqref{FIR-eq:VFP-Trans-Aux}. Transforming back, we also conclude the well-posedness of the variational equation corresponding to the original equation~\eqref{FIR-eq:VFP-Main-Aux-PDE}. We state this in the following corollary.

\begin{corollary}\label{Aux-Cor:WellPose-Original}
Assume that 
\begin{align*}
\Psi\in C^2_c(\R\times\R^{2d}), \ \ U\in X', \  \text{ and } \  g^0\in L^2(\R^{2d};e^{-H}). 
\end{align*}
Then there exists a unique solution $g$ to the variational equation
\begin{align}\label{FIR-eq:PDE-Weak-Form-Lambda=0}
E(g,\phi) = L(\phi), \quad \forall \phi\in C_c^\infty([0,T)\times\R^{2d}),
\end{align}
in the class of functions $Y$. Here $E:X\times C_c^\infty([0,T)\times\R^{2d})\rightarrow\R$ and $F:C_c^\infty([0,T)\times\R^{2d})\rightarrow\R$ are given by
\begin{align}\label{FIR-eq:E-L=0}
&E(g,\phi):= \int_0^T\int_{\R^{2d}} \Bigl\{ g \Bigl( -\partial_t\phi + J\nabla H \cdot \nabla\phi +\frac{1}{2} J \nabla\psi\ast\rho_t\cdot \nabla \phi  +  \frac 12 \Psi\phi  \Bigr)-\frac 12 \phi\, J\nabla\psi\ast\rho_t\cdot\nabla g + \nabla_p g\cdot\nabla_p \phi
\Bigr\} \, e^{-H},\\
& L(\phi):=\langle U,\phi\rangle_{X',X} + \int_{\R^{2d}}g^0\phi\big|_{t=0} \, e^{-H}. 
\end{align}
\end{corollary}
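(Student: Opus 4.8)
\textbf{Proof of Corollary~\ref{Aux-Cor:WellPose-Original}.} The plan is to transport the well-posedness statement of Theorem~\ref{FIR-thm:AuxPDE-Well-Posed} through the explicit change of variables $g \mapsto e^{\lambda t}g$ recorded in~\eqref{FIR-eq:VFP-Lambda}, and to check that this map is an isomorphism of $Y$ onto itself that carries the variational problem~\eqref{FIR-eq:PDE-Weak-Form-Lambda=0} into the variational problem~\eqref{FIR-eq:PDE-Weak-Form}. Fix $\lambda \geq \tfrac12\|\Psi\|_{L^\infty}+1$, which is admissible because we assume $\Psi\in C_c^2(\R\times\R^{2d})$, so $\|\Psi\|_{L^\infty}<\infty$.

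First I would establish the bijection $T_\lambda : g \mapsto e^{\lambda t} g$ between the solution spaces. Since $e^{\lambda t}$ and $e^{-\lambda t}$ are smooth, bounded with bounded derivative on $[0,T]$, and independent of $x$, multiplication by $e^{\pm\lambda t}$ preserves $L^2(0,T;L^2(\R^{2d};e^{-H}))$ and commutes with $\nabla_p$; hence $T_\lambda$ maps $X$ isomorphically onto $X$. Moreover $\partial_t(e^{\lambda t}g) - J\nabla H\cdot\nabla(e^{\lambda t}g) = e^{\lambda t}\big(\partial_t g - J\nabla H\cdot\nabla g\big) + \lambda e^{\lambda t} g$, and the right-hand side defines an element of $X'$ whenever $\partial_t g - J\nabla H\cdot\nabla g\in X'$ and $g\in X$, since multiplication by $e^{\lambda t}$ is bounded on $X$ and hence its adjoint is bounded on $X'$. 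Thus $T_\lambda$ restricts to an isomorphism of $Y$ onto $Y$. The trace values also transform by $(T_\lambda g)|_{t=0} = g|_{t=0}$ and $(T_\lambda g)|_{t=T} = e^{\lambda T} g|_{t=T}$, which are well defined by Lemma~\ref{FIR-thm:VFP-Green}.

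Next I would verify the algebraic identity relating the two variational forms. A direct substitution of $g = e^{-\lambda t}\tilde g$ (equivalently $\tilde g = e^{\lambda t} g$) into $E(g,\phi)$, using that all coefficients other than the zero-order ones are time-multiplication-invariant and that the $\lambda$-term in~\eqref{FIR-def:VFP-E-Bi-Form} is precisely the extra zero-order term produced by the transformation, gives
\begin{align*}
E_\lambda(\tilde g, \phi) = E(e^{-\lambda t}\tilde g,\, e^{\lambda t}\phi)\qquad\text{and}\qquad L_\lambda(\phi) = L(e^{\lambda t}\phi) - \langle U,e^{\lambda t}\phi\rangle_{X',X} + \langle e^{-\lambda t}U, \phi\rangle_{X',X},
\end{align*}
where the last two terms are equal because $\langle e^{-\lambda t}U,\phi\rangle_{X',X} = \langle U, e^{-\lambda t}\phi\rangle$ by definition of the transpose; so in fact $L_\lambda(\phi) = L(e^{\lambda t}\phi)$ once we match the $U$-terms, noting $e^{\lambda t}\phi\big|_{t=0}=\phi\big|_{t=0}$. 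Since $\phi \mapsto e^{\lambda t}\phi$ is a bijection of $C_c^\infty([0,T)\times\R^{2d})$ onto itself, the variational equation $E(g,\phi) = L(\phi)$ for all $\phi$ is equivalent to $E_\lambda(e^{\lambda t}g,\phi') = L_\lambda(\phi')$ for all $\phi'$. Applying Theorem~\ref{FIR-thm:AuxPDE-Well-Posed} to the latter yields a unique $\tilde g\in Y$, and then $g := e^{-\lambda t}\tilde g\in Y$ is the unique solution of~\eqref{FIR-eq:PDE-Weak-Form-Lambda=0}; uniqueness is inherited because $T_\lambda$ is a bijection. Finally, the initial-condition statement transfers verbatim: $g|_{t=0} = \tilde g|_{t=0} = g^0$ in the trace sense in $L^2(\R^{2d};e^{-H})$, using again Lemma~\ref{FIR-thm:VFP-Green} and the identification of the trace carried out at the end of the proof of Theorem~\ref{FIR-thm:AuxPDE-Well-Posed}.

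The only mildly delicate point, and the one I would be most careful about, is bookkeeping of the $U$-term and the final-time trace term when moving between $L$ and $L_\lambda$: one must confirm that the transpose relation $\langle e^{-\lambda t}U,\phi\rangle_{X',X} = \langle U, e^{-\lambda t}\phi\rangle_{X',X}$ is exactly what makes the forcing terms correspond, and that the boundary term at $t=T$ (which does not appear explicitly in either variational equation since test functions vanish at $t=T$) plays no role. This is routine once the isomorphism $T_\lambda$ on $Y$ and the density of $C_c^\infty([0,T)\times\R^{2d})$ in the relevant spaces are in hand, so no genuine obstacle arises; the corollary is essentially a translation of Theorem~\ref{FIR-thm:AuxPDE-Well-Posed}. $\qquad\square$
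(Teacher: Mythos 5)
Your overall approach — transporting the well-posedness of Theorem~\ref{FIR-thm:AuxPDE-Well-Posed} through the time-rescaling $g\mapsto e^{\lambda t}g$, after checking that the rescaling is an isomorphism of $Y$ onto itself — is precisely what the paper does; the paper simply states ``transforming back, we conclude well-posedness'' without writing out the details, and your proof is a legitimate filling-in of those details. The isomorphism part of your argument (boundedness of multiplication by $e^{\pm\lambda t}$ on $X$, the adjoint on $X'$, preservation of traces) is sound.

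There is, however, a systematic sign error in the algebraic verification that makes the stated identities false. If $G$ solves \eqref{FIR-eq:VFP-Main-Aux-PDE}, then substituting $G=e^{\lambda t}g$ and dividing by $e^{\lambda t}$ produces the term $+\lambda g$ and the right-hand side $e^{-\lambda t}U$ in \eqref{FIR-eq:VFP-Trans-Aux}; that is, $g=e^{-\lambda t}G$ solves the transformed equation, or equivalently $G=e^{\lambda t}\tilde g$ where $\tilde g$ is the transformed solution. Consequently the correct relation between the bilinear forms is
\[
E(e^{\lambda t}\tilde g,\, e^{-\lambda t}\phi) = E_\lambda(\tilde g,\phi),
\qquad
L_\lambda(e^{\lambda t}\phi) = L(\phi),
\]
the latter because $\langle e^{-\lambda t}U,e^{\lambda t}\phi\rangle_{X',X}=\langle U,\phi\rangle_{X',X}$ and $(e^{\lambda t}\phi)|_{t=0}=\phi|_{t=0}$. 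Your version $E_\lambda(\tilde g,\phi)=E(e^{-\lambda t}\tilde g,e^{\lambda t}\phi)$ actually computes to $E_{-\lambda}(\tilde g,\phi)$, and the argument you give for the $L$-identity fails: the two brackets you assert to be equal are $\langle U,e^{\lambda t}\phi\rangle_{X',X}$ and $\langle U,e^{-\lambda t}\phi\rangle_{X',X}$, which differ. The final conclusion should therefore read $g:=e^{\lambda t}\tilde g$, not $e^{-\lambda t}\tilde g$. You state that ``a direct substitution gives'' the displayed identities, but performing the substitution exposes the sign; after flipping the exponents throughout, the argument is correct. As written, the key identity you rely on does not hold, so the step from Theorem~\ref{FIR-thm:AuxPDE-Well-Posed} to the corollary is not yet verified.
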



\subsection{Bounds and Regularity Properties}\label{Aux-Sec:PDE-Prop}

Having discussed the well-posedness of equation~\eqref{FIR-eq:VFP-Main-Aux-PDE}, in this section we derive some properties of its solution. These properties play an important role in the proof of Theorem~\ref{FIR-thm:VFP-Aux-PDE-Res}. 

\subsubsection{Comparison principle and growth at infinity}
We first provide an auxiliary lemma which we require to prove the comparison principle.

\begin{lemma}\label{FIR-lem:VFP-Comp-Aux-Lem}
For $g\in Y$, define $g^-\in X$ by $g^-:=\max\{-g,0\}$.
Then 
\begin{align}\label{FIR-eq:VFP-Max-Min}
\langle \partial_tg-J\nabla H\cdot\nabla g, g^-\rangle_{X',X}  = -\frac 12\int_{\R^{2d}} (g^-)^2\Bigr|_{t=0}^{t=T} e^{-H}.
\end{align}
\end{lemma}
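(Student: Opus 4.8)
The identity to be proved is the "chain rule" for the operator $\partial_t - J\nabla H\cdot\nabla$ applied to the truncation $g^-$, and it is the natural analogue of the Green formula in Lemma~\ref{FIR-thm:VFP-Green}. My plan is to reduce to that lemma by a regularization of the function $s\mapsto s^-$. First I would note that, by Lemma~\ref{FIR-lem:VFP-Density}, $C_c^\infty([0,T]\times\R^{2d})$ is dense in $Y$, so it suffices to establish the identity (in an appropriately stable form) for smooth $g$ and then pass to the limit; however, since $g\mapsto g^-$ is only Lipschitz, not $C^1$, I will first need to regularize the truncation itself.

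The key steps, in order, would be as follows. \emph{Step 1: a smooth convex approximation of $(\cdot)^-$.} Introduce $\beta_\e\in C^\infty(\R)$ with $\beta_\e(s)=0$ for $s\geq 0$, $\beta_\e(s)\to s^-$ as $\e\to0$, $-1\leq\beta_\e'\leq 0$, and $\beta_\e'\to -\mathbf 1_{\{s<0\}}$ pointwise, together with the primitive $B_\e(s):=\int_0^s\beta_\e(r)\beta_\e'(r)\,dr$, so that $B_\e(s)\to\frac12(s^-)^2$. \emph{Step 2: the chain rule for smooth $g$.} For $g\in C_c^\infty([0,T]\times\R^{2d})$ one has $\beta_\e(g)\in Y$ and, by the ordinary chain rule,
\[
\langle \partial_t g - J\nabla H\cdot\nabla g,\ \beta_\e(g)\rangle_{X',X}
= \int_0^T\!\!\int_{\R^{2d}} (\partial_t g - J\nabla H\cdot\nabla g)\,\beta_\e(g)\,e^{-H}
= \int_0^T\!\!\int_{\R^{2d}} \bigl(\partial_t - J\nabla H\cdot\nabla\bigr)B_\e(g)\,e^{-H},
\]
and since $\int (J\nabla H\cdot\nabla B_\e(g))e^{-H} = -\int B_\e(g)\,\div(e^{-H}J\nabla H) = 0$ (the drift $J\nabla H$ is divergence-free and tangent to level sets of $H$, so $\div(e^{-H}J\nabla H)=0$), this collapses to $\int_{\R^{2d}} B_\e(g)\,e^{-H}\big|_{t=0}^{t=T}$. \emph{Step 3: pass $\e\to0$ for smooth $g$} using dominated convergence (all integrands are bounded with compact support), obtaining $\langle\partial_t g - J\nabla H\cdot\nabla g,g^-\rangle = -\frac12\int(g^-)^2 e^{-H}\big|_{t=0}^{t=T}$ — note the sign, coming from $\beta_\e(g)\to -g^-$. \emph{Step 4: pass to general $g\in Y$.} Take $g_n\in C_c^\infty([0,T]\times\R^{2d})$ with $g_n\to g$ in $Y$; then $g_n^-\to g^-$ in $X$ (since $|a^- - b^-|\leq|a-b|$ and $\nabla_p(g_n^-) = -\mathbf 1_{\{g_n<0\}}\nabla_p g_n$, using Stampacchia's lemma, converges in $L^2(e^{-H})$ along a subsequence), the left-hand side converges by continuity of the dual pairing $X'\times X\to\R$ together with $\partial_t g_n - J\nabla H\cdot\nabla g_n\to\partial_t g - J\nabla H\cdot\nabla g$ in $X'$, and the right-hand side converges because the time traces depend continuously on $g$ in $Y$ (Lemma~\ref{FIR-thm:VFP-Green}) and $\|g_n^-|_{t=0,T}\|_{L^2(e^{-H})}\to\|g^-|_{t=0,T}\|_{L^2(e^{-H})}$.

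The main obstacle I anticipate is the subtlety in Step~4 concerning the convergence of $\nabla_p(g_n^-)$: one must be careful that the a.e.-convergence of the indicator $\mathbf 1_{\{g_n<0\}}$ may fail on the set $\{g=0\}$, but there $\nabla_p g=0$ a.e., so the product $\mathbf 1_{\{g_n<0\}}\nabla_p g_n$ still converges to $\mathbf 1_{\{g<0\}}\nabla_p g$ in $L^2(e^{-H})$; making this rigorous requires the standard argument that $\nabla_p g = 0$ a.e.\ on any level set of $g$. A secondary technical point is justifying that $\beta_\e(g)\in X$ with the stated regularity and that the pairing $\langle\partial_t g - J\nabla H\cdot\nabla g,\beta_\e(g)\rangle$ is well-defined — this follows because $|\beta_\e(g)|\leq|g|$ and $|\nabla_p\beta_\e(g)|\leq|\nabla_p g|$, so $\beta_\e(g)\in X$, and the whole identity in Step~2 can alternatively be obtained by first establishing it for $\phi = \beta_\e(g)$ as an admissible test function via a density argument in the weak formulation rather than by formal integration by parts. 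Everything else is routine.
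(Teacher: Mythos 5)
Your overall plan is sound and, after one correction, gives a valid alternative proof. However, Step~2 contains a genuine error in the chain-rule bookkeeping. You define $B_\e(s) := \int_0^s\beta_\e(r)\beta_\e'(r)\,dr$, so $B_\e'(s) = \beta_\e(s)\beta_\e'(s)$ and hence
\[
(\partial_t - J\nabla H\cdot\nabla)B_\e(g) \;=\; \beta_\e(g)\,\beta_\e'(g)\,(\partial_t g - J\nabla H\cdot\nabla g),
\]
which is \emph{not} equal to $\beta_\e(g)\,(\partial_t g - J\nabla H\cdot\nabla g)$ as you assert; an extra factor $\beta_\e'(g)$ appears. The correct primitive to use is $\Phi_\e(s) := \int_0^s\beta_\e(r)\,dr$, for which $\Phi_\e'(s) = \beta_\e(s)$ and the chain rule does give $(\partial_t - J\nabla H\cdot\nabla)\Phi_\e(g) = \beta_\e(g)(\partial_t g - J\nabla H\cdot\nabla g)$. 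Since $\Phi_\e(s)\to\int_0^s r^-\,dr = -\tfrac12(s^-)^2$, the boundary term produces $-\tfrac12\int(g^-)^2e^{-H}\big|_{t=0}^{t=T}$ directly; the minus sign comes from this primitive, not from "$\beta_\e(g)\to -g^-$" (which is also a slip — with your choice $\beta_\e(s)\to s^-=\max\{-s,0\}$ one has $\beta_\e(g)\to g^-$). The rest of Step~2 (antisymmetry of $J$, $\div(e^{-H}J\nabla H)=0$, compact support of $\Phi_\e(g)$) is fine, and Steps~3–4 are routine once the integrand is corrected.

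With this fix your argument is a genuine, if close, alternative to the paper's. Both proofs first reduce to $g\in C_c^\infty([0,T]\times\R^{2d})$ via the density Lemma~\ref{FIR-lem:VFP-Density}. At that point the paper approximates $g^-$ by smooth $\phi_n\to g^-$ in $X$ to replace the pairing by the $L^2(e^{-H})$ integral $\int g^-(\partial_t g - J\nabla H\cdot\nabla g)e^{-H}$, invokes the a.e.\ identity $g^-(\partial_t g - J\nabla H\cdot\nabla g) = -g^-(\partial_t g^- - J\nabla H\cdot\nabla g^-)$, and concludes by integration by parts (the Green formula~\eqref{FIR-eq:VFP-Green}). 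You instead regularize the truncation $s\mapsto s^-$ itself, so that $\beta_\e(g)$ is already smooth, and obtain the boundary term by a genuine chain rule before letting $\e\to0$. The paper's route is slightly shorter for smooth $g$; yours makes the Kruzkov/renormalization-type mechanism explicit and is arguably more transparent as to why the quadratic boundary term appears. The limit passage to general $g\in Y$ (your Step~4) is also what the paper implicitly relies on, and your handling of the subtlety at $\{g=0\}$ is appropriate.
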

\begin{proof}
Since $C_c^\infty([0,T]\times \R^{2d})$ is dense in $Y$ by Lemma~\ref{FIR-lem:VFP-Density}, it is sufficient to prove~\eqref{FIR-eq:VFP-Max-Min} for $g\in C_c^\infty([0,T]\times\R^{2d})$. For $g\in C_c^\infty([0,T]\times\R^{2d})$, $g^-\in X\cap \mathrm{Lip}(\R^{2d})$ and there exists a sequence $\phi_n\in C_c^\infty([0,T]\times\R^{2d})$ such that $\phi_n\rightarrow g^-$ in $X$. We have
\begin{align*}
\langle \partial_tg -J\nabla H \cdot \nabla g, g^-\rangle_{X',X} 
&= \lim_{n\to\infty} \langle \partial_tg -J\nabla H \cdot \nabla g, \phi_n\rangle_{X',X}\\
&= \lim\limits_{n\rightarrow \infty} \int_0^T\int_{\R^{2d}} \phi_n (\partial_tg-J\nabla H\cdot\nabla g)e^{-H} \\
&= \int_0^T\int_{\R^{2d}} g^- (\partial_tg-J\nabla H\cdot\nabla g)e^{-H} \\
&= -\int_0^T\int_{\R^{2d}} g^- (\partial_tg^--J\nabla H\cdot\nabla g^-)e^{-H}
\stackrel{\eqref{FIR-eq:VFP-Green}}=-\frac 12\int_{\R^{2d}} (g^-)^2\Bigl|_{t=0}^{t=T}e^{-H}.
\end{align*}
\end{proof}

We now prove the comparison principle. 
\begin{proposition}[Comparison principle] \label{FIR-prop:VFP-Comp-Princ}
Let $g$ be the solution given by Corollary~\ref{Aux-Cor:WellPose-Original}.  Then
\begin{enumerate}
\item $g^0\geq 0$  \ and  \ $U\geq 0$ \  $\Longrightarrow$  \ $g\geq 0$.
\item $g^0\in L^\infty(\R^{2d})$  \ and  \ $U\in L^1(0,T; L^\infty(\R^{2d}))$ \  $\Longrightarrow$  \ $g\in L^\infty([0,T]\times \R^{2d})$ with
\begin{align*}
\|g(t)\|_{L^\infty} \leq \|g^0\|_{L^\infty} +\int_0^t\|U(s)\|_{L^\infty} ds.
\end{align*}
\end{enumerate}
\end{proposition}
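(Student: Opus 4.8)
The plan is to prove both parts of the comparison principle using the weighted Green formula~\eqref{FIR-eq:VFP-Green} and Lemma~\ref{FIR-lem:VFP-Comp-Aux-Lem}, following the standard energy-estimate strategy but carried out entirely in the weighted space $X$. The key point is that the convective term $-J\nabla H\cdot\nabla g$ and the Ornstein-Uhlenbeck term $\nabla_pH\cdot\nabla_pg-\Delta_pg$ are both ``compatible'' with the weight $e^{-H}$, so all problematic unbounded coefficients disappear after testing.

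For part (1), I would argue on the transformed equation~\eqref{FIR-eq:VFP-Trans-Aux} (equivalently on the variational form~\eqref{FIR-eq:PDE-Weak-Form} with $\lambda$ as in~\eqref{FIR-eq:VFP-Lambda}), since the time-rescaling $g\mapsto e^{\lambda t}g$ preserves the sign of $g$, of $g^0$, and of $U$. First I would test the equation with $\phi=g^-\in X$; this is legitimate because, by Remark~\ref{FIR-rem:general-phi} and density of $C_c^\infty$ in $Y$, the weak formulation extends to test functions in $X$ after accounting for the boundary terms. Using Lemma~\ref{FIR-lem:VFP-Comp-Aux-Lem} to handle $\langle\partial_t g-J\nabla H\cdot\nabla g,g^-\rangle_{X',X}$, and noting that on the set $\{g<0\}$ one has $\nabla_p g^- = -\nabla_p g$ and $g\,g^- = -(g^-)^2$, the energy identity becomes, schematically,
\begin{align*}
-\frac12\int_{\R^{2d}}(g^-)^2\Bigl|_{t=0}^{t=T}e^{-H}
- \int_0^T\!\!\int_{\R^{2d}}\Bigl[\Bigl(\lambda+\tfrac12\Psi\Bigr)(g^-)^2 + |\nabla_p g^-|^2\Bigr]e^{-H}
= -\int_0^T\langle e^{-\lambda t}U,g^-\rangle_{X',X}\,dt.
\end{align*}
The convection and convolution-gradient terms cancel exactly as in the coercivity computation in the proof of Theorem~\ref{FIR-thm:AuxPDE-Well-Posed} (the $J\nabla\psi*\rho_t\cdot\nabla(g^-)^2$ terms cancel, the $J\nabla H$ term integrates against $e^{-H}$ to zero by anti-symmetry of $J$). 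Since $g^0\geq0$ gives $g^-|_{t=0}=0$, the boundary term at $t=0$ vanishes; $U\geq0$ makes the right-hand side $\leq0$; and $\lambda\geq\frac12\|\Psi\|_{L^\infty}+1$ makes the bracket on the left strictly positive. Hence $\frac12\|g^-(T)\|_{L^2(e^{-H})}^2 + \|g^-\|_X^2 \leq 0$ (up to the correct signs), forcing $g^-\equiv0$, i.e.\ $g\geq0$; transforming back gives the claim.

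For part (2), I would apply part (1) twice, using the linearity of~\eqref{FIR-eq:VFP-Main-Aux-PDE} in $(g,g^0,U)$. Set $M(t):=\|g^0\|_{L^\infty}+\int_0^t\|U(s)\|_{L^\infty}\,ds$ and consider $w^\pm := M(t)\mp g$. A direct computation shows $w^\pm$ solves the same equation with right-hand side $M'(t)\mp U + M(t)\cdot\bigl(\text{zeroth-order coefficient}\bigr)$ — here one must be slightly careful because the equation has a zeroth-order term $-\tfrac12 g(J\nabla H\cdot\nabla\psi*\rho_t)$, so $M(t)$ is not an exact supersolution unless that coefficient is controlled; the clean fix is to absorb it by a further exponential shift $g\mapsto e^{ct}g$ with $c$ large enough that the zeroth-order coefficient becomes nonnegative on the relevant sign, exactly as the $\lambda$-shift is used elsewhere. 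After this shift, $w^\pm|_{t=0} = \|g^0\|_{L^\infty}\mp g^0 \geq 0$ and the right-hand side for $w^\pm$ is $\geq0$ by construction of $M$, so part (1) yields $w^\pm\geq0$, i.e.\ $|g(t)|\leq M(t)$, and undoing the shift gives the stated bound (the shift only improves the constant over finite time, or one notes the $L^\infty$ bound is shift-robust since $M$ already dominates).

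The main obstacle I anticipate is \emph{justifying that $\phi = g^-$ is an admissible test function} and that Lemma~\ref{FIR-lem:VFP-Comp-Aux-Lem} applies to the full operator (not just $\partial_t - J\nabla H\cdot\nabla$): one needs $g^-\in X$ with $\nabla_p g^- = -\mathds{1}_{\{g<0\}}\nabla_p g \in L^2(e^{-H})$, which follows from Stampacchia's lemma, and one needs the weak formulation of Corollary~\ref{Aux-Cor:WellPose-Original} together with Remark~\ref{FIR-rem:general-phi} to hold for $X$-valued (not merely $C_c^\infty$) test functions, which requires a density/continuity argument in $X$ since every term in $E(g,\cdot)$ is $X$-continuous in its second slot for fixed $g\in Y$. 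The secondary technical nuisance is the zeroth-order coefficient $J\nabla H\cdot\nabla\psi*\rho_t$, which is bounded (by assumption \ref{cond:VFP:V2} and $|\nabla H|^2\leq C(1+H)$... actually it is \emph{not} globally bounded, so one genuinely needs the exponential-shift trick rather than a naive Gronwall); handling its sign correctly in both parts is where care is needed, but it is routine once the shift is in place.
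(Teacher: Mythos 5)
Your argument for part (1) coincides with the paper's: both work in the $\lambda$-shifted weak formulation, test with a sequence $\phi_n\geq 0$ converging to $g^-$ in $X$, apply Lemma~\ref{FIR-lem:VFP-Comp-Aux-Lem} to handle the $\langle\partial_t g - J\nabla H\cdot\nabla g,g^-\rangle_{X',X}$ term, note that the antisymmetric convection and the $J\nabla\psi\ast\rho_t$ contributions cancel, and close with $\lambda+\tfrac12\Psi\geq 1$. The admissibility of $g^-$ as a test function, which you single out as the main obstacle, is exactly the point the paper addresses by passing to the limit along a nonnegative $C_c^\infty$ sequence rather than substituting $g^-$ directly, so there is no discrepancy here.

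For part (2) your instinct is right but your fix fails. You (and the paper, much more tersely) apply part (1) to $\tilde g := M(t)-g$ with $M(t) = \|g^0\|_{L^\infty} + \int_0^t\|U(s)\|_{L^\infty}\,ds$, and for that one needs the source term $\tilde U$ in the equation for $\tilde g$ to be nonnegative. Since all spatial derivatives of $M$ vanish, one computes $\tilde U = \|U(t)\|_{L^\infty} - U - \tfrac12 M(t)\bigl(J\nabla H\cdot\nabla\psi\ast\rho_t - \Psi\bigr)$, and the term $J\nabla H\cdot\nabla\psi\ast\rho_t = \tfrac{p}{m}\cdot\nabla_q\psi\ast\rho_t$ is sign-indefinite and unbounded, growing linearly in $|p|$. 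Your proposed cure --- a further exponential-in-time shift $g\mapsto e^{ct}g$ --- only adds the spatially \emph{constant} quantity $c$ to the zeroth-order coefficient, which can never dominate a term unbounded in $p$; the $\lambda$-shift succeeds in the well-posedness proof precisely because the coefficient it must absorb there, $\tfrac12\Psi$, is compactly supported and hence bounded. So the difficulty you flag is genuine, but the exponential shift does not close it, and a naive Gronwall argument (which you also correctly rule out) fails for the same reason. The paper's own one-line argument for part (2) glosses over this same point; note that in the subsequent development only part (1) is actually invoked, via Proposition~\ref{FIR-prop:VFP-Aux-PDE-Control}.
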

\begin{proof}
Let  $g$ be the solution of the transformed variational equation~\eqref{FIR-eq:PDE-Weak-Form} provided by Theorem~\ref{FIR-thm:AuxPDE-Well-Posed},
which reads explicitly
\begin{align*}
0&= \langle\partial_tg-J\nabla H\cdot\nabla g,\phi\rangle_{X',X} - \langle e^{-\lambda t}U,\phi\rangle_{X',X} -\int_{\R^{2d}}g^0\phi_0\,e^{-H}\\
& + \int_0^T\int_{\R^{2d}} \Bigl\{ g \Bigl( \frac{1}{2} J \nabla\psi\ast\rho_t\cdot \nabla \phi  +  \Bigl( \lambda + \frac 12 \Psi \Bigr)\phi  \Bigr)  
-\frac 12 \phi\, J\nabla\psi\ast\rho_t\cdot\nabla g + \nabla_p g\cdot\nabla_p \phi
\Bigr\} \, e^{-H}.
\end{align*}
Consider a sequence $\phi_n\to g^-$ in $X$ as $n\rightarrow \infty$, with $\phi_n\geq0$. Then by the assumptions on $U$ and $g^0$ we have
\[
\langle e^{-\lambda t}U,\phi_n\rangle_{X',X} +\int_{\R^{2d}}g^0\phi_n|^{}_{t=0}\,e^{-H} \geq0,
\]
and therefore
\begin{align*}
0&\leq \lim_{n\to\infty} \langle\partial_tg-J\nabla H\cdot\nabla g,\phi_n\rangle_{X',X}\\
& \quad  + \int_0^T\int_{\R^{2d}} \Bigl\{ g \Bigl( \frac{1}{2} J \nabla\psi\ast\rho_t\cdot \nabla \phi_n  +  \Bigl( \lambda + \frac 12 \Psi \Bigr)\phi_n  \Bigr)  
-\frac 12 \phi_n\, J\nabla\psi\ast\rho_t\cdot\nabla g + \nabla_p g\cdot\nabla_p \phi_n
\Bigr\} \, e^{-H}\\
&= \langle\partial_tg-J\nabla H\cdot\nabla g,g^-\rangle_{X',X}\\
&\quad +\int_0^T\int_{\R^{2d}} \Bigl\{ g \Bigl( \frac{1}{2} J \nabla\psi\ast\rho_t\cdot \nabla g^-    +  \Bigl( \lambda + \frac 12 \Psi \Bigr)g^-  \Bigr)  
-\frac 12 g^-\, J\nabla\psi\ast\rho_t\cdot\nabla g + \nabla_p g\cdot\nabla_p g^-
\Bigr\} \, e^{-H}\\
&= -\frac 12\int_{\R^{2d}} (g^-)^2\Bigr|_{t=0}^{t=T} e^{-H} 
- \int_0^T\int_{\R^{2d}}\Bigl\{\Bigl(\lambda+\frac 12 \Psi\Bigr)|g^-|^2 +|\nabla_pg^-|^2\Bigr\}e^{-H},
\end{align*}
where  the last equality follows by Lemma~\ref{FIR-lem:VFP-Comp-Aux-Lem}. 
Since $g^-|_{t=0}=0$ and $\lambda \geq \frac12 \|\Psi\|_\infty+1$ by assumption~\eqref{FIR-eq:VFP-Lambda}, this implies that $g^-=0$.

This completes the proof of the first part of Proposition~\ref{FIR-prop:VFP-Comp-Princ}. The second part is a simple consequence of the first part, by applying the first part to the function $\tilde g\in Y$, $\tilde g(t) := \|g^0\|_\infty + \int_0^t \|U(s)\|_{L^\infty}\, ds - g(t)$, which satisfies an equation of the same form.
\end{proof}

In the next result we use the comparison principle to prove explicit bound on the solution of equation~\eqref{FIR-eq:VFP-Main-Aux-PDE} when $U=0$.
\begin{proposition}[Growth] \label{FIR-prop:VFP-Aux-PDE-Control} Assume that $\inf H=0$ and  $0 < \alpha_1 \leq g^0\leq \alpha_2<\infty$. The the solution for the variational problem~\eqref{FIR-eq:PDE-Weak-Form-Lambda=0} with $U=0$ satisfies
\begin{align*}
\alpha_1\exp\left(-\beta_1 t\sqrt{\omega_1+ H}\right)\leq g \leq \alpha_2\exp\left(\beta_2 t\sqrt{\omega_2+ H}\right) 
\end{align*}
for some fixed constants $\beta_1,\beta_2,\omega_1,\omega_2>0$.
\end{proposition}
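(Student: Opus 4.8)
The plan is to prove the two-sided bound by constructing explicit super- and sub-solutions of the form $\bar g(t,x) = \alpha_2 \exp(\beta_2 t \sqrt{\omega_2 + H(x)})$ and $\underline g(t,x) = \alpha_1 \exp(-\beta_1 t\sqrt{\omega_1+H(x)})$, and then invoking the comparison principle of Proposition~\ref{FIR-prop:VFP-Comp-Princ}. The key point is that the comparison principle, as stated, compares the solution $g$ of the homogeneous problem ($U=0$) against a function $\tilde g$ that solves an equation "of the same form"; so what I really need is that $\bar g$ is a supersolution of~\eqref{FIR-eq:VFP-Main-Aux-PDE} with a \emph{nonnegative} forcing term and $\bar g|_{t=0} \ge g^0$, and symmetrically that $\underline g$ is a subsolution with nonpositive forcing and $\underline g|_{t=0}\le g^0$. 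Since $g^0$ is pinched between $\alpha_1$ and $\alpha_2$ and $\bar g|_{t=0} = \alpha_2$, $\underline g|_{t=0}=\alpha_1$, the initial comparison is immediate.

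The heart of the matter is the differential inequality. Write $w(x) := \sqrt{\omega + H(x)}$ and $\bar g = \alpha_2 e^{\beta t w}$. I would compute
\[
\partial_t \bar g = \beta w\, \bar g, \qquad \nabla \bar g = \beta t\, \bar g\, \nabla w, \qquad
\Delta_p \bar g = \bar g\bigl(\beta t \Delta_p w + \beta^2 t^2 |\nabla_p w|^2\bigr),
\]
and substitute into the operator on the left of~\eqref{FIR-eq:VFP-Main-Aux-PDE}. The term $-J\nabla H\cdot\nabla\bar g$ contributes $-\beta t\,\bar g\, J\nabla H\cdot\nabla w$, which vanishes because $w$ is a function of $H$ alone and $J\nabla H\cdot\nabla H = 0$. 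What remains, after dividing by $\bar g$, is a scalar expression of the form
\[
\beta w - J\nabla(\psi*\rho_t)\cdot \beta t\nabla w + \beta t\,\nabla_p H\cdot\nabla_p w - \beta t \Delta_p w - \beta^2 t^2 |\nabla_p w|^2 - \tfrac12\bigl(J\nabla H\cdot\nabla\psi*\rho_t - \Psi\bigr),
\]
and I must show this is $\ge 0$ for a suitable choice of $\beta = \beta_2$ and $\omega=\omega_2$ (uniformly in $t\in[0,T]$, $x\in\R^{2d}$). Here I use the growth hypotheses: $|\nabla H|,|\Delta H|, |\nabla_p H|^2 \le C(1+H)$ (assumption~\ref{cond:VFP:V2} / the analogous bound on $H$), which yield $|\nabla w| \le C$, $|\Delta_p w| \le C$ and hence all the $O(\beta t)$ terms are bounded by $C\beta T$, while $\psi\in W^{1,1}\cap C^2_b$ gives $|\nabla\psi*\rho_t|\le \|\nabla\psi\|_1$ and the term $\tfrac12 J\nabla H\cdot\nabla\psi*\rho_t$ is bounded by $C\sqrt{1+H}$, i.e.\ $\le C w$ (after absorbing constants into $\omega$). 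The leading positive term $\beta w$ dominates: choosing $\omega_2$ large enough that $w \ge 1$ everywhere, then $\beta_2$ large enough that $\beta_2 w \ge Cw + C\beta_2 T + C\beta_2^2 T^2 + \tfrac12\|\Psi\|_\infty$ — which is possible since the $\beta^2$ term can be controlled by taking $w$ bounded below away from $0$ and noting $\beta w - \beta^2 T^2 C \ge 0$ once $\beta \le w/(CT^2)$... actually more carefully one picks $\beta_2$ so that the linear-in-$\beta$ negative contributions are beaten by $\beta_2 w$ and the $\beta_2^2 t^2|\nabla_p w|^2$ term is handled by observing $|\nabla_p w|^2 = |\nabla_p H|^2/(4w^2) \le C(1+H)/(4w^2) \le C/4$ so it too is $O(\beta_2^2 T^2)$, and then enlarging $\omega_2$ to make $w$ as large as needed. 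The subsolution $\underline g$ is handled symmetrically; the sign of the $-\beta t$ terms flips favourably and one again picks $\beta_1,\omega_1$ large.

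The main obstacle I anticipate is the $\beta^2 t^2|\nabla_p w|^2$ term in $\Delta_p \bar g$: it is quadratic in $\beta$, so one cannot simply take $\beta$ large without care. The resolution is that $|\nabla_p w|^2$ is \emph{bounded} (not growing in $H$), so this term is $\le C\beta^2 T^2$, a constant; meanwhile the good term $\beta w$ can be made as large as we like \emph{by enlarging $\omega$}, independently of $\beta$. So the correct order of quantifiers is: fix any $\beta_2>0$ making the $O(\beta t)$ drift/friction terms and $\tfrac12\|\Psi\|_\infty$ manageable relative to $\beta_2$, then choose $\omega_2$ so large that $\beta_2\sqrt{\omega_2} \ge C\beta_2 + C\beta_2^2 T^2 + \tfrac12\|\Psi\|_\infty + C$ with $C$ the constants from the growth bounds; since $w\ge\sqrt{\omega_2}$, the inequality holds pointwise. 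One secondary technical point is that $\bar g,\underline g$ must genuinely lie in the solution class $Y$ (or at least be legitimate comparison functions for Proposition~\ref{FIR-prop:VFP-Comp-Princ}); this follows from the exponential-in-$\sqrt{H}$ growth being compatible with the weight $e^{-H}$ in the definition~\eqref{FIR-def:X-space} of $X$, since $e^{2\beta t\sqrt{\omega+H}}e^{-H}$ is integrable, and similarly for the $\nabla_p$ contribution. Once the differential inequalities and the initial inequalities are in hand, Proposition~\ref{FIR-prop:VFP-Comp-Princ} (applied to $\bar g - g$ and to $g - \underline g$, each of which solves a variational problem of the required form with nonnegative data) gives $\underline g \le g \le \bar g$, which is the assertion with $\beta_1,\beta_2,\omega_1,\omega_2$ as constructed.
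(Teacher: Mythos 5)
Your proposal is correct and follows essentially the same route as the paper: construct $\bar g = \alpha_2 e^{\beta_2 t\sqrt{\omega_2+H}}$ and $\underline g = \alpha_1 e^{-\beta_1 t\sqrt{\omega_1+H}}$, verify via the growth hypotheses on $H$ that they are super/sub-solutions after choosing $\beta$ large enough to beat the $O(\sqrt{1+H})$ term from $\frac12 J\nabla H\cdot\nabla\psi*\rho_t$ and then $\omega$ large enough to absorb the remaining bounded contributions including the quadratic $\beta^2 t^2|\nabla_p w|^2$ term, and then invoke the comparison principle Proposition~\ref{FIR-prop:VFP-Comp-Princ} applied to $\bar g - g$ and $g - \underline g$. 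You additionally flag the (real, if minor) need to check $\bar g,\underline g\in Y$, which the paper asserts without comment.
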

 \begin{proof} 
We first prove the second inequality in Proposition \ref{FIR-prop:VFP-Aux-PDE-Control}. For some constants $\beta_2>0,\omega_2>1$ to be specified later, we define $g_2:=\alpha_2\exp(\beta_2 t\sqrt{\omega_2+ H}) \in Y$, such that $g_2|_{t=0}=\alpha_2$.  
We will show that $g_2-g$ satisfies the assumptions of Proposition~\ref{FIR-prop:VFP-Comp-Princ}. 

Substituting $g_2-g$ in~\eqref{FIR-eq:E-L=0} and using the smoothness of $g_2$ we find 
\begin{align*}
E(g_2-g, \phi)=\langle U_2,\phi\rangle_{X',X}+\int_{\R^{2d}}\left(g_2|_{t=0}-g^0\right)\phi\, e^{-H}
\end{align*}
with 
\begin{align*}
U_2 = \partial_t g_2 - J\nabla H\cdot \nabla g_2 -J\nabla (\psi\ast\rho_t)\cdot\nabla g_2 + \nabla_p H\cdot\nabla_pg_2 - \Delta_p g_2 -\frac{g_2}{2}\left(J\nabla H\cdot\nabla\psi\ast\rho_t-\Psi\right).
\end{align*}
By construction $g_2|_{t=0}-g^0\geq 0$. We now show that $U_2\geq 0$. We calculate
\begin{align*}
\partial_t g_2 - J\nabla H\cdot \nabla g_2 -J\nabla (\psi\ast\rho_t)\cdot \nabla g_2 + \nabla_pH\cdot\nabla_pg_2 - \Delta_p g_2 -\frac{g_2}{2}\left(J\nabla H\cdot\nabla\psi\ast\rho_t-\Psi\right)\\
\geq g_2\Bigl( \frac{1}{2} \beta_2\sqrt{\omega_2+H} - \frac 12\left(J\nabla H\cdot\nabla\psi\ast\rho_t-\Psi\right) + \frac 12 \beta_2\sqrt{\omega_2 + H} - c\beta_2 t -\tilde c\beta_2^2t^2 \Bigr),
\end{align*}
where the constants $c,\tilde c$ are independent of $\beta_2$ and $\omega_2$, using the uniform bounds on $\Delta H$ and the bound  $|\nabla H|^2\leq C(1+H)$. Because of this growth condition on $\nabla H$, we can choose $\beta_2,\omega_2$ large enough such that
\begin{align*}
\frac{1}{2} \beta_2\sqrt{\omega_2+H} \geq  \frac 12\left(J\nabla H\cdot\nabla\psi\ast\rho_t-\Psi\right). 
\end{align*}
Then we choose $\omega_2$ even larger such that for any $t\in [0,T]$
\begin{align*}
\frac 12 \beta_2\sqrt{\omega_2+H}\geq \frac 12\beta_2\sqrt{\omega_2} \geq c\beta_2 t+\tilde c\beta_2^2t^2.
\end{align*}
For these values of $\beta_2,\omega_2$ we therefore have
\begin{align*}
U_2=\partial_t g_2 - J\nabla H\cdot \nabla g_2 -J\nabla (\psi\ast\rho_t)\cdot \nabla g_2 + \nabla_pH\cdot\nabla_pg_2 - \Delta_p g_2 -\frac{g_2}{2}\left(J\nabla H\cdot\nabla\psi\ast\rho_t-\Psi\right)\geq 0.
\end{align*}
Using the comparison principle of Lemma~\ref{FIR-prop:VFP-Comp-Princ} we then obtain
\begin{align*}
g\leq \alpha_2\exp\left(\beta_2 t\sqrt{\omega_2+ H}\right). 
\end{align*}

Proceeding similarly it also follows that $g_1:=\alpha_1\exp(-\beta_1 t\sqrt{\omega_1+H})$ is a subsolution for~\eqref{FIR-eq:VFP-Main-Aux-PDE} for appropriately chosen $\beta_1$ and $\omega_1$, and the first inequality in Proposition \eqref{FIR-prop:VFP-Aux-PDE-Control} follows.
\end{proof}

In the next result we make a specific choice for $\Psi$ (which corresponds to the Fisher Information for the VFP equation) and show that with this choice, the $L^2(e^{-H})$ norm of the solution of~\eqref{FIR-eq:VFP-Main-Aux-PDE} decreases in time. 
\begin{proposition}\label{FIR-prop:Aux-PDE-Decreasing}
The solution $g$ for the variational problem~\eqref{FIR-eq:PDE-Weak-Form-Lambda=0} (in the sense of Corollary~\ref{Aux-Cor:WellPose-Original}) with $U=0$ and 
\begin{align}\label{FIR-def:VFP-Aux-Psi-Explicit}
\Psi = -\Bigl(  \Delta_p\varphi -\nabla_p\varphi  \cdot\nabla_pH - \frac 12|\nabla_p\varphi|^2 \Bigr),
\end{align}
for some $\varphi\in C^\infty_c([0,T]\times\R^{2d})$, satisfies
\begin{align*}
\int_{\R^{2d}}g^2\Bigr|_0^T e^{-H}\leq 0.  
\end{align*} 
\end{proposition}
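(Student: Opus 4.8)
The plan is to establish the monotonicity by testing the weak formulation~\eqref{FIR-eq:PDE-Weak-Form-Lambda=0} against $g$ itself, and then to exploit the specific structure of $\Psi$ from~\eqref{FIR-def:VFP-Aux-Psi-Explicit} together with the Green formula of Lemma~\ref{FIR-thm:VFP-Green}. First I would take a sequence $\phi_n\in C_c^\infty([0,T)\times\R^{2d})$ converging to $g$ in $X$ (using density of $C_c^\infty$ in $X$), substitute into $E(g,\phi_n)=L(\phi_n)=0$ (recall $U=0$, $g^0=g|_{t=0}$), and pass to the limit. Using Lemma~\ref{FIR-thm:VFP-Green} with $g=\tilde g$ to handle $\langle \partial_t g - J\nabla H\cdot\nabla g, g\rangle_{X',X} = \tfrac12\int_{\R^{2d}} g^2 e^{-H}\big|_{t=0}^{t=T}$, and noting that the two convolution terms $\tfrac12 J\nabla\psi\ast\rho_t\cdot\nabla g^2$ and $-\tfrac12 g J\nabla\psi\ast\rho_t\cdot\nabla g$ combine and integrate to zero against the weight $e^{-H}$ (exactly as in the cancellation computation preceding~\eqref{FIR-def:X-space}, since $\div(e^{-H}J\nabla\psi\ast\rho_t)=0$ — or one simply observes $J\nabla\psi\ast\rho_t\cdot\nabla g\cdot g e^{-H} = \tfrac12 J\nabla\psi\ast\rho_t\cdot\nabla(g^2)e^{-H}$ which integrates to zero), the identity reduces to
\begin{align*}
\frac12\int_{\R^{2d}} g^2 e^{-H}\Big|_{t=0}^{t=T} + \int_0^T\!\!\int_{\R^{2d}} \Bigl( \tfrac12\Psi\, g^2 + |\nabla_p g|^2\Bigr) e^{-H}\,dx\,dt = 0.
\end{align*}

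So the claim $\int_{\R^{2d}} g^2 e^{-H}\big|_0^T \leq 0$ will follow once I show
\[
\int_0^T\!\!\int_{\R^{2d}} \Bigl( \tfrac12\Psi\, g^2 + |\nabla_p g|^2\Bigr) e^{-H}\,dx\,dt \geq 0.
\]
This is where the explicit form of $\Psi$ enters. With $\Psi = -\bigl(\Delta_p\varphi - \nabla_p\varphi\cdot\nabla_p H - \tfrac12|\nabla_p\varphi|^2\bigr)$, I would rewrite $-\tfrac12\Psi\,g^2 e^{-H}$ using the identity $(\Delta_p\varphi - \nabla_p\varphi\cdot\nabla_p H)e^{-H} = e^{H}\div_p(e^{-H}\nabla_p\varphi)\cdot e^{-H} = \div_p(e^{-H}\nabla_p\varphi)$, so that $-\tfrac12\int \Psi g^2 e^{-H} = \tfrac12\int g^2\,\div_p(e^{-H}\nabla_p\varphi)\,dx - \tfrac14\int |\nabla_p\varphi|^2 g^2 e^{-H}\,dx$. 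Integrating the first term by parts in $p$ gives $-\int g\,\nabla_p g\cdot\nabla_p\varphi\, e^{-H}\,dx$. Thus the $t$-integrand becomes
\[
\int_{\R^{2d}} \Bigl( |\nabla_p g|^2 - g\,\nabla_p g\cdot\nabla_p\varphi + \tfrac14 g^2|\nabla_p\varphi|^2\Bigr)e^{-H}\,dx = \int_{\R^{2d}} \bigl|\nabla_p g - \tfrac12 g\,\nabla_p\varphi\bigr|^2 e^{-H}\,dx \geq 0,
\]
which completes the argument: each time-slice integrand is a perfect square, hence nonnegative, and integrating over $[0,T]$ gives the desired inequality.

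The main obstacle I expect is the rigor of the integration-by-parts manipulations: $g$ only lies in $Y$, so $\nabla_p g\in L^2(e^{-H})$ but a priori nothing more, and $g^2$ need not have the regularity to justify integrating $\div_p(e^{-H}\nabla_p\varphi)$ against it directly. I would handle this by first replacing $g$ with a mollified/approximated sequence $g_n\in C_c^\infty$ converging to $g$ in $X$ (the same sequence used for testing), performing all the by-parts steps on $g_n$ where they are classical — here $\varphi\in C_c^\infty$ is smooth and compactly supported so all terms involving $\varphi$ are perfectly controlled — and then passing to the limit, using that the final expression $\int|\nabla_p g_n - \tfrac12 g_n\nabla_p\varphi|^2 e^{-H}$ is continuous with respect to $X$-convergence (because $\nabla_p\varphi$ is bounded with compact support). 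A secondary point requiring a little care is the treatment of the term $\langle\partial_t g - J\nabla H\cdot\nabla g, g\rangle_{X',X}$: I cannot test directly against $g$ since $g\notin C_c^\infty$, but Lemma~\ref{FIR-thm:VFP-Green} (with $\tilde g = g$) already furnishes exactly the needed identity $2\langle\partial_t g - J\nabla H\cdot\nabla g, g\rangle_{X',X} = \int g^2 e^{-H}\big|_{t=0}^{t=T}$, and Remark~\ref{FIR-rem:general-phi} or a direct limiting argument lets me extend the weak formulation to the admissible test function. Modulo these approximation details, the computation is the perfect-square completion above.
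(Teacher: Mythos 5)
Your proposal is correct and follows essentially the same route as the paper's proof: approximate $g$ by $\phi_n\in C_c^\infty$, use the Green formula to turn the transport term into the boundary integral $\tfrac12\int g^2 e^{-H}\big|_0^T$, observe that the two skew convolution terms cancel in the limit, substitute the explicit form of $\Psi$ and integrate the $\Delta_p\varphi$ term by parts against $e^{-H}$, and complete the square. The one slip is a sign in the cross-term: after the integration by parts the correct identity is $\int\bigl(\tfrac12\Psi g^2 + |\nabla_p g|^2\bigr)e^{-H} = \int\bigl|\nabla_p g + \tfrac12 g\nabla_p\varphi\bigr|^2 e^{-H}$, not $\bigl|\nabla_p g - \tfrac12 g\nabla_p\varphi\bigr|^2$; this is inconsequential for the conclusion since both are perfect squares, but worth correcting.
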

\begin{proof}

Let $g\in Y$ be the solution given by Corollary~\ref{Aux-Cor:WellPose-Original}.  Since $g\in X$, there exists a sequence $\phi_n\in C_c^\infty((0,T)\times\R^{2d})$ such that $\phi_n\rightarrow g$ in $X$. Furthermore $\partial_t g - J\nabla H\cdot \nabla g\in X'$ and we have
\begin{align*}
\langle \partial_t g - J\nabla H\cdot\nabla g,g\rangle_{X',X} = \lim\limits_{n\rightarrow\infty} \langle \partial_t g -J\nabla H\cdot \nabla g,\phi_n\rangle_{X',X}.   
\end{align*}
Using the same approximation arguments as in the proof of the comparison principle we find
\begin{align*}
\frac 12 \int_{\R^{2d}}g_t^2 e^{-H}\Bigl|_{t=0}^{t=T}&=\langle \partial_t g - J\nabla H\cdot\nabla g,g\rangle_{X',X} \\
&= \lim\limits_{n\rightarrow\infty}  \int_0^T\int_{\R^{2d}} \Bigl( \frac 12 \phi_n J\nabla\psi\ast \rho_t\cdot\nabla g - \frac 12 g J\nabla\psi\ast \rho_t\cdot\nabla\phi_n - \nabla _p g\nabla_p\phi_n -\frac 12 g\phi_n \Psi 
\Bigr)e^{-H}\\
& =\int_0^T\int_{\R^{2d}} \Bigl( -|\nabla_p g|^2-\frac 12 g^2\Psi\Bigr)e^{-H}.
\end{align*}
Using~Lemma~\ref{FIR-thm:VFP-Green} and substituting~\eqref{FIR-def:VFP-Aux-Psi-Explicit} into this relation we find
\begin{align*}
\frac 12 \int_{\R^{2d}} g^2 e^{-H}\Bigr|_0^T &= \int_0^T\int_{\R^{2d}} \Bigl( -|\nabla_p g|^2+\frac 12 g^2\Bigl[ \Delta_p\varphi -\nabla_p\varphi\cdot \nabla _p H-\frac 12 |\nabla_p\varphi|^2\Bigr] \Bigr)e^{-H} \\
&= -\int_0^T\int_{\R^{2d}} \Bigl( |\nabla_p g|^2+g\nabla_p\varphi\cdot\nabla_p g + \frac 14 g^2 |\nabla_p\varphi|^2\Bigr)e^{-H}\leq 0.
\end{align*}
where the second equality follows by applying integration by parts to the $\Delta_p\varphi$ term. This completes the proof. 
\end{proof}

\subsubsection{Regularity}
In this section we prove certain regularity properties for the solution of equation~\eqref{FIR-eq:VFP-Main-Aux-PDE}. We first present a general result regarding regularity of kinetic equations. This result is a combination of Theorem 1.5 and Theorem 1.6~\cite{Bouchut02}. The main difference is that we assume more control on the second derivative with respect to momentum, which also gives us a stronger regularity in the position variable. 

\begin{proposition}\label{Aux-Prop:Bouchut-Type-Res}
Assume that 
\begin{align}\label{Aux-eq:Reg-Abstract-PDE}
\partial_t f+p\cdot \nabla_q f -\sigma\Delta_p f=g \quad \text{in }\R\times\R^{2d}
\end{align}
holds with $\sigma>0$ and 
\begin{align*}
f,g\in L^2(\R\times\R^{2d}), \ \ \nabla_p f,\nabla_pg\in L^2(\R\times\R^{2d}).
\end{align*}
Then $\Delta_p f, \nabla_qf \in L^2(\R\times\R^{2d})$, $\partial_t f\in L^2_{\mathrm{loc}}(\R\times\R^{2d})$ and 
\begin{align*}
\|\nabla_q f\|_{L^2}\leq C\Bigl(\|\nabla_p g\|_{L^2}+ \|f\|_{L^2}\Bigr).
\end{align*}
\end{proposition}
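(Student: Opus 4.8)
\textbf{Proof proposal for Proposition~\ref{Aux-Prop:Bouchut-Type-Res}.}

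The plan is to use the well-known hypoelliptic smoothing effect of the kinetic transport operator $\partial_t + p\cdot\nabla_q$ together with the ellipticity in $p$, following the Fourier-multiplier argument of Bouchut. First I would take the Fourier transform of equation~\eqref{Aux-eq:Reg-Abstract-PDE} in all variables $(t,q,p)\mapsto(\tau,\eta,\zeta)$, so that the equation becomes $(i\tau + i\,p\!\cdot\!\eta \text{ acting as } -\eta\cdot\nabla_\zeta) \hat f + \sigma|\zeta|^2 \hat f = \hat g$; the transport term $p\cdot\nabla_q$ is the obstruction to a naive symbol estimate, since it differentiates in $\zeta$. The standard device is to work along the characteristics of the transport part: write $\hat f(\tau,\eta,\zeta)$ and use that the combination $\tau - p\cdot\eta$ (equivalently, shifting $\zeta$ along $\eta$) is conserved by the transport, and exploit the elliptic gain $\sigma|\zeta|^2$ averaged along these characteristics. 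This yields the a priori bound $\||\zeta|^2 \hat f\|_{L^2} \lesssim \|\hat g\|_{L^2}$ (hence $\Delta_p f\in L^2$) and, crucially, the velocity-averaging/gain-of-derivative estimate $\||\eta|^{1/2}\hat f\| \lesssim \|\hat g\|^{1/2}\|\hat f\|^{1/2}$ type control, which one upgrades using the extra hypothesis $\nabla_p g\in L^2$ to the full one-derivative-in-$q$ bound.

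More concretely, the key computational step I would carry out is the following symbol estimate. For the operator $\mathcal T = \partial_t + p\cdot\nabla_q$, one has the commutator/interpolation identity relating $\nabla_q$ to a fractional combination of $\mathcal T$ and $\nabla_p$: schematically $\nabla_q \sim \nabla_p \mathcal T - \mathcal T\nabla_p$ up to lower order, so that $\nabla_q f$ is controlled by $\nabla_p(\mathcal T f) = \nabla_p(\sigma\Delta_p f + g)$ and $\mathcal T(\nabla_p f)$. The term $\nabla_p g$ is exactly what appears here, which is why the hypothesis $\nabla_p g\in L^2$ enters the final bound $\|\nabla_q f\|_{L^2}\le C(\|\nabla_p g\|_{L^2} + \|f\|_{L^2})$. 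The precise justification is cleanest via the Fourier representation: one shows that the multiplier $\eta \mapsto \eta\,(\,i\tau + i p\!\cdot\!\eta + \sigma|\zeta|^2)^{-1}$, after the averaging in the transport direction, is bounded by a constant times $|\zeta|\,(\text{something integrable})$, and then one closes the estimate by absorbing $\sigma\Delta_p f$ using the already-established bound $\|\Delta_p f\|_{L^2}\lesssim \|g\|_{L^2}+\|\nabla_p g\|_{L^2}$ and Young/Cauchy--Schwarz. The local-in-time statement $\partial_t f\in L^2_{\mathrm{loc}}$ then follows directly by reading it off from the equation: $\partial_t f = -p\cdot\nabla_q f + \sigma\Delta_p f + g$, where the first term is in $L^2_{\mathrm{loc}}$ because $\nabla_q f\in L^2$ and $p$ is locally bounded, and the other two are in $L^2$.

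I expect the main obstacle to be making the characteristic/averaging argument for the transport term fully rigorous rather than merely formal --- in particular, justifying the Fourier-side manipulations (the shift of $\zeta$ along $\eta$ is an unbounded operation, and one must be careful that all quantities remain in $L^2$ and that the integrations by parts along characteristics are licit for the class of $f$ under consideration). A clean way around this is to first prove the estimate for smooth compactly supported $f$ (or mollify $f$ in $(t,q,p)$ and note the equation is preserved up to a commutator error that vanishes, using that the coefficient $p$ is linear so the commutator $[p\cdot\nabla_q,\text{mollifier}]$ is controlled exactly as in Lemma~\ref{FIR-lem:Aux-Conv-Bound}), establish the bounds there, and then pass to the limit using the density and the fact that all the norms appearing are lower semicontinuous. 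Since this proposition is quoted as essentially a repackaging of \cite[Theorems 1.5 and 1.6]{Bouchut02} with a stronger hypothesis on $\nabla_p g$ giving a stronger conclusion on $\nabla_q f$, I would in the write-up lean on that reference for the transport-averaging core and only spell out the modification: tracking how the extra $\nabla_p g\in L^2$ assumption propagates through Bouchut's estimates to produce the stated $\|\nabla_q f\|_{L^2}$ bound (Bouchut obtains $f\in H^{2/3}$ in all variables from $g\in L^2$; the improvement to a full derivative in $q$ is the analogue of elliptic regularity once one has one more derivative of the right-hand side in the degenerate direction).
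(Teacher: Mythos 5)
You have correctly identified the two outer layers of the argument: the hypoelliptic commutator identity $\partial_{q_j}=\partial_{p_j}\mathcal T-\mathcal T\partial_{p_j}$ with $\mathcal T=\partial_t+p\cdot\nabla_q$, and the mollify-truncate-pass-to-the-limit scheme in which the commutator $[p\cdot\nabla_q,\nu_\delta\ast]$ is controlled by the linearity of $p$ exactly as in Lemma~\ref{FIR-lem:Aux-Conv-Bound}. Both are in the paper's proof. But the proposal does not close the core estimate, and the route you sketch to close it would not work.

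The gap is this. Applying the commutator directly to $\nabla_q f$ produces the term $\nabla_p(\mathcal T f)=\nabla_p(\sigma\Delta_p f+g)$, and you propose to ``absorb $\sigma\Delta_p f$ using the bound $\|\Delta_p f\|_{L^2}\lesssim\|g\|_{L^2}+\|\nabla_p g\|_{L^2}$.'' But what actually appears is $\nabla_p\Delta_p f$, a \emph{third} derivative of $f$ in $p$, for which no bound is available. No symbol calculus saves this either: the transport operator $p\cdot\nabla_q$ is not a Fourier multiplier -- after transforming in $p$ it becomes $-\eta\cdot\nabla_\zeta$, an unbounded differentiation in the dual variable, so the multiplier $\eta(i\tau+ip\cdot\eta+\sigma|\zeta|^2)^{-1}$ you write down is not well-formed (it mixes the physical variable $p$ with the Fourier variables $\eta,\zeta$), and the ``averaging in the transport direction'' is precisely the step you flag as the main obstacle but do not actually carry out.

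The missing idea in the paper is a cancellation, not an absorption. One does not try to bound $\nabla_q f$ pointwise in Fourier; instead one computes $\|\partial_{q_j}f\|_{L^2}^2=(\partial_{q_j}f,\partial_{q_j}f)$, replaces one factor using the commutator identity, and integrates by parts (using that $\mathcal T$ is skew-adjoint on $L^2$ and commutes with $\partial_{q_j}$). This produces
\[
\|\partial_{q_j}f\|_{L^2}^2 \;=\; 2\bigl(\partial_{q_j}\partial_{p_j}f,\sigma\Delta_p f\bigr) \;+\; 2\bigl(\partial_{q_j}f,\partial_{p_j}g\bigr).
\]
The dangerous first term \emph{vanishes identically}: moving $\partial_{p_j}$ and then $\Delta_p$ across the inner product by integration by parts shows $(\partial_{q_j}\partial_{p_j}f,\Delta_p f)=-(\partial_{q_j}\partial_{p_j}f,\Delta_p f)$ (equivalently, on the Fourier side the integrand $\zeta_j\eta_j|\eta|^2|\hat f|^2$ integrates to zero since $|\hat f|^2$ is even for real $f$). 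What remains is $\|\partial_{q_j}f\|_{L^2}\le 2\|\partial_{p_j}g\|_{L^2}$. This is the step your proposal is missing, and without it the argument does not close.

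One further small correction: the bound $\sigma\|\Delta_p f\|_{L^2}\le C_d\|g\|_{L^2}$ follows from \cite[Theorem~1.5]{Bouchut02} with \emph{no} use of $\nabla_p g$; the hypothesis $\nabla_p g\in L^2$ enters only in the surviving term $2(\partial_{q_j}f,\partial_{p_j}g)$ of the identity above, i.e. it is what upgrades Bouchut's $H^{2/3}$-gain in $q$ to a full $q$-derivative.
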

\begin{proof}
From~\cite[Theorem 1.5]{Bouchut02} it follows that $\Delta_p f\in L^2(\R\times\R^{2d})$ with 
\begin{align*}
\sigma\|\Delta_p f\|_{L^2}\leq C_d \|g\|_{L^2},
\end{align*}
for a constant $C_d$ that only depends on the dimension $d$.
This implies that the Hessian in the $p$-variable satisfies $D^2_pf\in L^2(\R\times\R^{2d})$ as well. 

To prove the Proposition, we first assume that $f,g\in C_c^\infty([0,T]\times\R^{2d})$. We will later extend the results to the low-regularity situation via regularization arguments. 

Writing $(f,g) = \int_{\R^\times \R^{2d}} fg$ and using integration by parts we have
\begin{align}
\|\partial_{q_j} f\| _{L^2}^2 &=\left( \partial_{q_j} f, \partial_{q_j} f \right) =\left( \partial_{q_j} f, \partial_{p_j}(\partial_t+p\nabla_q)f - (\partial_t+p\nabla_q)\partial_{p_j}f \right) \notag\\
&=\left( \partial_{q_j} f, \partial_{p_j}(\partial_t+p\nabla_q)f  \right) + \left( \partial_{q_j}(\partial_t+p\nabla_q) f, \partial_{p_j}f \right)  
= 2\left( \partial_{q_j} \partial_{p_j} f, \sigma\Delta_p f  \right) + 2\left( \partial_{q_j} f, \partial_{p_j}g  \right)\notag\\
&\leq 0 + 2\|\partial_{q_j}f\|_{L^2}\|\partial_{p_j}g\|_{L^2} 
\label{Aux-eq:Part-Int-Reg-Calc}
\end{align}
Here we have used the (hypoelliptic) relation $\partial_{q_j}=\partial_{p_j}(\partial_t+p\nabla_q) - (\partial_t+p\nabla_q)\partial_{p_j}$ to arrive at the second equality. The final inequality follows since $f$ is real-valued, which implies that $|\hat f|^2$ is an even function and therefore 
\begin{align*}
\left( \partial_{q_j} \partial_{v_j} f_{\d,R}, \Delta_p f \right) = \int_{\R^{2d}} \zeta_j \eta_j |\eta|^2|\hat f|^2=0,
\end{align*}
where $\zeta,\eta$ are the Fourier variables corresponding to $q,p$.

Inequality~\eqref{Aux-eq:Part-Int-Reg-Calc} gives
\begin{align}\label{Aux-eq:Bouchut-Smoothed-Bounds}
\|\partial_{q_j} f\| _{L^2} \leq 2 \|\partial_{p_j}g\|_{L^2}.
\end{align}
Since $\nabla_q f, \Delta_p f,g\in L^2(\R\times\R^{2d})$, using~\eqref{Aux-eq:Reg-Abstract-PDE} we have $\partial_t f\in L^2_{\mathrm{loc}}(\R\times\R^{2d})$. This proves the result for smooth and compactly supported $f$ and $g$.

Let us now consider general $f,g\in L^2(\R\times\R^{2d})$ as in the Proposition, and define $f_\d:=\nu_\d\ast f$ and $g_\d:=\nu_\d\ast g$, where $\nu_\d$ is a regularizing sequence in $\R\times\R^{2d}$. Then we have
\begin{align*}
\partial_t f_\d + p\cdot \nabla_q f_\d - \Delta_p f_\d = g_\d + \bar g_\d,
\end{align*}
where $\bar g_\d = p\cdot \nabla_q f_\d - \nu_\d\ast(p\nabla_q f)$. Next we define $f_{\d,R}:=f_\d\rchi_R$ and $g_{\d,R}:=g_\d\rchi_R$, where 
\begin{align*}
 \rchi_R(x)=\rchi_1\Bigl(\frac x R\Bigr), \ \text{ where } \rchi_1\in C_c^\infty(\R^{2d}), \ \rchi_1(x)=1  \text{ for } |x|\leq 1, \ \rchi_1(x)=0 \text{ for } |x|\geq 2. 
\end{align*}
Then we have
\begin{align*}
\partial_t f_{\d,R} + p\cdot \nabla_q f_{\d,R} - \Delta_p f_{\d,R} = \left(g_\d + \bar g_\d\right)\rchi_R + \bar g_{\d,R}=: g_{\d,R}, 
\end{align*}
where
\begin{align}\label{Aux-eq:Cutoff-Error-Reg}
\bar g_{\d,R}=f_\d p\cdot \nabla_q\rchi_R-f_\d\Delta_p\rchi_R+\nabla_p f_\d\cdot\nabla_p\rchi_R. 
\end{align}
Note that $f_{\d,R},g_{\d,R}\in C_c^\infty(\R\times\R^{2d})$. To apply~\eqref{Aux-eq:Bouchut-Smoothed-Bounds} we need to show that $g_{\d,R},\nabla_p g_{\d,R}\in L^2(\R\times\R^{2d})$. In fact we will show that $g_{\d,R},\nabla_p g_{\d,R}$ are bounded in $L^2(\R\times\R^{2d})$ independently of $\d$ and $R$ with 
\begin{align}
\label{FIR-ineq:Bouchut-g}
\|\nabla_p g_{\d,R}\|_{L^2}\leq C\left(\|\nabla_p g\|_{L^2}+\|f\|_{L^2}+\|\nabla_p f\|_{L^2}\right).
\end{align}
Combining with estimate~\eqref{Aux-eq:Bouchut-Smoothed-Bounds}, we have $\nabla_q f\in L^2(\R\times\R^{2d})$ with 
\begin{align*}
\|\nabla_q f\|_{L^2}=\lim\limits_{\d\rightarrow 0, R\rightarrow\infty}\|\nabla_q f_{\d,R}\|_{L^2} \leq C\left(\|\nabla_p g\|_{L^2}+\|f\|_{L^2}+\|\nabla_p f\|_{L^2}\right).
\end{align*}

Now we prove that $g_{\d,R}$ satisfies inequality~\eqref{FIR-ineq:Bouchut-g}. Since the equations are defined in a distributional sense, for any $\phi\in C_c^\infty(\R\times\R^{2d})$ we have
\begin{align*}
\int_{\R^{1+2d}} \bar g_\d \phi 
&= \int_{\R^{1+2d}} \left[-f_\d p\cdot\nabla_q\phi+fp\cdot\nabla_q\nu_\d\ast\phi \right]
= \int_{\R^{1+2d}} \left[-f\nu_\d\ast(p\cdot\nabla_q\phi)+fp\cdot\nabla_q\nu_\d\ast\phi \right] \\
&\leq \|f\|_{L^2} \|\nu_\d\ast(p\cdot\nabla_q\phi)+p\cdot\nabla_q\nu_\d\ast\phi\|_{L^2}\\
&\leq   \|f\|_{L^2}\|\kappa_\d\|_{L^1}\|\phi\|_{L^2}\leq C\|f\|_{L^2}\|\phi\|_{L^2}.
\end{align*}
where $\kappa_\d(q,p)=|p||\nabla_q\nu_\d(q,p)|$. Here the final inequality follows from Lemma~\ref{FIR-lem:Aux-Conv-Bound} since $\|\kappa_\d\|_{L^1}\leq C$ independent of $\d$ (recall~\eqref{Aux-eq:Reg-Seq-Ind-L1}). As a result of this calculation it follows that
\begin{align*}
\|\bar g_\d\|_{L^2} \leq C\|f\|_{L^2},
\end{align*}
where $C$ is independent of $\d$.

A similar calculation for $\nabla_p \bar g_\d$ gives, using implicit summation over repeated indices,
\begin{align*}
\int_{\R^{2d}} \bar g_\d \partial_{p_j} \phi 
&=\int_{\R^{2d}} \left[-(\nu_\d \ast f) (p_i\partial_{q_ip_j}\phi)+fp_i\partial_{q_i}(\nu_\d\ast\partial_{p_j}\phi)\right]=\int_{\R^{2d}} \left[\partial_{q_i}\phi\,\partial_{p_j}(p_i\nu_\d\ast f)+fp_i\partial_{p_j}(\nu_\d\ast\partial_{q_i}\phi)\right]\\
&=\int_{\R^{2d}} \left[\partial_{q_i}\phi\,\partial_{p_j}(p_i\nu_\d\ast f)-\nu_\d\ast\partial_{q_i}\phi\, \partial_{p_j}(fp_i)\right]\\
&=\int_{\R^{2d}} \left[\partial_{q_i}\phi\,\bigl(p_i\nu_\d\ast\partial_{p_j}f + \delta_{ij} \nu_\d\ast f\bigr)
 -\nu_\d\ast\partial_{q_i}\phi\, \bigl(p_i\partial_{p_j}f + \delta_{ij}f\bigr)\right]\\
&=\int_{\R^{2d}} \partial_{p_j}f \left[\nu_\d\ast(p_i\partial_{q_i}\phi)-p_i\nu_\d\ast\partial_{q_i}\phi\right] \leq C\|\partial_{p_j}f\|_{L^2}\|\nabla p_i\|_{\infty} \|\phi\|_{L^2},
\end{align*}
where $C$ is independent of $\d$, implying
\begin{align*}
\|\partial_{p_j }\bar g_\d\| \leq C\|\nabla_p f\|_{L^2}.
\end{align*}

Now let us consider $\bar g_{\d,R}$ (defined in~\eqref{Aux-eq:Cutoff-Error-Reg}). Since $|\nabla_p \rchi_R|\leq 1/R$ and $|\Delta_p \rchi_R|\leq 1/R^2$, it follows that
\begin{align*}
\|\bar g_{\d,R}\|_{L^2} \leq C\|f_\d p\cdot\nabla_q\rchi_R\|_{L^2}+\frac{C}{R^2}\|f_\d\|+\frac{C}{R}\|\nabla_pf_\d\| \leq C\|f\|_{L^2} + \frac{C}{R^2}\|f\|+\frac{C}{R}\|\nabla_pf\|,
\end{align*}
i.e.\ $\bar g_{\d,R}$ is bounded in $L^2(\R\times\R^{2d})$ independent of $\d, R$. A similar calculation shows that 
\begin{align*}
\|\partial_{p_j}\bar g_{\d,R}\|_{L^2} \leq C(R)\xrightarrow{R\rightarrow\infty}0. 
\end{align*}
This completes the proof.
\end{proof}

We now use Proposition~\ref{Aux-Prop:Bouchut-Type-Res} to prove regularity properties of equation~\eqref{FIR-eq:VFP-Main-Aux-PDE}.

\begin{proposition}\label{FIR-prop:L1-loc-Prop} 
Let $g$ be the solution of the variational problem~\eqref{FIR-eq:PDE-Weak-Form-Lambda=0} (in the sense of Corollary~\ref{Aux-Cor:WellPose-Original})  with $U=0$ and with initial datum $g^0\in X$. If $g^0\in C^3(\R^{2d})\cap X$, then $g$ satisfies
\begin{align*}
\partial_t g,\nabla g,\Delta_p g\in L^2_{\mathrm{loc}}([0,T]\times\R^{2d}).
\end{align*}
\end{proposition}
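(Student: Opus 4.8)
The strategy is to bootstrap regularity from the weak solution provided by Corollary~\ref{Aux-Cor:WellPose-Original} by casting equation~\eqref{FIR-eq:VFP-Main-Aux-PDE} into the framework of the kinetic-regularity result Proposition~\ref{Aux-Prop:Bouchut-Type-Res}. First I would recall that the solution $g\in Y$ satisfies $g,\nabla_p g\in L^2(0,T;L^2(\R^{2d};e^{-H}))$; the weight $e^{-H}$ is locally bounded above and below, so on any compact set $K\subset[0,T]\times\R^{2d}$ we have $g,\nabla_p g\in L^2(K)$. Rewrite~\eqref{FIR-eq:VFP-Main-Aux-PDE} (with $U=0$, $\g=1$) in the form
\[
\partial_t g + p\cdot\nabla_q g - \Delta_p g = \tilde g,
\]
where the right-hand side is
\[
\tilde g := \nabla_q V\cdot\nabla_p g + J\nabla(\psi*\rho_t)\cdot\nabla g - \nabla_p H\cdot\nabla_p g + \frac g2\bigl(J\nabla H\cdot\nabla\psi*\rho_t - \Psi\bigr),
\]
using $J\nabla H\cdot\nabla g = (p/m)\cdot\nabla_q g - \nabla_q V\cdot\nabla_p g$, so that the $(p/m)\cdot\nabla_q g$ term is the transport term of the kinetic equation (after the harmless rescaling $p\mapsto p/m$) and everything else is collected into $\tilde g$.

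\textbf{Key steps.} The second step is to check that $\tilde g$ has the integrability required by Proposition~\ref{Aux-Prop:Bouchut-Type-Res}, namely $\tilde g,\nabla_p\tilde g\in L^2_{\mathrm{loc}}$, at least after the cut-off arguments already internal to that proposition. Here I would use the hypothesis $g^0\in C^3(\R^{2d})\cap X$ together with the growth bounds~\eqref{FIR-res:VFP-Dec-Exp-Map}/Proposition~\ref{FIR-prop:VFP-Aux-PDE-Control} (giving pointwise control $|g|\leq C(1+H)^{1/2}$ locally), the smoothness and boundedness of $\psi$ and its derivatives (assumption~\ref{cond:VFP:V2}), the bounds $|\nabla H|,|\Delta H|,|\nabla_p H|^2\le C(1+H)$ and $\Psi\in C^2_c$. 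These make each term of $\tilde g$ a product of a bounded-on-compacts coefficient with $g$ or $\nabla g$, so $\tilde g\in L^2_{\mathrm{loc}}$; differentiating in $p$ produces at worst first derivatives of $g$ and (via $\nabla_p$ of smooth coefficients) again $g$ itself, so $\nabla_p\tilde g\in L^2_{\mathrm{loc}}$ provided we already know $\nabla g\in L^2_{\mathrm{loc}}$ — which is not yet available for $\nabla_q g$. This circularity is resolved exactly as in the proof of Proposition~\ref{Aux-Prop:Bouchut-Type-Res}: one regularizes $g$ and $\tilde g$ by mollification and spatial truncation, applies the a priori estimate $\|\nabla_q f\|_{L^2}\le C(\|\nabla_p g\|_{L^2}+\|f\|_{L^2})$ to the smoothed problem where all quantities are legitimate, and the commutator errors are controlled by Lemma~\ref{FIR-lem:Aux-Conv-Bound} uniformly in the regularization parameters. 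Passing to the limit then yields $\nabla_q g\in L^2_{\mathrm{loc}}$, hence $\nabla g\in L^2_{\mathrm{loc}}$, and $\Delta_p g\in L^2_{\mathrm{loc}}$ from the same result; finally $\partial_t g\in L^2_{\mathrm{loc}}$ follows by reading it off from the equation, since all other terms are now in $L^2_{\mathrm{loc}}$.

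\textbf{Main obstacle.} The delicate point is the first-order drift term $\nabla_p H\cdot\nabla_p g = (p/m)\cdot\nabla_p g$ (and similarly the convective terms), which has an unbounded coefficient; on a fixed compact set this is harmless, but the mollification/truncation scheme used to import Proposition~\ref{Aux-Prop:Bouchut-Type-Res} must be set up so that the truncation errors $\bar g_{\d,R}$ stay bounded in $L^2$ uniformly — this is where the growth assumption $|\nabla H|^2\le C(1+H)$ and the weighted estimates on $g$ are essential. I expect the bookkeeping of these commutator and cut-off terms, rather than any conceptual difficulty, to be the bulk of the work; the argument is essentially a transcription of the proof of Proposition~\ref{Aux-Prop:Bouchut-Type-Res} with the extra lower-order terms carried along, using the density result Lemma~\ref{FIR-lem:VFP-Density} to justify the manipulations on the weak formulation.
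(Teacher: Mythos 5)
Your overall plan — rewrite the equation in kinetic form $\partial_t g \pm p\cdot\nabla_q g - \Delta_p g = G$ with all lower-order terms on the right-hand side and then import the regularity from Proposition~\ref{Aux-Prop:Bouchut-Type-Res} — is the same as the paper's, but there are two genuine gaps in the middle of your argument, and you have misidentified where the real obstruction lies.

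First, you claim that differentiating $\tilde g$ in $p$ ``produces at worst first derivatives of $g$,'' and that the missing ingredient is $\nabla_q g$. Neither half of this is correct. Check the right-hand side carefully: since $\psi*\rho_t$ depends only on $q$, the term $J\nabla(\psi*\rho_t)\cdot\nabla g$ equals $-\nabla_q(\psi*\rho_t)\cdot\nabla_p g$, so $\tilde g$ is a linear combination of $g$ and $\nabla_p g$ only, with no $\nabla_q g$ at all. The actual problem is that $\nabla_p\tilde g$ contains $D^2_p g$, a \emph{second} $p$-derivative (e.g.\ from $\nabla_p(\nabla_p H\cdot\nabla_p g)$), and you cannot invoke Proposition~\ref{Aux-Prop:Bouchut-Type-Res} directly because its hypotheses require $\nabla_p(\text{RHS})\in L^2$ — that proposition does not produce this for you; it assumes it. The paper resolves this with an explicit bootstrap you have omitted: first apply the weaker Theorem~1.5 of \cite{Bouchut02} (which only needs the RHS in $L^2$, not its $p$-gradient) to get $\Delta_p g$, and hence $D^2_p g$, in $L^2_{\mathrm{loc}}$; only then does $\nabla_p \tilde g \in L^2_{\mathrm{loc}}$ hold and Proposition~\ref{Aux-Prop:Bouchut-Type-Res} become applicable to give $\nabla_q g\in L^2_{\mathrm{loc}}$. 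Your claim that ``this circularity is resolved exactly as in the proof of Proposition~\ref{Aux-Prop:Bouchut-Type-Res}'' is therefore not accurate — the mollification/truncation bookkeeping inside that proof is a separate issue from the missing $D^2_p g$ bound.

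Second, Proposition~\ref{Aux-Prop:Bouchut-Type-Res} is stated on all of $\R\times\R^{2d}$, and the paper exploits this by extending $g$ to negative times via $g(t):=g^0$ for $t\le 0$; the piece of $G$ on $\{t<0\}$ is then $-p\nabla_q g^0-\Delta_p g^0$, and the hypothesis $g^0\in C^3(\R^{2d})$ is used precisely so that this extended $G$ and its $p$-gradient are in $L^2_{\mathrm{loc}}(\R\times\R^{2d})$. You mention $g^0\in C^3$ only in passing, alongside the pointwise growth bounds from Proposition~\ref{FIR-prop:VFP-Aux-PDE-Control}, which in fact play no role in this proof (local $L^2$ membership of $g$ and $\nabla_p g$ is all that is needed, and that comes from $g\in Y$). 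Concretely: you should drop the appeal to the growth estimates, add the extension to $t<0$, and insert the Bouchut~Theorem~1.5 step before invoking Proposition~\ref{Aux-Prop:Bouchut-Type-Res}.
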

\begin{proof}
Let $g$  be the solution of the variational problem~\eqref{FIR-eq:PDE-Weak-Form-Lambda=0} in the sense of Corollary~\ref{Aux-Cor:WellPose-Original}, but on the time interval $[0,\infty)$; since Corollary~\ref{Aux-Cor:WellPose-Original} guarantees existence and uniqueness on any finite interval, this $g$ is well defined.  We extend $g$ to all $t$ by setting
\[
g(t) := \begin{cases}
g^0 & t\leq 0\\
g(t) & t>0
\end{cases}
\]

We next recast the variational problem~\eqref{FIR-eq:PDE-Weak-Form-Lambda=0} in the form used in Proposition~\ref{Aux-Prop:Bouchut-Type-Res}. Changing $p$ to $-p$ and rearranging~\eqref{FIR-eq:PDE-Weak-Form-Lambda=0} we find, also using Remark~\ref{FIR-rem:general-phi}, for all $\phi\in C_c^\infty(\R\times\R^{2d})$
\begin{align*}
\int_0^T\int_{\R^{2d}} &\Bigl\{ g\left(-\partial_t\phi-p\cdot\nabla_q\phi\right) +\nabla_pg\cdot\nabla_p\phi\Bigr\}e^{-H}\\
&=\int_0^T\int_{\R^{2d}} \Bigl\{ -g\nabla_qV\cdot \nabla_p\phi-\frac 12g\nabla_q\psi\ast\rho_t\cdot\nabla_p\phi-\frac12 g\Psi\phi+\frac 12 \phi\nabla_q\psi\ast\rho_t\cdot\nabla_pg  \Bigr\}e^{-H}
- \int_{\R^{2d}} g\phi\Big|_{t=0}^{t=T}e^{-H}.
\end{align*}
With the choice $\phi=\tilde \phi e^H$, where $\tilde \phi\in C_c^\infty(\R\times\R^{2d})$ we rewrite this as
\begin{multline}\label{Aux-eq:PDE-Reg-Weak-to-Strong}
\int_0^T\int_{\R^{2d}} \Bigl\{ g\left(-\partial_t\tilde \phi-p\cdot\nabla_q\tilde \phi\right) +\nabla_pg\cdot\nabla_p\tilde\phi\Bigr\}\\
=\int_0^T\int_{\R^{2d}} \Bigl\{ \nabla_pg\cdot\nabla_p H\tilde\phi -g\nabla_qV\cdot \nabla_p\tilde \phi-\frac 12g\nabla_q\psi\ast\rho_t\cdot\nabla_p\tilde\phi-\frac 12g\nabla_q\psi\ast\rho_t\cdot\nabla_pH\tilde\phi\\
-\frac12 g\tilde\Psi\phi+\frac 12 \tilde\phi\nabla_q\psi\ast\rho_t\cdot\nabla_pg  \Bigr\}
- \int_{\R^{2d}} g\tilde \phi\Big|_{t=0}^{t=T}.
\end{multline}
After combining this expression with similar expressions for the regions $t>T$ and $t<0$, we find that these expressions form  the distributional version of the equation
\begin{align}
\label{FIR-eq:reg-full-space}
\partial_t  g -p\nabla_q g-\Delta_p g= G \qquad \text{in } \R\times \R^{2d},
\end{align}
where 
\begin{align}\label{Aux-eq:Reg-G-def}
G=\begin{cases}
-p\nabla_q g^0 - \Delta_p g^0 & t< 0\\
\nabla_pg\cdot\nabla_q V-\nabla_q\psi\ast\rho_t\cdot \nabla_p g-\nabla_p g\cdot\nabla_p H-\frac 12 g\left(\nabla_q\psi\ast\rho_t\cdot\nabla_pH+\Psi\right) & t>0.
\end{cases}
\end{align}
Since $g,\nabla_p g\in L^2(0,T;L^2(e^{-H}))\subset L^2_{\mathrm{loc}}(\R\times \R^{2d})$ and by assumption $g^0\in C^3(\R^{2d})$, it follows that $G\in L^2_{\mathrm{loc}}(\R\times \R^{2d})$. After a smooth truncation, Theorem~1.5 of \cite{Bouchut02} implies that $\Delta_p g\in L^2_{\mathrm{loc}}(\R\times\R^{2d})$. Using this additional regularity in the definition of $G$~\eqref{Aux-eq:Reg-G-def}, it then follows that $\nabla_p G\in L^2_{\mathrm{loc}}(\R\times\R^{2d})$. 
Applying Proposition~\ref{Aux-Prop:Bouchut-Type-Res} to a truncated version of~\eqref{FIR-eq:reg-full-space} then implies the result.
\end{proof}

\begin{remark}
From Proposition~\ref{FIR-prop:L1-loc-Prop} it follows that the solution for the variational problem~\eqref{FIR-eq:PDE-Weak-Form-Lambda=0} satisfies the original equation~\eqref{FIR-eq:VFP-Main-Aux-PDE} (with the choice $U=0$)
\begin{equation*}
\begin{aligned}
&\partial_t g - J\nabla H\cdot \nabla g -J\nabla (\psi\ast\rho_t)\cdot \nabla g + \nabla_pH\cdot\nabla_pg - \Delta_p g -\frac{g}{2}\left(J\nabla H\cdot\nabla\psi\ast\rho_t-\Psi\right)=0,\\
& g|_{t=0}=g^0,
\end{aligned}
\end{equation*}
in $L^1_{\mathrm{loc}}([0,T]\times\R^{2d})$ (i.e.\ all derivatives are in $L^1_{\mathrm{loc}}$).
\end{remark}

\section{Proof of Theorem~\ref{DOG-thm:DOG-Compactness}}\label{DOG-App-Sec:HIR}
In this section we prove Theorem~\ref{DOG-thm:DOG-Compactness}. We will use the following alternative definition of the rate functional
\begin{equation}\label{DOG-def:Rate-Fn-h-Def}
\begin{aligned}
I(\rho)=
\begin{cases}
\displaystyle
\frac{1}{2}\int\limits_0^T\int\limits_{\mathbb{R}^{2d}}|h_t|^2\,d\rho_t dt
\quad
 &\parbox{11cm}{$\text{if } \partial_t\rho_t=\vep^{-1}\div(\rho J\nabla H)+\Delta_p\rho-\div_p(\rho_t h_t), 
\ \text{for } h\in L^2(0,T;L^2_\nabla(\rho))$,\\
\hbox{}\qquad\qquad and $\rho|_{t=0} = \rho_0$,}
\\[0.2cm]
+\infty & \text{otherwise},
\end{cases}
\end{aligned}
\end{equation} 
where $\vep>0$ is fixed.

\begin{proof}[Proof of Theorem~\ref{DOG-thm:DOG-Compactness}]
We first show that the estimate~\eqref{DOG-eq:Ham-Bounds-DOG} holds.  Since $\rho$ satisfies $I(\rho)<C$, using the defintion~\eqref{DOG-def:Rate-Fn-h-Def} of the rate functional we find that there exists $h\in L^2(0,T;L^2_\nabla(\rho))$ such that for any  $f\in C_c^2(\R^2)$ 
\begin{align}\label{DOG-eq:Compact-Weak-Form}
\frac{d}{dt}\int_{\R^2}fd\rho_t =\int_{\R^2}\Big{(}\frac{1}{\vep}J\nabla H\cdot \nabla f +\Delta_p f +\nabla_p f\cdot h_t\Big)d\rho_t.
\end{align}
Formally substituting $f=H$ in~\eqref{DOG-eq:Compact-Weak-Form} and using the growth conditions on $H$ (see~\ref{cond:DOG-H-Growth}) we find
\begin{align*}\label{DOG-eq:Compact-Weak-Form-H}
\partial_t \int_{\R^2} Hd\rho_t =\int_{\R^2} \left(\Delta_p H + \nabla_p H\cdot h_t\right)d\rho_t
&\leq C+\frac12 \int_{\R^2}|\nabla _p H|^2 \, d\rho_t + 
\frac12 \int_{\R^2} |h_t|^2\, d\rho_t\\
&\leq C+C \int_{\R^2}H \, d\rho_t + 
\frac12 \int_{\R^2} |h_t|^2\, d\rho_t.
\end{align*}
The bound $\int H\rho_t^\vep<C$ then follows by applying a Gronwall-type estimate, integrating in time over $[0,T]$, and using the fact that $h\in L^2(0,T;L^2_\nabla(\rho))$. To make the choice  $f=H$ admissible in the definition~\eqref{DOG-def:Rate-Fn-h-Def} of the rate functional we use a two-step approximating argument. We first extend the class of admissible functions from $C^2_c(\R^{2})$ to 
\[
\mathcal A:=\Bigl\{F\in C^2_b(\R^{2}): \sup_{x\in \R^{2}} (1+|x|) |F(x)|  <\infty\Bigr\}.
\]
For a given $F\in\mathcal{A}$, define the sequence  $f_k(x)=F(x)\xi_k(x)\in C^2_c(\R^{2})$, where $\xi_k\in C^\infty_c(\mathbb{R})$ is a sequence of smoothed characteristic functions converging pointwise to one, with $0\leq\xi_k\leq1$,  $|\nabla \xi_k|\leq 1/k$, and $|d^2\xi_k|\leq 1/k^2$. Then $|\nabla H\cdot \nabla f_k|$, $\Delta_p f_k$, and $|\nabla_pf|^2$ are bounded uniformly and converge pointwise to the corresponding terms with $f_k$ replaced by $f$; convergence follows by the Dominated Convergence Theorem. In the second step, we extend $\mathcal{A}$ to include $H(q,p)$ by using an approximating sequence $\mathcal{A}\ni g_k(q,p)=H(q,p)\psi_k(H(q,p))$ where $\psi_k:\mathbb{R}\rightarrow\mathbb{R}$ is defined as $\psi_k(s):=(1+|s|/k)^{-2}$. Note that $\psi_k\rightarrow 1$ pointwise as $k\rightarrow\infty$. Proceeding as described in the formal calculations above we find
\begin{align*}
\partial_t\bigg{(}\int g_kd\rho_t
\bigg{)}\leq C\biggl(1+ \int g_kd\rho_t+\int|h_t|^2 d\rho_t\biggr),
\end{align*}
where $C$ is independent of $k$ and $\vep$. Using a Gronwall-type estimate, integrating in time over $[0,T]$ and applying the monotone convergence theorem we find~\eqref{DOG-eq:Ham-Bounds-DOG}.

Next we prove~\eqref{est:DOG-F-I}. The main idea of the proof is to consider a modified equation for which an estimate of the type~\eqref{est:DOG-F-I} holds, and then arrive at~\eqref{est:DOG-F-I} by passing to an appropriate limit.  

We consider the following modification of equation~\eqref{DOG-eq:Ran-Ham-Evo-R2},
\begin{align}\label{DOG-eq:DOG+Friction}
\partial_t\rho=-\frac{1}{\vep}\div(\rho J\nabla H)+\alpha\div_{p}(\rho\nabla_p H )+\Delta_p\rho^\vep,
\end{align}
where $\alpha>0$. Essentially, we have added a friction term to equation~\eqref{DOG-eq:Ran-Ham-Evo-R2}, as a result of which $\mu^\alpha(dqdp)=Z_\alpha^{-1}e^{-\alpha H(q,p)}dqdp$ is a stationary measure for~\eqref{DOG-eq:DOG+Friction} ($Z_\alpha$ is the normalization constant).

The rate functional corresponding to~\eqref{DOG-eq:DOG+Friction} is
\begin{equation}\label{DOG-def:Mod-Rate-Fn-h-Def}
\begin{aligned}
I_\alpha(\rho)=
\begin{cases}
\displaystyle
\frac{1}{2}\int\limits_0^T\int\limits_{\mathbb{R}^{2}}|h^\alpha_t|^2\,d\rho_t dt
\quad
 &\parbox{11cm}{$\text{if } \partial_t\rho_t=-\vep^{-1}\div(\rho J\nabla H)+\Delta_p\rho+\div_p(\rho[\alpha \nabla_p H-h_t^\alpha]), $\\[\jot]
\hbox{}\qquad\qquad $\text{ for } h^\alpha\in L^2(0,T;L^2_\nabla(\rho_t)),$ and $\rho|_{t=0} = \rho_0$,}
\\[0.2cm]
+\infty & \text{otherwise}.
\end{cases}
\end{aligned}
\end{equation}
Note that equation~\eqref{DOG-eq:DOG+Friction} is a special case of the VFP equation (with the choice $\psi=0$) and therefore the proof of Theorem~\ref{DOG-thm:VFP-Ent-Fisher-Inf-Bounds} also applies to this case. We follow the proof up to~\eqref{ineq:FIR-pre-end} (adding a constant $\alpha$ to the friction) to find for any $\tau\in [0,T]$
\begin{align*}
\RelEnt(\rho_\tau|\mu^\alpha) + \int_0^\tau\int_{\R^{2}} 
\Bigl( \Delta_p\varphi -\alpha \nabla_pH\cdot\nabla_p\varphi - \frac 12 |\nabla_p\varphi|^2\Bigr) d\rho_tdt 
\leq   I_\alpha(\rho) + \RelEnt(\rho_0|\mu^\alpha),
\end{align*}
for any $\varphi\in C_c^\infty(\R\times\R^2)$. Using the definition of relative entropy we have
\begin{multline}\label{DOG:eq:FIR-alpha-Pass}
\M{F}(\rho_\tau) + \int_0^\tau\int_{\R^{2}} 
\Bigl( \Delta_p\varphi -\alpha \nabla_pH\cdot\nabla_p\varphi - \frac 12 |\nabla_p\varphi|^2\Bigr) d\rho_tdt 
\leq   I_\alpha(\rho) + \M{F}(\rho_0) +\alpha\int_{\R^2} H\rho_\tau-\alpha\int_{\R^2} H\rho_0.
\end{multline}
Below we show that  $I_\alpha(\rho)\rightarrow I(\rho)$ as $\alpha\rightarrow 0$. Then passing to the limit $\alpha\rightarrow 0$ in~\eqref{DOG:eq:FIR-alpha-Pass} we find
\begin{align*}
\M{F}(\rho_\tau) + \int_0^\tau\int_{\R^{2}} 
\Bigl( \Delta_p\varphi - \frac 12 |\nabla_p\varphi|^2\Bigr) d\rho_tdt 
\leq   I(\rho) + \M{F}(\rho_0),
\end{align*}
where we have used $|\nabla_p H|^2\leq C(1+H)$ along with the estimate~\eqref{DOG-eq:Ham-Bounds-DOG}. The required inequality~\eqref{est:DOG-F-I} then follows by taking the supremum over $\varphi\in C_c^\infty(\R\times\R^{2})$.  

To complete the proof we show that $I_\alpha(\rho)\rightarrow I(\rho)$ as $\alpha\rightarrow 0$. Using the definition of the rate functionals for the original equation~\eqref{DOG-def:Rate-Fn-h-Def} and the modified equation~\eqref{DOG-def:Mod-Rate-Fn-h-Def}, we  write the rate functional for the modified equation as
\begin{align*}
I_\alpha(\rho)=\frac 12\int_0^T\int_{\R^2} |h_t^\alpha|^2d\rho_tdt &= \frac 12\int_0^T\int_{\R^2} |h_t -\alpha\nabla_pH|^2d\rho_tdt \\ 
&=\frac 12\int_0^T\int_{\R^2}\Bigl( |h_t |^2 + \alpha^2 |\nabla_p H|^2 - 2\alpha \nabla_p H\cdot h_t \Bigr)d\rho_tdt  \xrightarrow{\alpha\rightarrow 0} I(\rho),
\end{align*}
where we have used $|\nabla_p H|^2\leq C(1+H)$ and the estimate~\eqref{DOG-eq:Ham-Bounds-DOG} to arrive at the convergence statement. Note that~\eqref{DOG-eq:Ham-Bounds-DOG} along with the definition of the rate functionals implies that $I(\rho)<\infty$ iff  $I_\alpha(\rho)<\infty$. 
\end{proof}

\end{appendices}
\section*{Acknowledgements}
The authors would like to thank the anonymous referee for valuable suggestions and comments. US thanks Giovanni Bonaschi, Xiulei Cao, Joep Evers and Patrick van Meurs for insightful discussions regarding Theorem~\ref{DOG-thm:VFP-Compactness} and Theorem~\ref{DOG-lem:DOG-Local-Eq-Const-Level-Sets}. MAP and US kindly acknowledge support from the Nederlandse Organisatie voor Wetenschappelijk Onderzoek (NWO) VICI grant 639.033.008. MHD was supported by the ERC Starting Grant 335120.  Part of this work has appeared in Oberwolfach proceedings \cite{PeletierDuongSharma13}.\\

\bibliographystyle{alphainitials}  
\newcommand{\etalchar}[1]{$^{#1}$}

\end{document}